\documentclass[a4paper,12pt]{article}
\usepackage[left=30mm, top=30mm, right=30mm, bottom=30mm, nohead, nofoot]{geometry}
	
    \usepackage{float}
	\usepackage{amsmath,amsfonts,amssymb,amsthm,mathtools} 
	\usepackage{icomma} 

	\usepackage{wrapfig} 
	\usepackage{rotating}
    \usepackage{subfigure}
	\usepackage{hhline}
	\usepackage{lscape}
	\usepackage[usenames,dvipsnames,svgnames,table,rgb]{xcolor}
	\usepackage{dynkin-diagrams}	
	\usepackage{enumitem}
	\setlist{nolistsep, leftmargin=5mm}
	
	\usepackage{array,tabularx,tabulary,booktabs} 
	\usepackage{longtable} 
	\usepackage{multirow} 
	
	\usepackage{multicol} 
	
	\usepackage{extsizes} 
	\usepackage{geometry} 
    \usepackage{hyperref}
	
	\usepackage{setspace} 
	\usepackage{lastpage} 
	\usepackage{soul} 
	\usepackage{bbding}
	\usepackage{hyperref}
	\usepackage[usenames,dvipsnames,svgnames,table,rgb]{xcolor}
	\hypersetup{ 
	colorlinks=true, 
	linkcolor=black, 
	citecolor=black, 
	filecolor=black, 
	urlcolor=ForestGreen 
	}
	
	\usepackage{environ}
	\makeatletter
	\newsavebox{\measure@tikzpicture}
	\NewEnviron{scaletikzpicturetowidth}[1]{%
	\def\tikz@width{#1}%
	\begin{lrbox}{\measure@tikzpicture}%
	\BODY
	\end{lrbox}%
	\pgfmathparse{#1/\wd\measure@tikzpicture}%
	\BODY
	}
	\makeatother
	
    \usepackage{verbatim}
	
	\usepackage{attachfile2}
	\attachfilesetup{appearance=true,
	color=0 0 0
	}
	
	\usepackage{forest} 
	\usepackage{vowel} 
	\usepackage{philex} 

	\usepackage{sectsty}
	\sectionfont{\normalsize}
	\subsectionfont{\normalsize}
	\usepackage{titlesec}
	\newlength{\bibitemsep}
	\newlength{\bibparskip}
	\let\oldthebibliography\thebibliography
	\renewcommand\thebibliography[1]{%
	\oldthebibliography{#1}%
	\setlength{\parskip}{\bibitemsep}%
	\setlength{\itemsep}{\bibparskip}%
	}
\usepackage{tikz}
\usepackage{pstricks}
\usetikzlibrary{arrows,positioning,shapes.geometric}
\usepackage{alltt}
\usepackage{amsthm}
\usepackage{amssymb}
\usepackage{amsmath}
\usepackage{tikz-cd}
\usepackage{hyperref}
\usepackage{xcolor}
\newtheorem*{Proposition}{Proposition}
\newtheorem{theorem}{Theorem}[section]
\newtheorem*{TheoremA}{Theorem A}
\newtheorem*{TheoremB}{Theorem B}
\newtheorem*{TheoremC}{Theorem C}
\newtheorem*{Theorem}{Theorem}
\newtheorem*{Corollary}{Corollary}
\newtheorem*{Example}{Example}
\newtheorem{lemma}[theorem]{Lemma}
\newtheorem{corollary}[theorem]{Corollary}
\newtheorem{proposition}[theorem]{Proposition}

\newtheorem{remark}[theorem]{Remark}
\newtheorem{definition}[theorem]{Definition}
\newtheorem*{notation}{Notation}

\newtheorem{example}[theorem]{Example}

\newcommand{\Cr}[1]{\mathcal{#1}} 
\newcommand{\tens}{\otimes} 
\newcommand{\Sing}{\operatorname{Sing}} 
\newcommand{\Aut}{\operatorname{Aut}} 

\begin{document}
\title{On the action of Bender-Knuth generators of cactus group on the set of short semi-standard Young tableaux}
\author{Igor Svyatnyy\footnote{\textsc{Igor \, K. \, Svyatnyy: \, National\, Research\, University\, Higher\, School\, of\, \,Economics, \,Usacheva\, Street 6,\, 119048,\, Moscow,\, Russia.}\, \texttt{igor.svyatnyy@outlook.com}}}

\definecolor{zzttqq}{rgb}{0.26666666666666666,0.26666666666666666,0.26666666666666666}
\definecolor{cqcqcq}{rgb}{0.7529411764705882,0.7529411764705882,0.7529411764705882}
\maketitle

\begin{abstract}
    In the article by Michael Chmutov, Max Glick and Pavel Pylyavskii \cite{Chmutov} the action of the cactus group $C_N$ on the set of semi-standard Young tableaux filled with the numbers from $1$ to $N$ was defined. Namely, they constructed the set of generators (we rightfully call them Bender-Knuth generators) of the cactus group and a group homomorphism from $C_N$ to Berenstein-Kirillov group $BK_N$ (cf. \cite{Berenstein_Kirillov}), which sends these generators to the Bender-Knuth involutions on the set of semi-standard Young tableaux. 
    
    In \cite{Henriques_Kamnitzer} Andre Henriques and Joel Kamnitzer defined a natural action of cactus group $C_N$ on the tensor product of $N$ normal crystals via commutors. The term \textit{crystal} was introduced by Kashiwara in \cite{Kashiwara_1993} and later appeared in his review article \cite{Kashiwara_1995}. A normal crystal can be thought of as a combinatorial model of an integrable representation of quantized universal enveloping algebra $U_q(\mathfrak{g})$, where $\mathfrak{g}$ is a semisimple Lie algebra (or even Kac-Moody Lie algebra).  
    
    By applying the result of Henriques and Kamnitzer to the $N$-th tensor power of a spinor crystal $\Cr{B}_S$ I defined the action of cactus group $C_N$ on the set of short semi-standard Young tableaux filled with the numbers $1, 2, \ldots, N$ in \cite{Svyatnyy}. A semi-standard Young tableau is called \textit{short} if the number of cells in the first two columns with the numbers $\leqslant N$ is less than or equal to $N$. Howe duality between the Lie algebra $\mathfrak{o}_{2n}$ and $O_N$ (cf. \cite{Howe}) gives the natural bijection between the set of short semi-standard Young tableaux of a fixed shape and the set $T_\lambda^N$ defined in \cite{Svyatnyy}, which indexes the connected components of $\Cr{B}_S^{\tens N}$ isomorphic to $\Cr{B}_\lambda$.
    The set of short semi-standard Young tableaux obviously forms a subset inside the set of semi-standard Young tableaux. The purpose of this paper is to explicitly compute the action of Bender-Knuth generators of cactus group $C_N$ on the set of short semi-standard Young tableaux defined in \cite{Svyatnyy} and compare it with their action on the set of semi-standard Young tableaux defined in \cite{Chmutov}.
    

\end{abstract}

\tableofcontents

\section*{Introduction.}
\addcontentsline{toc}{section}{Introduction.}
\subsection*{The $\mathfrak{g}$-Crystals category.}
\addcontentsline{toc}{subsection}{The \textbf{\underline{$\mathfrak{g}$-Crystals}} category}

Let $\mathfrak{g}$ be a finite-dimensional complex semisimple Lie algebra. Crystals were introduced by Kashiwara \cite{Kashiwara_1993}, \cite{Kashiwara_1995} as a combinatorial structure arising from the representations of the quantum group $U_q(\mathfrak{g})$. Informally, a crystal is a directed graph with the edges labeled by simple roots of $\mathfrak{g}$ and the vertices labeled by the weight lattice of $\mathfrak{g}$.  Crystals can be tensored together to produce a crystal whose underlying set (i.e., the set of the vertices of the corresponding graph) is the product of underlying sets of the multiples. Tensor product of crystals is associative but is not symmetric as the map $\operatorname{flip}: \Cr{B}_1 \tens \Cr{B}_2 \rightarrow \Cr{B}_2 \tens \Cr{B}_1$ which sends $(b_1, b_2)$ to $(b_2, b_1)$ is not even a morphism of crystals. 

In \cite{Henriques_Kamnitzer} Andre Henriques and Joel Kamnitzer defined the \textbf{\underline{$\mathfrak{g}$-Crystals}} category.  Its objects are the crystals arising from the integrable $U_q(\mathfrak{g})$-modules that belong to the semisimple category $\Cr{O}_{int}(\mathfrak{g})$ (cf. Definition $2.3$ in \cite{Kashiwara_1995}). This category is closed under the tensor product operation for crystals. One of the main results of \cite{Henriques_Kamnitzer} is the construction of a family of functorial isomorphisms $\sigma_{\mathcal{B}_1, \mathcal{B}_2}: \mathcal{B}_1 \otimes \mathcal{B}_2 \rightarrow \mathcal{B}_2 \otimes \mathcal{B}_1$ for $\Cr{B}_1, \Cr{B}_2 \in \operatorname{Ob}(\textbf{\underline{$\mathfrak{g}$-Crystals}})$. These functorial isomorphisms are referred to as \textbf{\textit{commutors}}. 

In this paper, for the most part we use the term \textit{\textbf{crystal}} for an object of the \textbf{\underline{$\mathfrak{g}$-Crystals}} category.  By definition, the category \textbf{\underline{$\mathfrak{g}$-Crystals}} is semisimple. Simple objects of this category are \textbf{\textit{the highest weight crystals}} (i.e., crystals with a root vertex). They are labeled by the set of dominant integral weights of $\mathfrak{g}$, denoted by $P_+$. The family of simple objects of the category \textbf{\underline{$\mathfrak{g}$-Crystals}} is denoted by $\mathbb{B} = \{\mathcal{B}_\lambda, \lambda \in P_+\}$. Naturally, the crystal $\Cr{B}_\lambda$ arises from the irreducible integrable $U_q(\mathfrak{g})$-module $V_\lambda \in \operatorname{Ob}(\Cr{O}_{int}(\mathfrak{g}))$ (cf. Theorem $2.1$ in \cite{Kashiwara_1995}). 

There are no non-zero morphisms between the highest weight crystals $\mathcal{B}_\mu$ and $\mathcal{B}_\lambda$ if $\lambda \neq \mu$ and only the identity morphism otherwise. This means that the automorphisms of a given crystal can only permute its highest weight components of the same highest weight. It will be very important for us. 

\subsection*{The spinor crystal.}
\addcontentsline{toc}{subsection}{The spinor crystal.}
We set $\mathfrak{g}$ to be equal to the classical Lie algebra $D_n$. Let $(\epsilon_1, \epsilon_2, \ldots, \epsilon_n)$ be the orthonormal basis of the dual of the Cartan subalgebra of $D_n$. Set $\Lambda_{n-1} = \frac{1}{2}(\epsilon_1 + \ldots + \epsilon_{n-1} - \epsilon_n)$ and $\Lambda_n = \frac{1}{2}(\epsilon_1 + \ldots + \epsilon_{n-1} + \epsilon_n)$. The \textbf{\textit{spinor crystal}}, denoted by $\mathcal{B}_S$, is an object of the \underline{\textbf{$D_n$-crystals}} category equal to the direct sum of two highest weight crystals: $\mathcal{B}_S = \mathcal{B}_{\Lambda_{n-1}} \oplus \mathcal{B}_{\Lambda_n}$. We denote the set of weights of the spinor crystal by $P[S]$. 

Consider the $N$-th tensor power of the spinor crystal $\mathcal{B}_S^{\otimes N}$. In order to decompose it into the sum of the highest weight crystals we need the following key proposition. 

\begin{Proposition}[Proposition \ref{prop_3.2}]
    Let $\mathcal{B}_{\lambda}$ be the highest weight crystal of the highest weight $\lambda \in P_+$. Then the following decomposition formula holds: 
    \begin{equation}
       \mathcal{B}_\lambda  \otimes \mathcal{B}_S  = \bigoplus_{\substack{\mu \in P[S],  \\ \lambda + \mu \in P_+}} \mathcal{B}_{\lambda + \mu}.
    \end{equation}

    Moreover, the highest weight element of the connected component $\mathcal{B}_{\lambda + \mu}$  equals $b_{\lambda} \otimes b_{\mu}$, where $b_\lambda$ is the highest weight element of the crystal $\mathcal{B}_{\lambda}$ and $b_\mu$ is the element of the weight $\mu$ in the spinor crystal $\mathcal{B}_S$. 

\end{Proposition}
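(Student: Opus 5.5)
The plan is to exploit the fact that $\mathcal{B}_S$ is \emph{minuscule}: its set of weights $P[S]$ consists of all vectors $\frac12(\pm\epsilon_1 \pm \cdots \pm \epsilon_n)$, each of multiplicity one. These vectors form the Weyl group orbits of $\Lambda_{n-1}$ and $\Lambda_n$. Consequently, for every simple root $\alpha_i$ and every $b \in \mathcal{B}_S$, the $\alpha_i$-string through $b$ has length at most one, so $\varepsilon_i(b), \varphi_i(b) \in \{0,1\}$. Indeed, $\langle \mu, \alpha_i^\vee\rangle \in \{-1,0,1\}$ for all $\mu \in P[S]$. Since weights have multiplicity one, we may write $b_\mu$ for the unique element of weight $\mu$.

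The first step is to identify the highest weight elements of $\mathcal{B}_\lambda \otimes \mathcal{B}_S$ with the tensor product rule. An element $b \otimes b_\mu$ is highest weight iff $e_i(b\otimes b_\mu)=0$ for all $i$. In the standard (Kashiwara) convention this means $\varepsilon_i(b)=0$ and $\varepsilon_i(b_\mu) \le \varphi_i(b)$ for all $i$; the opposite convention is handled symmetrically. The first condition forces $b=b_\lambda$, because $\mathcal{B}_\lambda$ has a unique element killed by all $e_i$. For $b=b_\lambda$ we have $\varphi_i(b_\lambda)=\langle\lambda,\alpha_i^\vee\rangle$. Since $\varepsilon_i(b_\mu)\in\{0,1\}$, the condition says: whenever $\varepsilon_i(b_\mu)=1$, we need $\langle\lambda,\alpha_i^\vee\rangle\ge 1$.

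The second step is to translate this into $\lambda+\mu\in P_+$. For a minuscule crystal, $\varepsilon_i(b_\mu)=\max(0,-\langle\mu,\alpha_i^\vee\rangle)$, because the strings have length at most one. Thus $\varepsilon_i(b_\mu)=1$ exactly when $\langle\mu,\alpha_i^\vee\rangle=-1$. The condition is then $\langle \lambda+\mu,\alpha_i^\vee\rangle \ge 0$ for all $i$. The other cases hold automatically: if $\langle\mu,\alpha_i^\vee\rangle\ge0$, dominance of $\lambda$ gives the inequality. Hence the highest weight elements are exactly the $b_\lambda\otimes b_\mu$ with $\lambda+\mu\in P_+$, and each has weight $\lambda+\mu$. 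The formula in the paper lists $P[S]$ as a set, so this gives precisely one summand per such $\mu$, in agreement with multiplicity one.

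The final step is the conclusion. Since $\mathcal{B}_\lambda\otimes\mathcal{B}_S$ lies in the semisimple category \textbf{\underline{$D_n$-Crystals}}, it is the direct sum of highest weight crystals $\mathcal{B}_{\mathrm{wt}(h)}$, one for each highest weight element $h$. This gives the decomposition together with the stated highest weight vectors. The main obstacle is the first step. It requires verifying carefully the minuscule property, namely that $\varepsilon_i,\varphi_i\in\{0,1\}$ on $\mathcal{B}_S$, for the explicit spinor crystal of type $D_n$, including the special root $\alpha_n=\epsilon_{n-1}+\epsilon_n$. It also requires keeping the tensor product convention consistent with the one used in the paper. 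I would check the $\varepsilon_i$ formula directly on the $\pm$-sign-sequence model: there $f_i$ for $i<n$ changes $(+,-)$ in positions $(i,i+1)$ to $(-,+)$, and $f_n$ changes $(+,+)$ in positions $(n-1,n)$ to $(-,-)$.
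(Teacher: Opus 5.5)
Your proposal is correct and is essentially the paper's own proof. You force the first tensor factor to be $b_\lambda$ via the tensor product rule. You then use $\varepsilon_i(b_\mu)+\varphi_i(b_\mu)\leqslant 1$ on $\mathcal{B}_S$ (Proposition~\ref{prop_3.1}), which you repackage as $\varepsilon_i(b_\mu)=\max(0,-\langle\mu,\alpha_i^\vee\rangle)$, to turn the remaining condition $\varepsilon_i(b_\mu)\leqslant\varphi_i(b_\lambda)$ into $\lambda+\mu\in P_+$. Multiplicity one comes from the distinct weights in $\mathcal{B}_S$.

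Your derivation of the string-length bound from $\langle\mu,\alpha_i^\vee\rangle\in\{-1,0,1\}$ plus normality (look at the top of the $i$-string) is also valid. It is a slightly more conceptual substitute for the paper's direct inspection of the sign-sequence model.
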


Denote by $\Delta^N \subset P_+$ the set of the highest weights of the highest weight crystals in the decomposition of $\mathcal{B}_S^{\otimes N}$ into the sum of simple crystals: $$\mathcal{B}_S^{\otimes N} = \bigoplus_{\lambda \in \Delta^N} \mathcal{B}_\lambda^{m_{\lambda}},$$ where $m_\lambda \in \mathbb{Z}_{>0}$. 

Using the proposition,  by induction one can obtain the following decomposition formula.

\begin{Corollary}[Corollary \ref{cor_3.1}]
    \begin{equation} {\label{eq_1.1}}
        \mathcal{B}_{S}^{\otimes N} = \bigoplus_{(\mu_1, \mu_{2}, \ldots, \mu_N) \in T^{N}} \mathcal{B}_{\mu_1 + \mu_{2} + \ldots + \mu_N},
    \end{equation}
where 
\begin{equation}
    T^N := \Big\{ (\mu_1, \mu_{2}, \ldots, \mu_N) \hspace{1mm} | \hspace{1mm} \mu_k \in P[S], \sum_{i=1}^{k} \mu_i \in P_+, \forall k \in \{1, 2, \ldots, N\} \Big\}
\end{equation}
\end{Corollary}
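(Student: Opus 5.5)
We argue by induction on $N$, using the Proposition (Proposition \ref{prop_3.2}) at each step together with the fact that $\otimes$ distributes over direct sums in the semisimple category \textbf{\underline{$D_n$-Crystals}}.

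For the base case $N=1$, note that $\mathcal{B}_S = \mathcal{B}_{\Lambda_{n-1}}\oplus\mathcal{B}_{\Lambda_n}$. A one-term sequence $(\mu_1)$ with $\mu_1\in P[S]$ lies in $T^1$ exactly when $\mu_1\in P_+$. The weights of $\mathcal{B}_S$ are $\frac12(\pm\epsilon_1\pm\cdots\pm\epsilon_n)$, and the only dominant ones for $D_n$ are $\Lambda_{n-1}$ and $\Lambda_n$. This follows from the dominance conditions $\mu^{(i)}\geq \mu^{(i+1)}$ for $i<n$ and $\mu^{(n-1)}+\mu^{(n)}\geq 0$. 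Hence the right-hand side of (\ref{eq_1.1}) is $\mathcal{B}_{\Lambda_{n-1}}\oplus\mathcal{B}_{\Lambda_n}=\mathcal{B}_S$. Alternatively, one can start the induction at $N=0$, with $\mathcal{B}_0$ the trivial crystal and $T^0$ consisting of the empty sequence. The Proposition with $\lambda=0$ then gives the case $N=1$ directly.

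For the inductive step, assume (\ref{eq_1.1}) holds for $N-1$. Write
\[
\mathcal{B}_S^{\otimes N}=\mathcal{B}_S^{\otimes (N-1)}\otimes\mathcal{B}_S=\bigoplus_{(\mu_1,\ldots,\mu_{N-1})\in T^{N-1}}\mathcal{B}_{\mu_1+\cdots+\mu_{N-1}}\otimes\mathcal{B}_S .
\]
This uses associativity of the crystal tensor product and the fact that tensoring with $\mathcal{B}_S$ commutes with direct sums, which holds because the underlying set of a tensor product is the product of the underlying sets.

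Set $\lambda=\mu_1+\cdots+\mu_{N-1}$, which lies in $P_+$ by the definition of $T^{N-1}$. The Proposition then gives
\[
\mathcal{B}_\lambda\otimes\mathcal{B}_S=\bigoplus_{\mu_N\in P[S],\ \lambda+\mu_N\in P_+}\mathcal{B}_{\lambda+\mu_N}.
\]
Substituting, the summands are indexed by pairs consisting of a sequence $(\mu_1,\ldots,\mu_{N-1})\in T^{N-1}$ and a weight $\mu_N\in P[S]$ with $\sum_{i=1}^N\mu_i\in P_+$. These pairs are exactly the elements of $T^N$, since the partial-sum conditions for $k\le N-1$ are the conditions defining $T^{N-1}$, and the condition for $k=N$ is the new constraint on $\mu_N$. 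This proves (\ref{eq_1.1}) for $N$.

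The only delicate point is to read the sum as indexed by sequences, so that multiplicities are counted correctly. Each distinct sequence contributes its own copy of the component, and the Proposition's statement about highest weight elements identifies that copy with the element $b_{\mu_1}\otimes\cdots\otimes b_{\mu_N}$. There is no real analytic obstacle here; the work lies in the Proposition itself, which we assume.
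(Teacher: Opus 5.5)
Your proposal is correct and takes the paper's approach: the paper obtains Corollary~\ref{cor_3.1} from Proposition~\ref{prop_3.2} by induction on $N$, tensoring on one copy of $\mathcal{B}_S$ at a time and distributing over the direct sum, just as you do. Your base-case check (the only dominant spinor weights are $\Lambda_{n-1}$ and $\Lambda_n$) and your matching of the index set with $T^N$ fill in details the paper leaves implicit.
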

It is clear that \begin{gather} \label{delta} \Delta^N = \Big\{ \sum_{i=1}^{N} \mu_i  \hspace{1mm} | \hspace{1mm} (\mu_1, \mu_{2}, \ldots, \mu_N) \in T^N \Big\}\end{gather}
We present $T^N$ as the disjoint union: 
\begin{gather} \label{tlambdaN}
T^{N} = \coprod_{\lambda \in \Delta^{N}} T_\lambda^{N}, \hspace{1mm} \text{where}\\
T_\lambda^N := \Big\{(\mu_1,\mu_{2}, \ldots, \mu_N) \in T^N \hspace{1mm} | \sum_{i=1}^{N} \mu_i = \lambda \Big\} \subset T^N.
\end{gather}
Notice that the set $T_\lambda^N$ indexes the summands on the right-hand side of the formula \ref{eq_1.1} isomorphic to $\mathcal{B}_{\lambda}$.

\subsection*{Cell tables and short semi-standard Young tableaux.}
\addcontentsline{toc}{subsection}{Cell tables and short semi-standard young tableaux.}
It turns out the sets $\Delta^N$ and $T_\lambda^N$ have nice combinatorial interpretations. This matter was discussed in detail in my previous study \cite{Svyatnyy}. This paper uses some of the key results of \cite{Svyatnyy}.

The set $\Delta^N$ can be interpreted as the set of regular cell diagrams of the height $n$ and length $N$, denoted by $\mathfrak{D}(N,n)$. 

A \textbf{\textit{Regular cell diagram}} denoted by $D(\textbf{l}, \textbf{r}) \in \mathfrak{D}(N,n)$ is a union of cells and a vertical line called the \text{axis} of the cell diagram constructed by two sets of nonnegative integers $\textbf{l} = (l_1, l_2, \ldots l_n)$, $\textbf{r} = (r_1,r_2, \ldots r_n)$, which satisfy the following conditions:

\begin{itemize}
 \item $r_i + l_i = N, \hspace{1mm} \forall i \in \{1, \ldots n\}$
 \item $r_1 \geqslant r_2 \geqslant \ldots \geqslant r_n$
 \item $r_{n-1} \geqslant l_{n}$
\end{itemize}

The construction looks as follows. In the $i$-th
row to the left (resp. right) of the axis there are $l_i$ (resp. $r_i$) cells. 

\begin{Example}[Example \ref{ex_5.1}]
    As an example we draw the regular cell diagram $D(\textbf{l}, \textbf{r}) \in \mathfrak{D}(7,4)$, where $\textbf{r} =  (5, 4, 4, 3), \hspace{2mm} \textbf{l} = (2, 3, 3, 4)$.

\centering 
\begin{tikzpicture}[scale = 0.5]
\draw[thick,<->] (6,-0.5) -- (6, 4.5) node[anchor=north west] {};
\draw (2,0) rectangle (3,1);
\draw (3,0) rectangle (4,1);
\draw (4,0) rectangle (5,1);
\draw (5,0) rectangle (6,1);
\draw (6,0) rectangle (7,1);
\draw (7,0) rectangle (8,1);
\draw (8,0) rectangle (9,1);
\draw (3,1) rectangle (4,2);
\draw (4,1) rectangle (5,2);
\draw (5,1) rectangle (6,2);
\draw (6,1) rectangle (7,2);
\draw (7,1) rectangle (8,2);
\draw (8,1) rectangle (9,2);
\draw (9,1) rectangle (10,2);
\draw (3,2) rectangle (4,3);
\draw (4,2) rectangle (5,3);
\draw (5,2) rectangle (6,3);
\draw (6,2) rectangle (7,3);
\draw (7,2) rectangle (8,3);
\draw (8,2) rectangle (9,3);
\draw (9,2) rectangle (10,3);
\draw (5,3) rectangle (4,4);
\draw (6,3) rectangle (5,4);
\draw (7,3) rectangle (6,4);
\draw (8,3) rectangle (7,4);
\draw (9,3) rectangle (8,4);
\draw (10,3) rectangle (9,4);
\draw (11,3) rectangle (10,4);
\end{tikzpicture}
\end{Example}

\begin{Proposition}[Proposition \ref{prop_5.2}] 
There exists a natural bijection, denoted by $$\mathcal{K}_N: \Delta^N \longrightarrow \mathfrak{D}(N,n),$$ defined by the following formula 
\begin{equation}\mathcal{K}_N(\lambda) = D(\textbf{l}(\lambda, N), \textbf{r}(\lambda, N)),
\end{equation}
where
\begin{gather}
\textbf{r}(\lambda, N) = \Big(\frac{N}{2} + \lambda_i \Big)_{1 \leqslant i \leqslant n} \hspace{1mm}, \\
\textbf{l}(\lambda, N) = \Big(\frac{N}{2} - \lambda_i \Big)_{1 \leqslant i \leqslant n} \hspace{1mm}.
\end{gather}   
\end{Proposition}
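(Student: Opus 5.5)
The plan is to first describe $\Delta^N$ explicitly as a set of weights, and then check that $\mathcal{K}_N$ carries this set bijectively onto $\mathfrak{D}(N,n)$.

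\textbf{Description of $\Delta^N$.} By (\ref{delta}), $\Delta^N$ consists of the endpoints $\sum_{i=1}^N \mu_i$ of sequences $(\mu_1,\ldots,\mu_N)\in T^N$. The weights in $P[S]$ are $\frac12(\pm\epsilon_1\pm\cdots\pm\epsilon_n)$ with all sign patterns. Recall that $\lambda=\sum\lambda_i\epsilon_i\in P_+$ for $D_n$ means
\[
\lambda_1\geqslant\cdots\geqslant\lambda_{n-1}\geqslant|\lambda_n|,
\]
with all $\lambda_i$ simultaneously in $\mathbb{Z}$ or simultaneously in $\frac12+\mathbb{Z}$. I would prove
\[
\Delta^N=\Big\{\lambda\in P_+ : \lambda_i\in \tfrac N2+\mathbb{Z} \text{ for all } i,\ \lambda_1\leqslant \tfrac N2\Big\}.
\]

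\textbf{Inclusion $\subseteq$.} Each $\mu_k$ has coordinates $\pm\frac12$. Hence every coordinate of $\sum_{i=1}^N\mu_i$ lies in $\frac N2+\mathbb{Z}$ and has absolute value at most $\frac N2$. Dominance is part of the definition of $T^N$.

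\textbf{Inclusion $\supseteq$.} I would argue by induction on $N$, using Proposition~\ref{prop_3.2}. Given $\lambda$ in the right-hand set with $N\geqslant1$, I must find $\mu\in P[S]$ such that $\lambda-\mu$ lies in the corresponding set for $N-1$. The natural choice is $\mu_i=\frac12\operatorname{sign}(\lambda_i)$, with a suitable convention at $\lambda_i=0$ chosen to preserve dominance. More precisely, I would take $\mu_i=+\frac12$ for the largest coordinates and $-\frac12$ elsewhere, so that $\lambda-\mu$ remains dominant and its coordinates have absolute value at most $\frac{N-1}2$.

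\textbf{Main obstacle.} The hard step is this choice of $\mu$. Two conditions compete: coordinates with $|\lambda_i|=\frac N2$ force the sign of $\mu_i$, while dominance must survive in the $D_n$ sense, which involves the condition $\lambda_{n-1}\geqslant|\lambda_n|$ and the sign of $\lambda_n$. I would handle it by choosing $\mu_i=\frac12$ for $i\leqslant k$ and $-\frac12$ for $i>k$, where $k$ is the last index with $\lambda_i>0$ (treating $\lambda_n$ via its sign), and then check the cases in which a zero or half-integer boundary occurs.

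\textbf{Matching with $\mathfrak{D}(N,n)$.} Set $r_i=\frac N2+\lambda_i$ and $l_i=\frac N2-\lambda_i$. Then the integrality of $\lambda_i-\frac N2$ together with $|\lambda_i|\leqslant\frac N2$ is exactly the condition that $r_i,l_i$ are nonnegative integers with $r_i+l_i=N$. The condition $\lambda_1\geqslant\cdots\geqslant\lambda_{n-1}$ is $r_1\geqslant\cdots\geqslant r_{n-1}$. The condition $\lambda_{n-1}\geqslant|\lambda_n|$ splits into $\lambda_{n-1}\geqslant\lambda_n$, which is $r_{n-1}\geqslant r_n$, and $\lambda_{n-1}\geqslant-\lambda_n$, which is $r_{n-1}\geqslant l_n$. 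Thus $\mathcal{K}_N$ maps $\Delta^N$ into $\mathfrak{D}(N,n)$. The inverse $\lambda_i=r_i-\frac N2$ maps $\mathfrak{D}(N,n)$ back into the right-hand set, and hence into $\Delta^N$. Since these two maps are mutually inverse, $\mathcal{K}_N$ is a bijection.
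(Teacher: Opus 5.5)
Your proof is correct and follows the same route as the paper. The paper reduces the statement to the characterization of $\Delta^N$ inside $P_+$ by the requirement that $\textbf{r}(\lambda,N)$ and $\textbf{l}(\lambda,N)$ have nonnegative integer entries (Proposition~\ref{prop_5.1}, quoted without proof from the author's earlier paper), then notes that dominance translates exactly into the axioms of a regular cell diagram. Your inductive step does supply a valid proof of that quoted characterization. The choice $\mu_i=\frac12$ for $\lambda_i>0$ and $\mu_i=-\frac12$ for $\lambda_i\leqslant 0$ preserves dominance and the bound $(\lambda-\mu)_1\leqslant\frac{N-1}{2}$. The only delicate cases are a drop from a positive coordinate to $0$ and $\lambda=0$; they work because the positive coordinate is then an integer $\geqslant 1$, and $N$ is even and hence $\geqslant 2$.
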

For brevity we denote the image of the element $\lambda \in \Delta^N$ under the map $\mathcal{K}_N$ by $D_\lambda^N$. The set $T_\lambda^N$ can be interpreted as the set of regular cell tableaux of the shape $D^{N}_\lambda \in \mathfrak{D}(N,n)$ denoted by $\mathfrak{Ctab}(D_\lambda^{N})$.

A \textbf{\textit{Regular cell tableau}} of the shape $D^{(N)} \in \mathfrak{D}(N,n)$ is a sequence of nested regular cell diagrams $(D^{(1)}, D^{(2)}, \ldots ,D^{(N)})$ such that $D^{(i)} \in \mathfrak{D}(i, n)$ for all $i \in \{1, 2, \ldots N\}$.

\begin{Proposition}[Proposition \ref{prop_5.4}]
\label{bj_t&ct}
There exists a natural bijection  
$$\mathcal{I}_\lambda: T_\lambda^{   N}\longrightarrow \mathfrak{Ctab}(D_\lambda^{N}), $$
given by the formula
\begin{gather} 
\label{bijection_t_cell_tables}\mathcal{I}_{\lambda}:  (\mu_1, \mu_{2}, \ldots \mu_{N}) \mapsto (\mathcal{K}_1(\mu_1), \mathcal{K}_2(\mu_1 + \mu_2), \ldots, \mathcal{K}_i\Big(\sum_{j=1}^{i} \mu_j\Big), \ldots, \mathcal{K}_N (\lambda)).\end{gather}
\end{Proposition}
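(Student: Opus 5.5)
The plan is to reduce everything to Proposition \ref{prop_5.2} (the bijection $\mathcal{K}_N$) applied at each level $i \le N$. The only new ingredient is the nesting condition on the diagrams.

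\textbf{Step 1: the map is well defined.} For $(\mu_1,\ldots,\mu_N)\in T_\lambda^N$, put $\lambda^{(i)}=\sum_{j\le i}\mu_j$. By the definition of $T^N$, $\lambda^{(i)}\in P_+$, and $(\mu_1,\ldots,\mu_i)\in T^i$. Hence $\lambda^{(i)}\in\Delta^i$ by formula (\ref{delta}) at level $i$. So $\mathcal{K}_i(\lambda^{(i)})\in\mathfrak{D}(i,n)$ is defined, and $\lambda^{(N)}=\lambda$.

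\textbf{Step 2: consecutive diagrams are nested.} Every $\mu\in P[S]$ has coordinates $\pm\frac12$. Therefore
\[ r_k(\lambda^{(i+1)},i+1)-r_k(\lambda^{(i)},i)=\tfrac12+(\mu_{i+1})_k\in\{0,1\}, \]
and likewise $l_k$ increases by $1-\big(\tfrac12+(\mu_{i+1})_k\big)\in\{0,1\}$. So in each row exactly one cell is added, either on the left or on the right of the axis. Thus $D^{(i)}\subset D^{(i+1)}$ in the sense needed for regular cell tableaux, and the image lies in $\mathfrak{Ctab}(D^N_\lambda)$.

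\textbf{Step 3: injectivity.} The map $(\mu_1,\ldots,\mu_N)\mapsto(\lambda^{(1)},\ldots,\lambda^{(N)})$ is injective, since $\mu_i=\lambda^{(i)}-\lambda^{(i-1)}$. Each $\mathcal{K}_i$ is injective by Proposition \ref{prop_5.2}. Hence $\mathcal{I}_\lambda$ is injective.

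\textbf{Step 4: surjectivity.} Given a regular cell tableau $(D^{(1)},\ldots,D^{(N)})$, set $\lambda^{(i)}=\mathcal{K}_i^{-1}(D^{(i)})\in\Delta^i\subset P_+$ and $\mu_i=\lambda^{(i)}-\lambda^{(i-1)}$, with $\lambda^{(0)}=0$. Inverting Step 2, the nesting condition together with $r_k+l_k=i$ forces each row to gain exactly one cell. Hence every coordinate of $\mu_i$ equals $\pm\frac12$.

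This is the main obstacle: one must check that $\mu_i\in P[S]$, i.e.\ that the parity condition on the number of minus signs, which distinguishes the weights of $\mathcal{B}_{\Lambda_{n-1}}\oplus\mathcal{B}_{\Lambda_n}$, imposes nothing further. Since $P[S]$ is the union of the weights of both spinor components, it is exactly the set of all vectors $(\pm\frac12,\ldots,\pm\frac12)$. So any such $\mu_i$ lies in $P[S]$.

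With that settled, partial sums are dominant, so $(\mu_1,\ldots,\mu_N)\in T_\lambda^N$, and it maps to the given tableau. I would also double-check that "nested" in the paper's definition means exactly one added cell per row, i.e.\ a length-$(i+1)$ diagram containing a length-$i$ one with a common axis. If it is defined more loosely, the count $r_k+l_k=i$ resp.\ $i+1$ still forces this.

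Naturality refers to compatibility with the componentwise nature of $\mathcal{K}_i$ and needs no further argument.
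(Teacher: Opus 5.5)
Your proof is correct and complete. The paper does not reprove this statement (it is quoted from the author's earlier work), but your argument is exactly the direct verification behind it: passing from $D^{(i-1)}$ to $D^{(i)}$ adds exactly one cell to each row $k$, to the right or the left of the axis according as $(\mu_i)_k=+\frac12$ or $-\frac12$, which is the picture in Example \ref{ex_5.4}, and the only points you leave implicit are harmless (containment $D^{(i)}\supset D^{(j)}$ for non-consecutive $i\geqslant j$ follows by transitivity, and ``the two increments are nonnegative and sum to $1$, hence one is $0$ and the other $1$'' uses that the $r_k,l_k$ are integers, with $D^{(0)}$ empty when $i=1$).
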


It was also observed in \cite{Svyatnyy} that the notion of a regular cell diagram is connected with the notion of a short Young diagram. 

A \textbf{ \textit{short Young diagram}} is a Young diagram with the total length of the first two columns not greater than $N$. We denote the set of short Young diagrams with no more than $n$ columns by $\operatorname{SYD}(N, n)$.

The following proposition was proved in \cite{Svyatnyy}.

\begin{Proposition}[Theorem \ref{th_5.1}]
    Consider a map 
    $$\mathcal{F}_N: \mathfrak{D}(N, n) \rightarrow \operatorname{SYD}(N, n),$$
    defined in the following way. If $n$ is even, then \begin{gather}
\mathcal{F}_N (D(\textbf{l}, \textbf{r})) =  (l_n, l_{n-1}, \ldots, l_1)^{t},
\end{gather}
otherwise
\begin{gather}
\mathcal{F}_N (D(\textbf{l}, \textbf{r})) =  (r_n, l_{n-1}, \ldots, l_1)^{t},
\end{gather} 

The map $\mathcal{F}_N$ is a bijection. 

\end{Proposition}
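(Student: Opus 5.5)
Since $\mathcal{F}_N$ is given by an explicit formula, I will prove it is a bijection by checking three things directly. First, the image of every regular cell diagram is a short Young diagram with at most $n$ columns. Second, an explicit inverse exists on $\operatorname{SYD}(N,n)$. Third, the two compositions are the identity. Throughout I use the defining conditions of a regular cell diagram $D(\textbf{l},\textbf{r})$:
\[
l_i + r_i = N, \qquad r_1 \geqslant \ldots \geqslant r_n, \qquad r_{n-1} \geqslant l_n .
\]
The first two give $l_1 \leqslant l_2 \leqslant \ldots \leqslant l_n$, since $l_i = N - r_i$.

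\textbf{Even $n$.} The image is the diagram whose column lengths are $(l_n, l_{n-1}, \ldots, l_1)$.
\begin{itemize}
\item It is a Young diagram, because $l_n \geqslant l_{n-1} \geqslant \ldots \geqslant l_1 \geqslant 0$.
\item It has at most $n$ columns.
\item It is short, because the first two columns have total length $l_n + l_{n-1} \leqslant r_{n-1} + l_{n-1} = N$, using $l_n \leqslant r_{n-1}$.
\end{itemize}
For the inverse, take a short diagram with columns $c_1 \geqslant \ldots \geqslant c_n \geqslant 0$, padding with zero columns. Set $l_i = c_{n+1-i}$ and $r_i = N - l_i$.
\begin{itemize}
\item The $r_i$ are nonnegative and weakly decreasing, since $c_1 \leqslant c_1 + c_2 \leqslant N$.
\item The condition $r_{n-1} \geqslant l_n$ reads $N - c_2 \geqslant c_1$, which is exactly shortness.
\end{itemize}
The two maps are visibly inverse.

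\textbf{Odd $n$.} The image has columns $(r_n, l_{n-1}, \ldots, l_1)$, and here lies the one point needing care: $r_n$ replaces $l_n$.
\begin{itemize}
\item Monotonicity needs $r_n \geqslant l_{n-1}$. This is equivalent to $N - l_n \geqslant N - r_{n-1}$, i.e. $r_{n-1} \geqslant l_n$, which is exactly the third condition.
\item Shortness needs $r_n + l_{n-1} \leqslant N$. This is equivalent to $r_n \leqslant r_{n-1}$, which is the monotonicity of $\textbf{r}$.
\end{itemize}
So in the odd case the roles of the two conditions are swapped relative to the even case. For the inverse, set $l_i = c_{n+1-i}$ for $i<n$, $r_n = c_1$, and fill in the remaining $l_i$, $r_i$ via $l_i + r_i = N$. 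The same two equivalences, read backwards, show that the result lies in $\mathfrak{D}(N,n)$. Nonnegativity of all entries follows from $0 \leqslant c_i \leqslant N$.

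I expect no real obstacle; the only delicate step is matching the third condition and the monotonicity of $\textbf{r}$ to the monotonicity and shortness conditions, with the swap in the odd case.
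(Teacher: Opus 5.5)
Your verification is correct and complete. In both parities you correctly see that the two defining inequalities of a regular cell diagram, $r_{n-1}\geqslant r_n$ and $r_{n-1}\geqslant l_n$, translate exactly into the two conditions defining $\operatorname{SYD}(N,n)$: weakly decreasing column lengths, and $c_1+c_2\leqslant N$. You also correctly note that their roles are exchanged when $n$ is odd, and your inverse maps do land in $\mathfrak{D}(N,n)$.

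The paper gives no argument at this point. It imports the statement as Theorem~\ref{th_5.1} from Theorem 6.1 of the author's earlier paper. There the bijection is packaged with the representation-theoretic claim that, under Howe duality, $\mathcal{F}_N(D^N_\lambda)$ is the diagram labelling the $O_N$-module $U_\lambda$.

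Your route is elementary and self-contained, and it shows that bijectivity is a purely combinatorial fact, independent of Howe duality. What it does not give is the identification with the $O_N$-type of $U_\lambda$. The stated proposition does not ask for this, but the paper relies on it to interpret $\mathcal{Y}_\lambda$ through Gelfand--Tsetlin bases.

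You rightly use ``short'' in the two-column sense of the introduction: the first two columns have total length at most $N$. Definition~\ref{def_5.3}, as literally written, bounds only the first column, and under that reading $\mathcal{F}_N$ would not be surjective. For example, with $N=2$, $n=4$, the column lengths $(2,2,0,0)$ would require $0=r_3\geqslant l_4=2$.
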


The bijection $\mathcal{F}_N$ comes from the Howe duality between Lie algebra $D_n$ and orthogonal Lie group $O_N$. One can take the $N$-th tensor power of the spinor representation  $(V_{\Lambda_{n-1}} \oplus V_{\Lambda_n})^{\otimes N}$ of Lie algebra $D_n$ and decompose it into the direct sum of simple $D_n$-modules: $$(V_{\Lambda_{n-1}} \oplus V_{\Lambda_n})^{\otimes N} = \bigoplus_\lambda U_\lambda \tens L_\lambda.$$ Here we denoted by $V_\lambda$ the highest weight module of highest weight $\lambda$. Then by Howe duality \cite{Howe} the multiplicity space $U_\lambda$ of the summand $V_\lambda, \hspace{1mm} \lambda \in \Delta^N$ has the natural structure of an irreducible representation of the group $O_N$. It is well known that such representations are indexed by the short Young diagrams \cite{Fulton}. Bijection $\mathcal{F}_N$ sends the cell diagram $D_\lambda^N \in \mathfrak{D}(N,n)$, corresponding to the highest weight $\lambda \in \Delta^N$ to the short Young diagram corresponding to the isomorphism class of the multiplicity space $U_\lambda$ as an $O_N$-module. The latter was also proved in \cite{Svyatnyy}.

Using the previous proposition, we can present another combinatorial interpretation of the set $T_\lambda^N$.  

A \textbf{\textit{short semi-standard Young tableau}} of the shape $\nu^{(N)}$ is a sequence of nested short Young diagrams $(\nu^{(1)},\nu^{(2)}, \ldots, \nu^{(N)})$, such that $\nu^{(i)} \in \operatorname{SYD}(i, n)$ and 
$\nu^{(i+1)} - \nu^{(i)}$ is a horizontal strip for all $1 \leqslant i < N$.

We denote by $\operatorname{SSSYT}(\nu, N)$ the set of short semi-standard Young tableaux of the shape $\nu$. 

\begin{Proposition}[Proposition \ref{prop_5.5}]
Set $\nu = \mathcal{F}_N(D_\lambda^N)$. Then the map
$$\mathcal{Y}_\lambda: \mathfrak{Ctab}(D_\lambda^{N}) \longrightarrow \operatorname{SSSYT}(\nu, N),$$ defined by the following formula
\begin{gather}
    \mathcal{Y}_\lambda: (D^{(1)}, D^{(2)}, \ldots, D^{(N)} = D_\lambda^N) \mapsto (\mathcal{F}_1(D^{(1)}), \mathcal{F}_2(D^{(2)}), \ldots, \mathcal{F}_N(D_{\lambda}^{N}) = \nu).
\end{gather}
is a bijection between the set of regular cell tableaux of the shape $D_\lambda^{N}$ and the set of short semi-standard Young tableaux of the shape $\nu = \mathcal{F}_N(D_\lambda^N)$.
\end{Proposition}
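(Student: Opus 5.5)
The plan is to derive the statement directly from the bijectivity of each $\mathcal{F}_i$ (Theorem \ref{th_5.1}). Once we know that each $\mathcal{F}_i: \mathfrak{D}(i,n)\to \operatorname{SYD}(i,n)$ is a bijection, the map $\mathcal{Y}_\lambda$ is injective. It is a bijection onto $\operatorname{SSSYT}(\nu,N)$ provided one local fact holds: for $D\in\mathfrak{D}(i,n)$ and $D'\in\mathfrak{D}(i+1,n)$, we have $D\subset D'$ (nested, with the axes aligned) if and only if $\mathcal{F}_{i+1}(D')/\mathcal{F}_i(D)$ is a horizontal strip. Granting this equivalence, the forward direction shows that $\mathcal{Y}_\lambda$ lands in $\operatorname{SSSYT}(\nu,N)$, and the last entry is $\mathcal{F}_N(D_\lambda^N)=\nu$. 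The backward direction shows that for any $(\nu^{(1)},\ldots,\nu^{(N)})\in \operatorname{SSSYT}(\nu,N)$ the sequence $(\mathcal{F}_1^{-1}(\nu^{(1)}),\ldots,\mathcal{F}_N^{-1}(\nu^{(N)}))$ is a regular cell tableau of shape $D_\lambda^N$. Together these give surjectivity.

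To prove the local equivalence, first unpack nestedness. Write $D=D(\textbf{l},\textbf{r})$ and $D'=D(\textbf{l}',\textbf{r}')$. Containment means $l_j\le l'_j$ and $r_j\le r'_j$ for all $j$. Since $l_j+r_j=i$ and $l'_j+r'_j=i+1$, this is equivalent to the following: for each row $j$ exactly one of $l_j$, $r_j$ increases by $1$ and the other stays fixed. Equivalently, $l'_j-l_j\in\{0,1\}$ for all $j$, and also $r'_j-r_j = 1-(l'_j-l_j)\in\{0,1\}$.

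Next, use the standard criterion for horizontal strips. For partitions $\alpha,\beta$, the skew shape $\beta/\alpha$ is a horizontal strip if and only if $\beta^t_k-\alpha^t_k\in\{0,1\}$ for every column $k$. Containment is then automatic.

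For $n$ even, the column lengths of $\mathcal{F}_i(D)$ are $(l_n,\ldots,l_1)$. This is weakly decreasing because $r_1\ge\cdots\ge r_n$ forces $l_1\le\cdots\le l_n$. The criterion then reads $l'_j-l_j\in\{0,1\}$ for all $j$, which is exactly nestedness.

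For $n$ odd, the column lengths are $(r_n,l_{n-1},\ldots,l_1)$. The criterion becomes $l'_j-l_j\in\{0,1\}$ for $j<n$ together with $r'_n-r_n\in\{0,1\}$. The last condition is equivalent to $l'_n-l_n\in\{0,1\}$ because $l_n+r_n=i$ and $l'_n+r'_n=i+1$. So again the criterion coincides with nestedness.

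The main (mild) obstacle is bookkeeping rather than substance. One must pin down the precise meaning of \emph{nested} for cell diagrams (cellwise containment with a common axis). One must also check that the odd case behaves correctly: there the first column of $\mathcal{F}_i(D)$ is built from $r_n$ rather than $l_n$, and the regularity condition $r_{n-1}\ge l_n$ is what makes $(r_n,l_{n-1},\ldots)$ a partition. That regularity condition is already absorbed into Theorem \ref{th_5.1}, so no extra work is needed beyond the row-by-row comparison above.
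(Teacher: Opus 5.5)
Your argument is correct, and it takes a different route from the paper. The paper contains no proof of Proposition \ref{prop_5.5}: it imports the statement from the author's earlier work. In the remark that follows, it explains the bijection as the identification of two bases of the multiplicity space $U_\lambda$. One is the principal basis of highest weight vectors, obtained by adding one factor $V_S$ at a time and indexed by $T_\lambda^N\cong\mathfrak{Ctab}(D_\lambda^N)$. The other is the Gelfand--Tsetlin basis for the chain $O_1\subset O_2\subset\cdots\subset O_N$, indexed by $\operatorname{SSSYT}(\nu,N)$. In that picture, nestedness of consecutive cell diagrams is the Pieri-type rule of Proposition \ref{prop_3.2}, and the horizontal-strip condition reflects the branching $O_i\downarrow O_{i-1}$. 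The content of the statement is then that Howe duality intertwines the two chains.

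You bypass all of this. You use only the combinatorial half of Theorem \ref{th_5.1}, the bijectivity of each $\mathcal{F}_i$ rather than its $O_N$-interpretation, together with the constraint $l_j+r_j=i$. This reduces the proposition to the one-step equivalence ``$D\subset D'$ iff $\mathcal{F}_{i+1}(D')/\mathcal{F}_i(D)$ is a horizontal strip.'' You check it column by column, and you handle the odd-$n$ case correctly, where the first column is governed by $r_n$ and $r'_n-r_n=1-(l'_n-l_n)$.

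The representation-theoretic route explains why this particular bijection is the natural one, since it identifies two canonical bases of $U_\lambda$. However, it depends on Howe duality and on branching rules for orthogonal groups. Your route is elementary and self-contained. As a by-product, it makes explicit the dictionary between the signs of the coordinates of $\mu_i$ and the columns containing the cells labeled $i$. The proof of Theorem \ref{th_7.1} invokes that dictionary simply ``by definition.''
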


    The latter bijection comes from the identification of two bases of the multiplicity space $U_\lambda$. One of them is the Gelfand-Tsetlin basis with respect to the chain of nested orthogonal groups. This basis is naturally indexed by the set $\operatorname{SSSYT}(\nu, N)$ of  short semi-standard Young tableaux of the shape $\nu = \mathcal{F}_N(D_\lambda^N)$. The other basis of the multiplicity space consists of the highest weight vectors of the summands $V_\lambda$ that appear if we consecutively (by taking tensor product of the previous decomposition with  $V_{\Lambda_{n-1}} \oplus V_{\Lambda_{n}}$) decompose $(V_{\Lambda_{n-1}} \oplus V_{\Lambda_{n}})^{\otimes N}$ into the sum of simple modules. This basis is naturally indexed by the set $T_\lambda^N$ or $\mathfrak{Ctab}(D_\lambda^N)$. For details see my previous paper \cite{Svyatnyy}.

\subsection*{Cactus group.}
\addcontentsline{toc}{subsection}{Cactus group.}
Let $C_N$ be a group with the generators $ \mathbf{s}_{p, q}$, $1 \leqslant p < q \leqslant N$ and relations
$$  \mathbf{s}_{p,q}^{2} = e;$$
$$ \mathbf{s}_{p_1, q_1}  \mathbf{s}_{p_2, q_2} =  \mathbf{s}_{p_2, q_2}  \mathbf{s}_{p_1, q_1}, \hspace{1mm} \text{if} \hspace{2mm} q_1 < p_2;$$
$$ \mathbf{s}_{p_1, q_1}  \mathbf{s}_{p_2, q_2}  \mathbf{s}_{p_1, q_1} =  \mathbf{s}_{p_1 + q_1 - q_2, p_1 + q_1 - p_2}, \hspace{1mm} \text{if} \hspace{2mm} p_1 \leqslant p_2 < q_2 \leqslant q_1.$$

In the work of Andre Henriques and Joel Kamnitzer \cite{Henriques_Kamnitzer} the group $C_N$ is called the \textbf{\textit{cactus group}}. Henriques and Kamnitzer showed that the cactus group naturally appears in the coboundary categories. 
\textbf{\textit{Coboundary category}} is a tensor category equipped with the set of functorial isomorphisms $\sigma_{X,Y}: X \otimes Y  \rightarrow  Y \otimes X$ , satisfying certain axioms. The category \textbf{\underline{$\mathfrak{g}$-Crystals}} equipped with the family of functorial isomorphisms $\{\sigma_{\Cr{B}_1,\Cr{B}_2} |\hspace{1mm} \Cr{B}_1, \Cr{B}_2 \in \operatorname{Ob}( \textbf{\underline{$\mathfrak{g}$-Crystals}})\}$ defined in \cite{Henriques_Kamnitzer} is coboundary. 

Let $\mathcal{B}$ be an arbitrary crystal. In \cite{Henriques_Kamnitzer}, an action of the cactus group $C_N$ on the tensor product $\mathcal{B}^{\otimes N}$ was introduced. The generators $\mathbf{s}_{p,q}$ of $C_N$ act via suitable compositions of commutors, which by construction are the isomorphisms of crystals. Consequently, the cactus group acts on $\mathcal{B}^{\otimes N}$ by crystal automorphisms. These automorphisms permute the connected (irreducible) components of $\mathcal{B}^{\otimes N}$ of the same highest weight.

Hence, by setting $\mathfrak{g} = D_n$ and $\Cr{B} = \Cr{B}_S$  we obtain an action of the cactus group $C_N$ on the sets $T_\lambda^N$, $\mathfrak{Ctab}(D_\lambda^N)$, $\operatorname{SSSYT}(\nu, N)$ for each $\lambda \in \Delta^N$ and $ \nu =\mathcal{F}_N(D_\lambda^N)$ by permutations, as each of these sets indexes the connected (irreducible) components of $\Cr{B}_S^{\tens N}$ of the highest weight $\lambda$. Computing this action explicitly is the main goal of this work.

\subsection*{Connection with the work of Chmutov, Glick and Pylyavskii}
\addcontentsline{toc}{subsection}{Connection with the Chmutov's work}

In \cite{Chmutov} Michael Chmutov, Max Glick and Pavel Pylyavskii constructed the homomorphism from the cactus group $C_N$ to Berenstein-Kirillov group $BK_N$ (cf.\cite{Berenstein_Kirillov}) and by doing so defined the action of the cactus group on the set of semi-standard Young tableaux filled with the numbers from $1$ to $N$. 

A short semi-standard Young tableau is obviously a semi-standard Young tableau and hence it is natural to ask how our action of the cactus group on the set of short semi-standard Young tableaux is connected with the action of the cactus group on the set of semi-standard Young tableaux defined in \cite{Chmutov}. 

It turns out that the action of Chmutov-Glick-Pylyavskii on semi-standard Young tableaux can not be restricted to the subset of the short semi-standard Young tableaux. Nonetheless, there are some similarities between these two actions and we are going to observe them.  

We need the alternative description of the cactus group acquired by Chmutov, Glick and Pylyavskii in the same work \cite{Chmutov}. 

\begin{Theorem}[Theorem \ref{th_6.1}]
Let $G_N$ be the group with generators $\mathbf{t}_i$, $i \in \{1, 2, \ldots N-1\}$ and relations \begin{gather}
\mathbf{t}_i^{2} = 1 \\
\mathbf{t}_i\mathbf{t}_j =\mathbf{t}_j\mathbf{t}_i, \hspace{1mm} \text{if} \hspace{2mm}  |i - j | > 1 \\
(\mathbf{t}_i\mathbf{s}_{k-1}\mathbf{s}_{k-j}\mathbf{s}_{k-1})^{2} = 1, \hspace{1mm} \text{if} \hspace{2mm} i+1 < j < k , \hspace{1mm} \text{where}\\ 
\mathbf{s}_i = \mathbf{t}_1(\mathbf{t}_2\mathbf{t}_1)\ldots (\mathbf{t}_i\mathbf{t}_{i-1}\ldots \mathbf{t}_1)
\end{gather}
Then, $G_N$ is isomorphic to the cactus group $C_N$.
Isomorphism is given by  
$$\mathbf{t}_1 \mapsto \mathbf{s}_{1,2}, \hspace{1mm} \mathbf{t}_2 \mapsto \mathbf{s}_{1,2}\mathbf{s}_{1,3}\mathbf{s}_{1,2}, \hspace{1mm} \mathbf{t}_i \mapsto 
\mathbf{s}_{1,i}\mathbf{s}_{1,i+1}\mathbf{s}_{1,i}\mathbf{s}_{1, i-1}, \hspace{1mm} \text{if} \hspace{1mm} i> 2$$ in one direction
$$\mathbf{s}_{i,j} \mapsto \mathbf{s}_{j-1}\mathbf{s}_{j-i}\mathbf{s}_{j-1}, \hspace{1mm} \text{where} \hspace{1mm} \mathbf{s}_i = \mathbf{t}_1(\mathbf{t}_2\mathbf{t}_1)\ldots (\mathbf{t}_i\mathbf{t}_{i-1}\ldots \mathbf{t}_1)$$
in the other one.
\end{Theorem}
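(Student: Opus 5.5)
We will prove the theorem by constructing mutually inverse homomorphisms. Define $\varphi: G_N \to C_N$ on generators by the formulas in the statement, and $\psi: C_N \to G_N$ by $\mathbf{s}_{i,j} \mapsto \mathbf{s}_{j-1}\mathbf{s}_{j-i}\mathbf{s}_{j-1}$. The aim is to show that both maps respect the defining relations and that both composites are the identity on generators.

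The first step is to understand the elements $\mathbf{s}_i \in G_N$. A direct induction shows that $\varphi(\mathbf{s}_i) = \mathbf{s}_{1,i+1}$. The base case is $\varphi(\mathbf{s}_1) = \mathbf{s}_{1,2}$. For the inductive step, write $\mathbf{s}_{i} = \mathbf{s}_{i-1}(\mathbf{t}_i \cdots \mathbf{t}_1)$. Using the cactus relation $\mathbf{s}_{1,q}\mathbf{s}_{p,r}\mathbf{s}_{1,q} = \mathbf{s}_{q+1-r,q+1-p}$, the image of the product $\mathbf{t}_i\cdots\mathbf{t}_1$ telescopes. This works because $\varphi(\mathbf{t}_i) = \mathbf{s}_{1,i}\mathbf{s}_{1,i+1}\mathbf{s}_{1,i}\mathbf{s}_{1,i-1}$, which is exactly the kind of product that cancels against its neighbours. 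Once $\varphi(\mathbf{s}_i) = \mathbf{s}_{1,i+1}$ is established, we get
$$\varphi(\psi(\mathbf{s}_{i,j})) = \mathbf{s}_{1,j}\mathbf{s}_{1,j-i+1}\mathbf{s}_{1,j} = \mathbf{s}_{i,j}$$
by the third cactus relation. In the other direction, $\psi(\varphi(\mathbf{t}_i)) = \mathbf{s}_{i-1}\mathbf{s}_i\mathbf{s}_{i-1}\mathbf{s}_{i-2}$ reduces to $\mathbf{t}_i$ directly from the definition of $\mathbf{s}_i$, since $\mathbf{s}_i\mathbf{s}_{i-1} = \ldots$. To make this concrete, one first shows in $G_N$ that $\mathbf{s}_i$ is an involution and that $\mathbf{s}_{i}\mathbf{s}_{i-1}$ equals the "rotation" $\mathbf{t}_i\cdots\mathbf{t}_1$ conjugated appropriately. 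This mimics the classical identity that the longest element $w_0$ of $S_{i+1}$ satisfies $w_0^{(i+1)}w_0^{(i)} = $ a Coxeter element.

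Next we check well-definedness, which is the main obstacle. For $\varphi$, the relations $\mathbf{t}_i^2 = 1$ follow because $\varphi(\mathbf{t}_i)$ is conjugate to $\mathbf{s}_{2,i}$ or a similar generator. The far commutativity $\mathbf{t}_i\mathbf{t}_j = \mathbf{t}_j\mathbf{t}_i$ follows by rewriting $\varphi(\mathbf{t}_i)$ as a generator $\mathbf{s}_{p,q}$ conjugated by $\mathbf{s}_{1,q'}$ and applying the disjoint-commutation relation. The third relation holds because $\varphi(\mathbf{s}_{k-1}\mathbf{s}_{k-j}\mathbf{s}_{k-1}) = \mathbf{s}_{j,k}$, and $\varphi(\mathbf{t}_i)$ equals $\mathbf{s}_{i,i+1}$ up to the identification, which is supported on $\{1,\dots,i+1\}$. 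That set is disjoint from $[j,k]$ when $i+1<j$, so the commutation relation gives $(\varphi(\mathbf{t}_i)\mathbf{s}_{j,k})^2 = 1$.

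For $\psi$, we must verify all three cactus relations in $G_N$, and this is where the third defining relation of $G_N$ is essential. I would first prove in $G_N$ the "inner" relations involving only the $\mathbf{s}_i$: namely $\mathbf{s}_i^2 = 1$ and $\mathbf{s}_i\mathbf{s}_j\mathbf{s}_i$ for $j \le i$. These follow from Coxeter-type manipulations using only $\mathbf{t}_i^2 = 1$ and far commutativity; note that no braid relation is needed, since the cactus group is not a quotient of the braid group. With these in hand, the nesting relation $\mathbf{s}_{p_1,q_1}\mathbf{s}_{p_2,q_2}\mathbf{s}_{p_1,q_1} = \mathbf{s}_{p_1+q_1-q_2,\,p_1+q_1-p_2}$ reduces to manipulations of words in the $\mathbf{s}_i$. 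The disjoint commutation relation for $q_1 < p_2$ is the genuinely new input. After conjugating by $\mathbf{s}_{q_2-1}$, it becomes the statement that elements supported on an initial segment commute with $\mathbf{s}_{j,k}$. Since these are generated by $\mathbf{t}_i$ with $i+1<j$, this is exactly the third relation of $G_N$. Alternatively, since this theorem is quoted from \cite{Chmutov}, one may simply cite their proof.
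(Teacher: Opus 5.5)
The paper does not prove this statement. It quotes it as Theorem~1.8 of Chmutov--Glick--Pylyavskii, so your self-contained argument is a genuinely different route, and its architecture is sound:
\begin{itemize}
\item The telescoping $\varphi(\mathbf{t}_i\cdots\mathbf{t}_1)=\mathbf{s}_{1,i}\mathbf{s}_{1,i+1}$ gives $\varphi(\mathbf{s}_i)=\mathbf{s}_{1,i+1}$. Only $\mathbf{s}_{p,q}^2=1$ is used there, not the nesting relation.
\item $\varphi\psi=\mathrm{id}$ follows from nesting.
\item Once $\mathbf{s}_k^2=1$, one gets $\psi\varphi(\mathbf{t}_i)=(\mathbf{s}_{i-1}\mathbf{s}_i)(\mathbf{s}_{i-1}\mathbf{s}_{i-2})=(\mathbf{t}_i\cdots\mathbf{t}_1)(\mathbf{t}_1\cdots\mathbf{t}_{i-1})=\mathbf{t}_i$.
\item Your support argument for the third relation under $\varphi$ is correct.
\item Your reduction of disjoint commutation under $\psi$ to the third relation is correct: conjugate by $\mathbf{s}_{q_2-1}$, then apply the third relation letter by letter to an initial $\mathbf{s}_{m-1}$ with $m<j$.
\end{itemize}

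\textbf{Gap: relations $\mathbf{t}_i^2=1$ and far commutativity under $\varphi$.} This step fails as written, because it rests on a false claim.
\begin{itemize}
\item For $i\ge 3$, $\varphi(\mathbf{t}_i)$ is \emph{not} conjugate to any generator. Nesting identifies all intervals of equal length, so $C_N^{ab}\cong(\mathbb{Z}/2)^{N-1}$ with $\mathbf{s}_{p,q}\mapsto e_{q-p}$. But $\varphi(\mathbf{t}_i)\mapsto e_i+e_{i-2}$.
\item Similarly $\varphi(\mathbf{t}_i)\neq\mathbf{s}_{i,i+1}$. In $C_3=\langle\mathbf{s}_{1,2}\rangle*\langle\mathbf{s}_{1,3}\rangle$ one has $\mathbf{s}_{1,2}\mathbf{s}_{1,3}\mathbf{s}_{1,2}\neq\mathbf{s}_{1,3}\mathbf{s}_{1,2}\mathbf{s}_{1,3}=\mathbf{s}_{2,3}$. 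Your support argument for the third relation does not need this claim, so it is harmless there.
\item Disjoint commutation alone cannot give far commutativity, since every factor $\mathbf{s}_{1,\cdot}$ of $\varphi(\mathbf{t}_j)$ overlaps $[1,i+1]$.
\end{itemize}

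The fix uses nesting instead.
\begin{itemize}
\item Far commutativity: let $j\ge i+2$. Conjugation by $\varphi(\mathbf{t}_j)=\mathbf{s}_{1,j}\mathbf{s}_{1,j+1}\mathbf{s}_{1,j}\mathbf{s}_{1,j-1}$ sends an interval $[p,q]$ with $q\le j-1$ successively to $[j-q,j-p]$, $[p+1,q+1]$, $[j+1-q,j+1-p]$, and back to $[p,q]$. Hence it centralizes every $\mathbf{s}_{p,q}$ with $q\le j-1$, in particular $\varphi(\mathbf{t}_i)$.
\item Involutivity: for $i\ge 3$, $\varphi(\mathbf{t}_i)=\mathbf{s}_{1,i}(\mathbf{s}_{1,i+1}\mathbf{s}_{2,i})\mathbf{s}_{1,i}$, where $\mathbf{s}_{1,i+1}$ and $\mathbf{s}_{2,i}$ are commuting involutions. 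So $\varphi(\mathbf{t}_i)^2=1$.
\end{itemize}

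\textbf{The $\psi$ direction.} Here the key lemma is only asserted, but it is true and has a short proof.
\begin{itemize}
\item First, $\mathbf{s}_m\mathbf{s}_{m-1}=\mathbf{t}_1\cdots\mathbf{t}_m$.
\item Since $\mathbf{t}_{q+1},\dots,\mathbf{t}_m$ commute with $\mathbf{t}_1,\dots,\mathbf{t}_{q-1}$, conjugation by $\mathbf{t}_1\cdots\mathbf{t}_m$ sends $\psi(\mathbf{s}_{p,q})=\mathbf{s}_{q-1}\mathbf{s}_{q-p}\mathbf{s}_{q-1}$ to $\mathbf{s}_q\mathbf{s}_{q-p}\mathbf{s}_q=\psi(\mathbf{s}_{p+1,q+1})$ for $q\le m$.
\item Induction on $m$, using $\mathbf{s}_m=(\mathbf{t}_1\cdots\mathbf{t}_m)\mathbf{s}_{m-1}$, then gives $\mathbf{s}_m\psi(\mathbf{s}_{p,q})\mathbf{s}_m=\psi(\mathbf{s}_{m+2-q,\,m+2-p})$ for $q\le m+1$.
\item All nesting relations follow from this.
\end{itemize}
With these repairs your argument becomes a complete proof, which the paper itself does not supply.
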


It turns out that it is much easier to compute the action of the generators $ t_i$ on the sets $T_\lambda^N$, $\mathfrak{Ctab}(D_\lambda^N)$, $\operatorname{SSSYT}(\nu, N)$ rather than the action of the generators $\mathbf{s}_{p,q}$. 

For instance, in Chmutov's case the generators $\mathbf{t}_i$ act on the set of semi-standard Young tableaux by Bender-Knuth involutions. Hence, we are going to call the generators $\mathbf{t}_i$ \textbf{\textit{Bender-Knuth generators}} of the cactus group $C_N$.

\subsection*{Main results.}
\addcontentsline{toc}{subsection}{Main results.}

The main result of this work is the explicit description of the action of the cactus group $C_N$ on the set $\operatorname{SSSYT}(\nu, N)$ of short semi-standard Young tableaux of fixed shape $\nu \in \operatorname{SYD}(N,n)$, which comes from the natural action of the cactus group on $\Cr{B}_S^{\tens N}$ by commutors. As was pointed out earlier, this set is in bijection with the sets $\mathfrak{Ctab}(D_\lambda^N)$ and $T_\lambda^N$, where $\nu = \mathcal{F}_N(D_\lambda^N)$. 

We are going to explicitly describe the action of the Bender-Knuth generators $\mathbf{t}_i \in  C_N, \hspace{1mm} \forall \hspace{1mm} i \in \{1, \ldots, N-1\}$ on the set $\operatorname{SSSYT}(\nu, N)$. The main result of this work is the theorems formulated below. Before stating the main result, we introduce the following notations.

\begin{notation}
Let $\tilde\psi_{\nu}: C_N \rightarrow \Aut(\operatorname{SSSYT}(\nu, N))$ be the map defining the action of the cactus group $C_N$ on the set $\operatorname{SSSYT}(\nu, N)$.  Denote by $\tau_i$ the image of the Bender-Knuth generator $\mathbf{t_i} \in C_N$ under the map $\tilde \psi_{\nu}$.
\end{notation}

\begin{notation}
Denote by $\tau^{bk}_i$ the $i$-th Bender-Knuth involution on the set of semi-standard Young tableaux. 
\end{notation}

Consider the skew semi-standard Young tableau filled with the numbers $i, i+1$. We will call a cell of this tableau \textbf{\textit{free}} if there are no other cells in the same column with it. 

The next three theorems explicitly describe the action of $\tau_i$ on the set $\operatorname{SSSYT}(\nu, N)$.  

\begin{TheoremA} [Theorem \ref{th_7.1}]
\label{main_t}
Let $x = (\nu^{(1)}, \nu^{(2)}, \ldots, \nu^{(i-1)},\nu^{(i)}, \nu^{(i+1)}, \ldots, \nu^{(N)} = \nu) \in \operatorname{SSSYT}(\nu, N)$. Consider $$\tau^{bk}_i(x) = (\nu^{(1)}, \nu^{(2)}, \ldots, \nu^{(i-1)},\tilde\nu^{(i)}, \nu^{(i+1)}, \ldots \nu^{(N)}).$$ If $\tau^{bk}_i(x) \in \operatorname{SSSYT}(\nu, N) \Leftrightarrow \tilde\nu^{(i)} \in \operatorname{SYD}(i, n)$, then \begin{gather}\tau_i(x) = \tau^{bk}_i(x).
\end{gather} 
\end{TheoremA}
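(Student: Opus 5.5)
Since $\mathbf{t}_i$ maps to a composition of $\mathbf{s}_{1,k}$'s, and $\mathbf{s}_{1,i}\mathbf{s}_{1,i+1}$ acts only on the first $i+1$ tensor factors, the plan is to reduce to $\Cr{B}_S^{\tens (i+1)}$. There the action of $\tau_i$ changes only the intermediate weight $\mu_1+\dots+\mu_i$, equivalently only the diagram $\nu^{(i)}$. The reduction goes as follows. By the Henriques--Kamnitzer construction, $\mathbf{s}_{1,k}$ for $k\le i+1$ acts on $\Cr{B}_S^{\tens N}=\Cr{B}_S^{\tens (i+1)}\tens \Cr{B}_S^{\tens(N-i-1)}$ as $\sigma\tens \mathrm{id}$. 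So the image of $\mathbf{t}_i$ is $\theta\tens\mathrm{id}$, where $\theta$ is a crystal automorphism of $\Cr{B}_S^{\tens(i+1)}$.

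By the key Proposition (the decomposition $\Cr{B}_\lambda\tens\Cr{B}_S$ with highest weight elements $b_\lambda\tens b_\mu$), the component indexed by $(\mu_1,\dots,\mu_N)\in T^N$ is obtained from the component $\Cr{B}_{\mu_1+\dots+\mu_{i+1}}$ of $\Cr{B}_S^{\tens(i+1)}$ indexed by $(\mu_1,\dots,\mu_{i+1})$ by successively tensoring with $\Cr{B}_S$ along $\mu_{i+2},\dots,\mu_N$. Hence $\theta\tens\mathrm{id}$ sends this tuple to $(\mu'_1,\dots,\mu'_{i+1},\mu_{i+2},\dots,\mu_N)$, where $(\mu'_1,\dots,\mu'_{i+1})$ is the image of the truncated tuple under $\theta$. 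The same reasoning applied with $i-1$ in place of $i+1$ shows that $\theta$ also fixes the prefix $(\mu_1,\dots,\mu_{i-1})$. This uses that $\mathbf{t}_i$ commutes with $\mathbf{s}_{1,k}$ for $k\le i-1$, via the relation $\mathbf{t}_i\mathbf{t}_j=\mathbf{t}_j\mathbf{t}_i$ for $|i-j|>1$ together with $\mathbf{s}_{i-1}$ being a word in $\mathbf{t}_1,\dots,\mathbf{t}_{i-2}$, and it also uses the explicit form of $\mathbf{t}_i$. I expect the fixing of the prefix to need some care. The cleanest route I would try is to observe directly that $\mathbf{s}_{1,i}\mathbf{s}_{1,i+1}\mathbf{s}_{1,i}\mathbf{s}_{1,i-1}$ acts as an automorphism of $\Cr{B}_{\lambda^{(i-1)}}\tens\Cr{B}_S\tens\Cr{B}_S$ for each fixed highest weight component $\Cr{B}_{\lambda^{(i-1)}}$ of $\Cr{B}_S^{\tens(i-1)}$, using naturality of commutors. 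Consequently, in terms of short tableaux, $\tau_i$ changes only $\nu^{(i)}$ and keeps $\nu^{(i-1)}$ and $\nu^{(i+1)}$.

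Next I count. Under the bijections $\mathcal{Y}_\lambda$ and $\mathcal{I}_\lambda$, the admissible middle diagrams are the $\eta\in\operatorname{SYD}(i,n)$ with $\nu^{(i-1)}\subset\eta\subset\nu^{(i+1)}$ and both differences horizontal strips. These are exactly the middle weights $\kappa\in\Delta^i$ with $\kappa-\lambda^{(i-1)}\in P[S]$ and $\lambda^{(i+1)}-\kappa\in P[S]$. The restriction of $\tau_i$ to this fibre is some involution. To identify it, I would use that the multiplicity of $\Cr{B}_{\lambda^{(i+1)}}$ in $\Cr{B}_{\lambda^{(i-1)}}\tens\Cr{B}_S\tens\Cr{B}_S$ is small and explicit. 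The key step is then to compute the commutor $\sigma_{\Cr{B}_S,\Cr{B}_S}$ (or the relevant composite) via Schützenberger involutions on $\Cr{B}_S$ and on the components of $\Cr{B}_S\tens\Cr{B}_S$. I expect the result to be that the intermediate weight $\kappa=\lambda^{(i-1)}+\mu_i$ is sent to $\lambda^{(i-1)}+\mu_{i+1}$ when that weight is dominant, i.e.\ the spinor steps are swapped.

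Finally, I would translate this swap through $\mathcal{K}_i$ and $\mathcal{F}_i$. Swapping $\mu_i$ and $\mu_{i+1}$ changes the row lengths $l_j$ of the cell diagram, in the rows where the two steps differ, by the complementary amounts. Reading this on the transposed diagram $\mathcal{F}_i(D^{(i)})$, this is precisely the Bender--Knuth rule: in each row of the skew shape $\nu^{(i+1)}/\nu^{(i-1)}$, the free cells with entries $i,i+1$ have their counts exchanged. Hence whenever $\tilde\nu^{(i)}$ is short, i.e.\ the swapped weight lies in $\Delta^i$, we get $\tau_i(x)=\tau^{bk}_i(x)$. The main obstacle is the explicit commutor computation on the two-step fibre, together with checking that the swap corresponds to Bender--Knuth in the parity cases of $n$, where $\mathcal{F}$ uses $r_n$ instead of $l_n$.
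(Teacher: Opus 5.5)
Your reduction to the fibre over $(\nu^{(i-1)},\nu^{(i+1)})$ is sound and matches the paper's locality result (Proposition \ref{prop_6.3}). To make the naturality argument work, you need the two-commutor form $\psi(\mathbf t_i)=\sigma_{1,1,i}\circ\sigma_{1,i,i+1}$ of Proposition \ref{prop_6.2}. With $\gamma=\lambda^{(i-1)}$, it acts on $\Cr{B}_\gamma\tens\Cr{B}_S\tens\Cr{B}_S$ as $(\sigma_{\Cr{B}_S,\Cr{B}_\gamma}\tens 1)\circ\sigma_{\Cr{B}_\gamma\tens\Cr{B}_S,\Cr{B}_S}$.

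The gap is the central step. You only state it as an expectation, and the expectation is wrong, for two reasons.

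First, this automorphism involves commutors with the arbitrary factor $\Cr{B}_\gamma$. Its effect on the fibre depends on $\gamma$, through the indices with $\gamma_j=\gamma_{j+1}$. So Sch\"utzenberger involutions on $\Cr{B}_S$ and on components of $\Cr{B}_S\tens\Cr{B}_S$ cannot produce it; indeed $1\tens\sigma_{\Cr{B}_S,\Cr{B}_S}\tens 1$ is $\psi(\mathbf s_{i,i+1})$, not $\psi(\mathbf t_i)$.

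Second, the answer is not the plain swap $(\mu_i,\mu_{i+1})\mapsto(\mu_{i+1},\mu_i)$. It is $(\mu_{i+1}^*,\mu_i^*)$ (Corollary \ref{cor_6.2}): within each free interval, i.e.\ the block of free cells in one row of $\nu^{(i+1)}-\nu^{(i-1)}$, the signs are re-sorted so that the $i$'s stay on the left. The plain swap relabels free cells in place. So in a row with $b>0$ free $i$'s followed by $c>0$ free $(i+1)$'s, it puts the $i+1$'s left of the $i$'s. Correspondingly $\langle\gamma+\mu_{i+1},\alpha^\vee_{\operatorname{Fr}_{mid}}\rangle=-1$.

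Hence ``$\tilde\nu^{(i)}$ is short'' is not equivalent to ``$\gamma+\mu_{i+1}\in\Delta^i$''; $\tilde\nu^{(i)}$ corresponds to $\gamma+\mu^*_{i+1}$. Even with your expected formula proved, your argument only covers tableaux in which no row of the skew shape contains free cells of both kinds. For instance, a row consisting of one free $i$ followed by one free $i+1$ is fixed by $\tau_i^{bk}$, yet there $\gamma+\mu_{i+1}\notin P_+$, so your formula says nothing. Knowing the size of the fibre does not single out the involution either.

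What is missing is a formula for the commutor on highest weight elements of $\Cr{B}_\gamma\tens\Cr{B}_S$ with $\gamma$ arbitrary. The paper obtains this in Theorem \ref{th_4.1}. It combines the Kamnitzer--Tingley description $b_\gamma\tens c\mapsto b_\omega\tens c^*$ with the computation of Kashiwara's involution on spinor paths (Corollary \ref{cor_10.1}). The result: the image is $\tilde e_{i_k}\cdots\tilde e_{i_1}b_\mu\tens\tilde f_{i_k}\cdots\tilde f_{i_1}b_\gamma$ for any raising path of $b_\mu$.

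The paper applies this to $\sigma_{1,i,i+1}$ on $(b_\gamma\tens b_{\mu_i})\tens b_{\mu_{i+1}}$, choosing a raising path for $b_{\mu_{i+1}}$ that first sweeps the free intervals. Those first $\tilde f$'s act on $b_{\mu_i}$, since $\varphi_j(b_\gamma)=0$ there, and turn it into $b_{\mu_i^*}$. Then Proposition \ref{prop_4.4} shows that all remaining $\tilde f$'s act on the first factor. Its hypothesis $\gamma+\mu_{i+1}^*\in P_+$ is exactly the shortness of $\tau_i^{bk}(x)$. So the new $(i+1)$-st factor is $b_{\mu_i^*}$, and the resulting tableau is $\tau_i^{bk}(x)$.
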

So, if Bender-Knuth involution $\tau_i^{bk}$ sends $x \in \operatorname{SSSYT}(\nu, N)$ to a short semi-standard Young tableau, then $\tau_i$ acts on $x$ as $\tau_i^{bk}$.  

Now, consider a tableau $x \in \operatorname{SSSYT}(\nu, N)$, such that $\tau^{bk}_i(x) \notin \operatorname{SSSYT}(\nu, N) \Leftrightarrow \tilde\nu^{(i)} \notin \operatorname{SYD}(i,n)$. The following proposition explains the classification of such tableaux. 

\begin{Proposition}[Proposition \ref{prop_7.1}]
    Let $x = (\nu^{(1)}, \nu^{(2)}, \ldots, \nu^{(i-1)},\nu^{(i)}, \nu^{(i+1)}, \ldots, \nu^{(N)} = \nu) \in \operatorname{SSSYT}(\nu, N)$. If $\tau_i^{bk}(x) \notin \operatorname{SSSYT}(\nu, N)$, then there are $2$ possibilities for the tableau $x$: 
    \\
    \textbf{Possibility 1}. Tableau $x$ satisfies the following conditions: 
\begin{itemize}
    \item There is only one cell  in the last row of the skew Young tableau $\nu^{(i+1)} - \nu^{(i-1)}$. Moreover, this cell is free and has the number $i+1$ in it. 
    \item There is a  free cell with the number $i+1$ in it in the penultimate row of the skew tableau $\nu^{(i+1)} - \nu^{(i-1)}$.
    \item In the first two columns of the tableau $\nu^{(i+1)}$ there are exactly $i+1$ cells.
\end{itemize}
Tableau $x$ that satisfies the conditions above is said to be of  \textbf{type $1$}. 
\\
\textbf{\textit{Possibility 2}}. Tableau $x$ satisfies the following conditions: 
\begin{itemize}
\item In the last row of the skew tableau $\nu^{(i+1)} - \nu^{(i-1)}$ there are at least two  free cells with the number $i+1$ and less than two cells with the number $i$.
\item In the first two columns of the tableau $\nu^{(i+1)}$ there are exactly $i+1$ cells.
\end{itemize}
Tableau $x$ that satisfies the conditions above is said to be of  \textbf{type $2$}. 
\end{Proposition}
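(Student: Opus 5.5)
The plan is to analyse the two-letter skew tableau $\nu^{(i+1)}-\nu^{(i-1)}$ directly and track only the first two columns. For a Young diagram $\mu$, let $c(\mu)$ be the number of cells in its first two columns, so that $\mu\in\operatorname{SYD}(k,n)$ iff $c(\mu)\leqslant k$ (the column bound $n$ is inherited from $\nu^{(i+1)}$). The Bender--Knuth involution $\tau_i^{bk}$ has two standard features. It leaves non-free cells fixed. In each row it replaces the block of $a$ free $i$'s followed by $b$ free $(i+1)$'s by $b$ free $i$'s followed by $a$ free $(i+1)$'s. So $\tilde\nu^{(i)}$ satisfies $\nu^{(i-1)}\subset\tilde\nu^{(i)}\subset\nu^{(i+1)}$, and both differences are horizontal strips.

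\emph{Step 1 (reduction to an equality).} Since $\tilde\nu^{(i)}\subset\nu^{(i+1)}$, we have $c(\tilde\nu^{(i)})\leqslant c(\nu^{(i+1)})\leqslant i+1$. Hence $\tilde\nu^{(i)}\notin\operatorname{SYD}(i,n)$ forces $c(\tilde\nu^{(i)})=c(\nu^{(i+1)})=i+1$. This gives the third condition in both types. It also shows that every cell of $\nu^{(i+1)}-\nu^{(i-1)}$ lying in columns $1,2$ carries $i$ in $\tau_i^{bk}(x)$. On the other hand $c(\nu^{(i)})\leqslant i$, so at least one such cell carries $i+1$ in $x$. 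A cell whose label changes must be free, so some free cell in column $1$ or $2$ carries $i+1$ in $x$ and $i$ after the involution.

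\emph{Step 2 (locating the cells in columns $1,2$).} Each of the two horizontal strips has at most one cell per column. A cell of the strip in column $1$ can only be in the last row of $\nu^{(i+1)}$. In that row the free cells of the strip form a leftmost block (the column below is empty, and a cell of $\nu^{(i-1)}$ above lies in the skew tableau's complement). A cell in column $2$ lies either in that same last row or in the row just above it. I will split accordingly.

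\emph{Case (a):} the last row of the skew tableau contains the column-$2$ cell, or the only strip cells in columns $1,2$ are in that last row. Then both leftmost positions in this row must be $i$ after the swap. This gives $b\geqslant 2$, i.e.\ at least two free $(i+1)$'s. The requirement that not all these cells were already $i$ in $x$, together with $c(\nu^{(i)})\leqslant i$ (at most one of them was $i$ before), gives fewer than two $i$'s in the row. This is type $2$.

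\emph{Case (b):} the last row has exactly one strip cell and it is in column $1$, and the column-$2$ cell sits in the penultimate row. Then each of these rows must have its leftmost free cell turned into $i$, so each has a free $(i+1)$. Now use $c(\nu^{(i)})\leqslant i$ with $c(\nu^{(i+1)})=i+1$, plus the count of exactly one $(i+1)$ needed in these two columns. I expect this to force the last-row cell to be a lone free $i+1$ and the penultimate-row cell to be a free $i+1$ (rather than an $i$ sitting above an $i+1$, which would not be free). This is type $1$.

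\emph{Main obstacle.} The main obstacle is the bookkeeping in Step 2. One has to check carefully which cells in columns $1,2$ can be non-free, since a non-free pair would contribute both an $i$ and an $i+1$ and is unchanged by $\tau_i^{bk}$. One also has to rule out the mixed configurations, e.g.\ a column-$2$ cell in the last row together with extra cells, not matching either type. I would handle this by writing $c(\nu^{(i+1)})-c(\nu^{(i-1)})\leqslant 2$ explicitly. The bounds $c(\nu^{(i-1)})\leqslant i-1$ and $c(\nu^{(i)})\leqslant i$ then pin the strip in columns $1,2$ to one of the two listed shapes.
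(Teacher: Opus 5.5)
Your argument is correct in substance, but it takes a different route from the paper.

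\textbf{Comparison with the paper.} The paper never analyses the tableau directly. It passes to the triplet $(\gamma,\mu_i,\mu_{i+1})$. It then uses the observation from the proof of Theorem~\ref{th_7.1} that $\tau_i^{bk}(x)$ is short iff the triplet is of type $0$, and invokes the trichotomy proved after Definition~\ref{def_6.2}. Finally it merely asserts (``one can show'') that tableau types correspond to triplet types. You work purely combinatorially: $\tilde\nu^{(i)}\subset\nu^{(i+1)}$ forces $c(\tilde\nu^{(i)})=c(\nu^{(i+1)})=i+1$, and a local analysis of $\tau_i^{bk}$ in the two lowest nonempty rows finishes the proof. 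Your route is self-contained, does not depend on the parity of $n$, and avoids the unproved dictionary between tableau types and triplet types. The paper's route, in exchange, aims at exactly that dictionary, which is what Theorems~\ref{th_7.2} and \ref{th_7.3} actually need.

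\textbf{Three points need repair.}

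First, the bound $c(\nu^{(i+1)})-c(\nu^{(i-1)})\leqslant 2$ is false in general. For example, $\nu^{(2)}=\emptyset$, $\nu^{(3)}=(2)$, $\nu^{(4)}=(2,2)$ gives $c(\nu^{(4)})-c(\nu^{(2)})=4$. In the failure case you must derive it from Step 1:
\begin{itemize}
\item every skew cell in columns $1,2$ must become $i$, and non-free cells are fixed by $\tau_i^{bk}$;
\item so no non-free $i+1$ lies in these columns;
\item hence each of columns $1,2$ contains at most one skew cell, and that cell is free;
\item with $c(\nu^{(i+1)})=i+1$ and $c(\nu^{(i-1)})\leqslant i-1$, you get exactly one free skew cell in each of the two columns, and $c(\nu^{(i-1)})=i-1$.
\end{itemize}

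Second, the column-$2$ cell need not lie in the row directly above the last one. Take $i=5$ and $\nu^{(1)},\dots,\nu^{(6)}=(1),(2),(2,1),(2,1,1),(2,1,1),(2,2,1,1)$. Both new cells contain $6$, and $\tau_5^{bk}$ destroys shortness, yet row $3$ of the skew tableau is empty. Your Case (b) argument works verbatim for any row above the last, because the intermediate rows contain no skew cells. ``Penultimate row'' has to be read as the second-lowest \emph{nonempty} row; this is also what the type-$1$ triplet condition encodes, and the statement is false under the literal geometric reading.

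Third, drop the closing expectation of Case (b). The column-$2$ cell can be a free $i$: in the paper's Example~\ref{ex_7.2} the penultimate row begins $i,i,i,i+1$. Both column cells can also equal $i+1$, as in the example above, so ``exactly one $i+1$'' is not forced either. What the proposition needs is only a free $i+1$ somewhere in that row. You already obtained this from the requirement that the row's leftmost, free cell becomes $i$.
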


\begin{TheoremB}[Theorem \ref{th_7.2}]
    If  $x \in \operatorname{SSSYT}(\nu, N)$ is a tableau of \textbf{type }$1$, then $\tau_i$ operates on $x$ the following way: 
    \begin{itemize}
        \item All rows of the skew tableau $\nu^{(i+1)} - \nu^{(i-1)}$, aside from the last two rows, convert the same way they would under the Bender-Knuth involution.  
        \item The last row of  $\nu^{(i+1)} - \nu^{(i-1)}$ stays the same (it only has one cell with $i+1$ in it).
        \item The number inside the first free cell with the number $i+1$ in the penultimate row is being replaced with $i$ and then the usual Bender-Knuth involution is applied.   
    \end{itemize}
    
\end{TheoremB}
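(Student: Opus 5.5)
We will compute $\tau_i$ locally, through the crystal structure, and then translate the answer into the language of short tableaux. By the isomorphism $G_N\cong C_N$ (Theorem \ref{th_6.1}), $\mathbf{t}_i=\mathbf{s}_{i-1}^{-1}\mathbf{s}_i$-type products reduce to $\mathbf{s}_{1,i}\mathbf{s}_{1,i+1}\mathbf{s}_{1,i}\mathbf{s}_{1,i-1}$. For the Henriques--Kamnitzer action, $\mathbf{s}_{1,k}$ acts on the first $k$ tensor factors only. Hence $\tau_i$ changes only the $i$-th partial sum of the sequence $(\mu_1,\dots,\mu_N)\in T_\lambda^N$; equivalently it changes only $D^{(i)}$ in the cell tableau, or $\nu^{(i)}$ in the short tableau. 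The first step is therefore to reduce to a statement about a "two-step" piece. Fix $\alpha=\sum_{j\le i-1}\mu_j$ and $\gamma=\sum_{j\le i+1}\mu_j$. The set of intermediate $\beta$ with $\beta-\alpha,\gamma-\beta\in P[S]$ and $\beta$ dominant indexes the $\mathcal{B}_\gamma$-components of $\mathcal{B}_\alpha\tens\mathcal{B}_S\tens\mathcal{B}_S$. The claim is that $\tau_i$ acts on this set as the involution induced by the commutor on $\mathcal{B}_S\tens\mathcal{B}_S$, i.e. the Sch\"utzenberger-type involution of the middle step. This parallels the way Chmutov--Glick--Pylyavskii identify $\mathbf{t}_i$ with $\tau_i^{bk}$, and we would reproduce their argument verbatim at the level of crystals.

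The second step is to describe this involution explicitly on $\beta$. In coordinates $\beta_k=\alpha_k\pm\frac12$, the multiset of sign vectors $(\beta-\alpha,\gamma-\beta)$ is constrained as follows. In each coordinate $k$, the pair of signs is either fixed (both $+$ or both $-$, so $\gamma_k-\alpha_k=\pm1$) or free ($\gamma_k=\alpha_k$, giving $+-$ or $-+$). Dominance of $\beta$ imposes interlacing conditions only across adjacent coordinates, together with the special $D_n$ condition on $\beta_{n-1}\pm\beta_n$. Through $\mathcal{K}_i$ and $\mathcal{F}_i$, the free coordinates become exactly the free cells in the rows of $\nu^{(i+1)}-\nu^{(i-1)}$. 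In Theorem A's generic situation the commutor involution swaps the numbers of $+-$ and $-+$ free coordinates within each block of consecutive free coordinates. This is precisely the Bender--Knuth swap of free $i$'s and $(i+1)$'s in each row. We take Theorem A as given in this form and use the same row-by-row dictionary for all rows except the last two of the skew tableau.

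The third step treats type $1$. Here the row-by-row swap would produce a non-short $\tilde\nu^{(i)}$. Concretely, the condition on the first two columns means the last two rows correspond to the coordinates $n-1,n$ with the $D_n$ dominance condition $\beta_{n-1}\ge|\beta_n|$. The condition $r_{n-1}\ge l_n$, together with the odd/even twist in $\mathcal{F}$ (using $r_n$ instead of $l_n$), couples these two rows. We would compute the $\mathcal{B}_\gamma$-multiplicity space restricted to coordinates $n-1,n$, which is a small $D_2=A_1\times A_1$ computation. We would then check that the only involution consistent with the rest of the rows is the one stated: the last row (a single free cell with $i+1$) stays fixed, the first free $i+1$ in the penultimate row is turned into $i$, and the remaining free cells of that row undergo the Bender--Knuth swap.

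The main obstacle is this last identification. Proving that the commutor on $\mathcal{B}_S\tens\mathcal{B}_S$, restricted to highest weight vectors, acts as stated in the degenerate $D_2$ corner requires an explicit computation of the commutor, via Sch\"utzenberger involution on $\mathcal{B}_S\tens\mathcal{B}_S$ components. We would do this by listing the components of $\mathcal{B}_{\alpha}\tens\mathcal{B}_S\tens\mathcal{B}_S$ of weight $\gamma$ in the corner coordinates and matching them by their lowest weights. We would then transport the result through $\mathcal{I}_\lambda$ and $\mathcal{Y}_\lambda$ (Propositions \ref{prop_5.4}, \ref{prop_5.5}), carefully tracking the parity of $n$ in $\mathcal{F}$.
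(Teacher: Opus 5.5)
\paragraph{The first step identifies $\mathbf t_i$ with the wrong element of $C_N$.} You claim that $\tau_i$ acts on the intermediate weights $\beta$ by the involution induced by $1_{\mathcal{B}_\alpha}\otimes\sigma_{\mathcal{B}_S,\mathcal{B}_S}$. That is the action of $\mathbf{s}_{i,i+1}$, not of $\mathbf{t}_i$. These are different elements of $C_N$: already $\mathbf{t}_2=\mathbf{s}_{1,2}\mathbf{s}_{1,3}\mathbf{s}_{1,2}$, while $\mathbf{s}_{2,3}=\mathbf{s}_{1,3}\mathbf{s}_{1,2}\mathbf{s}_{1,3}$, and $C_3$ is infinite dihedral.

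They also act differently on $\operatorname{Sing}(\mathcal{B}_S^{\otimes N})$. The automorphism $s_{i,i+1}=1\otimes\sigma_{\mathcal{B}_S,\mathcal{B}_S}\otimes 1$ replaces $b_{\mu_i}\otimes b_{\mu_{i+1}}$ by $\sigma_{\mathcal{B}_S,\mathcal{B}_S}(b_{\mu_i}\otimes b_{\mu_{i+1}})$, independently of $\alpha$. The automorphism $t_i$ does depend on $\alpha$. Take $n=4$, $\mu_i=\frac12(1,-1,1,1)$ and $\mu_{i+1}=\frac12(-1,1,1,1)$.
\begin{itemize}
\item For $\alpha=(1,1,0,0)$ the two new cells lie in one row as $i,\,i+1$, and $\tau_i$ fixes them. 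The swapped pair is not even admissible, so $\sigma_{\mathcal{B}_S,\mathcal{B}_S}$ must fix $b_{\mu_i}\otimes b_{\mu_{i+1}}$.
\item For $\alpha=(2,1,0,0)$ the two cells lie in different rows, and $\tau_i$ swaps $\mu_i$ and $\mu_{i+1}$, whereas $s_{i,i+1}$ fixes them.
\end{itemize}
The same defect appears in your second step. The blocks must be cut wherever $\alpha_k\neq\alpha_{k+1}$ (the condition $\lambda_{j_1}=\lambda_{j_2}$ in the definition of a free interval), and no involution of $\mathcal{B}_S\otimes\mathcal{B}_S$ can detect this.

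The correct starting point is Proposition \ref{prop_6.2}, $t_i=\sigma_{1,1,i}\circ\sigma_{1,i,i+1}$. The $(i+1)$-st factor is commuted past all of $\mathcal{B}_{\alpha+\mu_i}$ and then back past $\mathcal{B}_\alpha$; this is where the dependence on $\alpha$ enters. Your locality argument is also incomplete: $\mathbf{s}_{1,i+1}$ a priori moves all of the first $i+1$ factors, and Proposition \ref{prop_6.3} derives locality from this two-commutor formula.

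\paragraph{The type $1$ step is not yet a proof, even granting a correct reduction.}
\begin{itemize}
\item ``The only involution consistent with the rest of the rows'' is an unproved uniqueness claim.
\item A $D_2$ computation on coordinates $n-1,n$ cannot suffice. The type $1$ correction runs along the whole free interval $\operatorname{Fr}^{m-1}$ containing $n-1$. The cell that changes is the first free $i+1$ of the penultimate row, i.e.\ coordinate $n-a$ with $a=|\operatorname{Fr}^{m-1}_-|$ arbitrary, and the correcting path is $\tilde e_{n-a}\cdots\tilde e_{n-2}\tilde e_n$.
\item Components of $\mathcal{B}_\alpha\otimes\mathcal{B}_S\otimes\mathcal{B}_S$ with the same highest weight $\gamma$ all have the same lowest weight, so they cannot be matched that way.
\end{itemize}

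What is missing is an explicit formula for the commutor on highest weight vectors. The paper obtains it from the Kamnitzer--Tingley description $\sigma(b_\beta\otimes c)=b_\omega\otimes c^{*}$ (for $c$ in the spinor component $\mathcal{B}_\omega$), combined with the computation of Kashiwara's involution on spinor paths (Corollary \ref{cor_10.1}). This gives Theorem \ref{th_4.1}: the highest weight vector of weight $\beta+\mu_{i+1}$ in $\mathcal{B}_S\otimes\mathcal{B}_\beta$ is $\tilde e_{i_k}\cdots\tilde e_{i_1}b_{\mu_{i+1}}\otimes\tilde f_{i_k}\cdots\tilde f_{i_1}b_\beta$ for any raising path of $b_{\mu_{i+1}}$ to a highest weight element.

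The type $1$ computation then runs as follows.
\begin{enumerate}
\item Set $b_\beta=b_\alpha\otimes b_{\mu_i}$ and choose a raising path that begins with $\tilde{\mathbf{e}}_{\operatorname{Fr}^m}\cdots\tilde{\mathbf{e}}_{\operatorname{Fr}^1}$ followed by $\tilde{\mathbf{e}}_{\operatorname{type}_1}$.
\item The lowering counterparts of all these operators act on $b_{\mu_i}$, since $\varphi_j(b_\alpha)=0$ for the relevant $j$.
\item Proposition \ref{prop_6.5} ($\alpha+\mu^{\diamond}_{i+1}\in P_+$) together with Proposition \ref{prop_4.4} shows that all remaining lowering operators act on $b_\alpha$.
\end{enumerate}
This is Corollary \ref{cor_6.3}, and Theorem B is its translation through $\mathcal{I}_\lambda$ and $\mathcal{Y}_\lambda$.
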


\begin{Example}[Example \ref{ex_7.2}]
Suppose the first two columns of the tableau below (left) contain exactly $i+1$ cells. Then the automorphism ${\tau_i}$ maps it to the tableau on the right.
\vspace{2mm}

\begin{center}
\begin{tikzpicture}[scale = 0.6]
\draw (2,0) rectangle (3,1);
\draw (3, 0.5) circle (0pt)  node[scale = 0.5, anchor=east]{$i+1$};

\draw (3,1) rectangle (4,2);
\draw (3.7, 1.5) circle (0pt)  node[scale = 0.5, anchor=east]{$i$};
\draw (4,1) rectangle (5,2);
\draw (4.7, 1.5) circle (0pt)  node[scale = 0.5, anchor=east]{$i$};
\draw (5,1) rectangle (6,2);
\draw (5.7, 1.5) circle (0pt)  node[scale = 0.5, anchor=east]{$i$};
\filldraw[color=black!70, fill=red!15,  thick] (6,1) rectangle (7,2);
\draw (7, 1.5) circle (0pt)  node[scale = 0.5, anchor=east]{$i+1$};
\draw (7,1) rectangle (8,2);
\draw (8, 1.5) circle (0pt)  node[scale = 0.5, anchor=east]{$i+1$};
\filldraw[color=black!70, fill=gray!15,  thick] (8,1) rectangle (9,2);
\draw (9, 1.5) circle (0pt)  node[scale = 0.5, anchor=east]{$i+1$};
\filldraw[color=black!70, fill=gray!15,  thick] (8,2) rectangle (9,3);
\draw (8.7, 2.5) circle (0pt)  node[scale = 0.5, anchor=east]{$i$};
\draw (9,2) rectangle (10,3);
\draw (9.7, 2.5) circle (0pt)  node[scale = 0.5, anchor=east]{$i$};
\draw (10,2) rectangle (11,3);
\draw (10.7, 2.5) circle (0pt)  node[scale = 0.5, anchor=east]{$i$};
\draw (11,2) rectangle (12,3);
\draw (12, 2.5) circle (0pt)  node[scale = 0.5, anchor=east]{$i+1$};
\filldraw[color=black!70, fill=gray!15,  thick] (12,2) rectangle (13,3);
\draw (13, 2.5) circle (0pt)  node[scale = 0.5, anchor=east]{$i+1$};
\filldraw[color=black!70, fill=gray!15,  thick] (13,2) rectangle (14,3);
\draw (14, 2.5) circle (0pt)  node[scale = 0.5, anchor=east]{$i+1$};
\filldraw[color=black!70, fill=gray!15,  thick] (12,3) rectangle (13,4);
\draw (12.7, 3.5) circle (0pt)  node[scale = 0.5, anchor=east]{$i$};
\filldraw[color=black!70, fill=gray!15,  thick] (13,3) rectangle (14,4);
\draw (13.7, 3.5) circle (0pt)  node[scale = 0.5, anchor=east]{$i$};
\draw (2,1) -- (2,6);
\draw (2,6) -- (15,6);
\draw (15,6) -- (15,4);
\draw (15,4) -- (14,4);

\draw[thick,->] (16,3) -- (18,3);
\filldraw[black] (16.40,3.40) circle (0pt) node[anchor=west]{$\tau_i$};


\draw (19,0) rectangle (20,1);
\draw (20, 0.5) circle (0pt)  node[scale = 0.5, anchor=east]{$i+1$};

\draw (20,1) rectangle (21,2);
\draw (20.7, 1.5) circle (0pt)  node[scale = 0.5, anchor=east]{$i$};
\draw (21,1) rectangle (22,2);
\draw (22, 1.5) circle (0pt)  node[scale = 0.5, anchor=east]{$i+1$};
\draw (22,1) rectangle (23,2);
\draw (23, 1.5) circle (0pt)  node[scale = 0.5, anchor=east]{$i+1$};
\draw (23,1) rectangle (24,2);
\draw (24, 1.5) circle (0pt)  node[scale = 0.5, anchor=east]{$i+1$};
\draw (24,1) rectangle (25,2);
\draw (25, 1.5) circle (0pt)  node[scale = 0.5, anchor=east]{$i+1$};
\filldraw[color=black!70, fill=gray!15,  thick] (25,1) rectangle (26,2);
\draw (26, 1.5) circle (0pt)  node[scale = 0.5, anchor=east]{$i+1$};
\filldraw[color=black!70, fill=gray!15,  thick] (25,2) rectangle (26,3);
\draw (25.7, 2.5) circle (0pt)  node[scale = 0.5, anchor=east]{$i$};
\draw (26,2) rectangle (27,3);
\draw (26.7, 2.5) circle (0pt)  node[scale = 0.5, anchor=east]{$i$};
\draw (27,2) rectangle (28,3);
\draw (28, 2.5) circle (0pt)  node[scale = 0.5, anchor=east]{$i+1$};
\draw (28,2) rectangle (29,3);
\draw (29, 2.5) circle (0pt)  node[scale = 0.5, anchor=east]{$i+1$};
\filldraw[color=black!70, fill=gray!15,  thick] (29,2) rectangle (30,3);
\draw (30, 2.5) circle (0pt)  node[scale = 0.5, anchor=east]{$i+1$};
\filldraw[color=black!70, fill=gray!15,  thick] (30,2) rectangle (31,3);
\draw (31, 2.5) circle (0pt)  node[scale = 0.5, anchor=east]{$i+1$};
\filldraw[color=black!70, fill=gray!15,  thick] (29,3) rectangle (30,4);
\draw (29.7, 3.5) circle (0pt)  node[scale = 0.5, anchor=east]{$i$};
\filldraw[color=black!70, fill=gray!15,  thick](30,3) rectangle (31,4);
\draw (30.7, 3.5) circle (0pt)  node[scale = 0.5, anchor=east]{$i$};
\draw (19,1) -- (19,6);
\draw (19,6) -- (32,6);
\draw (32,6) -- (32,4);
\draw (32,4) -- (31,4);
\end{tikzpicture}
\end{center}

Non-free cells are highlighted in gray, and the first free cell in the penultimate row of the left skew Young tableau with the number $i+1$ inside is highlighted in red. This example illustrates the action of ${\tau_i}$ on \textbf{type }$1$  tableaux. 

\end{Example}

\begin{TheoremC}[Theorem \ref{th_7.3}]
   If  $x \in \operatorname{SSSYT}(\nu, N)$ is a tableau of \textbf{type} $2$, then $\tau_i$ operates on $x$ the following way: 
\begin{itemize}
    \item All rows of the skew tableau $\nu^{(i+1)} - \nu^{(i-1)}$, aside from the last row, convert the same way they would under the Bender-Knuth involution.
    \item The first cell in the last row of the skew tableau $\nu^{(i+1)} - \nu^{(i-1)}$ (it is always free) changes its value from $i$ to $i+1$ and vice versa if the number of free cells in the last row of $\nu^{(i+1)} - \nu^{(i-1)}$ is odd.
\end{itemize}
\end{TheoremC}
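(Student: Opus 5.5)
We handle Theorem C by reducing it to the case of a tensor square of the spinor crystal, as for the other Bender--Knuth generators. Via the isomorphism $G_N\cong C_N$ (Theorem \ref{th_6.1}), $\mathbf{t}_i$ is a composite of the $\mathbf{s}_{1,j}$. The key structural fact is that $\mathbf{t}_i$ acts on the component indexed by $(\mu_1,\ldots,\mu_N)\in T^N_\lambda$ only through the pair $(\mu_i,\mu_{i+1})$, with $\mu_1+\ldots+\mu_{i-1}$ fixed.

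\textbf{Step 1: localization.} Concretely, set $\alpha=\sum_{j<i}\mu_j$ and $\beta=\alpha+\mu_i+\mu_{i+1}$. We show that $\tau_i$ replaces $\mu_i$ by the unique other $\mu_i'\in P[S]$ such that $b_\alpha\otimes b_{\mu_i'}\otimes b_{\mu'_{i+1}}$ with $\mu'_{i+1}=\beta-\alpha-\mu_i'$ indexes the ``partner'' component inside $\Cr{B}_\alpha\otimes\Cr{B}_S\otimes\Cr{B}_S$. Here the partner is determined by the commutor $\sigma_{\Cr{B}_S,\Cr{B}_S}$ acting on the multiplicity space of $\Cr{B}_\beta$. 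So the problem becomes computing the involution induced by $1\otimes\sigma_{\Cr{B}_S,\Cr{B}_S}$, up to the normalization coming from $\mathbf{s}_{1,i}$'s, on the set $\{\mu\in P[S]:\alpha+\mu\in P_+,\ \beta-\alpha-\mu\in P[S]\}$. Through $\mathcal{I}_\lambda$ and $\mathcal{Y}_\lambda$, this set corresponds to the middle diagrams $\nu^{(i)}$ compatible with fixed $\nu^{(i-1)},\nu^{(i+1)}$.

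\textbf{Step 2: describe the two sets.} On the Young side, the diagrams $\nu^{(i)}$ with $\nu^{(i-1)}\subset\nu^{(i)}\subset\nu^{(i+1)}$ and horizontal-strip conditions decompose row by row. In each row of $\nu^{(i+1)}-\nu^{(i-1)}$ the free cells can be split freely between $i$ and $i+1$. Bender--Knuth reverses this split: $a$ free $i$'s and $b$ free $i+1$'s become $b$ and $a$. The shortness constraint $\nu^{(i)}\in\operatorname{SYD}(i,n)$ only affects the first two columns. For type 2 tableaux, the first two columns of $\nu^{(i+1)}$ contain exactly $i+1$ cells, and the last row has at least two free $i+1$'s and fewer than two $i$'s. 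Hence, in the last row, only the first (leftmost) cell's value is constrained by shortness, since that cell lies in column $1$ or $2$. The remaining free cells of the last row are forced to be $i+1$ by the inequality $r_{n-1}\ge l_n$ coming from the regular cell diagram condition. Translating via $\mathcal{F}$, this says the last-row coordinate of $\mu_i$ corresponds to a $\pm\tfrac12$ choice in the $n$-th spinor coordinate, which is tied to the parity of the spinor ($\Lambda_{n-1}$ vs $\Lambda_n$ component).

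\textbf{Step 3: the parity computation.} The obstacle is the last-row rule. The other rows are handled by the same argument that gives Theorem A: the commutor on the multiplicity space acts on those coordinates as the $\mathfrak{gl}$-type Schützenberger involution, which is Bender--Knuth. For the last row, we compute the commutor on $\Cr{B}_S\otimes\Cr{B}_S$ restricted to the Levi of type $D$ involving $\epsilon_{n-1},\epsilon_n$, i.e.\ $A_1\times A_1$. Reversing the split of the other free cells of the row (Bender--Knuth on the remaining coordinates) changes the total count in the $\epsilon_n$-coordinate by the number of free cells. The weight $\beta$ is fixed, so the $\pm$ sign on $\epsilon_n$ in $\mu_i'$ must compensate. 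As a result, the first cell flips between $i$ and $i+1$ exactly when the number of free cells in the last row is odd, and is unchanged when it is even. I would verify this by a direct check on $\Cr{B}_S^{\otimes 2}$ for $n=2$ (where $D_2=A_1\times A_1$), and then extend by restriction to the Levi, using that commutors commute with Levi restriction, as shown in \cite{Henriques_Kamnitzer}.
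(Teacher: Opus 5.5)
Your proposal has a genuine gap: the two steps that carry the content are not valid as stated.

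\textbf{Step~1.} You reduce $\tau_i$ to the involution induced by $1\otimes\sigma_{\mathcal{B}_S,\mathcal{B}_S}$ on a multiplicity space ``up to normalization''. That operator is the image of $\mathbf{s}_{i,i+1}$, which is a different element of $C_N$. Already in $C_3$ one has $\mathbf{t}_2=\mathbf{s}_{1,2}\mathbf{s}_{1,3}\mathbf{s}_{1,2}$, while $\mathbf{s}_{2,3}=\mathbf{s}_{1,3}\mathbf{s}_{1,2}\mathbf{s}_{1,3}$. In the Chmutov--Glick--Pylyavskii action the latter acts by $\tau^{bk}_1\tau^{bk}_2\tau^{bk}_1\tau^{bk}_2\tau^{bk}_1$, which differs from $\tau^{bk}_2$ already on tableaux of shape $(2,1)$. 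What is true is Proposition~\ref{prop_6.2}: on $\mathcal{B}_\alpha\otimes\mathcal{B}_S\otimes\mathcal{B}_S$ the generator acts by $(\sigma_{\mathcal{B}_S,\mathcal{B}_\alpha}\otimes 1)\circ\sigma_{\mathcal{B}_\alpha\otimes\mathcal{B}_S,\mathcal{B}_S}$. This gives locality (Proposition~\ref{prop_6.3}), but it involves commutors with $\mathcal{B}_\alpha$ that your plan never computes. Also, the set of admissible middle weights usually has many elements, so ``the unique other $\mu_i'$'' is not meaningful.

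\textbf{Step~3.} The claim that commutors commute with Levi restriction is not in Henriques--Kamnitzer, and it is false. For the standard $\mathfrak{sl}_n$-crystal $B$ one has $\sigma_{B,B}=\operatorname{id}$, since $B\otimes B$ is multiplicity free. The commutor for the Levi $\mathfrak{sl}_{n-1}$, by contrast, sends $b_j\otimes b_n$ to $b_n\otimes b_j$. Even granting the claim, a $D_2$ computation sees only the coordinates $n-1,n$. The answer, however, depends on the parity of $a=|\operatorname{Fr}^m_-|$, i.e.\ on the whole last free interval $\{n-a-b+1,\ldots,n\}$. Moreover, ``the other rows are handled as in Theorem A'' is exactly what must be proved for type~$2$ triplets. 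Finally, the fixed weight $\beta$ cannot single out a candidate, since all candidates share it.

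\textbf{What survives, and what is missing.} Your Step~2 is correct: shortness of $\nu^{(i)}$ (which is exactly $r_{n-1}\geq l_n$) forces the column-$2$ cell of the last row, hence every free cell there except possibly the first, to carry $i+1$. This can be combined with a parity fact. By the proof of Proposition~\ref{prop_6.3}, the new $(i+1)$-th factor is obtained from $b_{\mu_i}$ by lowering operators. Hence the new $i$-th factor lies in the same spinor component as $b_{\mu_{i+1}}$, which fixes the number of minus signs of $\mu_i'$ modulo $2$. That would pin down the last row, but only once you know all other coordinates of $\mu_i'$, i.e.\ that every other row transforms by Bender--Knuth.

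For that you need the paper's key ingredient, Theorem~\ref{th_4.1}, which rests on Kamnitzer--Tingley and Corollary~\ref{cor_10.1} on Kashiwara's involution. It states that the highest weight element of weight $\alpha+\mu_i+\mu_{i+1}$ in $\mathcal{B}_S\otimes\mathcal{B}_{\alpha+\mu_i}$ is $\tilde e_{j_r}\cdots\tilde e_{j_1}b_{\mu_{i+1}}\otimes\tilde f_{j_r}\cdots\tilde f_{j_1}(b_\alpha\otimes b_{\mu_i})$, for any raising path from $b_{\mu_{i+1}}$ to a highest weight element. The paper starts the path with $\tilde{\mathbf{e}}_{\operatorname{type}_2}\tilde{\mathbf{e}}_{\operatorname{Fr}^m}\cdots\tilde{\mathbf{e}}_{\operatorname{Fr}^1}$:
\begin{itemize}
\item The corresponding lowering operators all act on $b_{\mu_i}$, because $\varphi_j(b_\alpha)=0$ for every index $j$ involved; this produces $b_{\mu_i^{\diamond\diamond}}$.
\item Since $\alpha+\mu_{i+1}^{\diamond\diamond}\in P_+$ (Proposition~\ref{prop_6.6}), Proposition~\ref{prop_4.4} sends all remaining lowering operators to $b_\alpha$.
\end{itemize}
This is Corollary~\ref{cor_6.4}, and the theorem is its translation to tableaux.
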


\begin{Example}[Example \ref{ex_7.3}]
Suppose the first two columns of the tableau below (left) contain exactly $i+1$ cells. Then the automorphism ${\tau_i}$ maps it to the tableau on the right.
\vspace{2mm}  
\begin{center}
\begin{tikzpicture}[scale = 0.6]
\filldraw[color=black!70, fill=green!15,  thick] (2,0) rectangle (3,1);
\draw (3, 0.5) circle (0pt)  node[scale = 0.5, anchor=east]{$i+1$};
\filldraw[color=black!70, fill=green!15,  thick] (3,0) rectangle (4,1);
\draw (4, 0.5) circle (0pt)  node[scale = 0.5, anchor=east]{$i+1$};
\filldraw[color=black!70, fill=green!15,  thick] (4,0) rectangle (5,1);
\draw (5, 0.5) circle (0pt)  node[scale = 0.5, anchor=east]{$i+1$};
\filldraw[color=black!70, fill=gray!15,  thick] (5,0) rectangle (6,1);
\draw (6, 0.5) circle (0pt)  node[scale = 0.5, anchor=east]{$i+1$};
\filldraw[color=black!70, fill=gray!15,  thick](6,0) rectangle (7,1);
\draw (7, 0.5) circle (0pt)  node[scale = 0.5, anchor=east]{$i+1$};
\filldraw[color=black!70, fill=gray!15,  thick] (5,1) rectangle (6,2);
\draw (5.7, 1.5) circle (0pt)  node[scale = 0.5, anchor=east]{$i$};
\filldraw[color=black!70, fill=gray!15,  thick] (6,1) rectangle (7,2);
\draw (6.7, 1.5) circle (0pt)  node[scale = 0.5, anchor=east]{$i$};
\draw (7,1) rectangle (8,2);
\draw (7.7, 1.5) circle (0pt)  node[scale = 0.5, anchor=east]{$i$};
\draw (8,1) rectangle (9,2);
\draw (8.7, 1.5) circle (0pt)  node[scale = 0.5, anchor=east]{$i$};
\draw (9,1) rectangle (10,2);
\draw (10, 1.5) circle (0pt)  node[scale = 0.5, anchor=east]{$i+1$};
\draw (10,1) rectangle (11,2);
\draw (11, 1.5) circle (0pt)  node[scale = 0.5, anchor=east]{$i+1$};
\draw (11,1) rectangle (12,2);
\draw (12, 1.5) circle (0pt)  node[scale = 0.5, anchor=east]{$i+1$};

\draw (12,2) rectangle (13,3);
\draw (12.7, 2.5) circle (0pt)  node[scale = 0.5, anchor=east]{$i$};
\draw (13,2) rectangle (14,3);
\draw (13.7, 2.5) circle (0pt)  node[scale = 0.5, anchor=east]{$i$};

\draw (2,1) -- (2,6);
\draw (2,6) -- (14,6);
\draw (14,6) -- (14,3);

\draw[thick,->] (16,3) -- (18,3);
\filldraw[black] (16.40,3.40) circle (0pt) node[anchor=west]{$\tau_i$};

\filldraw[color=black!70, fill=red!15,  thick] (20,0) rectangle (21,1);
\draw (20.7, 0.5) circle (0pt)  node[scale = 0.5, anchor=east]{$i$};
\filldraw[color=black!70, fill=green!15,  thick] (21,0) rectangle (22,1);
\draw (22, 0.5) circle (0pt)  node[scale = 0.5, anchor=east]{$i+1$};
\filldraw[color=black!70, fill=green!15,  thick] (22,0) rectangle (23,1);
\draw (23, 0.5) circle (0pt)  node[scale = 0.5, anchor=east]{$i+1$};
\filldraw[color=black!70, fill=gray!15,  thick] (23,0) rectangle (24,1);
\draw (24, 0.5) circle (0pt)  node[scale = 0.5, anchor=east]{$i+1$};
\filldraw[color=black!70, fill=gray!15,  thick](24,0) rectangle (25,1);
\draw (25, 0.5) circle (0pt)  node[scale = 0.5, anchor=east]{$i+1$};
\filldraw[color=black!70, fill=gray!15,  thick] (23,1) rectangle (24,2);
\draw (23.7, 1.5) circle (0pt)  node[scale = 0.5, anchor=east]{$i$};
\filldraw[color=black!70, fill=gray!15,  thick] (24,1) rectangle (25,2);
\draw (24.7, 1.5) circle (0pt)  node[scale = 0.5, anchor=east]{$i$};
\draw (25,1) rectangle (26,2);
\draw (25.7, 1.5) circle (0pt)  node[scale = 0.5, anchor=east]{$i$};
\draw (26,1) rectangle (27,2);
\draw (26.7, 1.5) circle (0pt)  node[scale = 0.5, anchor=east]{$i$};
\draw (27,1) rectangle (28,2);
\draw (27.7, 1.5) circle (0pt)  node[scale = 0.5, anchor=east]{$i$};
\draw (28,1) rectangle (29,2);
\draw (29, 1.5) circle (0pt)  node[scale = 0.5, anchor=east]{$i+1$};
\draw (29,1) rectangle (30,2);
\draw (30, 1.5) circle (0pt)  node[scale = 0.5, anchor=east]{$i+1$};

\draw (30,2) rectangle (31,3);
\draw (31, 2.5) circle (0pt)  node[scale = 0.5, anchor=east]{$i+1$};
\draw (31,2) rectangle (32,3);
\draw (32, 2.5) circle (0pt)  node[scale = 0.5, anchor=east]{$i+1$};

\draw (20,1) -- (20,6);
\draw (20,6) -- (32,6);
\draw (32,6) -- (32,3);

\end{tikzpicture}
\end{center}

Non-free cells are highlighted in gray, free cells of the last row are highlighted in green. The first cell of the last row of the right tableau, where the number changed, is highlighted in red. This example illustrates the action of ${\tau_i}$ on \textbf{type 2} tableaux. 
\end{Example}

\subsection*{This paper is organized as follows}

The first chapter \ref{crystals} is dedicated to the notion of a crystal. In \ref{general_info},  following the articles of Henriques and Kamnitzer \cite{Henriques_Kamnitzer} and Joseph \cite{Joseph}, we give all the necessary definitions regarding crystals. In \ref{crystal_bases}, following Kashiwara's review article \cite{Kashiwara_1995} we construct the closed family of normal crystals using integrable $U_q(\mathfrak{g})$-modules from the $\mathcal{O}_{int}(\mathfrak{g})$ category.  In\
\ref{spinor_crystal} we study the structure of the spinor crystal. The main results of  subsection \ref{spinor_crystal} are Proposition \ref{prop_3.2} and  Corollary \ref{cor_3.1}. In \ref{b_infty&Kas_involution} we explore the action of  Kashiwara's involution on the crystal $B_\infty$. The main result of the subsection \ref{b_infty&Kas_involution} is Corollary \ref{cor_10.1}. Corollary \ref{cor_10.1} plays a key role in the proof of the main result of the subsection \ref{technical_results}, namely  Theorem \ref{th_4.1}.

The second chapter \label{cell_tables_&_sssyt} follows my previous study \cite{Svyatnyy} and focuses on combinatorial interpretations of the sets $\Delta^N$ (see \ref{delta}) and $T_\lambda^N $ (see \ref{tlambdaN}).  Key propositions of this chapter are \ref{prop_5.2}, \ref{prop_5.4}, \ref{prop_5.5}. 

The third and final chapter \ref{final_chapter} is dedicated to cactus group and its action on the set $T_\lambda^N$ and its combinatorial interpretations discussed in \ref{prop_5.4} and \ref{prop_5.5}. In subsection \ref{def_of_action_of_cactus_group} following \cite{Henriques_Kamnitzer} we define the action of cactus group $C_N$ on the $N$-th tensor power of a crystal and by doing so also define the action of $C_N$ on the set $T_\lambda^N$. We also present an alternative description of the cactus group given by  Michael Chmutov, Max Glick and Pavlo Pylyavskii in \cite{Chmutov}, see Theorem \ref{th_6.1}.  
In subsection \ref{final_final} we describe explicitly the action of Bender-Knuth generators of $C_N$ (these are the generators used in the alternative description of cactus group by Chmutov-Glick-Pylyavskii) on the set $\operatorname{SSSYT}(\nu, N)$ , which is a combinatorial interpretation of the set $T_\lambda^N $ given by Proposition \ref{prop_5.5}, where $\nu = \mathcal{F}_N(\lambda)$, see  Theorem \ref{th_5.1}.  Main results of this chapter and of the article as a whole are Theorems \ref{th_7.1}, \ref{th_7.2},  \ref{th_7.3}. Examples \ref{ex_7.1}, \ref{ex_7.2}, \ref{ex_7.3} visualize the statements of these theorems. 

\subsection*{Acknowledgments}

I am grateful to my scientific advisor Leonid Rybnikov for drawing my attention to the subject and for the extremely useful discussions and references. Also, I want to thank him for pointing out the relevance of Kashiwara's involution in  Theorem \ref{th_4.1}. My initial proof of this theorem was far less elegant. 

I am happy to thank Grigory Olshansky for pointing out the relevance of Howe duality (see Proposition \ref{prop_5.3}) to my research, which helped me to construct a great combinatorial interpretation of the set $T_\lambda^N$, namely the set of short semi-standard Young tableaux. I am also thankful for all the references and the discussions we had. 

\section{Crystals}
\label{crystals}

\subsection{General information on crystals}
\label{general_info}
This subsection is dedicated to general knowledge on crystals. We start with the definition of a crystal. Informally,  a crystal is a combinatorial model for a representation of a Lie algebra $\mathfrak{g}$. It can be thought of as a graph with the vertices labeled by the weight lattice of $\mathfrak{g}$ and directed edges labeled by the simple roots of $\mathfrak{g}$. This subsection mostly follows the article by Andre Henriques and Joel Kamnitzer \cite{Henriques_Kamnitzer}, except that they only consider normal crystals (they call them $\mathfrak{g}$-crystals). 

Let $\mathfrak{g}$ be a complex semisimple Lie algebra. We denote by $P$ its weight lattice, by $P_+$ the set of dominant weights, $I$ denote the set of vertices of the corresponding Dynkin diagram, $\{\alpha_i\}_{i \in I}$ denote its simple roots and $\{\alpha_i^{\vee}\}_{i \in I}$ denote its simple co-roots. 

We follow conventions in Joseph \cite{Joseph} in defining crystals.

\begin{definition}
\label{def_1.1}
A \textbf{crystal} is a set $\mathcal{B}$ with the maps 
    \begin{gather*}
        \operatorname{wt}: \mathcal{B} \rightarrow P, \\
        \varepsilon_{i}, \varphi_{i}: \mathcal{B} \rightarrow \mathbb{Z}  \sqcup \{ - \infty\} \\
        \end{gather*}
and operators,
    \begin{gather*}
        \tilde e_i, \tilde f_i : \mathcal{B} \rightarrow \mathcal{B} \sqcup \{0\}, \hspace{1mm}  \\
        \tilde e_i 0 = 0, \hspace{1mm} \tilde f_i 0= 0,
    \end{gather*}
for all $i \in I$. Here $0$ is the ideal element that does not belong to $\Cr{B}$. They are subject to the following axioms: 
\begin{enumerate}
    \item for all $b \in \mathcal{B}$ we have $\varphi_i(b) - \varepsilon_i(b) = \langle\text{wt}(b), \alpha_i^{\vee}  \rangle$.
    \item if $b \in \Cr{B}$ and $\tilde e_ib  \in \Cr{B}$, then  $\operatorname{wt}(\tilde e_ib) = \operatorname{wt}(b) + \alpha_i, \hspace{1mm} \varepsilon_i(\tilde e_ib) = \varepsilon_i(b) - 1$ and $\varphi_i(\tilde e_i b) = \varphi_i(b) + 1$; 
    \item if $b \in \Cr{B}$ and $\tilde f_ib \in \Cr{B}$, then $\operatorname{wt}(\tilde f_ib) = \operatorname{wt}(b) - \alpha_i, \hspace{1mm} \varepsilon_i(\tilde f_ib) = \varepsilon_i(b) + 1$ and $\varphi_i(\tilde f_i b) = \varphi_i(b) - 1$; 
    \item for all $b, b^{'} \in \mathcal{B}$ we have $b^{'} = \tilde e_i \cdot b \Leftrightarrow b = \tilde f_i \cdot b^{'}$.
    \item for $b \in \Cr{B}$,  if $\varphi_i(b) = - \infty$, then $\tilde e_ib = \tilde f_i b = 0$.
\end{enumerate}
\end{definition}

\begin{definition}
For two crystals $\Cr{A}$ and  $\Cr{B}$ a \textbf{morphism} $\psi$ from $\Cr{A}$ to $\Cr{B}$ is a map $\Cr{A} \sqcup \{0\} \rightarrow \Cr{B} \sqcup \{0\}$ that satisfies the following conditions: 
    
\begin{enumerate}
    \item $\psi(0) = 0$,
    \item If $b \in \Cr{A}$ and $\psi(b) \in \Cr{B}$, then \begin{itemize}
        \item $\operatorname{wt}(\psi(b)) = \operatorname{wt}(b)$,
        \item $\varepsilon_i(\psi(b)) = \varepsilon_i(b)$
        \item $\varphi_i(\psi(b)) = \varphi_i(b)$
    \end{itemize}
    \item For $b \in \Cr{A} $, we have $\psi(\tilde e_ib) = \tilde e_i \psi(b)$ provided $\tilde e_ib \in \Cr{A}$. 
    \item For $b \in \Cr{A},$ we have $\psi(\tilde f_ib) = \tilde f_i \psi(b)$ provided $\tilde f_ib \in \Cr{A}$. 
\end{enumerate}

A morphism is called an \textbf{embedding} if the corresponding map is injective. 
\end{definition}

\begin{definition}
 A morphism $\psi: \Cr{A} \rightarrow \Cr{B}$  is called strict if it commutes with $\tilde e_i$, $\tilde f_i$ , for all $i \in I$.
\end{definition}

\begin{definition}
A crystal $\Cr{B}$ is called \textbf{upper normal} (resp. \textbf{lower normal})  if for any $b \in \Cr{B}$ and $i \in I$, $\varepsilon_i(b) \in \mathbb{Z}$ (resp. $\varphi_i(b) \in \mathbb{Z}$) and \begin{gather*} \varepsilon_i(b) = \text{max} \{n: \tilde e_i^{n} \cdot b \neq 0 \} \\
(\text{resp.} \hspace{1mm} \varphi_i(b) = \text{max} \{n : \tilde{ f}_i^{n}  \cdot b \neq 0 \}) 
.\end{gather*}
A crystal $\Cr{B}$ is called \textbf{normal} if it is both \textbf{upper} and \textbf{lower normal}. 
\end{definition}
\begin{remark}
In the later papers of Kashiwara (e.g.\cite{Kashiwara_1995}) the term "\textbf{normal crystal}" is used for crystals that arise from integrable $U_q(\mathfrak{g})$-modules. For crystals such that  \begin{gather*} \varepsilon_i(b) = \text{max} \{n: \tilde e_i^{n} \cdot b \neq 0 \} \\
(\text{resp.} \hspace{1mm} \varphi_i(b) = \text{max} \{n : \tilde{ f}_i^{n}  \cdot b \neq 0 \}) 
.\end{gather*}
 the term "\textbf{semi-normal crystal}"  is used instead. This difference in terminology isn't important for us, since we are not going to consider semi-normal crystals, which are not normal. 
\end{remark}
The following lemma is obvious.

\begin{lemma}
If $\Cr{A}$ and $\Cr{B}$ are normal, then any morphism $\psi: \Cr{A} \rightarrow \Cr{B}$ is strict. 
\end{lemma}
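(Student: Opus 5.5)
We must show that a morphism $\psi:\Cr{A}\to\Cr{B}$ between normal crystals commutes with $\tilde e_i$ and $\tilde f_i$ for every $b\in\Cr{A}$, including the cases where $\tilde e_i b = 0$ or $\tilde f_i b = 0$. The definition of a morphism already gives this when $\tilde e_i b \in \Cr{A}$ (resp.\ $\tilde f_i b\in\Cr{A}$), and $\psi(0)=0$ handles $b=0$. So the plan is to treat the remaining case, where $b\in\Cr{A}$ but $\tilde e_i b = 0$; the case of $\tilde f_i$ is symmetric.

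Suppose $\tilde e_i b = 0$, so we must show $\tilde e_i\psi(b) = 0$. If $\psi(b)=0$ this is immediate from $\tilde e_i 0 = 0$. Otherwise $\psi(b)\in\Cr{B}$, and condition 2 of the definition of a morphism gives $\varepsilon_i(\psi(b))=\varepsilon_i(b)$. Since $\Cr{A}$ is upper normal, $\varepsilon_i(b)=\max\{n:\tilde e_i^n b\neq 0\}=0$, because $\tilde e_i b=0$. Hence $\varepsilon_i(\psi(b))=0$. Since $\Cr{B}$ is upper normal, $\max\{n:\tilde e_i^n\psi(b)\neq 0\}=0$, so $\tilde e_i\psi(b)=0=\psi(\tilde e_i b)$.

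The argument for $\tilde f_i$ is identical, using $\varphi_i$ and lower normality of both crystals. There is no real obstacle here. The only point needing care is to separate out the possibility $\psi(b)=0$ before invoking equality of $\varepsilon_i$, since condition 2 only applies when $\psi(b)\in\Cr{B}$.
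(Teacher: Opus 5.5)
Your proof is correct. It isolates the only case the definition of a morphism leaves open, namely $b\in\Cr{A}$ with $\tilde e_i b=0$ (resp.\ $\tilde f_i b=0$); you rightly dispose of $\psi(b)=0$ first and then transport $\varepsilon_i(b)=0$ (resp.\ $\varphi_i(b)=0$) across $\psi$, which requires upper (resp.\ lower) normality of both crystals. The paper gives no proof and simply calls the lemma obvious, so your argument is exactly the routine verification it leaves to the reader.
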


In this paper we mostly work with crystals arising from the crystal bases of integrable $U_q(\mathfrak{g})$-modules. Those crystals are normal, so all morphisms between them are strict. 

\begin{definition}
\label{def_1.3}
For two crystals $\Cr{A}$ and $\Cr{B}$ we define the \textbf{direct sum} $\Cr{A} \oplus \Cr{B}$ whose underlying set is $\Cr{A} \sqcup \Cr{B}$ with the obvious actions. 
\end{definition}

Next we define the tensor product of crystals. 

\begin{definition}
\label{def_1.2}
For two crystals $\mathcal{A}$ and $\mathcal{B}$ we define their \textbf{tensor product} $\mathcal{A} \otimes \mathcal{B}$ as follows: 
$$\Cr{A} \tens \Cr{B} = \{a \tens b; \hspace{1mm} a \in \Cr{A}, b \in \Cr{B}\},$$
$$\text{wt}(a \tens b) = \text{wt}(a) + \text{wt}(b).$$
For $i \in I$ set  $$\operatorname{wt}_i(b) = \langle \operatorname{wt}(b), \alpha_i^{\vee}\rangle.$$
Then
\begin{gather*}
    \varepsilon_i(a\tens b) = \max (\varepsilon_i(a),  \varepsilon_i(b) - \operatorname{wt}_i(a)), \\
    \varphi_i(a \tens b) = \max (\varphi_i(b), \varphi_i(a) + \operatorname{wt}_i(b)).
\end{gather*}

The actions of $\tilde{e}_i$, $\tilde{f}_i$ are defined by : 

\begin{center}
$\tilde e_i (a \tens b) = \begin{cases}
a \tens \tilde e_i  b, \hspace{1mm} \text{if} \hspace{1mm} \varepsilon_i(b) > \varphi_i(a)
\\
\tilde e_i a \tens b, \hspace{1mm} \text{otherwise}
\end{cases}
$

$\tilde f_i  (a \tens b) = \begin{cases}
a\tens\tilde  f_i b, \hspace{1mm} \text{if} \hspace{1mm} \varepsilon_i(b) \geqslant \varphi_i(a)
\\
\tilde f_i a \tens b, \hspace{1mm} \text{otherwise}
\end{cases}
$
\end{center}

Here we assume $a \tens 0 = 0 \tens b = 0$.
\end{definition}

One can check directly that the tensor product of crystals is indeed a crystal. Moreover, it can be checked that the tensor product of normal crystals is also normal. Both computations can be found in \cite{Joseph} in subsections $5.2.4$ and $5.2.6$  (pages $138-140$). 

Tensor product of crystals is associative: i.e. if $\mathcal{B}_1, \mathcal{B}_2, \mathcal{B}_3$ are crystals, then the map
$$\alpha_{\mathcal{B}_1, \mathcal{B}_2, \mathcal{B}_3}: (\mathcal{B}_1 \otimes \mathcal{B}_2) \otimes \mathcal{B}_3 \rightarrow \mathcal{B}_1 \otimes (\mathcal{B}_2 \otimes \mathcal{B}_3) $$defined by $$(a \tens b)\tens c \mapsto a \tens (b \tens c)$$
is an isomorphism of crystals (see Proposition 1.3.1 in \cite{Kashiwara_1993}). Hence, the category of crystals equipped with a bifunctor $\tens$ is a tensor category and normal crystals form a tensor subcategory inside it.

\begin{definition}
\label{def_1.1.1}
    The \textbf{crystal graph} of a crystal $\mathcal{B}$ is a colored and oriented graph with the vertices in $\mathcal{B}$, with the arrows
    \begin{gather}
        u \overset{i}{\longrightarrow} v \hspace{2mm} \text{iff} \hspace{2mm} u, v \in \Cr{B}, \hspace{1mm}  v = \tilde f_i u
    \end{gather}
\end{definition}

\begin{definition}
\label{def_1.5}
A crystal $\mathcal{B}$ is called \textbf{connected} if the crystal graph of $\mathcal{B}$ is connected. 

A  crystal $\mathcal{B}$ is called a \textbf{highest weight crystal of highest weight} $\lambda \in P_+$, if there exists an element $b_\lambda$ (called a \textbf{highest weight element}) such that $\text{wt}(b_\lambda) = \lambda$,  $\tilde e_i  b_\lambda = 0$ for all $i \in I$ and the whole $\mathcal{B}$ is generated by $\tilde f_i$ acting on $b_\lambda$. 
\end{definition}

Clearly, every highest weight crystal is connected. The converse is not true. Moreover, there are non-isomorphic highest weight crystals of the same highest weight. 


Let $\mathbb{B} = \{\mathcal{B}_{\lambda} : \lambda \in P_+\}$ be a family of crystals such that $\mathcal{B}_\lambda$ is a highest weight crystal of highest weight $\lambda$ . We call the family $(\mathbb{B}, \iota)$ \textbf{closed} if $\iota_{\lambda, \nu}: \mathcal{B}_{\lambda + \nu} \rightarrow \mathcal{B}_{\lambda} \otimes \mathcal{B}_{\nu}$, defined by $b_{\lambda+ \nu} \mapsto b_\lambda \tens b_\nu$ is an inclusion of crystals. 

\begin{theorem}[Joseph, \cite{Joseph}] \label{th_1.1}
There exists a unique closed family of normal crystals $(\mathbb{B}, \iota)$. 
\end{theorem}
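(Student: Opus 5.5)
The plan follows Joseph's approach: build the family from representation theory, then prove uniqueness by induction on dominant weights.

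\emph{Existence.} For each $\lambda \in P_+$ I take the irreducible integrable module $V_\lambda \in \operatorname{Ob}(\Cr{O}_{int}(\mathfrak{g}))$ and its Kashiwara crystal base $(L_\lambda, B_\lambda)$, and set $\Cr{B}_\lambda := B_\lambda$. This crystal is normal: on each $U_q(\mathfrak{sl}_2)_i$-string, $\varepsilon_i$ and $\varphi_i$ measure the distance to the ends of the string. It is a highest weight crystal, with $b_\lambda$ the image of the highest weight vector $v_\lambda$. For closedness, note that $v_\lambda \tens v_\nu$ generates a copy of $V_{\lambda+\nu}$ inside $V_\lambda \tens V_\nu$. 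By Kashiwara's tensor product theorem, $(L_\lambda \tens L_\nu, B_\lambda \tens B_\nu)$ is a crystal base of $V_\lambda \tens V_\nu$, with crystal structure given by Definition \ref{def_1.2}. Crystal bases are compatible with decomposition into irreducible summands, so the connected component of $b_\lambda \tens b_\nu$ is the crystal base of that copy of $V_{\lambda+\nu}$. It is therefore isomorphic to $B_{\lambda+\nu}$ by an isomorphism sending $b_{\lambda+\nu} \mapsto b_\lambda \tens b_\nu$, and this gives the embedding $\iota_{\lambda,\nu}$.

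\emph{Uniqueness.} Let $(\mathbb{B}, \iota)$ and $(\mathbb{B}', \iota')$ be two closed families of normal crystals. I will show $\Cr{B}_\lambda \cong \Cr{B}'_\lambda$ for every $\lambda$, by induction on $\lambda$ written as a sum of fundamental weights $\omega_i$.

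1. \emph{Fundamental weights.} First I show that a normal highest weight crystal $\Cr{B}_{\omega_i}$ of highest weight $\omega_i$ is determined. Normality forces $\tilde f_j b_{\omega_i} = 0$ for $j \neq i$ and $\varphi_i(b_{\omega_i}) = 1$. The difficulty is that the whole structure further down is not obviously forced by normality alone. So I would use closedness as well: by closedness, $\Cr{B}_{k\omega_i}$ embeds into $\Cr{B}_{\omega_i}^{\tens k}$ for all $k$. This makes the family "stable" in the sense needed to identify it with the limit crystal $B(\infty)$.

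2. \emph{Comparison with $B(\infty)$.} I compare each $\Cr{B}_\lambda$ with $B(\infty) \tens T_\lambda \tens C$, following Joseph. Closedness produces compatible strict embeddings $\Cr{B}_\lambda \hookrightarrow \Cr{B}_{\lambda+\nu} \tens T_{-\nu}$. Taking the limit gives a crystal $\Cr{B}_\infty$ satisfying Kashiwara's characterization of $B(\infty)$. That characterization is the uniqueness theorem for upper normal crystals with a unique highest element and an embedding $\Psi_i : \Cr{B}_\infty \hookrightarrow \Cr{B}_\infty \tens B_i$ for every $i$.

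3. \emph{Recovering $\Cr{B}_\lambda$.} Once $\Cr{B}_\infty \cong B(\infty)$, I recover $\Cr{B}_\lambda$ as the subcrystal of $B(\infty) \tens T_\lambda$ cut out by normality. Concretely, it consists of the elements $b \tens t_\lambda$ with $\varepsilon_i^*(b) \leqslant \langle \lambda, \alpha_i^\vee\rangle$ for all $i$. Normality of $\Cr{B}_\lambda$ guarantees that this is exactly the image. The same argument applied to $\mathbb{B}'$ then gives $\Cr{B}_\lambda \cong \Cr{B}'_\lambda$, and these isomorphisms are compatible with $\iota$ and $\iota'$ because both are determined by the images of the highest weight elements.

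The main obstacle is step 2: checking that the direct limit inherits the embeddings $\Psi_i$, which requires tracing the tensor product rule through the maps $\iota$. Since the paper only cites this theorem, I would follow Joseph (\cite{Joseph}, Chapter 6) at this point rather than reprove Kashiwara's characterization.
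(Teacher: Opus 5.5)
Your existence half coincides with what the paper itself does after Theorem~\ref{th_2.2}: the copy of $V_{\lambda+\nu}$ generated by $u_\lambda\otimes u_\nu$, combined with Kashiwara's tensor product theorem and Theorem~\ref{th_2.2}. That part is fine. For uniqueness the paper gives no argument and simply cites Joseph, and your sketch ends up doing the same. As a proof, however, it has a genuine gap exactly where the content lies.

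\emph{Where $\Psi_i$ comes from.} Closedness gives you only the inclusions $\iota_{\lambda,\nu}$. Nothing in steps 1--3 explains where strict embeddings $\Psi_i:\mathcal{B}_\infty\hookrightarrow\mathcal{B}_\infty\otimes B_i$ on the direct limit come from. They would have to be extracted from the $\iota$'s. A natural source is $\iota_{\lambda,k\omega_i}$ in the limit $\langle\lambda,\alpha_i^\vee\rangle=0$, $\langle\lambda,\alpha_j^\vee\rangle\to\infty$ for $j\neq i$, $k\to\infty$. Near the top, the second factor then stays on the $i$-string of $b_{k\omega_i}$, which mimics $B_i$. This still has to be set up and checked.

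\emph{The characterization as you state it.} Your summary of Kashiwara's characterization drops conditions that are indispensable. One is $\Psi_i(\mathcal{B}_\infty)\subset\mathcal{B}_\infty\otimes\{b_i(-m):m\geqslant 0\}$. The other, above all, is that every $b\neq b_\infty$ has some $i$ with $\Psi_i(b)=b'\otimes b_i(-m)$, $m>0$. This last condition is what drives Kashiwara's inductive uniqueness proof, and for an abstract limit it needs its own argument. For the $\Psi_i$ just described, it says the following: an element lying, for every $i$, in the image of some $\mathcal{B}_{\lambda^{(i)}}$ with $\langle\lambda^{(i)},\alpha_i^\vee\rangle=0$ must be $b_\infty$. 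In the concrete setting this follows from the Kashiwara involution, which is not available for an abstract family. As it stands, step 2 is the theorem itself, deferred to the reference.

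The parts you do write out also need repair.

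\emph{Step 1.} It adds nothing. The embeddings $\mathcal{B}_{k\omega_i}\hookrightarrow\mathcal{B}_{\omega_i}^{\otimes k}$ describe $\mathcal{B}_{k\omega_i}$ in terms of $\mathcal{B}_{\omega_i}$, not conversely, so they put no constraint on $\mathcal{B}_{\omega_i}$ beyond normality. The announced induction over fundamental weights is never used; steps 2--3 are a global argument, so step 1 should just be dropped.

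\emph{Transition maps.} Consider $\mathcal{B}_\lambda\to\mathcal{B}_{\lambda+\nu}\otimes T_{-\nu}$, $b\mapsto\iota_{\lambda,\nu}^{-1}(b\otimes b_\nu)\otimes t_{-\nu}$. This is well defined because $\tilde e_i(b\otimes b_\nu)=\tilde e_i b\otimes b_\nu$, so $b\otimes b_\nu$ lies in the component of $b_\lambda\otimes b_\nu$. These maps commute with the $\tilde e_i$ and preserve $\operatorname{wt}$, $\varepsilon_i$, $\varphi_i$. They are not strict, however: if $\varphi_i(b)=0<\langle\nu,\alpha_i^\vee\rangle$, then $\tilde f_i b=0$ while $\tilde f_i(b\otimes b_\nu)=b\otimes\tilde f_i b_\nu\neq 0$. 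This is the same phenomenon as the non-strictness of $\kappa_{\lambda,\infty}$ noted in the paper. Their compatibility does hold, because such a map is determined by where it sends $b_\lambda$.

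\emph{Step 3.} The description of the image by $\varepsilon_i^*(b)\leqslant\langle\lambda,\alpha_i^\vee\rangle$ is a theorem of Kashiwara, not a consequence of normality. You do not need it. The map $\mathcal{B}_\lambda\to\mathcal{B}_\infty\otimes T_\lambda$ commutes with the $\tilde e_i$ and preserves $\operatorname{wt}$ and $\varepsilon_i$. Normality says $\tilde f_i b\neq 0$ iff $\varepsilon_i(b)+\langle\operatorname{wt}(b),\alpha_i^\vee\rangle>0$. So the image, together with its crystal structure, is generated recursively from $b_\infty\otimes t_\lambda$ inside the ambient crystal, and is therefore determined once $\mathcal{B}_\infty$ is.
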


There are many ways to construct the closed family of normal crystals $(\mathbb{B}, \iota)$. One of the ways to do it is by taking crystal bases of the irreducible highest weight $U_q(\mathfrak{g})$-modules $V_\lambda$ with highest weights $\lambda \in P_+$, see paragraph \ref{crystal_bases} for details.

\begin{definition}
\label{def_1.6}
The category \underline{\textbf{$\mathfrak{g}$-$\text{Crystals}$}} is the category whose objects are crystals $\mathcal{B}$, such that every connected component of $\mathcal{B}$ is isomorphic to some $\mathcal{B}_\lambda$ from the closed family $\mathbb{B}$. Morphisms in this category are the usual morphisms of crystals. 
\end{definition}

In other words the category \underline{\textbf{$\mathfrak{g}$-$\text{Crystals}$}} consists of crystals isomorphic to the direct sum of highest weight crystals from the closed family $\mathbb{B}$, so any object of this category is automatically a normal crystal. By nature the category \underline{\textbf{$\mathfrak{g}$-$\text{Crystals}$}} is semisimple with simple objects being highest weight crystals $\Cr{B}_\lambda$ from the closed family.  Moreover,  the category \underline{\textbf{$\mathfrak{g}$-$\text{Crystals}$}} is closed under the tensor product operation defined in \ref{def_1.2}, which makes it a tensor subcategory of the category of normal crystals.  The latter will be explained in paragraph \ref{crystal_bases} after the explicit construction of the closed family $(\mathbb{B}, \iota)$. 

For the rest of this paper, unless specifically said otherwise, crystal means an object in \underline{\textbf{$\mathfrak{g}$-$\text{Crystals}$}} category.  We also reserve the notation $\Cr{B}_\lambda$ for the highest weight crystal of highest weight $\lambda \in P_+$ from the unique closed family of normal crystals $
(\mathbb{B}, \iota)$ . For the category \underline{\textbf{$\mathfrak{g}$-$\text{Crystals}$}} we have the following version of Schur's Lemma. 

\begin{proposition}[Schur's lemma]
\label{prop_1.2}
$Hom(\mathcal{B}_\lambda, \mathcal{B}_\mu)$ contains just the identity and zero maps in case $\lambda = \mu$ and only zero map otherwise. 
\end{proposition}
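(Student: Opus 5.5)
The plan is to use two facts: a morphism of crystals preserves weights and commutes with all $\tilde e_i$, $\tilde f_i$ (by Lemma above, since the crystals $\mathcal{B}_\lambda$ are normal, hence morphisms are strict), and $\mathcal{B}_\lambda$ is generated from its highest weight element $b_\lambda$ by the operators $\tilde f_i$ (Definition \ref{def_1.5}). We must show that any morphism $\psi:\mathcal{B}_\lambda \to \mathcal{B}_\mu$ is either the zero map or, when $\lambda=\mu$, the identity.

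The argument splits according to the image of the highest weight element. First suppose $\psi(b_\lambda)=0$. For any $b=\tilde f_{i_1}\cdots \tilde f_{i_k} b_\lambda \in \mathcal{B}_\lambda$, strictness gives $\psi(b)=\tilde f_{i_1}\cdots\tilde f_{i_k}\psi(b_\lambda)=\tilde f_{i_1}\cdots \tilde f_{i_k}0=0$, so $\psi$ is the zero map.

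Otherwise $c:=\psi(b_\lambda)\in\mathcal{B}_\mu$. Then $\operatorname{wt}(c)=\lambda$, and for every $i$ we have $\tilde e_i c=\psi(\tilde e_i b_\lambda)=\psi(0)=0$; alternatively, $\varepsilon_i(c)=\varepsilon_i(b_\lambda)=0$ and normality of $\mathcal{B}_\mu$ gives $\tilde e_i c=0$. Thus $c$ is a highest weight element of weight $\lambda$ inside $\mathcal{B}_\mu$. The key step is to show that the only element of $\mathcal{B}_\mu$ killed by all $\tilde e_i$ is $b_\mu$. Every $b\in\mathcal{B}_\mu$ has the form $\tilde f_{i_1}\cdots\tilde f_{i_k}b_\mu$. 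If $k\ge 1$, axiom 4 of Definition \ref{def_1.1} gives $\tilde e_{i_1}b=\tilde f_{i_2}\cdots\tilde f_{i_k}b_\mu\neq 0$. Hence $c=b_\mu$, and therefore $\lambda=\operatorname{wt}(c)=\mu$. So if $\lambda\neq\mu$, only the zero map remains.

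When $\lambda=\mu$ and $\psi(b_\lambda)=b_\lambda$, strictness gives $\psi(\tilde f_{i_1}\cdots\tilde f_{i_k}b_\lambda)=\tilde f_{i_1}\cdots\tilde f_{i_k}b_\lambda$ for every word. Every element is of this form, so $\psi=\mathrm{id}$. Note that $\psi$ is well defined on the set regardless of which word represents a given element; we only use that $\psi$ commutes with the operators.

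The only delicate point is the step showing that a highest weight crystal has a unique element annihilated by all $\tilde e_i$. This step relies on axiom 4 together with generation from $b_\mu$, and both hold for the closed family.
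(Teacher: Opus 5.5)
Your proof is correct and follows the same route as the paper. You split on whether $\psi(b_\lambda)=0$, otherwise show that $\psi(b_\lambda)$ must be the highest weight element $b_\mu$ (which forces $\lambda=\mu$), and conclude $\psi=\mathrm{id}$ because $\mathcal{B}_\lambda$ is generated from $b_\lambda$ by the $\tilde f_i$. The one addition is your explicit argument, via axiom 4 and generation from $b_\mu$, that $b_\mu$ is the only element of $\mathcal{B}_\mu$ annihilated by all $\tilde e_i$; the paper takes this step for granted.
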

\begin{proof}
Since a morphism of crystals commutes with all the structure maps, it sends the highest weight element of $\mathcal{B}_\lambda$ to either the highest weight element of $\mathcal{B}_\mu$ or to $0$. If the highest weight element of $\Cr{B}_\lambda$ is sent to $0$, then the whole $\Cr{B}_\lambda$ is sent to $0$ and it is a zero map.  Otherwise $\lambda = \mu$. Since any element of $\mathcal{B}_\lambda$ can be obtained by acting on the highest weight element by multiple $f_i$'s, the morphism is uniquely determined and is equal to the identity map. 
\end{proof}

Note that the tensor product of crystals is not symmetric, i.e. the map 
$$\text{flip} : \mathcal{B}_1 \otimes \mathcal{B}_2 \rightarrow \mathcal{B}_2 \otimes \mathcal{B}_1$$ defined by $b_1 \tens b_2 \mapsto b_2 \tens b_1$ is not a morphism of crystals. 

In the work of Henriques and Kamnitzer \cite{Henriques_Kamnitzer}  natural isomorphisms $$\sigma_{\mathcal{B}_1, \mathcal{B}_2} : \mathcal{B}_1 \otimes \mathcal{B}_2 \rightarrow \mathcal{B}_2 \otimes \mathcal{B}_1$$ were defined for any two $\mathcal{B}_1, \mathcal{B}_2 \in \operatorname{Ob}(\underline{\textbf{$\mathfrak{g}$-$\text{Crystals}$}})$. This family of maps was called \textbf{commutor}.

\begin{proposition}
    \label{prop_1.3}
    $\sigma_{\mathcal{B}_1, \mathcal{B}_2} \circ \sigma_{\mathcal{B}_2, \mathcal{B}_1} = 1$.
\end{proposition}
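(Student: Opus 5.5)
To prove that $\sigma_{\mathcal{B}_1, \mathcal{B}_2} \circ \sigma_{\mathcal{B}_2, \mathcal{B}_1} = 1$, I will use the explicit construction of the commutor from \cite{Henriques_Kamnitzer}, which the paper is about to recall. The commutor is built from the Schützenberger involutions $\xi_{\mathcal{B}}$, defined on each highest weight component $\mathcal{B}_\lambda$ as the unique set map sending the highest weight element to the lowest weight element and intertwining $\tilde e_i$ with $\tilde f_{\theta(i)}$, where $\theta$ is the Dynkin diagram automorphism induced by $-w_0$. The formula is
\[
\sigma_{\mathcal{B}_1, \mathcal{B}_2}(b_1 \tens b_2) = \xi_{\mathcal{B}_2 \tens \mathcal{B}_1}\bigl(\xi_{\mathcal{B}_2}(b_2) \tens \xi_{\mathcal{B}_1}(b_1)\bigr).
\]
Equivalently, $\sigma_{\mathcal{B}_1,\mathcal{B}_2} = \xi_{\mathcal{B}_2\tens\mathcal{B}_1}\circ \operatorname{flip}\circ(\xi_{\mathcal{B}_1}\tens\xi_{\mathcal{B}_2})$.

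The first step is to record that every Schützenberger involution is an involution, $\xi_{\mathcal{B}}^2 = 1$. On a highest weight crystal $\mathcal{B}_\lambda$ this holds because $\xi^2$ sends $b_\lambda$ to $b_\lambda$ and commutes with every $\tilde f_i$, since $\theta^2 = 1$. It therefore agrees with the identity on the generating element, and hence everywhere by the same argument as in Schur's lemma (Proposition \ref{prop_1.2}). For a general object of the category the statement follows componentwise.

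Next I compose the two commutors. Writing $b_1 \tens b_2 = \sigma_{\mathcal{B}_2,\mathcal{B}_1}(b_2 \tens b_1)$, we have
\[
b_1 \tens b_2 = \xi_{\mathcal{B}_1\tens\mathcal{B}_2}\bigl(\xi_{\mathcal{B}_1}(b_1)\tens\xi_{\mathcal{B}_2}(b_2)\bigr).
\]
Applying $\sigma_{\mathcal{B}_1,\mathcal{B}_2}$ gives
\[
\xi_{\mathcal{B}_2\tens\mathcal{B}_1}\circ\operatorname{flip}\circ(\xi_{\mathcal{B}_1}\tens\xi_{\mathcal{B}_2})\circ\xi_{\mathcal{B}_1\tens\mathcal{B}_2}\circ(\xi_{\mathcal{B}_1}\tens\xi_{\mathcal{B}_2})(b_1\tens b_2).
\]
The main obstacle is the middle part of this expression. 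There the Schützenberger involution of the tensor product sits between the factorwise involutions, so I cannot simply cancel terms.

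To deal with it, I will use the key identity from \cite{Henriques_Kamnitzer}:
\[
\operatorname{flip}\circ(\xi_{\mathcal{B}_1}\tens\xi_{\mathcal{B}_2})
\]
is a crystal isomorphism from $\mathcal{B}_1\tens\mathcal{B}_2$ to $\mathcal{B}_2\tens\mathcal{B}_1$ "twisted by $\theta$", meaning it intertwines $\tilde e_i$ with $\tilde f_{\theta(i)}$. I will check this directly from the tensor product rule in Definition \ref{def_1.2}. The point is that $\xi$ exchanges $\varepsilon_i$ with $\varphi_{\theta(i)}$, and flip reverses the order of the factors, so the strict and non-strict inequalities in the rule interchange appropriately.

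Any two twisted isomorphisms of this kind differ by a genuine crystal isomorphism. Hence $\xi_{\mathcal{B}_2\tens\mathcal{B}_1}\circ\operatorname{flip}\circ(\xi_{\mathcal{B}_1}\tens\xi_{\mathcal{B}_2})$ is an honest crystal morphism, which is consistent with $\sigma$ being a morphism. Both twisted maps also preserve the componentwise characterization of $\xi$: they send highest weight elements to lowest weight elements. From this I get
\[
(\xi_{\mathcal{B}_1}\tens\xi_{\mathcal{B}_2})\circ\xi_{\mathcal{B}_1\tens\mathcal{B}_2} = \operatorname{flip}\circ\xi_{\mathcal{B}_2\tens\mathcal{B}_1}\circ\operatorname{flip}\circ(\xi_{\mathcal{B}_1}\tens\xi_{\mathcal{B}_2}).
\]
Substituting this and using that $\operatorname{flip}^2 = 1$ and that all the $\xi$'s are involutions, the whole composite collapses to the identity.
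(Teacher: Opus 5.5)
Your argument is correct and is essentially the Henriques--Kamnitzer proof, which the paper cites rather than reproduces: use the twisted naturality $\xi_{\mathcal{B}_2\otimes\mathcal{B}_1}\circ\Phi=\Phi\circ\xi_{\mathcal{B}_1\otimes\mathcal{B}_2}$, where $\Phi=\operatorname{flip}\circ(\xi_{\mathcal{B}_1}\otimes\xi_{\mathcal{B}_2})$, to move the middle Schützenberger involution past $\Phi$, and then let the involutions cancel. Two small clean-ups are worth making: first, do not reuse $b_1,b_2$ for both the input and the output of $\sigma_{\mathcal{B}_2,\mathcal{B}_1}$, since your final operator string evaluates to $\operatorname{flip}(b_1\otimes b_2)=b_2\otimes b_1$, which is the identity only because the input was $b_2\otimes b_1$; second, justify the key identity by observing that $\Phi\circ\xi_{\mathcal{B}_1\otimes\mathcal{B}_2}\circ\Phi^{-1}$ is a $\theta$-twisted bijection preserving every connected component of $\mathcal{B}_2\otimes\mathcal{B}_1$, hence equals $\xi_{\mathcal{B}_2\otimes\mathcal{B}_1}$ --- sending highest weight elements to lowest weight elements is automatic for twisted maps, so it is component preservation that actually pins down $\xi$.
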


\begin{theorem}
\label{th_1.2} 
The following diagram commutes in \underline{\textbf{$\mathfrak{g}$-$\text{Crystals}$}}: \\
    
\centering
\begin{tikzcd}[column sep=3em]
\mathcal{B}_1 \otimes \mathcal{B}_2 \otimes \mathcal{B}_3 \arrow[r, "1_{\mathcal{B}_1} \otimes \sigma_{\mathcal{B}_2, \mathcal{B}_3} "] \arrow[d,swap,"\sigma_{\mathcal{B}_1, \mathcal{B}_2} \otimes 1_{\mathcal{B}_3} " ]  &
\mathcal{B}_1 \otimes \mathcal{B}_3 \otimes \mathcal{B}_2 \arrow[d,"\sigma_{\mathcal{B}_1 \otimes \mathcal{B}_3 , \mathcal{B}_2}"] \\
\mathcal{B}_2 \otimes \mathcal{B}_1 \otimes \mathcal{B}_3 \arrow[r,"\sigma_{\mathcal{B}_2 \otimes \mathcal{B}_1 , \mathcal{B}_3}" ]  &  \mathcal{B}_2 \otimes \mathcal{B}_1 \otimes \mathcal{B}_3 
\end{tikzcd}
\end{theorem}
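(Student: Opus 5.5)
I read the statement as the coboundary (``cactus'') axiom of Henriques--Kamnitzer: the two composites $\mathcal{B}_1\otimes\mathcal{B}_2\otimes\mathcal{B}_3\to\mathcal{B}_3\otimes\mathcal{B}_2\otimes\mathcal{B}_1$ agree, namely
$$\sigma_{\mathcal{B}_2\otimes\mathcal{B}_1,\mathcal{B}_3}\circ(\sigma_{\mathcal{B}_1,\mathcal{B}_2}\otimes 1)=\sigma_{\mathcal{B}_1,\mathcal{B}_3\otimes\mathcal{B}_2}\circ(1\otimes\sigma_{\mathcal{B}_2,\mathcal{B}_3}).$$
The labels in the diagram as printed look like typos: the bottom-right corner and the right vertical arrow should read $\mathcal{B}_3\otimes\mathcal{B}_2\otimes\mathcal{B}_1$ and $\sigma_{\mathcal{B}_1,\mathcal{B}_3\otimes\mathcal{B}_2}$. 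The plan is to work with the explicit Henriques--Kamnitzer formula for the commutor rather than with abstract properties.

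For every object $\mathcal{B}$ of \underline{\textbf{$\mathfrak{g}$-$\text{Crystals}$}}, let $\xi_{\mathcal{B}}$ be the Schützenberger involution. It acts on each connected component $\mathcal{B}_\lambda$ as the unique set bijection that sends the highest weight element to the lowest one and satisfies $\xi\tilde e_i=\tilde f_{\theta(i)}\xi$, where $\theta$ is the Dynkin automorphism induced by $-w_0$. Then
$$\sigma_{\mathcal{A},\mathcal{B}}(a\otimes b)=\xi_{\mathcal{B}\otimes\mathcal{A}}\big(\xi_{\mathcal{B}}(b)\otimes\xi_{\mathcal{A}}(a)\big).$$
I take from the paper's setup (Proposition~\ref{prop_1.3} and the surrounding discussion) that this is a well-defined crystal isomorphism. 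The ingredients I use are:
\begin{itemize}
\item $\xi$ is an involution;
\item $\xi$ is defined for any object, in particular for a tensor product regarded as a single crystal;
\item associativity lets me identify $(\mathcal{B}_3\otimes\mathcal{B}_2)\otimes\mathcal{B}_1$ with $\mathcal{B}_3\otimes(\mathcal{B}_2\otimes\mathcal{B}_1)$ compatibly with $\xi$. This holds because $\alpha$ is a crystal isomorphism and $\xi$ is natural under crystal isomorphisms, since it is determined componentwise by the crystal structure.
\end{itemize}

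For the left composite, take $a\otimes b\otimes c$. The first map gives $x\otimes c$ with $x=\xi_{\mathcal{B}_2\otimes\mathcal{B}_1}(\xi b\otimes\xi a)$. Applying $\sigma_{\mathcal{B}_2\otimes\mathcal{B}_1,\mathcal{B}_3}$ then gives
$$\xi_{\mathcal{B}_3\otimes\mathcal{B}_2\otimes\mathcal{B}_1}\big(\xi c\otimes\xi_{\mathcal{B}_2\otimes\mathcal{B}_1}(x)\big)=\xi_{\mathcal{B}_3\otimes\mathcal{B}_2\otimes\mathcal{B}_1}(\xi c\otimes\xi b\otimes\xi a),$$
using $\xi^2=1$ on $\mathcal{B}_2\otimes\mathcal{B}_1$. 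For the right composite, the first map gives $a\otimes y$ with $y=\xi_{\mathcal{B}_3\otimes\mathcal{B}_2}(\xi c\otimes\xi b)$. Then $\sigma_{\mathcal{B}_1,\mathcal{B}_3\otimes\mathcal{B}_2}$ gives $\xi(\xi_{\mathcal{B}_3\otimes\mathcal{B}_2}(y)\otimes\xi a)$, which equals the same expression $\xi(\xi c\otimes\xi b\otimes\xi a)$. Hence both composites coincide pointwise.

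The main obstacle is that the formula for $\sigma$ must actually define a crystal morphism. The key fact is that $a\otimes b\mapsto\xi b\otimes\xi a$ intertwines $\tilde e_i$ with $\tilde f_{\theta(i)}$ on tensor products, which I would check from the tensor product rule in Definition~\ref{def_1.2}. This fact also shows that $\xi$ is compatible with the associativity identification.

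To pin down the $\xi$-naturality claimed above, I would use Schur's lemma (Proposition~\ref{prop_1.2}). It guarantees that any crystal isomorphism maps components isomorphically, and hence carries highest to highest and lowest to lowest elements. I regard this as routine given the paper's setup, so the core of the proof is the two-line computation above.
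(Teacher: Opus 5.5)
Your proposal is correct, and you read the diagram as intended: bottom-right corner $\mathcal{B}_3\otimes\mathcal{B}_2\otimes\mathcal{B}_1$ and right arrow $\sigma_{\mathcal{B}_1,\mathcal{B}_3\otimes\mathcal{B}_2}$, since the printed bottom arrow $\sigma_{\mathcal{B}_2\otimes\mathcal{B}_1,\mathcal{B}_3}$ cannot land in $\mathcal{B}_2\otimes\mathcal{B}_1\otimes\mathcal{B}_3$. The paper gives no argument of its own and only cites Henriques--Kamnitzer, whose proof is essentially your computation that both composites equal $a\otimes b\otimes c\mapsto\xi(\xi c\otimes\xi b\otimes\xi a)$. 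In that argument the substantive work is showing $\sigma$ is a crystal isomorphism, not the cactus axiom itself; note that compatibility of $\xi$ with the associator comes from the naturality of $\xi$ under crystal isomorphisms, as you first said, not from the anti-isomorphism property.
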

The proofs of \ref{prop_1.3} and \ref{th_1.2} can be found in \cite{Henriques_Kamnitzer}.


\subsection{ Crystal bases. Explicit construction of the closed family.}
\label{crystal_bases}

In this chapter we dive into the explicit construction of the closed family $(\mathbb{B}, \iota)$. This chapter mostly follows Kashiwara's review article \cite{Kashiwara_1995}. 

We shall define $U_q(\mathfrak{g})$ for a complex semisimple Lie algebra $\mathfrak{g}$. Let $\mathfrak{h}$ be a Cartan subalgebra of $\mathfrak{g}$.  We denote  simple roots by $\{\alpha_i \in \mathfrak{h}^{*}\}_{i \in I}$ and simple co-roots by $\{\alpha^{\vee}_i \in \mathfrak{h}\}_{i \in I}$, where $I$ is a finite index set representing the vertices of a Dynkin diagram. Let us take an inner product $(\cdot, \cdot)$ on $\mathfrak{h}^{*}_{\mathbb{R}} := \operatorname{span}_{\mathbb{R}} \{\alpha_i, i \in I\} $, such that 
\begin{gather}(\alpha_i, \alpha_i) \in 2 \mathbb{Z}_{>0}, \\ \langle \alpha_i^{\vee}, \lambda\rangle = \frac{2(\alpha_i, \lambda)}{(\alpha_i, \alpha_i)}, \\
(\alpha_i, \alpha_j) \leqslant 0, \hspace{1mm} \text{for} \hspace{1mm} i \neq j.
\end{gather}
for any $i, j \in I$ and $\lambda \in \mathfrak{h}_{\mathbb{R}}^{*}$.  Here, $\langle \cdot, \cdot \rangle$ is the canonical pairing between $\mathfrak{h}^{*}$ and $\mathfrak{h}$.  Let $\{\Lambda_i\}_{i \in I}$ be the dual base of $\{\alpha^{\vee}_i\}$ with respect to $\langle \cdot, \cdot \rangle$ and set $P = \bigoplus\mathbb{Z}\Lambda_i$ to be the weight lattice of $\mathfrak{g}$ and $P^{*} = \bigoplus \mathbb{Z}\alpha_i^{\vee}$ to be the dual lattice of $P$.  The set of dominant integral weights is denoted by $P_+ = \{ \lambda \in P; \langle \lambda, \alpha_i^{\vee}\rangle \geqslant 0, \forall i \in I\}$.

\begin{definition}
    The quantized universal enveloping algebra $U_q(\mathfrak{g})$ is the algebra over $\mathbb{C}(q)$ generated by the symbols $e_i, f_i, i \in I$, and $q^{h}, h \in P^{*}$ with the following defining relations:
    \begin{enumerate}
        \item $q^h = 1$ for $h = 0$.
        \item $q^{h_1}q^{h_2} = q^{h_1 + h_2}$ for $h_1, h_2 \in P^{*}$.
        \item For $i \in I$ and $h \in P^{*}$, \begin{gather*} q^{h} e_i q^{-h} = q^{\langle h, \alpha_i\rangle} e_i \\ q^{h}f_iq^{-h} = q^{-\langle h, \alpha_i\rangle}f_i \end{gather*}
        \item  $[e_i, f_j] = \delta_{i,j} \cdot \frac{t_i - t_i^{-1}}{q_i - q_i^{-1}}, \hspace{2mm} \text{where} \hspace{2mm} q_i = q^{\frac{(\alpha_i, \alpha_i)}{2}} \hspace{2mm} \text{and} \hspace{2mm} t_i = q^{\frac{(\alpha_i, \alpha_i)}{2}\alpha_i^{\vee}}.$
        \item (Serre relations) For $i \neq j$ $$  \sum_{k=0}^{b} (-1)^{k} e_i^{(k)}e_je_i^{(b-k)} = \sum_{k=0}^{b} (-1)^{k} f_i^{(k)}f_jf_i^{(b-k)} = 0.$$ Here $b =  1 - \langle \alpha_i^{\vee}, \alpha_j\rangle$ and  \begin{gather} e_i^{(k)} = \frac{e_i^{k}}{[k]_{i}!}, \hspace{2mm} f_i^{(k)} = \frac{f_i^{k}}{[k]_{i}!}; \\ [n]_{i} = \frac{q_i^{n} - q_i^{-n}}{q_i - q_i^{-1}}, \hspace{2mm}  [k]_{i}! = \prod_{n=1}^{k} [n]_{i} \end{gather}

    \end{enumerate}
\end{definition}
Let us denote by $U_q(\mathfrak{g})_i$ the subalgebra of $U_q(\mathfrak{g})$ generated by $e_i, f_i, t_i, t_i^{-1}$. 
One can easily check, that in $U_q(\mathfrak{g})$ we have \begin{gather}
    t_ie_jt_i^{-1} = q^{(\alpha_i, \alpha_j)}e_j  = q_i^{\langle \alpha_i^{\vee}, \alpha_j\rangle}e_j, \\
    t_if_jt_i^{-1} = q^{-(\alpha_i, \alpha_j)} f_j = q_i^{-\langle \alpha_i^{\vee}, \alpha_j\rangle}f_j,
\end{gather}
Then $U_q(\mathfrak{g})_i$ is isomorphic to $U_{q_i}(\mathfrak{sl}_2)$. Hence we can say that $U_q(\mathfrak{g})$ is made up of several quantized $\mathfrak{sl}_2$, 

\begin{definition}
    A left $U_q(\mathfrak{g})$-module $M$ is called integrable if it satisfies 
    \begin{enumerate}
        \item $M = \bigoplus_{\lambda \in P} M[\lambda]$, where $M[\lambda] = \{v \in M; q^{h}v = q^{\langle h, \lambda \rangle} v, \hspace{1mm} \forall h \in P^{*}\}$.
        \item For any $i \in I$, $M$ is a union of finite dimensional $U_q(\mathfrak{g})_i$-submodules. 
    \end{enumerate}
\end{definition}
Let $M$ be an integrable $U_q(\mathfrak{g})$-module. Then by theory of integrable representations of $U_q(\mathfrak{sl}_2)$ we have:
\begin{gather}
M = \bigoplus_{ 0  \leqslant k \leqslant \langle \alpha_i^{\vee}, \lambda\rangle } f^{(k)}_i(M[\lambda ] \cap \ker e_i) 
\end{gather}

We define operators $\tilde{e}_i, \tilde{f}_i$ acting on $M$ by 
\begin{gather}
    \tilde{e}_if_{i}^{(k)}u = f^{(k-1)}_i u \hspace{2mm} \text{and} \hspace{2mm} \tilde{f}_if_i^{(k)}u = f_i^{(k+1)} u, 
\end{gather}
for $u \in M \cap \ker e_i$ and $0 \leqslant k \leqslant \langle \alpha_i^{\vee}, \lambda \rangle$.  These operators are usually called \textbf{Kashiwara operators}. 

Let us take a field $k$ and let $K = k(q)$ be the field of rational functions in a variable $q$ with coefficients in $k$. Let $V$ be a $K$-vector space. For a subring $C$ of $K$, \textbf{$C$-lattice} of $V$ is, by definition, a $C$-submodule $L$ of $V$ such that $V \cong K \tens_C L$. Denote by $A$ the subring of $K$ consisting of functions $f(q)$ in $K$ without a pole at $q=0$. By evaluation map $\operatorname{ev}_0: f(q)  \mapsto f(0)$  we have an isomorphism $$A/qA \cong k$$
\begin{definition}
\label{def_2.-2}
Let $V$ be a $K$-vector space. A \textbf{local base} of $V$ at $q = 0$ is a pair $(L, B)$, where 
\begin{itemize}
    \item $L$ is a $A$-lattice of $V$ that is a free $A$-module 
    \item $B$ is a base of $k$-vector space $L/qL$.
\end{itemize}
    
\end{definition}

Next we define direct sum and tensor product of local bases. 

Let $\{V_j\}$ be a family of $K$-vector spaces and let $(L_j, B_j)$ be a local base of $V_j$. Then $L = \oplus_j L_j \subset \oplus_j V_j$ is a free $A$-lattice of $\oplus_j V_j$ and $B = \coprod_j B_j \subset \oplus (L_j/qL_j) \cong L/qL$ is a base of $L/qL$. Hence, $(L, B)$ is a local base of  $\oplus_j V_j$. We call it \textbf{direct sum} of $\{(L_j, B_j)\}_j$ and denote it by $\oplus_j(L_j, B_j)$. 

Let $V_1$ and $V_2$ be two $K$-vector spaces and let $(L_j, B_j)$ be a local base of $V_j, (j = 1,2)$.  Then $L = L_1 \tens_A L_2 \subset V_1 \tens_K V_2$ is a free $A$-lattice of $V_1 \tens_K V_2$ and \begin{align}
    B= B_1 \tens B_2 = \{b_1 \tens b_2; b_1 \in B_1, b_2 \in B_2 \} \\ 
    \subset (L_1/qL_1) \tens_k (L_2/qL_2) \cong L/qL
\end{align}
is a base of $L/qL$. Hence $(L, B)$ is a local base of $V_1 \tens_K V_2$. We call it \textbf{tensor product} of $(L_1, B_1)$ and $(L_2, B_2)$ and denote it by $(L_1, B_1) \tens (L_2, B_2)$. 

Set $K = \mathbb{C}(q)$. Note that $U_q(\mathfrak{g})$ is a $K$-algebra.  

\begin{definition}
    \label{def_2.1}
Let us take an integrable $U_q(\mathfrak{g})$-module $M$ and let $M = \oplus_{\lambda \in P} M[\lambda]$ be its weight space decomposition.  A \textbf{crystal base} of $M$ is a local base $(L, B)$ of the $K$-vector space $M$, satisfying the following conditions.

\begin{enumerate}
    \item There is a local base $(L[\lambda], B[\lambda])$ of $M[\lambda]$ such that $$(L, B) = \oplus_{\lambda \in P} (L[\lambda], B[\lambda]).$$
    \item $\tilde{f}_iL \subset L$, and $\tilde{e}_i L \subset L$.
    \item $\tilde{f}_iB \subset B \cup \{0\}$, and  $\tilde{e}_i B \subset B \cup \{0\}$.
    \item For $u, v \in B$ and $i \in I$, $u = \tilde{e}_iv$ if and only if $v = \tilde{f}_i u$. 
\end{enumerate}
\end{definition}
It is clear from the definition that direct sum of crystal bases of integrable $U_q(\mathfrak{g})$-modules is a crystal base of the direct sum of these modules.  One of the most important properties of crystal bases of integrable $U_q(\mathfrak{g})$-modules is the "stability" by tensor product. 

\begin{theorem}[Kashiwara \cite{Kashiwara_1990}]
    Let $M_i$ be integrable $U_q(\mathfrak{g})$-modules and let $(L_i, B_i)$ be a crystal base of $M_i$ $(i = 1, 2)$.
    \begin{enumerate}
        \item $(L_1, B_1) \tens (L_2, B_2)$ is a crystal base in $M_1 \tens_{\mathbb{C}(q)} M_2$. 
        \item For $b_i \in B_i$ $(i = 1, 2)$ we have 
    \begin{gather*}
    \tilde e_i (b_1 \tens b_2) = \begin{cases}
    b_1 \tens \tilde e_i  b_2, \hspace{1mm} \text{if} \hspace{1mm} \varepsilon_i(b_2) > \varphi_i(b_1)
    \\
    \tilde e_i b_1 \tens b_2, \hspace{1mm} \text{otherwise}
    \end{cases}\\
    \tilde f_i  (b_1 \tens b_2) = \begin{cases}
    b_1\tens\tilde  f_i b_2, \hspace{1mm} \text{if} \hspace{1mm} \varepsilon_i(b_2) \geqslant \varphi_i(b_1)
    \\
    \tilde f_i b_1 \tens b_2, \hspace{1mm} \text{otherwise}
    \end{cases}
    \end{gather*}

    where \begin{gather*}\varepsilon_i(b) = \max\{ n \geqslant 0 ; \tilde e_i ^{n} b \neq 0\} \\ 
    \varphi_i(b) = \max\{ n \geqslant 0 ; \tilde f_i ^{n} b \neq 0\}
\end{gather*}

    \end{enumerate}
\end{theorem}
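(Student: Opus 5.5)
The plan is to reduce the statement to the rank one case and then pass to general $\mathfrak{g}$ through the subalgebras $U_q(\mathfrak{g})_i\cong U_{q_i}(\mathfrak{sl}_2)$. This is Kashiwara's tensor product theorem: the lattice $L=L_1\otimes_A L_2$ is stable under $\tilde e_i,\tilde f_i$, and the induced action on $B_1\otimes B_2$ is given by the signature rule displayed above.

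First, the weight condition (1) of Definition \ref{def_2.1} is immediate. The lattice $L$ decomposes as $\bigoplus_{\lambda}\bigoplus_{\mu+\nu=\lambda}L_1[\mu]\otimes L_2[\nu]$, and the basis $B_1\otimes B_2$ is compatible with this decomposition. Conditions (2)--(4) involve only one index $i$ at a time. For fixed $i$, I restrict $M_1$ and $M_2$ to $U_q(\mathfrak{g})_i$; by integrability they decompose into finite-dimensional $U_{q_i}(\mathfrak{sl}_2)$-modules. The Kashiwara operators $\tilde e_i,\tilde f_i$ are defined purely in terms of the $U_q(\mathfrak{g})_i$-structure. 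Moreover, a crystal base of $M_j$ restricts to a crystal base for $U_q(\mathfrak{g})_i$, and it splits compatibly into strings. So it suffices to treat $\mathfrak{g}=\mathfrak{sl}_2$.

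Second, in the $\mathfrak{sl}_2$ case I use that every finite-dimensional integrable module with a crystal base is, up to the $A$-lattice, a direct sum of irreducibles $V(l)$. Each of these has the standard crystal base $L(l)=\bigoplus_k A f^{(k)}u_l$, $B(l)=\{f^{(k)}u_l \bmod q\}$. I plan to show that any crystal base is isomorphic to a direct sum of such standard ones. The argument: take $u\in L\cap\ker e$ with $u \bmod qL\in B$, and use $\tilde e$-stability to see that strings in $B$ lift to submodules. Tensor products are additive in each factor, so the problem reduces to $V(l)\otimes V(m)$ with the standard bases. Here I use the quantum Clebsch--Gordan decomposition $V(l)\otimes V(m)\cong\bigoplus_{k}V(l+m-2k)$. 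The highest weight vectors are explicit $q$-combinations $\sum_j c_j(q)\,f^{(j)}u_l\otimes f^{(k-j)}u_m$. Using a fixed choice of coproduct (e.g.\ $\Delta(f)=f\otimes t^{-1}+1\otimes f$), these vectors are congruent mod $q$ to $u_l\otimes f^{(k)}u_m$. Applying $f^{(n)}$ via the coproduct and tracking leading powers of $q$, I expect to show that $f^{(n)}$ of the $k$-th highest weight vector is congruent mod $qL$ to either $f^{(n)}u_l\otimes f^{(k)}u_m$ or $u_l\otimes f^{(k+n)}u_m$, up to a unit. The first case occurs when $n$ exceeds the length $\varphi(u_l)-\varepsilon(f^{(k)}u_m)$ of the first segment, with corrections as in the signature rule. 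This gives both the stability $\tilde f L\subset L$ and the combinatorial formula for $\tilde f_i$. The formula for $\tilde e_i$ follows by condition (4).

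The main obstacle is this last $q$-adic estimate. One has to verify that the lowest-order term in $q$ is exactly one basis tensor with unit coefficient, and that all other terms lie in $qL$. I would first try induction on $m$ using $V(m)\subset V(1)^{\otimes m}$ together with associativity. This reduces everything to $V(l)\otimes V(1)$, where only two basis vectors occur in each weight space. There the explicit computation of $\tilde f$ on $f^{(n)}u_l\otimes u_1$ and $f^{(n)}u_l\otimes fu_1$ is a short direct check of $q$-powers. Compatibility of the inductive step with the tensor rule uses the associativity of the crystal tensor product, recalled earlier in the paper.
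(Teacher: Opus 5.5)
The paper gives no proof of this statement; it quotes it from Kashiwara as a black box. Your architecture is essentially Kashiwara's: restrict to $U_q(\mathfrak g)_i\cong U_{q_i}(\mathfrak{sl}_2)$, split crystal bases into standard strings, induct through $V(m)\subset V(m-1)\otimes V(1)$ using associativity, and check $V(l)\otimes V(1)$ by hand. The reductions are fine.

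\textbf{The coproduct makes the crux fail.} The step you single out as the main one fails as you set it up, because the coproduct is not an \emph{e.g.} The statement holds, with lattices at $q=0$ and the rule as displayed, for Kashiwara's coproduct $\Delta(e_i)=e_i\otimes t_i^{-1}+1\otimes e_i$, $\Delta(f_i)=f_i\otimes 1+t_i\otimes f_i$. The paper itself uses the $e_i$-half of this when it checks that $u_\lambda\otimes u_\nu$ is a highest weight vector. The statement is false for your coproduct. Take $V(1)\otimes V(1)$ with $u_1=fu_0$. The vector $u_0\otimes u_0$ is the highest weight vector of the $V(2)$-component, so $\tilde f(u_0\otimes u_0)=f(u_0\otimes u_0)$. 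With $\Delta(f)=f\otimes t^{-1}+1\otimes f$ this equals $q^{-1}u_1\otimes u_0+u_0\otimes u_1$, which is not in $L(1)\otimes_A L(1)$. So part (1) already fails. That coproduct (Lusztig's) is adapted to lattices at $q=\infty$ with the tensor factors reversed.

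\textbf{The predicted leading terms are wrong.} With the correct coproduct, $\Delta(f^{(n)})=\sum_j q^{j(n-j)}t^jf^{(n-j)}\otimes f^{(j)}$. For example, $f^{(n)}(u_l\otimes u_m)=\sum_j q^{j(l-n+j)}f^{(n-j)}u_l\otimes f^{(j)}u_m$. Its leading term is $f^{(n)}u_l\otimes u_m$ for $n\le l$ and $f^{(l)}u_l\otimes f^{(n-l)}u_m$ for $n\ge l$.

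In general, for $w_k\equiv u_l\otimes f^{(k)}u_m$:
\begin{itemize}
\item $f^{(n)}w_k\equiv f^{(n)}u_l\otimes f^{(k)}u_m$ for $n\le l-k$;
\item $f^{(n)}w_k\equiv f^{(l-k)}u_l\otimes f^{(n-l+2k)}u_m$ for $n\ge l-k$.
\end{itemize}
Your case condition is reversed, and $u_l\otimes f^{(k+n)}u_m$ occurs only when $k=l$.

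\emph{Up to a unit} is also not enough. Axiom (3) demands $\tilde f_iB\subset B\cup\{0\}$ exactly, so you must show the leading coefficient is $\equiv 1 \bmod q$.

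With these corrections, your $V(l)\otimes V(1)$ check does go through:
\begin{itemize}
\item the highest weight vector of the $V(l-1)$-component is $w=u_l\otimes v_1-\frac{q}{[l]}fu_l\otimes v_0$;
\item $f^{(n)}(u_l\otimes v_0)=f^{(n)}u_l\otimes v_0+q^{l-n+1}f^{(n-1)}u_l\otimes v_1$;
\item $f^{(n)}w\equiv f^{(n)}u_l\otimes v_1$ for $0\le n\le l-1$;
\item in each weight space, the change of basis to the tensor basis is congruent to the identity mod $q$.
\end{itemize}

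\textbf{The splitting lemma.} In your splitting lemma, the existence of lifts in $L\cap\ker e$ is exactly the point to prove. Take $u=\sum_n f^{(n)}u_n$ in a weight space of $L$, with $u_n\in\ker e$. Then $\tilde e^{N}u=u_N$ for the top index $N$. Inducting on $N$ gives $L=\bigoplus_n f^{(n)}(L\cap\ker e)$. Axiom (4) then makes $\{b\in B:\tilde e b=0\}$ a basis of $(L\cap\ker e)/q(L\cap\ker e)$.
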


In general, an integrable $U_q(\mathfrak{g})$-module is not completely reducible. However, for a semisimple Lie algebra $\mathfrak{g}$ (in fact, even for Kac-Moody Lie algebra), there exists a family of completely reducible integrable $U_q(\mathfrak{g})$-modules. 

\begin{definition}
    Let $\mathcal{O}_{int}(\mathfrak{g})$ denote the category of integrable $U_q(\mathfrak{g})$-modules $M$ such that for any $v \in M$, there exists an integer $l \geqslant 1$, such that $$e_{i_1} \ldots e_{i_l} v = 0$$ for any $i_1, \ldots i_l \in I$. 
\end{definition}
For $\lambda \in P_+$ we denote by $V_\lambda$ the $U_q(\mathfrak{g})$-module generated by $u_\lambda$ with the following defining relations: 

\begin{gather}
    q^{h} u_\lambda = q^{\langle \lambda, h\rangle} u_\lambda, \\
    e_{i}u_\lambda = 0, \\
    f_{i}^{1 + \langle \alpha^{\vee}_i, \lambda\rangle}u_{\lambda} = 0. 
\end{gather}

Then we have the following theorem.

\begin{theorem}[Lusztig \cite{Lusztig}]
\label{th_2.0}
Let $\lambda \in P_+$. 
\begin{enumerate}
    \item $V_\lambda[\lambda] = \{u \in V_\lambda :  e_iu = 0, \forall i \in I\} = \mathbb{C}(q)u_\lambda \neq 0$. 
    \item $V_\lambda$ is an irreducible integrable $U_q(\mathfrak{g})$-module in $\mathcal{O}_{int}(\mathfrak{g})$.
    \item Any $U_q(\mathfrak{g})$-module in $\mathcal{O}_{int}(\mathfrak{g})$ is completely reducible.
    \item Any irreducible $U_q(\mathfrak{g})$-module in $\mathcal{O}_{int}(\mathfrak{g})$ is isomorphic to $V_\lambda$ for some $\lambda \in P_+$. 
\end{enumerate}
\end{theorem}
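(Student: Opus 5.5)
The plan follows the classical Lie algebra arguments, replacing each ingredient with its quantum analogue. In Lusztig's approach the crucial tools are the triangular decomposition $U_q(\mathfrak{g}) \cong U_q^{-} \otimes U_q^{0} \otimes U_q^{+}$ and a quantum Casimir element (equivalently, a Harish-Chandra type central character argument). I would prove the four items in the order $1 \Rightarrow 2 \Rightarrow 3 \Rightarrow 4$.

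\textbf{Step 1 (item 1, weight structure).} By the triangular decomposition, $V_\lambda = U_q^{-} u_\lambda$. Hence $V_\lambda$ is spanned by monomials $f_{i_1}\cdots f_{i_k}u_\lambda$. So all weights are $\leqslant \lambda$, weight spaces are finite dimensional, and $V_\lambda[\lambda] = \mathbb{C}(q)u_\lambda$. Nonvanishing of $u_\lambda$ comes from comparing with the Verma module $M(\lambda)$, of which $V_\lambda$ is the quotient by the submodule generated by the vectors $f_i^{1+\langle\alpha_i^\vee,\lambda\rangle}u_\lambda$. Each such vector is a highest weight vector of weight $\lambda - (1+\langle\alpha_i^\vee,\lambda\rangle)\alpha_i \neq \lambda$, by the $U_q(\mathfrak{sl}_2)$ commutation formula for $[e_i, f_i^{(n)}]$. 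So the quotient does not kill the $\lambda$-weight space.

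The equality $\{u : e_iu=0\ \forall i\} = \mathbb{C}(q)u_\lambda$ will follow once irreducibility is known. Indeed, any such $u$ of weight $\mu<\lambda$ would generate a proper submodule $U_q^- u$, which avoids weight $\lambda$.

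\textbf{Step 2 (integrability and irreducibility).} To show integrability I would let $M' \subset V_\lambda$ be the set of vectors on which every $f_i$ and $e_i$ acts locally nilpotently. The $e_i$ are locally nilpotent since weights are bounded above. For the $f_i$, the quantum Serre relations imply that $\operatorname{ad}_q(f_i)^{b} f_j = 0$, so $M'$ is stable under $U_q(\mathfrak{g})$. Since $M'$ contains $u_\lambda$, we get $M' = V_\lambda$. It follows that $V_\lambda$ is a union of finite-dimensional $U_q(\mathfrak{g})_i$-modules and lies in $\mathcal{O}_{int}(\mathfrak{g})$.

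For irreducibility, integrability gives Weyl group invariance of the character via Lusztig's braid group operators $T_i$. I would then introduce the quantum Casimir element $\Omega$, which acts on any highest weight vector of weight $\mu$ by the scalar $q^{-(\mu+\rho,\mu+\rho)}$. A highest weight vector of weight $\mu<\lambda$ inside $V_\lambda$ generates an integrable highest weight module, so $\mu\in P_+$. Then the classical inequality $(\mu+\rho,\mu+\rho) < (\lambda+\rho,\lambda+\rho)$ for dominant $\mu<\lambda$ gives a contradiction.

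\textbf{Step 3 (complete reducibility).} Let $M\in\mathcal{O}_{int}(\mathfrak{g})$. Each $v\in M$ generates a submodule whose weights are bounded above. By the argument of Step 2, a highest weight vector generates a copy of $V_\mu$ with $\mu \in P_+$. It then suffices to show $\operatorname{Ext}^1(V_\lambda,V_\mu)=0$ within $\mathcal{O}_{int}$ and to use a standard Zorn-type argument for infinite sums.

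\begin{itemize}
\item If $(\lambda+\rho,\lambda+\rho)\neq(\mu+\rho,\mu+\rho)$, the Casimir eigenvalues split the extension.
\item Otherwise, neither of $\lambda$, $\mu$ lies strictly below the other. A lift of $u_\lambda$ is then a highest weight vector satisfying the defining relations of $V_\lambda$, because the relation $f_i^{1+\langle\alpha_i^\vee,\lambda\rangle}u=0$ is forced by integrability and $\mathfrak{sl}_2$-theory. This lift gives a splitting.
\end{itemize}

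\textbf{Step 4 (item 4).} An irreducible $M$ in $\mathcal{O}_{int}$ contains a vector killed by all $e_i$, by the nilpotence condition. By Step 3 this vector generates a copy of some $V_\lambda$, which must equal $M$.

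The main obstacle is the quantum Casimir (Steps 2 and 3): constructing $\Omega$ in a suitable completion, and computing its action on highest weight vectors. If that becomes too heavy, I would instead specialize at $q=1$ through Lusztig's $\mathbb{Z}[q,q^{-1}]$-form. That route shows that $\dim V_\lambda[\mu]$ equals the classical multiplicity, and then transfers irreducibility and the vanishing of $\operatorname{Ext}^1$ from Weyl's classical theorem by a semicontinuity argument.
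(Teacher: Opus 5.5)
There is no proof in the paper to compare yours with: the theorem is quoted from Lusztig and used as a black box. Judged on its own, your outline is the standard Casimir/$\operatorname{Ext}$-vanishing proof, i.e.\ Kac's argument for integrable modules in category $\mathcal{O}$ carried over to $U_q(\mathfrak{g})$. It is correct for semisimple $\mathfrak{g}$, which is the setting here. The key steps all hold:
\begin{itemize}
\item the lift of $u_\lambda$ gives a splitting when $\lambda\not<\mu$;
\item for comparable dominant weights the Casimir eigenvalues differ, since $(\lambda-\mu,\lambda+\mu+2\rho)>0$;
\item item 4 follows once you have a highest weight vector, which the $\mathcal{O}_{int}$ nilpotence condition provides.
\end{itemize}

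Lusztig's original argument goes differently: it compares $V_\lambda$ with the classical module through an integral form and specialization at $q=1$, which is essentially your fallback. The trade-off is as follows. Your route stays inside generic-$q$ representation theory and needs only one central operator. However, the real work is hidden in constructing $\Omega$ via the quasi-$R$-matrix, as an operator on modules where $U_q^+$ acts locally nilpotently; that is exactly the $\mathcal{O}_{int}$ condition, so it applies. The deformation route is heavier, but it also shows that $V_\lambda$ has the classical Weyl character. That is the fact implicitly needed when crystal multiplicities in $\mathcal{B}_S^{\otimes N}$ are matched with classical Howe-duality multiplicities, as in the paper's second chapter.

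Two places in your sketch need an extra sentence.

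First, $\operatorname{Ext}^1$-vanishing between simples gives semisimplicity only for finite-length modules. So before the Zorn step you should note that $U_q v$ has finite length. It equals $U_q^-$ applied to the finite-dimensional space $U_q^0U_q^+v$, so its weight spaces are finite-dimensional and its weights lie below finitely many weights $\nu_j$. Every simple subquotient is some $V_\mu$ with $\mu$ in the finite set $\{\mu\in P_+ : \mu\leqslant\nu_j \text{ for some } j\}$. Each step of a chain of submodules raises $\dim$ in one of these finitely many weight spaces, which bounds the length.

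Second, $(\mu+\rho,\mu+\rho)$ is in general not an integer; for $D_n$, $(\Lambda_n,\Lambda_n)=n/4$. So $q^{-(\mu+\rho,\mu+\rho)}\notin\mathbb{C}(q)$, and you need one of two fixes:
\begin{itemize}
\item adjoin a root of $q$; or
\item split every module according to cosets of the root lattice $Q$ in $P$, since extensions between different cosets split for weight reasons. Then renormalize $\Omega$ on each coset, using $(Q,P)\subset\mathbb{Z}$.
\end{itemize}
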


The following results were proven in the original work of Kashiwara \cite{Kashiwara_1990} for $\mathfrak{g} = A_n, B_n, C_n, D_n$ and then extended to an arbitrary Kac-Moody Lie algebra $\mathfrak{g}$ in \cite{Kashiwara_1991}.   

\begin{theorem}[Kashiwara, \cite{Kashiwara_1990}, \cite{Kashiwara_1991}]
\label{th_2.1}
\begin{enumerate}
    \item $V_\lambda$ has a unique crystal base $(L_\lambda, B_\lambda)$, such that $L_\lambda[\lambda]$ = $Au_\lambda$ and $B_\lambda[\lambda] = \{u_\lambda \bmod qL_{\lambda}\} $. 
    \item $B_\lambda = \{\tilde{f}_{i_1}\tilde{f}_{i_2}\ldots \tilde{f}_{i_k}u_{\lambda} \bmod qL_{\lambda}: l \geqslant 0, i_1, \ldots, i_k  \in I\} \backslash \{0\}$
\end{enumerate}
\end{theorem}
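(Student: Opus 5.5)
We would prove Theorem~\ref{th_2.1} in the standard way, via Kashiwara's grand loop argument, by simultaneous induction on the height of $\lambda-\mu$ for weights $\mu$ of $V_\lambda$. The proof runs in parallel with the analogous statement for $U_q^-(\mathfrak{g})$ and its crystal base $(L(\infty),B(\infty))$.

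\emph{Preliminary constructions.} On $V_\lambda$ set $L_\lambda$ to be the $A$-span of all $\tilde f_{i_1}\cdots\tilde f_{i_k}u_\lambda$, and set $B_\lambda$ to be the nonzero images of these vectors in $L_\lambda/qL_\lambda$. This is the only possible choice: any crystal base with $L_\lambda[\lambda]=Au_\lambda$ must contain these vectors by condition~2 of Definition~\ref{def_2.1}. It must also be spanned by them, because every weight vector lies in the span of $f$-monomials applied to $u_\lambda$, and these monomials can be rewritten through the Kashiwara operators $\tilde f_i$ modulo lower terms. Uniqueness and part~2 therefore reduce to showing that this $(L_\lambda,B_\lambda)$ really is a crystal base. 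On $U_q^-$ we use the analogous $L(\infty),B(\infty)$, with $\tilde e_i,\tilde f_i$ defined through the $q$-boson algebra. Finally, we introduce the projections $\pi_\lambda:U_q^-\to V_\lambda$, $P\mapsto Pu_\lambda$, and the maps $\Phi_{\lambda,\mu}:V_{\lambda+\mu}\to V_\lambda\otimes V_\mu$, $u_{\lambda+\mu}\mapsto u_\lambda\otimes u_\mu$.

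\emph{Induction statements.} For each $r$, we prove jointly, over all weights of depth at most $r$, the following assertions: stability of $L_\lambda$ and $L(\infty)$ under $\tilde e_i,\tilde f_i$; the fact that $B$ is a basis of $L/qL$ and $\tilde e_iB\subset B\cup\{0\}$; compatibility $\pi_\lambda(L(\infty))=L_\lambda$ and $\pi_\lambda(B(\infty))=B_\lambda\cup\{0\}$; and the tensor statement $\Phi_{\lambda,\mu}(L_{\lambda+\mu})\subset L_\lambda\otimes L_\mu$, together with the fact that $u_{\lambda+\mu}$-strings map to the expected elements of $B_\lambda\otimes B_\mu$. The key tool for the induction step is Kashiwara's tensor product theorem stated above, applied to $V_\lambda\otimes V_{\mu}$ with $\mu$ taken to be a fundamental weight or with $\mu$ large. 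In that tensor product the crystal structure is known by the explicit signature rule, and the depth-$r$ statements for $V_{\lambda+\mu}$ are pulled back through $\Phi$. We also use the $\mathfrak{sl}_2$-string analysis: for each $i$, the decomposition $M=\bigoplus f_i^{(k)}(M[\lambda]\cap\ker e_i)$ gives the local behaviour of $\tilde e_i,\tilde f_i$ and the relation $u=\tilde e_iv\iff v=\tilde f_iu$. After the induction, we fix the crystal base and obtain part~2 by the description above.

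\emph{Main obstacle.} The hard step is proving $\tilde e_iL_\lambda\subset L_\lambda$ and $\tilde e_iB_\lambda\subset B_\lambda\cup\{0\}$. The lattice is defined using only $\tilde f$'s, and the operator $\tilde e_i$ does not commute with $\tilde f_j$ in any simple way. We would handle this exactly as Kashiwara does, by embedding $V_{\lambda}$ into $V_{\lambda-\Lambda_i}\otimes V_{\Lambda_i}$ (or using $\Phi$ with a suitable splitting) so that the $i$-string structure becomes explicit through the tensor rule. We also need the inner product estimate $(L_\lambda,L_\lambda)\subset A$, which shows that $B_\lambda$ is orthonormal modulo $q$ and hence linearly independent. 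Since the statement is attributed to Kashiwara \cite{Kashiwara_1990,Kashiwara_1991}, in the paper itself one would simply cite these works rather than reproduce the full grand loop.
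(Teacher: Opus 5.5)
The paper does not prove Theorem~\ref{th_2.1}; it quotes it from Kashiwara's papers, which is also where your proposal ends up. Your grand-loop outline is therefore a summary of the cited work, not something the paper argues. Within that outline, however, one step you treat as routine is justified incorrectly. This is the claim that any crystal base $(L',B')$ of $V_\lambda$ with $L'[\lambda]=Au_\lambda$ is spanned over $A$ by the vectors $\tilde f_{i_1}\cdots\tilde f_{i_k}u_\lambda$.

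Condition~2 of Definition~\ref{def_2.1} only gives $L_\lambda\subseteq L'$. Rewriting $f$-monomials through the $\tilde f_i$ introduces coefficients such as $[k+1]_i$, which have poles at $q=0$. So it only shows that these vectors span $V_\lambda$ over $K=\mathbb{C}(q)$, not over $A$. Your argument never uses the $\tilde e_i$-part of Definition~\ref{def_2.1}, and without it the claim is false. Take $\lambda\neq0$ and let $w_0$ be the longest element of the Weyl group. The lattice $L'=L_\lambda+q^{-1}L_\lambda[w_0\lambda]$ is free and $\tilde f_i$-stable, since every $\tilde f_i$ kills the lowest weight space. It has $L'[\lambda]=Au_\lambda$ and strictly contains $L_\lambda$. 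Only the condition $\tilde e_iL'\subseteq L'$ excludes it.

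The reduction ``uniqueness follows from existence'' is nevertheless true, but it needs the $\tilde e_i$-stability of $L'$ and the crystal structure of $B_\lambda$. One option is to invoke Theorem~\ref{th_2.2}: the resulting automorphism of $V_\lambda$ is a scalar $c$, and the normalisation at weight $\lambda$ forces $c\in 1+qA$.

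Alternatively, induct on the depth of $\mu$. Suppose some $u\in L'[\mu]$ is not in $L_\lambda$, and let $N\geqslant1$ be minimal with $v:=q^Nu\in L_\lambda$. Using $\tilde e_iL'\subseteq L'$ and the inductive hypothesis $L'[\mu+\alpha_i]=L_\lambda[\mu+\alpha_i]$, you get $\tilde e_iv=q^N\tilde e_iu\in q^NL_\lambda\subseteq qL_\lambda$ for every $i$. At the same time $v\notin qL_\lambda$. So $\bar v:=v\bmod qL_\lambda$ is a nonzero element of $\operatorname{span}B_\lambda[\mu]$ with $\tilde e_i\bar v=0$ for all $i$.

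By condition~4, $\tilde e_i$ is injective on $\{b\in B_\lambda:\tilde e_ib\neq0\}$. Hence every $b$ in the support of $\bar v$ has $\tilde e_ib=0$ for all $i$. This is impossible for $\mu\neq\lambda$: such a $b$ equals $\tilde f_jb'$ with $b'\in B_\lambda$, so $\tilde e_jb=b'\neq0$. Therefore $L'=L_\lambda$, and $B_\lambda\subseteq B'$ then forces $B'=B_\lambda$.

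Two smaller points in the outline:
\begin{itemize}
\item The bound $(L_\lambda,L_\lambda)\subseteq A$ alone does not make $B_\lambda$ orthonormal modulo $q$. You also need something like the adjointness of $\tilde e_i$ and $\tilde f_i$ modulo $q$.
\item The embedding $V_\lambda\hookrightarrow V_{\lambda-\Lambda_i}\otimes V_{\Lambda_i}$ exists only when $\langle\alpha_i^\vee,\lambda\rangle\geqslant1$, and it presupposes crystal bases on both factors.
\end{itemize}
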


\begin{theorem}[Kashiwara, \cite{Kashiwara_1991}]
\label{th_2.2}
Let  $M$ be a $U_q(\mathfrak{g})$-module in $\Cr{O}_{int}(\mathfrak{g})$ and let $(L,B)$ be a crystal base of $M$ then there exists a $U_q(\mathfrak{g})$-linear isomorphism $$M \rightarrow \bigoplus_j V_{\lambda_j}$$ by which $(L, B)$ is isomorphic to $\bigoplus_j (L_{\lambda_j}, B_{\lambda_j})$. 
\end{theorem}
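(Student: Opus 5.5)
We would prove Theorem~\ref{th_2.2} by peeling off highest weight components one weight at a time, using Theorem~\ref{th_2.0} for the module structure and Theorem~\ref{th_2.1} for the crystal structure of each component. By Theorem~\ref{th_2.0}(3) the module $M$ is completely reducible. We may therefore work weight by weight with respect to the dominance order. Every vector of $M$ lies in a finite sum of $V_\mu$'s, and each $V_\mu$ has weights bounded above. So it suffices to build the isomorphism compatibly on the submodules generated by the weight spaces lying above a given weight, and then pass to the union. We argue by induction along this filtration.

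\emph{Step 1: one maximal weight.} Choose a weight $\lambda$ that is maximal among the weights of $M$ not yet treated. Then $e_iM[\lambda]=0$ for all $i$, and in $B$ every $b\in B[\lambda]$ satisfies $\tilde e_ib=0$. Lift the elements $b_1,\dots,b_m$ of $B[\lambda]$ to vectors $u_1,\dots,u_m\in L[\lambda]$ forming an $A$-basis of $L[\lambda]$; this is possible because $L[\lambda]$ is free and $B[\lambda]$ is a basis of $L[\lambda]/qL[\lambda]$. By Theorem~\ref{th_2.0}(1),(4), each $u_j$ generates a copy $N_j\cong V_\lambda$, and $N=\bigoplus_j N_j=U_q(\mathfrak g)M[\lambda]$. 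Inside $N_j$ we set
\[
L^{(j)}=\sum A\,\tilde f_{i_1}\cdots\tilde f_{i_k}u_j .
\]
Since the Kashiwara operators preserve $L$, we have $L^{(j)}\subset L$. Moreover, by Theorem~\ref{th_2.1} and the uniqueness statement there, $(L^{(j)},B^{(j)})$ is the crystal base of $N_j\cong V_\lambda$. Its reduction mod $q$ is the connected component $B^{(j)}$ of the crystal graph of $B$ through $b_j$; this uses condition 3 of Definition~\ref{def_2.1}.

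\emph{Step 2: the splitting, which is the main obstacle.} We must show that $L\cap N=\bigoplus_j L^{(j)}$ and that $L=(L\cap N)\oplus L'$ for some complement $U_q(\mathfrak g)$-submodule $M'$ with $L'=L\cap M'$. Then $(L',B\setminus\bigsqcup_j B^{(j)})$ is a crystal base of $M'$, and induction applies to $M'$.

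The natural candidate for $M'$ is the $U_q(\mathfrak g)$-submodule generated by the weight vectors of $M$ that are killed by all $e_i$ and have weight $\neq\lambda$. To verify the lattice statement, we would first reduce to a single $i$, that is, to $U_q(\mathfrak g)_i\cong U_{q_i}(\mathfrak{sl}_2)$. There the decomposition
\[
M=\bigoplus f_i^{(k)}(M[\mu]\cap\ker e_i)
\]
shows that the operators $\tilde e_i,\tilde f_i$ respect the splitting $M=N\oplus M'$. From this we would deduce that the projection of $L$ onto $N$ is stable under $\tilde e_i$ and $\tilde f_i$.

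Starting from the top weight, any element of this projection can be pushed up by $\tilde e_i$'s to weight $\lambda$, where the projection lies in $L[\lambda]$. This gives the inclusion of the projection into $\bigoplus_j L^{(j)}$, and hence the required splitting. The point needing the most care is the Nakayama-type argument mod $q$. One must check that the images of $L\cap N$ and $L\cap M'$ in $L/qL$ are spanned by $\bigsqcup_j B^{(j)}$ and by its complement in $B$, respectively, and that these images intersect trivially. For this we would use condition 4 of Definition~\ref{def_2.1}, which forces the crystal graph of $B$ to split into connected components.

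\emph{Step 3: assembling.} Applying the induction to $M'$ and combining with Step 1 gives a $U_q(\mathfrak g)$-linear isomorphism $M\cong\bigoplus_j V_{\lambda_j}$. Under this isomorphism $L$ corresponds to $\bigoplus_j L_{\lambda_j}$ and $B$ corresponds to $\bigsqcup_j B_{\lambda_j}$. Compatibility with the direct sums of local bases is automatic from the construction.
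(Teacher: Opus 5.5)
The paper gives no proof of this statement (it is quoted from Kashiwara), so your argument has to stand on its own. Its frame is fine:
\begin{itemize}
\item for a maximal weight $\lambda$, $N$ is the $\lambda$-isotypic component;
\item the projection $\pi_N$ along $M'$ is $U_q(\mathfrak g)_i$-linear, so it commutes with $\tilde e_i,\tilde f_i$;
\item $\pi_N(L)=L\cap N$ would give $L=(L\cap N)\oplus(L\cap M')$.
\end{itemize}
But the decisive inference of Step 2 is missing. That the $\tilde e$-images of $v\in\pi_N(L)[\mu]$ lie in $L_N:=\bigoplus_jL^{(j)}$ tells you nothing about $v$ by itself. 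Kashiwara operators are not invertible on $M$ ($\tilde f_i\tilde e_i$ is the projection killing the $\ker e_i$-component), so you cannot push up and come back down. What is needed is the lemma: \emph{if $v\in N[\mu]$, $\mu\neq\lambda$, and $\tilde e_iv\in L_N$ for all $i$, then $v\in L_N$.} Its proof uses the mod-$q$ highest-weight structure of $B_N:=\bigsqcup_jB^{(j)}$. If $v\notin L_N$, take the least $s\geq1$ with $q^sv\in L_N$; then $\overline{q^sv}\in L_N/qL_N$ is nonzero and killed by every $\tilde e_i$. By condition 4, $\tilde e_i$ is injective on the elements of $B_N$ it does not kill. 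Hence $\overline{q^sv}$ is a combination of elements of $B_N[\mu]$ killed by all $\tilde e_i$. The only such elements of $B_N$ are the $b_j$, which have weight $\lambda\neq\mu$, a contradiction. With this lemma, induction on $\operatorname{ht}(\lambda-\mu)$ gives $\pi_N(L)=L_N$. The same mod-$q$ argument, applied by induction on weight to $\bar\pi_N(b)$ for $b\in B\setminus B_N$ (note $\tilde e_ib\notin B_N$ by condition 4), places $B\setminus B_N$ inside $(L\cap M')/q(L\cap M')$. Your remark that condition 4 splits the crystal graph into components does not do this. What must be checked is that $B$ is compatible with the linear decomposition of $L/qL$, which is not a property of the graph.

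Separately, the reduction to a module with a maximal weight is not justified. With the paper's definition, $\mathcal O_{int}$ contains modules such as $\bigoplus_{k\geq0}V_{k\rho}$, whose weights have no maximal element. The submodules $U_q(\mathfrak g)\bigoplus_{\nu\geq\mu}M[\nu]$ by which you propose to exhaust $M$ can have the same defect. Running the argument on them would also require knowing in advance that $L$ is compatible with them, which is essentially the theorem. Without a maximal weight, $M[\lambda]$ no longer consists of highest weight vectors. The real point then becomes showing that every $b\in B$ with $\tilde e_ib=0$ for all $i$ lifts to an element of $L$ annihilated by all $e_i$, and your outline does not address it. For finite-dimensional $M$, which is all the paper actually uses (e.g.\ for $V_\lambda\otimes V_\nu$), this issue disappears. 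In that case your argument, with the lemma above supplied, goes through by induction on $\dim M$.
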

It is a direct check that for each $\lambda \in P_+$, $B_\lambda$ defined in Theorem \ref{th_2.1} is a normal highest weight crystal of highest weight $\lambda$. We claim that these crystals $B_\lambda, \lambda \in P_+$ form a closed family of normal crystals, which is unique by Theorem \ref{th_1.1}. It suffices to show for any $\lambda, \nu \in P_+$ that $B_{\lambda + \nu}$ can be embedded into $B_\lambda \tens B_\nu$, as any such embedding would send the highest weight element of $B_{\lambda + \nu}$ to the tensor product of highest weight elements of $B_\lambda$ and $B_\nu$. Consider $U_q(\mathfrak{g})$-module $V_\lambda  \tens V_\nu$. The element $u_\lambda \tens u_\nu \in V_\lambda \tens V_\nu$ is annihilated by all $e_i, i \in I$: $$\Delta(e_i) (u_\lambda \tens u_\nu) = e_iu_\lambda \tens t_i^{-1}u_\nu  + u_\lambda \tens e_i u_\nu.$$
 and is of the weight $\lambda + \nu$: 
 $$\Delta(q^{h}) (u_\lambda \tens u_\nu) = q^{\langle \lambda + \nu, h\rangle } u_\lambda \tens u_\nu.$$
Therefore, since the module $V_\lambda \tens V_\nu \in \operatorname{Ob}(\Cr{O}_{int}(\mathfrak{g}))$  is completely reducible by Theorem \ref{th_2.0}, it has an irreducible component isomorphic to $V_{\lambda + \nu}$ generated by the vector $u_\lambda \tens u_\nu$. Hence, by Theorem \ref{th_2.2} there is a $U_q(\mathfrak{g})$-linear embedding of $V_{\lambda + \nu}$ into $V_\lambda \tens V_\nu$, which induces an embedding of $B_{\lambda + \nu}$ into $B_\lambda \tens B_\nu$  and we are done.  

Note that the closedness of the category \underline{\textbf{$\mathfrak{g}$-$\text{Crystals}$}} under the tensor product operation now obviously follows from Theorem \ref{th_2.2}.

\subsection{Spinor crystal}
\label{spinor_crystal}

In the previous two chapters we assumed that $\mathfrak{g}$ is a finite-dimensional complex semisimple Lie algebra. In fact, most of the results discussed in these chapters hold for arbitrary Kac-Moody Lie algebras. Now we consider a very special case by setting $\mathfrak{g} = D_n$. For $\mathfrak{g} = D_n$ there is a fundamental object in the \underline{\textbf{$\mathfrak{g}$-Crystals}} category called \textit{spinor crystal}.  Spinor crystal arises from the crystal base of $U_q(\mathfrak{g})$-module $V_{sp}^{-} \oplus V_{sp}^{+} $ described in section $6.4$ of \cite{Kashiwara_Nakashima}.  Spinor crystal has several remarkable properties discussed in this chapter. 

First, we specify the notations. Let $\mathfrak{h}$ be the Cartan subalgebra of $\mathfrak{g} =D_n$ and  $\{\epsilon_i, i = 1, 2, \ldots n \}$ be the basis of $\mathfrak{h}^{*}$, such that $(\epsilon_i, \epsilon_j) = \delta_{ij}$, where $(\cdot, \cdot)$ is the ad-invariant scalar product on $\mathfrak{h}_{\mathbb{R}}^{*} = \operatorname{span}_{\mathbb{R}}\{\epsilon_i, i = 1, 2, \ldots, n\}$. The roots of $\mathfrak{g}$ have the form $\pm \epsilon_i \pm \epsilon_j, (i \neq j).$ We choose a polarization so that the set of simple roots $\Pi = \{ \alpha_1,  \ldots, \alpha_{n-1}, \alpha_n \}$, where $\alpha_i = \epsilon_i - \epsilon_{i+1}$ for $1 \leqslant i < n$ and $\alpha_n = \epsilon_{n-1} + \epsilon_n$. Let $\Pi^{*} = \{\alpha^{\vee}_1, \ldots \alpha^{\vee}_n \}$ be the set of simple co-roots and $\{\Lambda_i\}_{1 \leqslant i \leqslant n}$ be the set of fundamental weights of $\mathfrak{g}$. We write all weights in the basis $\{\epsilon_i, i = 1, 2, \ldots n \},$ so $(\lambda_1, \lambda_2, \ldots, \lambda_n) = \sum_{i=1}^{n} \lambda_i \epsilon_i.$ Then the weight lattice has the form $$P = \{(\lambda_1, \cdots, \lambda_n) \hspace{1mm} | \hspace{1mm} \lambda_i \in \frac{1}{2}\mathbb{Z}, \lambda_i - \lambda_j \in \mathbb{Z}\}$$ and the set of dominant integral weights $$P_+ = \{(\lambda_1, \cdots, \lambda_n) \in P\hspace{1mm} | \hspace{1mm} \lambda_1 \geqslant \lambda_2 \geqslant \cdots \geqslant \lambda_n, \lambda_{n-1} + \lambda_n \geqslant 0\}.$$ We have the following explicit formulas for fundamental weights of $\mathfrak{g}$: $$\Lambda_i  = \epsilon_1 + \ldots + \epsilon_i$$ for $i = 1, 2 \ldots, n-2$ and \begin{gather*} \Lambda_{n-1} = \frac{1}{2}(\epsilon_1 + \ldots + \epsilon_{n-1} - \epsilon_{n}), \\ \Lambda_{n} = \frac{1}{2}(\epsilon_1 + \ldots + \epsilon_{n-1}+ \epsilon_{n}).
\end{gather*}
\begin{definition}
\label{def_3.1}
Simple roots of $D_n$ are labeled by the index set $\{1, 2, \ldots, n\}$. We say that the index $i$ is \textbf{adjacent} to $j$ iff the roots $\alpha_i, \alpha_j \in \Pi$ share a common edge in the corresponding Dynkin diagram (see the picture below). 

We denote the subset of indices adjacent to $i$ by $\operatorname{AD}(i)$.
    \begin{center}
    \dynkin[edge length=1cm, root radius=0.1cm, arrow width=0.2cm, labels = {\alpha_1,\alpha_2, , , , , \alpha_{n-2}, \alpha_{n-1},  \alpha_n}]{D}{9}
    \end{center}

Looking at the picture above we can write for $n \geqslant 4$,  \begin{itemize}
    \item $\operatorname{AD} (1) = \{2\}$,
    \item $\operatorname{AD} (i) = \{i-1, i+1\}, \hspace{2mm} \forall  1 < i < n-2$,
    \item $\operatorname{AD}(n-2) = \{n-3, n-1, n\}$,
    \item $\operatorname{AD}(n-1) = \operatorname{AD}(n) = \{n-2\}$.
\end{itemize}
\end{definition}

It is clear from the definition above that $i \in \operatorname{AD}(j)$ $\Leftrightarrow$ $j \in \operatorname{AD}(i)$. 
Notice also that $$ (\alpha_i, \alpha_j) = \begin{cases}
-1, \hspace{1mm} \text{if} \hspace{1mm} i \in \operatorname{AD}(j), \\ 2,  \hspace{1mm}\text{if} \hspace{1mm} i =j , \\ 0, \hspace{1mm } \text{otherwise}.\end{cases}$$

\begin{definition}
\label{def_3.2}
Denote by
$$\mathcal{B}_S = \{ b = (i_1, \ldots, i_n); i_j = \pm \}.$$ We set $$
\operatorname{wt}(b) = \frac{1}{2} \sum_{j=1}^{n} i_j \epsilon_j$$ for $b = (i_1, i_2, \ldots, i_n)$.   
We define the action  of the crystal operators $\tilde{e}_j, \tilde{f}_j, j = 1, 2, \ldots, n-1$ on the set $\mathcal{B_S}$ as 
\begin{gather}
    \tilde{e}_j(i_1, \ldots i_n) = \begin{cases}(i_1, \ldots, \overset{j}{+}, \overset{j+1}{-}, \ldots, i_n),  \hspace{4mm} i_j = - \hspace{2mm} \text{and} \hspace{2mm} i_{j+1} = +,\\
    0, \hspace{2mm} \text{otherwise};
    \end{cases} \\
 \tilde{f}_j(i_1, \ldots i_n) = \begin{cases}(i_1, \ldots, \overset{j}{-}, \overset{j+1}{+}, \ldots, i_n),  \hspace{4mm} i_j = + \hspace{2mm} \text{and} \hspace{2mm} i_{j+1} = -,\\
    0, \hspace{2mm} \text{otherwise}.
    \end{cases}    
\end{gather}
Operators $\tilde{e}_n, \tilde{f}_n$ act the following way 
\begin{gather}
    \tilde{e}_n(i_1, \ldots i_n) = \begin{cases}(i_1, \ldots, \overset{n-1}{+}, \overset{n}{+}),  \hspace{4mm} i_{n-1} = - \hspace{2mm} \text{and} \hspace{2mm} i_{n} = -,\\
    0, \hspace{2mm} \text{otherwise};
    \end{cases} \\
  \tilde{f}_n(i_1, \ldots i_n) = \begin{cases}(i_1, \ldots, \overset{n-1}{-}, \overset{n}{-}),  \hspace{4mm} i_{n-1} = + \hspace{2mm} \text{and} \hspace{2mm} i_{n} = +,\\
    0, \hspace{2mm} \text{otherwise}.
    \end{cases}    
\end{gather}

The set $\mathcal{B}_S$ equipped with the maps $\operatorname{wt}, \tilde{e}_j,\tilde{f}_j $ $(j = 1, 2, \ldots n)$ is an object of the category  \underline{\textbf{$D_n$-Crystals}}  and is called \textbf{spinor crystal}. 

\end{definition}

It is a direct computation that $\mathcal{B}_{S} = \mathcal{B}_{\Lambda_{n-1}} \oplus 
\mathcal{B}_{\Lambda_{n}}$, where $\mathcal{B}_{\Lambda_{n-1}}, 
\mathcal{B}_{\Lambda_{n}}$ are the highest weight crystals, arising from crystal bases of $U_q(\mathfrak{g})$ modules $V_{\Lambda_{n-1}}, V_{\Lambda_{n}}$ respectively. Here we remind that by default the word "crystal" means an object in the \underline{\textbf{$\mathfrak{g}$-Crystals}}  category.  We denote by $P[S] = \Big\{
\big( \pm \frac{1}{2}, \pm \frac{1}{2}, \ldots, \pm \frac{1}{2} \big) \Big\}$ the set of weights of the elements of the spinor crystal. 

Let us observe some simple properties of the spinor crystal. 

\begin{proposition}
\label{prop_3.1}
For any element $b \in \Cr{B}_S$: 
\begin{gather} \label{eq_1}
\tilde e_i^{2}b = \tilde f_i^{2}b  = 0 \Leftrightarrow \varepsilon_i(b) + \varphi_i(b) \leqslant 1, \hspace{2mm} \forall i \in \{1, \ldots, n \} \\ \label{eq_2}
\varepsilon_i(b) \cdot \varepsilon_j(b) = 0 = \varphi_i (b) \cdot \varphi_j(b), \hspace{1mm} \text{if} \hspace{1mm} i \in \operatorname{AD}(j).
\end{gather}
Also, if $b_1, b_2 \in \Cr{B}_S$ and $\operatorname{wt}(b_1) = \operatorname{wt}(b_2)$ then $b_1 = b_2$. 

\end{proposition}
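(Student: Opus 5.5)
The plan is a direct verification from the explicit description of $\mathcal{B}_S$ in Definition \ref{def_3.2}. The key preliminary step is to compute $\varepsilon_i$ and $\varphi_i$ explicitly. Since $\mathcal{B}_S$ is normal, $\varepsilon_i(b)=\max\{k:\tilde e_i^k b\neq 0\}$ and $\varphi_i(b)=\max\{k:\tilde f_i^k b\neq 0\}$.

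For $1\leqslant i\leqslant n-1$, the operator $\tilde e_i$ acts nontrivially only when $(i_i,i_{i+1})=(-,+)$, and its output has the pattern $(+,-)$. On that pattern $\tilde e_i$ vanishes. Hence $\tilde e_i^2 b=0$ always, and symmetrically $\tilde f_i^2 b=0$. This gives
\begin{gather*}
\varepsilon_i(b)=[\,i_i=-,\ i_{i+1}=+\,], \qquad \varphi_i(b)=[\,i_i=+,\ i_{i+1}=-\,],
\end{gather*}
where $[\cdot]$ is the indicator. Likewise, for $i=n$ we get $\varepsilon_n(b)=[\,i_{n-1}=i_n=-\,]$ and $\varphi_n(b)=[\,i_{n-1}=i_n=+\,]$. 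As a consistency check, $\varphi_i-\varepsilon_i$ equals $\langle \operatorname{wt}(b),\alpha_i^\vee\rangle$, which is $\tfrac12(i_i-i_{i+1})$ or $\tfrac12(i_{n-1}+i_n)$, read as signs.

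With these formulas, (\ref{eq_1}) follows at once. Both sides of the equivalence are true for every $b$: the left side we just showed, and for the right side the two patterns defining $\varepsilon_i$ and $\varphi_i$ are mutually exclusive, so $\varepsilon_i+\varphi_i\leqslant 1$.

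For (\ref{eq_2}) I would run through the adjacency list of Definition \ref{def_3.1}; this case check is the only place requiring care. There are three cases.
\begin{itemize}
\item For $j=i+1$ with $i+1\leqslant n-1$: $\varepsilon_i\varepsilon_{i+1}\neq 0$ would force $i_{i+1}=+$ from $\varepsilon_i$ and $i_{i+1}=-$ from $\varepsilon_{i+1}$, a contradiction. The argument for $\varphi$ is the same.
\item For the pair $(n-1,n)$, which are not adjacent, nothing is needed.
\item For the pair $(n-2,n)$: $\varepsilon_{n-2}\neq0$ needs $i_{n-1}=+$, while $\varepsilon_n\neq 0$ needs $i_{n-1}=-$. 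For $\varphi$, $\varphi_{n-2}\neq0$ needs $i_{n-1}=-$, while $\varphi_n\neq0$ needs $i_{n-1}=+$. Both products therefore vanish.
\end{itemize}

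Finally, the last claim is immediate: $\operatorname{wt}(b)=\tfrac12\sum_j i_j\epsilon_j$ determines every sign $i_j$, so the map $\operatorname{wt}$ is injective on $\mathcal{B}_S$.
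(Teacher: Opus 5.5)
Your proof is correct. For the first line and for injectivity of $\operatorname{wt}$ you do what the paper does, which is to read them off the definition. You are more careful in two ways. You derive the explicit indicator formulas for $\varepsilon_i,\varphi_i$ from normality. You also note that the ``$\Leftrightarrow$'' holds because both sides are true for every $b$, and this is the form in which the paper later uses it.

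For (\ref{eq_2}) the routes differ. The paper argues abstractly. Suppose $\varepsilon_i(b)=\varepsilon_j(b)=1$ with $i\in\operatorname{AD}(j)$. Then $\langle \operatorname{wt}(\tilde e_i b),\alpha_j^\vee\rangle=\varphi_j(b)-\varepsilon_j(b)+\langle\alpha_i,\alpha_j^\vee\rangle=-2$. But the bound $\varepsilon_j+\varphi_j\leqslant 1$, applied to $\tilde e_i b$, forces this pairing to be $\geqslant -1$. The $\varphi$ half follows symmetrically using $\tilde f_i$, which the paper leaves implicit. That argument uses only the crystal axioms, the first line, and the negative Cartan integer. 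So it works unchanged for any normal crystal with $\varepsilon_i+\varphi_i\leqslant 1$ for all $i$, independent of the sign-vector model.

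Your case check instead uses the explicit model. The shared coordinate ($i_{i+1}$, or $i_{n-1}$ for the pair $(n-2,n)$) would have to carry both signs at once. This is more elementary and makes the role of the $D_n$ adjacency completely transparent, including the fact that $n-1$ and $n$ are not adjacent. It also treats the $\varphi$ half explicitly.
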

\begin{proof}
The first line is clear from the definition of crystal operators $\tilde e_i, \tilde f_i$. 

If $\varepsilon_i(b) > 0$, then $\varepsilon_i(b) = 1$ and $\varphi_i(b) = 0$ by the first line.  Consider the element $\tilde e_ib$. It follows from the first axiom of $\mathfrak{g}$-crystals that $$\langle \operatorname{wt}(\tilde e_ib), \alpha_j^{\vee} \rangle = \varphi_j(\tilde e_ib) - \varepsilon_j(\tilde e_ib)$$
On the other hand $\operatorname{wt}(\tilde e_ib) = \operatorname{wt}(b) + \alpha_i$, so $$\langle \operatorname{wt}(\tilde e_ib), \alpha_j^{\vee} \rangle = \langle \operatorname{wt}(b) , \alpha_j^{\vee} \rangle + \langle \alpha_i , \alpha_j^{\vee} \rangle = \varphi_j(b) - \varepsilon_j(b) - 1 $$
So, if $\varepsilon_j(b) > 0$, then $\varepsilon_j(b) = 1$,  $\varphi_j(b) =0$  and therefore,$$\varphi_j(\tilde e_ib) - \varepsilon_j(\tilde e_ib) = -2,$$
which is a contradiction since $$\varphi_j(\tilde e_ib) - \varepsilon_j(\tilde e_ib) \geqslant -\varphi_j(\tilde e_ib) - \varepsilon_j(\tilde e_ib) \geqslant -1$$ by the first line. 

The last part of the proposition is evident from the definition of the weight of an element of spinor crystal.  
\end{proof}

The latter proposition implies that the crystal graph of the spinor crystal admits the proper edge coloring (no edges of the same color share a common vertex), where the colors correspond to the indices of the respective crystal operator.  
It turns out that the spinor crystal $\Cr{B}_S$ is in a sense the "smallest generator" of the category \underline{\textbf{$D_n$-Crystals}}.  Any highest weight crystal $\Cr{B}_\lambda$, $\lambda \in P_+$ can be embedded into the tensor power of the spinor crystal $\Cr{B}_S^{\otimes N}$ for some natural $N$. So, by taking tensor powers of the spinor crystal and looking at their connected (irreducible) components we obtain all simple objects of the \underline{\textbf{$D_n$-Crystals}} category. Hence, it is natural to study tensor powers of the spinor crystal.  The following proposition is crucial for this study. 

\begin{proposition}
\label{prop_3.2}
    Let $\mathcal{B}_S$ be the spinor crystal and $\Cr{B}_\lambda$ be the highest weight crystal of the highest weight $\lambda \in P_+$. Then, 
    \begin{gather} \label{eq_3} \mathcal{B}_\lambda \otimes \mathcal{B}_S = \bigoplus_{\substack{\mu \in P[S],  \\ \lambda + \mu \in P_+}} \mathcal{B}_{\lambda + \mu},\end{gather}
    and the highest weight element of the crystal $\mathcal{B}_{\lambda + \mu}$ has the following form:
    $$b_\lambda \otimes b_\mu,$$
    where $b_\mu$ is the only element of the weight $\mu$ in $\mathcal{B}_S$ and $b_\lambda$ is the highest weight element in $\mathcal{B}_\lambda$.
\end{proposition}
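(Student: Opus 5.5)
The plan is to identify the highest weight elements of $\mathcal{B}_\lambda \otimes \mathcal{B}_S$ directly, and then use the semisimplicity of the category \underline{\textbf{$D_n$-Crystals}} to read off the decomposition. By Theorem \ref{th_2.2} and the discussion after it, $\mathcal{B}_\lambda \otimes \mathcal{B}_S$ is a direct sum of crystals $\mathcal{B}_\nu$ from the closed family. Each summand has exactly one highest weight element, namely an element killed by all $\tilde e_i$. So it suffices to determine the set of elements $b \otimes b' \in \mathcal{B}_\lambda \otimes \mathcal{B}_S$ with $\tilde e_i(b\otimes b')=0$ for all $i$, and to record their weights.

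\textbf{Step 1 (the criterion).} By the tensor product rule in Definition \ref{def_1.2} and normality, $\tilde e_i(b\otimes b') = 0$ holds iff $\varepsilon_i(b') \leqslant \varphi_i(b)$ and $\tilde e_i b = 0$. Indeed, if $\varepsilon_i(b')>\varphi_i(b)$ the operator acts on $b'$, which is nonzero since $\varepsilon_i(b')>0$. Otherwise it acts on $b$. Hence $b$ must be a highest weight element of $\mathcal{B}_\lambda$, which forces $b=b_\lambda$ because $\mathcal{B}_\lambda$ is a highest weight crystal. In that case $\varphi_i(b_\lambda)=\langle\lambda,\alpha_i^\vee\rangle$, so the condition becomes $\varepsilon_i(b')\leqslant \langle \lambda,\alpha_i^\vee\rangle$ for all $i$.

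\textbf{Step 2 (translating to dominance).} Take $b'=b_\mu$, which is well defined by the last claim of Proposition \ref{prop_3.1}. I claim that
\[
\varepsilon_i(b_\mu)\leqslant\langle\lambda,\alpha_i^\vee\rangle \ \forall i \iff \lambda+\mu\in P_+ .
\]
Proposition \ref{prop_3.1} gives $\varepsilon_i,\varphi_i\in\{0,1\}$ with $\varepsilon_i\varphi_i=0$. Hence $\langle\mu,\alpha_i^\vee\rangle=\varphi_i-\varepsilon_i$ equals $-1$ exactly when $\varepsilon_i=1$, and is $\geqslant 0$ otherwise.

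For the forward direction: if $\varepsilon_i(b_\mu)=1$, then $\langle\lambda+\mu,\alpha_i^\vee\rangle=\langle\lambda,\alpha_i^\vee\rangle-1\geqslant 0$. If $\varepsilon_i(b_\mu)=0$, then $\langle\lambda+\mu,\alpha_i^\vee\rangle\geqslant \langle\lambda,\alpha_i^\vee\rangle\geqslant 0$. So $\lambda+\mu$ is dominant.

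For the converse: if $\varepsilon_i(b_\mu)=1$, then $\langle\lambda,\alpha_i^\vee\rangle = \langle\lambda+\mu,\alpha_i^\vee\rangle+1\geqslant 1$. If $\varepsilon_i(b_\mu)=0$, the inequality is trivial.

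\textbf{Step 3 (conclusion).} The highest weight elements are therefore exactly the $b_\lambda\otimes b_\mu$ with $\mu\in P[S]$ and $\lambda+\mu\in P_+$. Their weights $\lambda+\mu$ are pairwise distinct, since the $\mu$ are distinct. Each generates a connected component, which by definition of the category is isomorphic to $\mathcal{B}_{\lambda+\mu}$. The components are exhausted because each component contains exactly one highest weight element. This yields formula (\ref{eq_3}) together with the description of the highest weight elements.

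The main obstacle is the semisimplicity input: that every connected component of the tensor product is some $\mathcal{B}_\nu$, and hence contains a unique highest weight element which generates it. I take this from the closedness of \underline{\textbf{$\mathfrak{g}$-Crystals}} under $\otimes$, established after Theorem \ref{th_2.2}. The remaining steps are the elementary $\{0,1\}$ bookkeeping of Step 2.
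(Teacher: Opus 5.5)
Your proposal is correct and follows essentially the same argument as the paper. The paper also first shows that any highest weight element has first factor $b_\lambda$, using $\varepsilon_i(b_1\otimes b_2)\geqslant\varepsilon_i(b_1)$. It then checks that $\varepsilon_i(b_\lambda\otimes b_\mu)=\max(0,\varepsilon_i(b_\mu)-\varphi_i(b_\lambda))$ vanishes for all $i$ exactly when $\lambda+\mu\in P_+$, via $\varepsilon_i+\varphi_i\leqslant 1$ on $\mathcal{B}_S$. Working with the $\tilde e_i$ action rule instead of the $\varepsilon_i$ formula is only a cosmetic difference.
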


\begin{proof}
 $\Cr{B}_\lambda \tens \Cr{B}_S$ can be decomposed into the direct sum of highest weight crystals: \begin{gather} \label{eq_4} \Cr{B}_\lambda \tens \Cr{B}_S = \bigoplus_{\nu \in X} \Cr{B}_{\nu}^{\oplus m_\nu},\end{gather} where $X \subset P_+$. To compute this decomposition it is enough to find all the highest weight elements in $\Cr{B}_\lambda \tens \Cr{B}_S$. 

Let  $b_1 \tens b_2$  be a highest weight element in $\Cr{B}_\lambda \tens \Cr{B}_S$. Then, by definition 
\begin{gather*}
    \tilde e_i (b_1 \tens b_2) = 0, \hspace{2mm} \forall i \in \{1, 2, \ldots n\}
\end{gather*}
Notice, that if $\varepsilon_i (b_1) > 0$, then $\tilde e_i(b_1 \tens b_2) \neq 0$. In fact, from the definition of tensor product, we have \begin{gather}\varepsilon_i(b_1 \tens b_2) = \varepsilon_i(b_1) + \max(0, \varepsilon_i(b_2) - \varphi_i(b_1)) \geqslant \varepsilon_i(b_1) > 0\end{gather}
Therefore, if $b_1 \tens b_2$ is a highest weight element, then $\varepsilon_i(b_1) = 0, \hspace{2mm} \forall i \in \{1, \ldots n\}$ meaning that $b_1$ is the highest weight element of  $\Cr{B_\lambda}$, which we denoted by $b_\lambda$. It follows that in the formula \ref{eq_4} all multiplicities $m_{\nu} = 1$, because each component $\Cr{B}_{\nu}$ is generated by a highest weight element of highest weight $\nu$, and all the highest weight elements in $\Cr{B}_{\lambda} \otimes \Cr{B}_S$ have different weights, as there are no elements of the same weight in $\Cr{B}_S$.

Let $b_{\mu}$ be the unique element of weight $\mu \in P[S]$ in the spinor crystal $\Cr{B}_S$. We want to determine when $b_{\lambda} \tens b_{\mu}$ is a highest weight element in $\Cr{B}_\lambda \tens \Cr{B}_S$. Suppose $\lambda + \mu \in P_+$. By definition of dominant integral weights we have: 
$$\langle \lambda + \mu , \alpha_i^{\vee}\rangle \geqslant 0, \hspace{2mm} \forall i \in \{1, \ldots n\}.$$ 
By the first axiom of crystals: 
\begin{gather}
\varphi_{i}(b_\lambda) = \varphi_i(b_\lambda) - \varepsilon_i(b_\lambda) = \langle \lambda, \alpha_{i}^{\vee}\rangle, \hspace{2mm}  \\
\varphi_i(b_\mu) - \varepsilon_i(b_\mu) = \langle \mu, \alpha_{i}^{\vee}\rangle.
\end{gather}
By linearity of contraction $\langle \lambda + \mu , \alpha_i^{\vee}\rangle = \langle \lambda , \alpha_i^{\vee}\rangle + \langle \mu , \alpha_i^{\vee}\rangle$. So, we obtain that 
\begin{gather}
\label{eq_5}
\varphi_{i}(b_{\lambda}) + \varphi_i(b_\mu) - \varepsilon_i(b_\mu) \geqslant 0,  \hspace{2mm} \forall i \in \{1, \ldots n\} 
\end{gather}
By definition of the tensor product of crystals  \begin{gather}
\varepsilon_i(b_\lambda \tens b_\mu) = \varepsilon_i(b_\lambda) + \max(0, \varepsilon_i(b_\mu) - \varphi_i(b_\lambda)) = \max(0, \varepsilon_i(b_\mu) - \varphi_i(b_\lambda)) \end{gather}
Inequality \ref{eq_5} can be rewritten as 
$$ \varphi_i(b_\mu)  \geqslant \varepsilon_i(b_\mu) - \varphi_{i}(b_{\lambda}),  \hspace{2mm} \forall i \in \{1, \ldots n\}.$$
Since $b_\mu \in \Cr{B}_S$, we have $\varepsilon_i(b_\mu) + \varphi_i(b_\mu) \leqslant 1$, and therefore,
$$1 - \varepsilon_i(b_\mu) \geqslant \varepsilon_i(b_\mu) - \varphi_i(b_\lambda).$$
Notice that if $\varepsilon_i(b_\mu) = 1$, then $\varepsilon_i(b_\mu) - \varphi_i(b_\lambda) \leqslant 0$ by the inequality above. On the other hand, if $\varepsilon_i(b_\mu) = 0$, then obviously $\varepsilon_i(b_\mu) - \varphi_i(b_\lambda) \leqslant 0$, as $\varphi_i(b_\lambda) \geqslant 0$. Therefore, 
$$\varepsilon_i(b_\lambda \tens b_\mu) = \max(0, \varepsilon_i(b_\mu) - \varphi_i(b_\lambda)) = 0, \hspace{2mm} \forall i \in \{1, \ldots n\}.$$
So, if $\lambda + \mu \in P_+$ , then $b_\lambda \tens b_{\mu}$ is a highest weight element in $\Cr{B}_\lambda \tens \Cr{B}_S$. 

Conversely, assume that $\lambda + \mu \notin P_+$. Then, there exists $i \in \{1, 2, \ldots n\}$, such that $\langle \lambda + \mu , \alpha_i^{\vee}\rangle < 0$. For such $i$ we have $$\varphi_i(b_\lambda) + \varphi_{i}(b_\mu) - \varepsilon_{i}(b_\mu) =\langle \lambda , \alpha_i^{\vee}\rangle + \langle \mu , \alpha_i^{\vee}\rangle =\langle \lambda + \mu , \alpha_i^{\vee}\rangle  < 0.$$
This inequality can be rewritten as $$\varepsilon_i(b_\mu) - \varphi_i(b_\lambda) > \varphi_{i}(b_\mu) \geqslant 0.$$ 
Hence, $\varepsilon_i(b_\lambda \tens b_\mu) = \max(0, \varepsilon_i(b_\mu) - \varphi_i(b_\lambda)) > 0$. So, if $\lambda + \mu \notin P_+$, then $b_\lambda \tens b_{\mu}$ is not a highest weight element in $\Cr{B}_\lambda \tens \Cr{B}_S$. 

We conclude that the highest weight elements in $\Cr{B}_\lambda \tens \Cr{B}_S$ are exactly those $b_\lambda \tens b_\mu$, for which $\lambda + \mu \in P_+$. It is evident that the element $b_\lambda \tens b_\mu$ is of the weight $\lambda + \mu$, so it is a highest weight element of the connected component of $\Cr{B}_\lambda \tens \Cr{B}_S$, isomorphic to $\Cr{B}_{\lambda + \mu}$. As a consequence, we obtain the following decomposition of $\Cr{B}_\lambda \tens \Cr{B}_S$ into the direct sum of highest weight crystals: $$\mathcal{B}_\lambda \otimes \mathcal{B}_S = \bigoplus_{\substack{\mu \in P[S],  \\ \lambda + \mu \in P_+}} \mathcal{B_{\lambda + \mu}}.$$

\end{proof}

From Proposition \ref{prop_3.2} we immediately get the following corollary: 

\begin{corollary}
\label{cor_3.1}
    The crystal $\mathcal{B}_S^{\otimes N}$ is isomorphic to the following direct sum of the highest weight crystals: 
\begin{gather}\label{eq_6} \mathcal{B}_S^{\otimes N} \cong \bigoplus_{(\mu_1, \mu_2, \ldots ,\mu_N) \in T^N} \mathcal{B}_{\mu_1 + \mu_2 + \ldots + \mu_N}, \end{gather}

where
\begin{gather} 
\label{eq_7}
    T^{N} = \Big\{(\mu_1, \mu_2, \ldots, \mu_N) \hspace{1mm} | \hspace{1mm} \mu_i \in P[S], \sum_{i=1}^{k} \mu_i \in P_+, \forall k \in \{1,2,\ldots, N\}  \Big\}.
\end{gather}
Moreover, the highest weight element of the connected component $\mathcal{B}_{\mu_1 + \mu_2 + \ldots + \mu_N}$ is equal to $b_{\mu_1} \otimes b_{\mu_2} \otimes \ldots \otimes b_{\mu_N}$, where $b_\mu$ is the only element of $\mathcal{B}_S$ of the weight $\mu$.
\end{corollary}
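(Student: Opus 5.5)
We prove Corollary \ref{cor_3.1} by induction on $N$, applying Proposition \ref{prop_3.2} once at each step. For $N=1$ we use the decomposition $\Cr{B}_S = \Cr{B}_{\Lambda_{n-1}} \oplus \Cr{B}_{\Lambda_n}$. The weights $\Lambda_{n-1}$ and $\Lambda_n$ are exactly the elements $\mu_1 \in P[S]$ that lie in $P_+$: dominance forces $\mu_1 = (\frac12,\ldots,\frac12,\pm\frac12)$. Hence $T^1 = \{\Lambda_{n-1}, \Lambda_n\}$. The highest weight elements are $(+,\ldots,+,\mp)$, and these are precisely $b_{\Lambda_{n-1}}$ and $b_{\Lambda_n}$.

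For the inductive step, assume the statement holds for $N-1$. Tensor the isomorphism \ref{eq_6} on the right with $\Cr{B}_S$. Tensor product distributes over direct sums: the underlying set of $(\bigoplus_j \Cr{A}_j)\tens \Cr{B}$ is $\coprod_j \Cr{A}_j \tens \Cr{B}$, and the operators in Definition \ref{def_1.2} only involve the two factors. We therefore get
\begin{equation*}
\Cr{B}_S^{\tens N} \cong \bigoplus_{(\mu_1,\ldots,\mu_{N-1}) \in T^{N-1}} \Cr{B}_{\mu_1+\ldots+\mu_{N-1}} \tens \Cr{B}_S .
\end{equation*}
Apply Proposition \ref{prop_3.2} to each summand, with $\lambda = \mu_1+\ldots+\mu_{N-1} \in P_+$. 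The summand splits as $\bigoplus \Cr{B}_{\lambda+\mu_N}$ over those $\mu_N \in P[S]$ with $\lambda + \mu_N \in P_+$. The pairs (a tuple in $T^{N-1}$, such a $\mu_N$) are exactly the tuples of $T^N$, which gives \ref{eq_6} for $N$.

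For the statement about highest weight elements, we need to track elements through the isomorphism, not just isomorphism classes. By induction, the component of $\Cr{B}_S^{\tens (N-1)}$ indexed by $(\mu_1,\ldots,\mu_{N-1})$ is the connected subcrystal $C$ generated by $b_{\mu_1}\tens\cdots\tens b_{\mu_{N-1}}$. There is an isomorphism $\Cr{B}_\lambda \to C$ sending $b_\lambda$ to this element, so $C \tens \Cr{B}_S \cong \Cr{B}_\lambda \tens \Cr{B}_S$, with $b_\lambda \tens b_{\mu_N}$ corresponding to $b_{\mu_1}\tens\cdots\tens b_{\mu_{N-1}} \tens b_{\mu_N}$. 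By associativity of the tensor product, $C \tens \Cr{B}_S$ is a subcrystal of $\Cr{B}_S^{\tens N}$. Proposition \ref{prop_3.2} says $b_\lambda \tens b_{\mu_N}$ is the highest weight element of $\Cr{B}_{\lambda+\mu_N}$, and transporting it gives the claim.

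The only point requiring care is this transport of highest weight elements. It needs the fact that each connected component is identified with $\Cr{B}_\lambda$ by an isomorphism fixing the designated generator, and that tensoring a crystal isomorphism with the identity on $\Cr{B}_S$ is again a crystal isomorphism. Both follow directly from Definition \ref{def_1.2}, since $\varepsilon_i$, $\varphi_i$ and $\operatorname{wt}$ are preserved by crystal isomorphisms. Everything else is bookkeeping with the defining condition of $T^N$.
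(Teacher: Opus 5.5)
Your proposal is correct and follows the paper's approach. The paper gives no separate proof: it calls the corollary immediate from Proposition \ref{prop_3.2} (by induction on $N$, as the introduction says). Your write-up carries out that induction explicitly, including the transport of the highest weight element $b_\lambda \otimes b_{\mu_N}$ through the identification of each component with $\Cr{B}_\lambda$.
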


It is clear that the set $T^N$ can be naturally decomposed into the disjoint union of the following sets:

\begin{gather}
    T^N = \coprod_{\lambda \in \Delta^N} T_\lambda^N,
\end{gather}

where

\begin{gather} \label{eq_8}
    T_\lambda^N = \Big\{ (\mu_1, \mu_2, \ldots, \mu_N) \in T^N\hspace{1mm} | \hspace{1mm} \sum_{i=1}^{N} \mu_i = \lambda\Big\}, \\ \label{eq_9}
    \Delta^N = \Big\{ \sum_{i=1}^{N} \mu_i  \hspace{1mm} | \hspace{1mm} (\mu_1, \mu_{2}, \ldots, \mu_N) \in T^N \Big\}
\end{gather}

The set $\Delta^N$ consists of the highest weights that are present in the decomposition of $\mathcal{B}_S^{\otimes N}$ into the direct sum of highest weight crystals. Connected components of $\mathcal{B}_S^{\otimes N}$ isomorphic to $\mathcal{B}_\lambda$ are indexed by the set $T_\lambda^N$. 

It turns out that both sets $\Delta^N$, $T_\lambda^N$ have nice combinatorial interpretations. We are going to discuss them in chapter \ref{cell_tables_&_sssyt} .

\subsection{${B}_{\infty}$  crystal. Kashiwara's involution}
\label{b_infty&Kas_involution}
In the previous chapters we primarily considered normal crystals from the category \underline{\textbf{$\mathfrak{g}$-Crystals}}. Now, for technical reasons we need to consider a couple of examples of crystals that are not normal. Assume we are in the setting of the chapter \ref{general_info}. 

\begin{example}
\label{ex_10.1}
For $\lambda \in P$, $A_\lambda$ is the crystal consisting of a single element $a_\lambda$ with $\operatorname{wt}(a_\lambda) = \lambda$, $\varepsilon_i(a_\lambda) = \varphi_i(a_\lambda) = - \infty$.  \end{example}
$A_\lambda$ is not a lower or upper normal crystal. We also consider another example of a crystal which is not normal. In the literature this crystal is often denoted by $T_\lambda$. But in this paper we are going to use the notation $T_\lambda^N$ for a completely different thing, so to avoid misunderstandings we are going to use the notation $A_\lambda$ instead. 

\begin{example}
\label{ex_10.2}
For $i \in I$ define the crystal $B_i$ as follows
\begin{gather*}
B_i = \{b_i(n) \hspace{1mm} | \hspace{1mm} n \in \mathbb{Z} \},\\
\operatorname{wt}(b_i(n)) = n\alpha_i, \\
\varphi_i(b_i(n)) = n, \hspace{2mm} \varepsilon_i(b_i(n)) = -n, \\
\varphi_j(b_i(n)) =\varepsilon_j(b_i(n)) = -\infty  \hspace{2mm}\text{if}  \hspace{2mm}j \neq i,
\end{gather*}
with the action 
\begin{gather*}
    \tilde e_i(b_i(n)) = b_i(n+1), \hspace{2mm} \tilde f_i(b_i(n)) = b_i(n-1), \\
    \tilde e_j(b_i(n)) = \tilde f_j(b_i(n)) = 0 \hspace{2mm}\text{if}  \hspace{2mm} j \neq i. 
\end{gather*}
\end{example}

\begin{example}
\label{ex_10.3}
${B}_\infty$ is a crystal associated with the crystal base of $U_q^{-}(\mathfrak{g})$ (cf. \cite{Kashiwara_1991}). We set \begin{gather*}\varepsilon_i(b) = \max \{n \geqslant 0; \tilde e_i^{n}b \neq 0\}, \\ \varphi_i(b) = \varepsilon_i(b) + \langle \alpha_i^\vee, \operatorname{wt}(b) \rangle. \end{gather*} Then ${B}_\infty$ is upper normal but not lower normal. The unique element of ${B}_{\infty}$ of weight $0$ is denoted by $b_\infty$. 
   
\end{example}
It follows from Theorem $5$ in \cite{Kashiwara_1991} that $\Cr{B}_\lambda$ can be regarded as a subcrystal of $B_\infty \tens A_\lambda$. We denote  the respective embedding of crystals by  $\mathcal{\kappa}_{\lambda, \infty}:  \Cr{B}_\lambda \hookrightarrow {B}_\infty \tens A_\lambda$.  It sends $b_\lambda $ to $b_\infty \tens a_\lambda$ and commutes with all the  $\tilde e_i$'s.  Note that this embedding is not strict, since it does not commute with all the $\tilde f_i$'s. 
 
We denote by $*$ the antiautomorphism of $U_q(\mathfrak{g})$ as $\mathbb{C}(q)$-algebra defined by
\begin{gather*}e_i^{*} = e_i, \\ f_i^{*} = f_i,\\ (q^{h})^{*} = q^{-h}.\end{gather*}
This antiautomorphism preserves $U_q^{-}(\mathfrak{g})$. It was proven in \cite{Kashiwara_1991} and \cite{Kashiwara_1993} that $*$ preserves the crystal lattice $L_\infty $ of $U_q^{-}(\mathfrak{g})$ and induces the involutive action on ${B}_{\infty}$ also denoted by $*$, see Theorem $2.1.1$ in \cite{Kashiwara_1993} .  Involution $*$ on $B_\infty$ is called Kashiwara's involution. For $b \in B_\infty$ we shall set \begin{gather*}
    \tilde e_i^{*}(b) :=  (\tilde e_i (b^*))^*, \\
    \tilde f_i^{*} := (\tilde f_i (b^*))^*, \\
    \varepsilon_i^{*}(b) := \varepsilon_i(b^*), \\
    \varphi_i^{*}(b) := \varphi_i(b^*),
\end{gather*}
The following theorem was proved in \cite{Kashiwara_1993}. 
\begin{theorem}[Kashiwara, theorem 2.2.1 in \cite{Kashiwara_1993}]
 \label{th_10.1}  

 For any $i \in I$ there exists a unique strict embedding of crystals: 
 $$\Psi_i: B_\infty \hookrightarrow B_\infty \tens B_i,$$ that sends $b_\infty$ to $b_\infty \tens b_i(0)$. 
  
  Let $b \in B_\infty$, such that $b = \tilde f_i^{*m}b_0$ with $\tilde e_i^* b_0 = 0$  ($b_0$ and $m$ are uniquely determined by $b$). Then, $$\Psi_i(b) = b_0 \tens  \tilde f_i^{m} b_i(0) = b_0 \tens b_i(-m).$$
 Moreover, if $$\Psi_i(b) = b_0 \tens b_i(-m),$$ then $$\Psi_i(\tilde f_i^*b) = b_0 \tens b_i(-m-1).$$
 \end{theorem}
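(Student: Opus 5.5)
Since $B_\infty$ is generated from $b_\infty$ by the operators $\tilde f_j$, uniqueness is immediate. A strict embedding commutes with every $\tilde f_j$, so any two strict embeddings sending $b_\infty$ to $b_\infty \tens b_i(0)$ agree on all of $B_\infty$. The real content is existence, together with the explicit formula. My plan is to \emph{define} $\Psi_i$ by the formula in the statement and then check that it is a strict embedding.

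First I will check that the definition makes sense. Using that $*$ is an involution on $B_\infty$ and that $\tilde e_i^*,\tilde f_i^*$ are conjugates of $\tilde e_i,\tilde f_i$, the $i$-string through $b^*$ transports to the $\tilde f_i^*$-string through $b$. Hence every $b$ can be written uniquely as $b=\tilde f_i^{*m}b_0$ with $\tilde e_i^*b_0=0$ and $m=\varepsilon_i^*(b)$. Now set $\Psi_i(b)=b_0\tens b_i(-m)$. Injectivity is clear, because $b_0$ and $m$ recover $b$. The "moreover" clause is also immediate: $\tilde f_i^*b=\tilde f_i^{*(m+1)}b_0$ has the same $b_0$. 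Weights match, since $\operatorname{wt}(\tilde f_i^* b)=\operatorname{wt}(b)-\alpha_i$.

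Next I will check compatibility with the crystal structure, separating $j\neq i$ from $j=i$.

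\emph{Case $j\neq i$.} In $B_\infty\tens B_i$ we have $\varphi_j(b_i(n))=\varepsilon_j(b_i(n))=-\infty$, so the tensor product rule always acts on the first factor: $\tilde f_j(b_0\tens b_i(-m))=\tilde f_jb_0\tens b_i(-m)$, and similarly for $\tilde e_j$ and $\varepsilon_j$. Compatibility therefore reduces to two facts. First, $\tilde f_j$ commutes with $\tilde f_i^*$. Second, $\tilde f_j$ preserves the condition $\tilde e_i^*(\cdot)=0$ and leaves $\varepsilon_i^*$ unchanged. I would deduce both from Kashiwara's comparison between $\tilde f_j$ and the right multiplication $\tilde f_i^*$ in $U_q^-(\mathfrak g)$ for $i\neq j$.

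\emph{Case $j=i$.} The tensor rule gives
\[
\varepsilon_i(b_0\tens b_i(-m))=\max\bigl(\varepsilon_i(b_0),\, m-\langle\alpha_i^\vee,\operatorname{wt}(b_0)\rangle\bigr).
\]
Moreover, $\tilde f_i$ acts on the right factor (that is, as $\tilde f_i^*$ on $b$) exactly when $\varphi_i(b_0)\le m$, and on $b_0$ otherwise. So compatibility amounts to the following key identities in $B_\infty$, for $b=\tilde f_i^{*m}b_0$:
\begin{gather*}
\varepsilon_i(b)=\max\bigl(\varepsilon_i(b_0),\, m-\langle\alpha_i^\vee,\operatorname{wt}(b_0)\rangle\bigr),\\
\tilde f_i b=\begin{cases}\tilde f_i^*b, & \text{if } \varphi_i(b_0)\le m,\\ \tilde f_i^{*m}\tilde f_ib_0, & \text{otherwise,}\end{cases}
\end{gather*}
where in the second case $\tilde f_ib_0$ must still satisfy $\tilde e_i^*\tilde f_ib_0=0$. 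The corresponding identity for $\tilde e_i$ follows from the one for $\tilde f_i$ by the axiom $b'=\tilde e_i b\Leftrightarrow b=\tilde f_i b'$.

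I expect this $j=i$ step to be the main obstacle. It is a statement about how the left Kashiwara operators interact with the right ones ($*$-twisted) inside $U_q^-(\mathfrak g)$. It cannot be checked purely combinatorially, because $B_\infty$ has no explicit model here.

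My first attempt would be to reduce to the rank-one subalgebra generated by $f_i$ acting on $U_q^-$. I would use the decomposition $U_q^-=\bigoplus_m f_i^{(m)}\,(\ker e_i'^{*})$, with $e_i'$ the $q$-derivation, together with the $q$-boson commutation relation $e_i'f_i=q^{-2}f_ie_i'+1$ on suitable pieces. In this reduction the left $f_i$-string through a $*$-lowest element looks like a tensor product of an $\mathfrak{sl}_2$-string with a copy of $B_i$. Because this computation is delicate and is part of Kashiwara's grand-loop induction on weight, I would ultimately rely on the cited results of \cite{Kashiwara_1991} and \cite{Kashiwara_1993} for these two identities. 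The rest of the argument is formal, given the identities together with $\tilde f_jb_\infty$-generation.
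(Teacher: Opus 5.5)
The paper gives no proof of this statement. It is imported from Kashiwara \cite{Kashiwara_1993} and used as a black box in Proposition~\ref{prop_10.1}. Your proposal is correct as far as it goes, and what it adds is an explicit reduction. The following steps are all right:
\begin{itemize}
\item uniqueness by $\tilde f_j$-generation;
\item well-definedness and injectivity of $b\mapsto b_0\tens b_i(-\varepsilon_i^*(b))$;
\item the ``moreover'' clause;
\item the translation of strictness into your identities for $j\neq i$ and $j=i$, with the case split $\varphi_i(b_0)\le m$ versus $\varphi_i(b_0)>m$ forced by the tensor product rule.
\end{itemize}

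The caveat is that this reduction relocates the content rather than removing it. Once the formal part is in place, those identities are \emph{equivalent} to the theorem. Strictness of $\Psi_i$, the ``moreover'' clause and injectivity give back $\tilde f_j\tilde f_i^{*}=\tilde f_i^{*}\tilde f_j$ and $\varepsilon_i^*(\tilde f_j b)=\varepsilon_i^*(b)$ for $j\neq i$. They likewise give back your $j=i$ formulas. So citing the identities is no weaker than citing the theorem itself, which is all the paper does. A self-contained proof would have to establish them algebraically in $U_q^-(\mathfrak g)$ and its crystal lattice. That is where the substance of Kashiwara's argument lies.
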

We are interested in computing Kashiwara's involution on a certain type of elements in $B_\infty$. 
\begin{proposition}
\label{prop_10.1}    
Let $b_\omega$ be one of the highest weight elements in spinor crystal $\Cr{B}_S = \Cr{B}_{\Lambda_{n-1}} \oplus \Cr{B}_{\Lambda_n}$ $($i.e. $b_\omega = b_{\Lambda_{n-1}}$ or $b_\omega = b_{\Lambda_n})$. 
Let $i_1, i_2, \ldots, i_m$ be a sequence, such that $i_j \in I$  and $\tilde f_{i_m} \ldots \tilde f_{i_2} \tilde f_{i_1} b_{\omega} \neq 0$.  Consider the following composition of embeddings $$\Phi =\Psi_{i_1} \circ \Psi_{i_{2}} \circ \ldots \circ \Psi_{i_m}: B_\infty \hookrightarrow B_\infty \tens B_{i_m} \hookrightarrow B_\infty \tens B_{i_{m-1}} \tens B_{i_m} \hookrightarrow \ldots \hookrightarrow B_\infty \tens B_{i_1} \tens \ldots \tens B_{i_m} $$
Then $$\Phi(\tilde f_{i_m}^{*} \ldots \tilde f_{i_2}^{*} \tilde f_{i_1}^{*}b_\infty) = b_\infty \tens b_{i_1}(-1) \tens b_{i_2}(-1) \tens \ldots \tens b_{i_m}(-1)$$
and $$\Phi(\tilde f_{i_1} \tilde f_{i_2} \ldots \tilde f_{i_m} b_{\infty}) =  b_\infty \tens b_{i_1}(-1) \tens b_{i_2}(-1) \tens \ldots \tens b_{i_m}(-1).$$
\end{proposition}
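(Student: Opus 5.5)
Both formulas will be proved by induction on $m$, peeling off one embedding $\Psi_{i_j}$ at a time and using Theorem \ref{th_10.1} at each step. The input from the spinor crystal is Proposition \ref{prop_3.1}. Since $\tilde f_{i_m}\cdots\tilde f_{i_1}b_\omega\neq 0$ is a path in $\Cr{B}_S$, every edge changes the sign pattern by a simple root. So consecutive indices satisfy $i_{j+1}\ne i_j$, and more strongly no index repeats along the path in an "$\alpha_i$-string of length $\ge 2$" way. Concretely, $\varepsilon_{i_j}$ of any intermediate element is at most $1$. This combinatorial rigidity is what forces every coordinate $b_{i_j}(\cdot)$ to equal $-1$.

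\emph{First formula.} Set $c_k=\tilde f^*_{i_k}\cdots\tilde f^*_{i_1}b_\infty$. I will show by induction on $k$ that
\[
\Psi_{i_1}\circ\cdots\circ\Psi_{i_k}(c_k)=b_\infty\tens b_{i_1}(-1)\tens\cdots\tens b_{i_k}(-1).
\]
Apply the outermost map $\Psi_{i_k}$ first. Theorem \ref{th_10.1} gives $\Psi_{i_k}(c_k)=b_0\tens b_{i_k}(-m')$, where $c_k=\tilde f^{*m'}_{i_k}b_0$ with $\tilde e^*_{i_k}b_0=0$. We need $m'=1$ and $b_0=c_{k-1}$. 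It suffices to show $\tilde e^*_{i_k}c_{k-1}=0$, i.e. $\varepsilon_{i_k}(c_{k-1}^*)=0$. Now $c_{k-1}^*=\tilde f_{i_{k-1}}\cdots\tilde f_{i_1}b_\infty$, since $*$ is an involution fixing $b_\infty$ and $\tilde f_i^*=*\tilde f_i*$.

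I plan to compare $\tilde f_{i_{k-1}}\cdots\tilde f_{i_1}b_\infty$ with $\kappa_{\omega,\infty}$ of the spinor element $\tilde f_{i_{k-1}}\cdots\tilde f_{i_1}b_\omega$. The embedding $\kappa_{\omega,\infty}:\Cr{B}_\omega\hookrightarrow B_\infty\tens A_\omega$ commutes with the $\tilde e_i$ and preserves $\varepsilon_i$. Moreover, applying $\tilde f_i$ to $b\tens a_\omega$ acts on the first factor, because $\varphi_i(a_\omega)=-\infty$. Hence whenever $\tilde f_{i_{k-1}}\cdots\tilde f_{i_1}b_\omega\neq 0$,
\[
\kappa_{\omega,\infty}\bigl(\tilde f_{i_{k-1}}\cdots\tilde f_{i_1}b_\omega\bigr)=\tilde f_{i_{k-1}}\cdots\tilde f_{i_1}b_\infty\tens a_\omega .
\]
This gives $\varepsilon_{i_k}(\tilde f_{i_{k-1}}\cdots\tilde f_{i_1} b_\infty)=\varepsilon_{i_k}(\tilde f_{i_{k-1}}\cdots\tilde f_{i_1}b_\omega)$. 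The right-hand side is $0$, because the next step $\tilde f_{i_k}$ is nonzero in $\Cr{B}_S$ and Proposition \ref{prop_3.1} gives $\varepsilon+\varphi\le1$. Therefore $m'=1$ and $b_0=c_{k-1}$, and the induction proceeds through the remaining maps.

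\emph{Second formula.} The goal is $\tilde f_{i_1}\cdots\tilde f_{i_m}b_\infty=(\tilde f_{i_m}\cdots \tilde f_{i_1} b_\infty)^*=c_m$. This reduces the second formula to the first. I would prove it directly by the same induction using the "moreover" clause of Theorem \ref{th_10.1}, applied to the reversed sequence. The reversed sequence is again a valid spinor path: read $\tilde e$'s from the lowest element, and the crystal $\Cr{B}_S$ is symmetric under $b\mapsto -b$ with $i$ fixed.

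Alternatively, I would compute $\Phi(\tilde f_{i_1}\cdots\tilde f_{i_m}b_\infty)$ using strictness. Each $\Psi_i$ is strict, so $\Phi$ commutes with $\tilde f_{i_1},\dots$. I would then evaluate $\tilde f_{i_1}\cdots\tilde f_{i_m}$ on $b_\infty\tens b_{i_1}(0)\tens\cdots\tens b_{i_m}(0)$ via the tensor product rule. The key check is that each $\tilde f_{i_j}$ lands on the $j$-th factor, not on $b_\infty$ or another factor with the same index. This needs $\varphi$ comparisons, and these follow from the no-repetition property of the spinor path. This tensor-rule bookkeeping is the main obstacle. I would first handle it by showing that the signature rule sees, for index $i_j$, a single unmatched position, exploiting $(\alpha_i,\alpha_j)\in\{0,-1\}$ for distinct indices.
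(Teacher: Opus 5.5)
Your proof of the first formula is correct and is the paper's argument: you transport $\varepsilon_{i_k}=0$ from $\tilde f_{i_{k-1}}\cdots\tilde f_{i_1}b_\omega$ to $\tilde f_{i_{k-1}}\cdots\tilde f_{i_1}b_\infty$ via $\kappa_{\omega,\infty}$, conclude $\tilde e^*_{i_k}c_{k-1}=0$, and let Theorem \ref{th_10.1} peel off $b_{i_k}(-1)$.

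The second formula is not proved. None of your three routes works as written.

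\emph{First route.} It is circular. The identity $\tilde f_{i_1}\cdots\tilde f_{i_m}b_\infty=(\tilde f_{i_m}\cdots\tilde f_{i_1}b_\infty)^*$ is exactly Corollary \ref{cor_10.1}, which the paper \emph{deduces} from both formulas of this proposition plus injectivity of $\Phi$. You give no independent proof of it.

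\emph{Second route.} It rests on a false claim: the reversed sequence is generally not a path out of $b_{\Lambda_{n-1}}$ or $b_{\Lambda_n}$. For $n=4$, reverse the path $(+,+,+,+)\xrightarrow{4}(+,+,-,-)\xrightarrow{2}(+,-,+,-)\xrightarrow{1}(-,+,+,-)$. It begins with $\tilde f_1$, which kills both highest weight elements. Negation only produces a path \emph{ending} at a lowest weight element. Your first-formula argument needs the path to \emph{start} at $b_\omega$, so that $\kappa_{\omega,\infty}$ sends it to $b_\infty$. Moreover, the ``moreover'' clause describes $\tilde f_i^*$, not $\tilde f_i$, and the reversed sequence would concern $\Psi_{i_m}\circ\cdots\circ\Psi_{i_1}$ rather than $\Phi$.

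\emph{Third route.} Strictness plus the tensor rule is the right approach and is what the paper does. However, you defer precisely the step that constitutes the proof. The mechanism you propose is also wrong: indices do repeat along spinor paths. For $n=4$, the full path $4,2,1,3,2,4$ repeats both $4$ and $2$, so there is no ``no-repetition property'' to use.

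The paper handles one embedding at a time. By strictness, $\Psi_{i_k}(\tilde f_{i_1}\cdots\tilde f_{i_k}b_\infty)=\tilde f_{i_1}\cdots\tilde f_{i_k}(b_\infty\tens b_{i_k}(0))$.
\begin{itemize}
\item The first $\tilde f_{i_k}$ hits the right factor, since $\varphi_{i_k}(b_\infty)=0\leqslant 0=\varepsilon_{i_k}(b_{i_k}(0))$.
\item Each later $\tilde f_{i_r}$ with $i_r\neq i_k$ hits the left factor, since $\varepsilon_{i_r}(b_{i_k}(-1))=-\infty$.
\item For $l<k$ with $i_l=i_k$, treated by downward induction on $l$, you need a quantitative estimate, described next.
\end{itemize}

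In $\Cr{B}_S$, by $\varepsilon+\varphi\leqslant 1$, the pairing $\langle\operatorname{wt}(\cdot),\alpha_{i_k}^\vee\rangle$ equals $-1$ just after step $l$ and $+1$ just before step $k$. Hence $-\sum_{j=l+1}^{k-1}\langle\alpha_{i_j},\alpha_{i_k}^\vee\rangle=2$. The current left factor $\tilde f_{i_{l+1}}\cdots\tilde f_{i_{k-1}}b_\infty$ therefore pairs to $2$ with $\alpha_{i_k}^\vee$. Upper normality of $B_\infty$ then gives $\varphi_{i_k}\geqslant 2>1=\varepsilon_{i_k}(b_{i_k}(-1))$, so $\tilde f_{i_l}$ also acts on the left factor.

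This weight identity across repeated occurrences of an index is the missing idea. Without it, your bookkeeping has no reason to send each $\tilde f_{i_j}$ to the correct tensor factor.
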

\begin{proof}
To prove the first equality it suffices to show that for all $1 \leqslant k \leqslant m$, $$\Psi_{i_k}(\tilde f_{i_k}^* \tilde f_{i_{k-1}}^* \ldots \tilde f_{i_1}^{*}b_\infty) = \tilde f_{i_{k-1}}^{*}\ldots \tilde f_{i_1}^{*}b_\infty \tens b_{i_{k}}(-1).$$
 Theorem \ref{th_10.1} implies that it is enough to show that $\tilde e_{i_k}^*\tilde f_{i_{k-1}}^{*}\ldots \tilde f_{i_1}^{*}b_\infty = 0$. Observe that $\tilde e_{i_k}^*\tilde f_{i_{k-1}}^{*}\ldots \tilde f_{i_1}^{*}b_\infty = (\tilde e_{i_k}\tilde f_{i_{k-1}}\ldots \tilde f_{i_1}b_\infty)^*$.  Since $\tilde f_{i_k} \tilde f_{i_{k-1}} \ldots \tilde f_{i_1} b_\omega \neq 0$ we have $\varphi_{i_k}(\tilde f_{i_{k-1}} \ldots \tilde f_{i_1} b_\omega) = 1$ and hence $\varepsilon_{i_k}(\tilde f_{i_{k-1}} \ldots \tilde f_{i_1} b_\omega) = 0$ or equivalently $\tilde e_{i_k} \tilde f_{i_{k-1}} \ldots \tilde f_{i_1} b_\omega  = 0$. Consider the embedding $\kappa_{\omega, \infty}$ of $\Cr{B}_\omega$ (the connected component of $b_\omega$ in $\Cr{B}_S$) into $B_\infty \tens A_\omega$ defined above. Clearly,  $$ \kappa_{\omega, \infty}(\tilde f_{i_{k-1}} \ldots \tilde f_{i_1} b_\omega) =  \tilde f_{i_{k-1}} \ldots \tilde f_{i_1} b_\infty$$Therefore, since $\kappa_{\omega, \infty}$ commutes with all the $\tilde e_i$'s  $$ \tilde e_{i_k}\tilde  f_{i_{k-1}} \ldots \tilde f_{i_1} b_\infty = \tilde e_{i_k} \kappa_{\omega, \infty}(\tilde f_{i_{k-1}} \ldots \tilde f_{i_1} b_\omega) = \kappa_{\omega, \infty}(\tilde e_{i_k}\tilde f_{i_{k-1}} \ldots \tilde f_{i_1} b_\omega) = 0$$
As $*$ is an involution on $B_\infty$, it follows that $(\tilde e_{i_k}\tilde f_{i_{k-1}}\ldots \tilde f_{i_1}b_\infty)^* = 0$ and we are done. 

To prove the second equality it suffices to show that for all $1 \leqslant k \leqslant m$ 
$$\Psi_{i_k}(\tilde f_{i_1} \ldots \tilde f_{i_{k-1}}\tilde f_{i_k}   b_\infty) =  \tilde f_{i_1} \ldots \tilde f_{i_{k-1}} b_\infty \tens b_{i_k}(-1).$$
Since $\Psi_{i_k}$ is a strict embedding, we have $$\Psi_{i_k}(\tilde f_{i_1} \ldots \tilde f_{i_{k-1}}\tilde f_{i_k}   b_\infty) =  \tilde f_{i_1} \ldots \tilde f_{i_{k-1}} \tilde f_{i_k} (b_\infty \tens b_{i_k}(0)).$$
Clearly, $0 = \varphi_{i_k}(b_\infty) \leqslant \varepsilon_{i_k}(b_{i_k}(0)) = 0$. Hence 
$$\Psi_{i_k}(\tilde f_{i_1} \ldots \tilde f_{i_{k-1}}\tilde f_{i_k}   b_\infty) =  \tilde f_{i_1} \ldots \tilde f_{i_{k-1}} (b_\infty \tens \tilde f_{i_k}  b_{i_k}(0)) =  \tilde f_{i_1} \ldots \tilde f_{i_{k-1}} (b_\infty \tens  b_{i_k}(-1)).$$
It is clear that all $\tilde f_{i_r}$'s for which $i_r \neq i_k$ act on the left multiple, since for any $m \in \mathbb{Z}$,  $\varepsilon_{i_r}(b_{i_k}(m)) = - \infty $ and for any $b \in B_\infty$, $\varphi_{i_r}(b) \in \mathbb{Z}$.  

Let $1\leqslant l < k$ be an index, such that $i_l =  i_k$.  Assume that 
$$\tilde f_{i_1} \ldots \tilde f_{i_{k-1}} (b_\infty \tens  b_{i_k}(-1)) =  \tilde f_{i_1} \ldots \tilde f_{i_{l}} (\tilde f_{i_{l+1}} \ldots \tilde f_{i_{k-1}}b_\infty \tens  b_{i_k}(-1)).$$The latter equality is clear if $l$ is the biggest such index.  Since $\tilde f_{i_k} \ldots \tilde f_{i_1} b_\omega \neq 0$   we have \begin{gather} \langle \operatorname{wt}(\tilde f_{i_l} \ldots \tilde f_{i_1} b_\omega), \alpha_{i_k}^{\vee}\rangle = \varphi_{i_k}(\tilde f_{i_l} \ldots \tilde f_{i_1} b_\omega) -  \varepsilon_{i_k}(\tilde f_{i_l} \ldots \tilde f_{i_1} b_\omega) = -1,  \\
\langle \operatorname{wt}(\tilde f_{i_{k-1}} \ldots \tilde f_{i_1} b_\omega), \alpha_{i_k}^{\vee}\rangle = \varphi_{i_k}(\tilde f_{i_{k-1}} \ldots \tilde f_{i_1} b_\omega) -  \varepsilon_{i_k}(\tilde f_{i_{k-1}} \ldots \tilde f_{i_1} b_\omega) = 1.
\end{gather}
Subtracting last two equalities, we obtain \begin{gather}
-\sum_{j = l +1}^{k-1}\langle  \alpha_j, \alpha_{i_k}^{\vee}\rangle  = \langle \operatorname{wt}(\tilde f_{i_{k-1}} \ldots \tilde f_{i_1} b_\omega) -  \operatorname{wt}(\tilde f_{i_l} \ldots \tilde f_{i_1} b_\omega), \alpha_{i_k}^\vee\rangle = 2.
\end{gather}
Evidently, $$\langle \operatorname{wt}(\tilde f_{i_{l+1}} \ldots \tilde f_{i_{k-1}}b_\infty), \alpha_{i_k}^{\vee} \rangle = -\sum_{j = l +1}^{k-1}\langle  \alpha_j, \alpha_{i_k}^{\vee}\rangle + \langle \operatorname{wt}(b_\infty), \alpha_{i_k}^{\vee}\rangle = 2.$$
On the other hand, 
$$\langle \operatorname{wt}(\tilde f_{i_{l+1}} \ldots \tilde f_{i_{k-1}}b_\infty), \alpha_{i_k}^{\vee} \rangle = \varphi_{i_k}(\tilde f_{i_{l+1}} \ldots \tilde f_{i_{k-1}}b_\infty) - \varepsilon_{i_k}(\tilde f_{i_{l+1}} \ldots \tilde f_{i_{k-1}}b_\infty) = 2.$$ Since the crystal $B_\infty$ is upper-normal $\varepsilon_{i_k}(\tilde f_{i_{l+1}} \ldots \tilde f_{i_{k-1}}b_\infty) \geqslant 0$ and hence $\varphi_{i_k}(\tilde f_{i_{l+1}} \ldots \tilde f_{i_{k-1}}b_\infty) \geqslant 2$. Therefore, as $\varphi_{i_k}(\tilde f_{i_{l+1}} \ldots \tilde f_{i_{k-1}}b_\infty) > \varepsilon_{i_k}(b_{i_k}(-1)) = 1$, 
$$\tilde f_{i_1} \ldots \tilde f_{i_{l}} (\tilde f_{i_{l+1}} \ldots \tilde f_{i_{k-1}}b_\infty \tens  b_{i_k}(-1)) = \tilde f_{i_1} \ldots \tilde f_{i_{l-1}} (\tilde f_{i_{l}} \ldots \tilde f_{i_{k-1}}b_\infty \tens  b_{i_k}(-1)).$$
So, we have shown that if for $l <  k$,  $i_l = i_k$ and all the crystal operators $\tilde f_{i_{l+1}}, \ldots, \tilde f_{i_{k-1}}$ act on the left multiple, then so does the crystal operator $\tilde f_{i_l}$.  We also know that all the crystal operators $\tilde f_{i_r}$ such that $i_r \neq  i_k$  act on the left multiple as well. Hence, by induction we obtain 
$$\Psi_{i_k}(\tilde f_{i_1} \ldots \tilde f_{i_{k-1}}\tilde f_{i_k}   b_\infty) = \tilde f_{i_1} \ldots \tilde f_{i_{k-1}} (b_\infty \tens  b_{i_k}(-1)) =  \tilde f_{i_1} \ldots \tilde f_{i_{k-1}}b_\infty \tens  b_{i_k}(-1)$$ and we are done.
\end{proof}
From the previous proposition we get an immediate key corollary.
\begin{corollary}
    \label{cor_10.1}
Let $b_\omega$ be one of the highest weight elements in spinor crystal $\Cr{B}_S = \Cr{B}_{\Lambda_{n-1}} \oplus \Cr{B}_{\Lambda_n}$ $($i.e. $b_\omega = b_{\Lambda_{n-1}}$ or $b_\omega = b_{\Lambda_n})$.  Let $i_1, i_2, \ldots, i_m$ be a sequence, such that $i_j \in I$  and $\tilde f_{i_m} \ldots \tilde f_{i_2} \tilde f_{i_1} b_{\omega} \neq 0$. Then \begin{gather}
  ( \tilde f_{i_m} \ldots \tilde f_{i_2} \tilde f_{i_1} b_\infty)^* = \tilde f_{i_1} \tilde f_{i_2} \ldots \tilde f_{i_m} b_\infty
\end{gather}
Here $*$ stands for Kashiwara's involution on $B_\infty$ and $b_\infty$ is the highest weight element of $B_\infty$.  
\end{corollary}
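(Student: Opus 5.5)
The plan is to compare the images of both sides under the composite embedding $\Phi = \Psi_{i_1} \circ \Psi_{i_2} \circ \ldots \circ \Psi_{i_m}$ of Proposition \ref{prop_10.1}. Each $\Psi_{i}$ in Theorem \ref{th_10.1} is a strict embedding of crystals, hence an injective map, and so $\Phi$ is injective as well. Proposition \ref{prop_10.1}, applied to the given sequence $i_1, \ldots, i_m$ (its hypothesis $\tilde f_{i_m} \ldots \tilde f_{i_1} b_\omega \neq 0$ is exactly ours), gives
$$\Phi(\tilde f_{i_m}^{*} \ldots \tilde f_{i_1}^{*} b_\infty) = b_\infty \tens b_{i_1}(-1) \tens \ldots \tens b_{i_m}(-1) = \Phi(\tilde f_{i_1} \ldots \tilde f_{i_m} b_\infty).$$
By injectivity of $\Phi$, we get $\tilde f_{i_m}^{*} \ldots \tilde f_{i_1}^{*} b_\infty = \tilde f_{i_1} \ldots \tilde f_{i_m} b_\infty$. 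Here we also use that the left-hand side is a genuine element of $B_\infty$ and not $0$; this holds because $\tilde f_i$ never vanishes on $B_\infty$, and hence neither does $\tilde f_i^{*}$, being conjugate to it by the involution $*$.

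It remains to rewrite the left-hand side through Kashiwara's involution. By the definition $\tilde f_i^{*}(b) = (\tilde f_i(b^{*}))^{*}$ and the fact that $*$ is an involution, an induction on $m$ gives
$$\tilde f_{i_m}^{*} \ldots \tilde f_{i_1}^{*} b = (\tilde f_{i_m} \ldots \tilde f_{i_1} b^{*})^{*}.$$
The intermediate $*$'s cancel in pairs. We also need $b_\infty^{*} = b_\infty$, which holds because $*$ preserves weight and $b_\infty$ is the unique element of weight $0$ (alternatively, $1^{*} = 1$ in $U_q^{-}(\mathfrak{g})$). Hence the left-hand side equals $(\tilde f_{i_m} \ldots \tilde f_{i_1} b_\infty)^{*}$, and the corollary follows.

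There is no real obstacle here, since all the substantive work is already done in Proposition \ref{prop_10.1}. The only points to state carefully are the injectivity of $\Phi$ as a composite of embeddings (each $\Psi_{i_k}$ extended by the identity on the tensor factor $B_{i_{k+1}} \tens \ldots \tens B_{i_m}$, which is still injective) and the bookkeeping with $*$ described in the second paragraph.
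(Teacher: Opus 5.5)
Your proof is correct and follows the same route as the paper: apply Proposition \ref{prop_10.1} to get equal images under $\Phi$, cancel $\Phi$ by injectivity, and identify $\tilde f_{i_m}^{*}\ldots\tilde f_{i_1}^{*}b_\infty$ with $(\tilde f_{i_m}\ldots\tilde f_{i_1}b_\infty)^{*}$. Your extra bookkeeping (the intermediate $*$'s cancelling in pairs, $b_\infty^{*}=b_\infty$, and injectivity of each $\Psi_{i_k}\otimes\mathrm{id}$) simply spells out what the paper calls evident.
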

\begin{proof}
Evidently, $(\tilde f_{i_m} \ldots \tilde f_{i_2} \tilde f_{i_1} b_\infty)^* =\tilde f_{i_m}^{*} \ldots \tilde f_{i_2}^{*} \tilde f_{i_1}^{*} b_\infty $. By Proposition \ref{prop_10.1} $\Phi(\tilde f_{i_m}^{*} \ldots \tilde f_{i_2}^{*} \tilde f_{i_1}^{*}b_\infty) = \Phi(\tilde f_{i_1} \tilde f_{i_2} \ldots \tilde f_{i_m} b_{\infty})$, where $\Phi$ is a composition of strict embeddings, i.e. is also a strict embedding. Thus, canceling $\Phi$ we obtain the equality needed. 
\end{proof}

\subsection{Some technical results about the structure of crystals}
\label{technical_results}

This subsection is dedicated to some technical results.

\begin{lemma}
\label{lemma_4.5}
Let $\Cr{B}$ be a crystal. Take $b \in \Cr{B}$ and $b_\mu \in \Cr{B}_S$, such that $\operatorname{wt}(b) + \operatorname{wt}(b_{\mu}) \in P_+$. Then, $\tilde{e}_i b_\mu \neq 0$ implies $\tilde f_i b \neq 0$. 
\end{lemma}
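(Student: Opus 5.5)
The proof will be a direct computation with the first crystal axiom, in the same spirit as the proof of Proposition \ref{prop_3.2}. Since $\Cr{B}$ is an object of \underline{\textbf{$D_n$-Crystals}}, it is normal. Hence $\varphi_i(b) = \max\{k : \tilde f_i^k b \neq 0\}$, so it suffices to show $\varphi_i(b) \geqslant 1$.

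First, I will extract the information about $b_\mu$. Assume $\tilde e_i b_\mu \neq 0$. By normality of $\Cr{B}_S$ this gives $\varepsilon_i(b_\mu) \geqslant 1$. Proposition \ref{prop_3.1} gives $\varepsilon_i(b_\mu) + \varphi_i(b_\mu) \leqslant 1$, so $\varepsilon_i(b_\mu) = 1$ and $\varphi_i(b_\mu) = 0$. By the first axiom of Definition \ref{def_1.1},
\begin{gather*}
\langle \operatorname{wt}(b_\mu), \alpha_i^\vee \rangle = \varphi_i(b_\mu) - \varepsilon_i(b_\mu) = -1.
\end{gather*}

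Next, I will use dominance. Since $\operatorname{wt}(b) + \operatorname{wt}(b_\mu) \in P_+$, we have $\langle \operatorname{wt}(b) + \operatorname{wt}(b_\mu), \alpha_i^\vee \rangle \geqslant 0$. By linearity this gives $\langle \operatorname{wt}(b), \alpha_i^\vee\rangle \geqslant 1$. Applying the first axiom to $b$ yields
\begin{gather*}
\varphi_i(b) = \varepsilon_i(b) + \langle \operatorname{wt}(b), \alpha_i^\vee\rangle \geqslant \varepsilon_i(b) + 1 \geqslant 1,
\end{gather*}
where $\varepsilon_i(b) \geqslant 0$ holds by upper normality. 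Therefore $\tilde f_i b \neq 0$.

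There is no real obstacle in this argument. The only point needing care is that normality of both $\Cr{B}$ and $\Cr{B}_S$ is what converts between the values of $\varepsilon_i, \varphi_i$ and the nonvanishing of $\tilde e_i, \tilde f_i$. This normality holds because every object of \underline{\textbf{$\mathfrak{g}$-Crystals}} is a direct sum of normal highest weight crystals from the closed family.
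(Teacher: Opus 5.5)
Your proof is correct and follows essentially the same route as the paper: use $\varepsilon_i(b_\mu)=1$, $\varphi_i(b_\mu)=0$ together with the first crystal axiom and dominance of $\operatorname{wt}(b)+\operatorname{wt}(b_\mu)$ to get $\varphi_i(b)\geqslant \varphi_i(b)-\varepsilon_i(b)\geqslant 1$, then conclude $\tilde f_i b\neq 0$ by normality. Your version is in fact slightly more careful than the paper's. You justify $\varepsilon_i(b_\mu)=1$ via Proposition~\ref{prop_3.1}, and you make explicit that normality of $\Cr{B}$ is what allows you to pass from $\varphi_i(b)\geqslant 1$ to $\tilde f_i b\neq 0$.
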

\begin{proof}

By the first axiom of $\mathfrak{g}$-crystals and linearity of contraction: 
$$\langle \operatorname{wt}(b) + \operatorname{wt}(b_\mu), \alpha_{i}^{\vee} \rangle =\langle \operatorname{wt}(b), \alpha_{i}^{\vee} \rangle + \langle \operatorname{wt}(b_\mu), \alpha_{i}^{\vee} \rangle = \varphi_i(b) - \varepsilon_i(b) + \varphi_{i}(b_\mu) - \varepsilon_i(b_\mu).$$ Since $\operatorname{wt}(b) + \operatorname{wt}(b_{\mu}) \in P_+$, we have for any $i \in \{1, 2, \ldots n\}$
$$\langle \operatorname{wt}(b) + \operatorname{wt}(b_\mu), \alpha_{i}^{\vee} \rangle \geqslant 0.$$
So, for any $i \in \{1, \ldots n\}$
$$\varphi_i(b) - \varepsilon_i(b) + \varphi_{i}(b_\mu) - \varepsilon_i(b_\mu) \geqslant 0.$$
 If $\tilde e_ib_\mu \neq 0$, then $\varepsilon_i(b_\mu) = 1$ and $\varphi_i(b_\mu) = 0$.  Substituting these values into the latter inequality we obtain
$$\varphi_i(b) \geqslant \varphi_i(b) - \varepsilon_i(b) \geqslant 1.$$

Therefore, $\tilde f_i b \neq 0$ and we are done. 
\end{proof}

The following corollary is easily derived from the previous lemma. 

\begin{corollary}
\label{cor_4.1}
Let $\Cr{B}$ be a crystal. Take $b \in \Cr{B}$ and $b_\mu \in \Cr{B}_S$, such that $\operatorname{wt}(b) + \operatorname{wt}(b_{\mu}) \in P_+$. Then, for any sequence of indexes $i_1, \ldots, i_r \in \{1, \ldots, n\}$ \hspace{1mm} $\tilde{e}_{i_r} \ldots \tilde{e}_{i_1} b_\mu \neq 0$ implies $\tilde{f}_{i_r} \ldots \tilde{f}_{i_1} b \neq 0$.
\end{corollary}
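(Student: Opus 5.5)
The plan is induction on $r$, with Lemma \ref{lemma_4.5} supplying each step. Lemma \ref{lemma_4.5} is exactly the case $r=1$. For the inductive step I will strengthen the hypothesis being carried. At stage $k$ I want to know that $b^{(k)} := \tilde f_{i_k}\ldots \tilde f_{i_1} b \neq 0$ and $b_\mu^{(k)} := \tilde e_{i_k}\ldots \tilde e_{i_1} b_\mu \neq 0$. I also want the pair $(b^{(k)}, b_\mu^{(k)})$ to satisfy the same hypothesis as the original pair, namely $\operatorname{wt}(b^{(k)}) + \operatorname{wt}(b_\mu^{(k)}) \in P_+$.

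The key observation is that the weight sum is invariant along the process. By the crystal axioms,
\[
\operatorname{wt}(b^{(k)}) = \operatorname{wt}(b) - \sum_{j=1}^{k}\alpha_{i_j}, \qquad \operatorname{wt}(b_\mu^{(k)}) = \operatorname{wt}(b_\mu) + \sum_{j=1}^{k}\alpha_{i_j},
\]
so $\operatorname{wt}(b^{(k)}) + \operatorname{wt}(b_\mu^{(k)}) = \operatorname{wt}(b) + \operatorname{wt}(b_\mu) \in P_+$ for every $k$. The element $b_\mu^{(k)}$ lies in $\Cr{B}_S$, and $b^{(k)}$ lies in $\Cr{B}$, so the pair satisfies the hypotheses of Lemma \ref{lemma_4.5}.

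Now the induction runs as follows. Assume $\tilde e_{i_r}\ldots\tilde e_{i_1} b_\mu \neq 0$. Then every intermediate element $b_\mu^{(k)}$, $0 \leqslant k \leqslant r$, is nonzero, because $\tilde e_i 0 = 0$. Suppose $b^{(k)} \neq 0$ for some $k < r$; for $k=0$ this holds since $b^{(0)} = b \in \Cr{B}$. We have $\tilde e_{i_{k+1}} b_\mu^{(k)} = b_\mu^{(k+1)} \neq 0$, and the weight condition holds by the invariance above. So Lemma \ref{lemma_4.5} applied to $(b^{(k)}, b_\mu^{(k)})$ gives $b^{(k+1)} = \tilde f_{i_{k+1}} b^{(k)} \neq 0$. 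At $k=r$ we obtain the claim.

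There is no serious obstacle here. The only point needing care is that the weight formulas for $b^{(k)}$ are used only when the elements are nonzero, i.e. actually lie in $\Cr{B}$. The induction guarantees this before each use.
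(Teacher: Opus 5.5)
Your proof is correct and is exactly the intended derivation: the paper gives no written proof beyond ``easily derived from the previous lemma,'' and the same weight-sum invariance combined with Lemma \ref{lemma_4.5}, applied step by step, is what the paper uses in the proof of Proposition \ref{prop_4.4}. Your care in applying the weight formulas only to elements already known to be nonzero is the right precaution.
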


We are ready to formulate the central result of this chapter. 

\begin{theorem}
\label{th_4.1}

Let $\lambda \in P_+$, $\mu \in P[S]$ and $\lambda +\mu \in P_+$. Denote by $b_\lambda$ the highest weight element in $\Cr{B}_{\lambda}$, by $b_\mu$ the element of weight $\mu$ in $\Cr{B}_{S}$. Let $\tilde e_{i_k} \tilde e_{i_{k-1}}\ldots \tilde e_{i_1} b_{\mu}$ be the highest weight element in $\Cr{B}_{S}$. Then in the crystal $\Cr{B}_S \tens \Cr{B}_{\lambda}$ there exists the unique highest weight element of weight $\lambda + \mu$ and it is equal to \begin{gather}\tilde e_{i_k} \tilde e_{i_{k-1}}\ldots \tilde e_{i_1} b_{\mu} \tens \tilde f_{i_k} \tilde f_{i_{k-1}}\ldots \tilde f_{i_1} b_\lambda .\end{gather}   
\end{theorem}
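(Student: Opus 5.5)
The plan is to separate uniqueness from the identification of the element. For uniqueness I use the commutor $\sigma_{\Cr{B}_S,\Cr{B}_\lambda}$ of Henriques--Kamnitzer, which is an isomorphism of crystals $\Cr{B}_S\tens\Cr{B}_\lambda\cong\Cr{B}_\lambda\tens\Cr{B}_S$. By Proposition \ref{prop_3.2}, the right-hand side contains $\Cr{B}_{\lambda+\mu}$ with multiplicity exactly one. Hence $\Cr{B}_S\tens\Cr{B}_\lambda$ has exactly one highest weight element of weight $\lambda+\mu$, and it remains to show that $x:=b_\omega\tens c$ is a highest weight element. Here
$$b_\omega:=\tilde e_{i_k}\ldots\tilde e_{i_1}b_\mu, \qquad c:=\tilde f_{i_k}\ldots\tilde f_{i_1}b_\lambda .$$

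First I check that $x$ is a nonzero element of the right weight. By Corollary \ref{cor_4.1}, applied with $b=b_\lambda$ and using $\lambda+\mu\in P_+$, we get $c\neq 0$. For the weight,
$$\operatorname{wt}(x)=\mu+\sum_j\alpha_{i_j}+\lambda-\sum_j\alpha_{i_j}=\lambda+\mu .$$
Next, by the tensor product rule, $\varepsilon_j(x)=\max(\varepsilon_j(b_\omega),\,\varepsilon_j(c)-\operatorname{wt}_j(b_\omega))$. Since $\varepsilon_j(b_\omega)=0$, the element $x$ is highest weight if and only if
$$\varepsilon_j(c)\leqslant\langle\omega,\alpha_j^\vee\rangle\qquad\text{for all } j\in I,$$
where $\omega\in\{\Lambda_{n-1},\Lambda_n\}$. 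Concretely, the right-hand side is $1$ for the single spin node $j$ with $\langle\omega,\alpha_j^\vee\rangle=1$ and $0$ for every other $j$.

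To control $\varepsilon_j(c)$ I pass to $B_\infty$ via $\kappa_{\lambda,\infty}$. This map commutes with all $\tilde e_i$, so $\varepsilon_j(c)=\varepsilon_j(\kappa_{\lambda,\infty}(c))$. By Corollary \ref{cor_4.1} each partial product $\tilde f_{i_r}\ldots\tilde f_{i_1}b_\lambda$ is nonzero, and by Lemma \ref{lemma_4.5} the relevant $\varphi$'s are positive. I expect this to give $\kappa_{\lambda,\infty}(c)\tens a_\lambda$ with first factor $\tilde f_{i_k}\ldots\tilde f_{i_1}b_\infty$, although this needs a small check since $\kappa$ is not strict. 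Similarly, $b_\mu=\tilde f_{i_1}\ldots\tilde f_{i_k}b_\omega\neq0$, and its image under $\kappa_{\omega,\infty}$ is $\tilde f_{i_1}\ldots\tilde f_{i_k}b_\infty$. Applying Corollary \ref{cor_10.1} to the sequence $i_k,\ldots,i_1$ gives
$$\tilde f_{i_k}\ldots\tilde f_{i_1}b_\infty=\bigl(\tilde f_{i_1}\ldots\tilde f_{i_k}b_\infty\bigr)^*,$$
and therefore
$$\varepsilon_j(c)=\varepsilon_j^*\bigl(\kappa_{\omega,\infty}(b_\mu)\bigr).$$

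The main obstacle is the final inequality $\varepsilon_j^*(y)\leqslant\langle\omega,\alpha_j^\vee\rangle$ for $y$ in the image of $\Cr{B}_\omega$ in $B_\infty$. My first attempt is to invoke Kashiwara's description of the image of $\Cr{B}_\omega$ in $B_\infty$ as $\{y:\varepsilon_j^*(y)\leqslant\langle\omega,\alpha_j^\vee\rangle\ \forall j\}$. If a self-contained route is preferred, I would use Theorem \ref{th_10.1} with $\Psi_j$: since $\Psi_j$ is strict, $\Psi_j(y)=y_0\tens b_j(-\varepsilon_j^*(y))$ is obtained by applying the same $\tilde f$-string to $b_\infty\tens b_j(0)$. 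I would then track, as in the proof of Proposition \ref{prop_10.1}, how many of these $\tilde f_j$ act on the right factor. That proof shows a $\tilde f_j$ passes to the left whenever $\varphi_j\geqslant 2$ there, and the spinor constraint $\varepsilon_j+\varphi_j\leqslant1$ from Proposition \ref{prop_3.1} should force at most $\langle\omega,\alpha_j^\vee\rangle$ of them to land on $b_j$.
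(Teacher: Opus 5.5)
Your argument is correct, but it takes a different route from the paper's. The paper never checks directly that the candidate is highest weight. Instead it identifies the unique highest weight element of weight $\lambda+\mu$ in $\Cr{B}_S\tens\Cr{B}_\lambda$ as $\sigma_{\Cr{B}_\lambda,\Cr{B}_S}(b_\lambda\tens b_\mu)$. It then imports the Kamnitzer--Tingley formula $\sigma(b_\lambda\tens c)=b_\omega\tens c^{*}$, and uses Corollary~\ref{cor_10.1} only to compute $b_\mu^{*}$. Highest-weightness therefore comes for free, because the commutor is an isomorphism.

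You use the commutor merely as some isomorphism, to transport the multiplicity-one statement of Proposition~\ref{prop_3.2}. You then verify the tensor-product criterion $\varepsilon_j(c)\leqslant\langle\omega,\alpha_j^\vee\rangle$ by hand. Corollary~\ref{cor_10.1} turns this into $\varepsilon_j^{*}(y)\leqslant\langle\omega,\alpha_j^\vee\rangle$, where $y$ is the image of $b_\mu$ in $B_\infty$.

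The check you flag for $\kappa_{\lambda,\infty}$ is immediate. Since $c\neq 0$, all partial products are nonzero, so morphism axiom 4 gives $\kappa_{\lambda,\infty}(c)=\tilde f_{i_k}\ldots\tilde f_{i_1}b_\infty\tens a_\lambda$; the paper uses this fact silently as well.

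For the final inequality, citing Kashiwara's description of $\Cr{B}_\omega$ inside $B_\infty$ via $\varepsilon^{*}$ is legitimate. Your self-contained alternative also goes through. Suppose $p$ copies of $\tilde f_j$ have already landed on the $B_j$ factor, and let $L$ be the current left factor. Then Proposition~\ref{prop_3.1} gives $\langle\operatorname{wt}(L),\alpha_j^\vee\rangle=1-\langle\omega,\alpha_j^\vee\rangle+2p$. Hence the next $\tilde f_j$ lands on the right only if $p=0$, $\varepsilon_j(L)=0$ and $\langle\omega,\alpha_j^\vee\rangle=1$.

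As for the trade-off: the paper's proof also yields the explicit value of the commutor on $b_\lambda\tens b_\mu$, at the price of relying on the deeper Kamnitzer--Tingley theorem. Yours proves exactly the stated, commutor-free claim, which is all that the later computations (Corollaries~\ref{cor_6.2}--\ref{cor_6.4}) use. In its self-contained form it needs nothing beyond Theorem~\ref{th_10.1} and the bookkeeping already carried out in the proof of Proposition~\ref{prop_10.1}.
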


\begin{proof}
       
Let $\tilde e_{i_r} \ldots \tilde e_{i_1}$ be any composition of ascending crystal operators such that $\tilde e_{i_r} \ldots \tilde e_{i_1} b_{\mu} \neq 0$.  Since $\operatorname{wt}(b_\lambda) + \operatorname{wt}(b_\mu) = \lambda + \mu \in P_+$, applying Corollary \ref{cor_4.1}, we obtain $\tilde f_{i_r} \ldots \tilde f_{i_1} b_\lambda \neq 0$. Thus, $\tilde e_{i_k} \tilde e_{i_{k-1}}\ldots \tilde e_{i_1} b_{\mu} \tens \tilde f_{i_k} \tilde f_{i_{k-1}}\ldots \tilde f_{i_1} b_\lambda $ is a non-zero element in $\Cr{B}_S \tens \Cr{B_\lambda}$.

As $\lambda + \mu \in P_+$, Proposition \ref{prop_3.2} implies that $b_\lambda \tens b_\mu$ is the unique highest weight element in $\Cr{B}_\lambda \tens \Cr{B}_S$ of weight $\lambda + \mu$.  Then, $\sigma_{\Cr{B_\lambda, \Cr{B_S}}}(b_\lambda \tens b_\mu)$ is a unique highest weight element of the weight $\lambda + \mu$ in $\Cr{B}_S \tens \Cr{B}_\lambda$.  Hence, we need to show that $$\sigma_{\Cr{B_\lambda, \Cr{B_S}}}(b_\lambda \tens b_\mu) = \tilde e_{i_k} \tilde e_{i_{k-1}}\ldots \tilde e_{i_1} b_{\mu} \tens \tilde f_{i_k} \tilde f_{i_{k-1}}\ldots \tilde f_{i_1} b_\lambda.$$
In the work of Joel Kamnitzer and Peter Tingley \cite{Kamnitzer_Tingley} it was proven that if $b_\lambda \tens c$ is a highest weight element in $\Cr{B}_\lambda \tens \Cr{B}_\nu$, where $\lambda, \nu \in P_+$, then $$\sigma_{\Cr{B}_\lambda, \Cr{B}_S}(b_\lambda \tens c) = b_\nu \tens  c^{*},$$
where $b_\lambda, b_\nu$ are the highest weight elements of crystals $\Cr{B}_\lambda, \Cr{B}_\nu$ respectively and $*$ is Kashiwara's involution on $\Cr{B}_\infty$. Spinor crystal is a direct sum of two highest weight crystals $\Cr{B}_S = \Cr{B}_{\Lambda_{n-1}} \oplus \Cr{B}_{\Lambda_{n}}$. So, $b_\mu$ is an element of a highest weight crystal $\Cr{B}_\omega$, where $\omega$ is either $\Lambda_{n-1}$ or $\Lambda_n$. Commutor is a natural map, so $\sigma_{\Cr{B_\lambda}, \Cr{B_S}}$ sends $\Cr{B}_\lambda \tens \Cr{B}_\omega \subset \Cr{B}_\lambda \tens \Cr{B}_S$ to $\Cr{B}_\omega \tens \Cr{B}_\lambda \subset \Cr{B}_S \tens \Cr{B}_\lambda$. Hence, $$\sigma_{\Cr{B}_\lambda, \Cr{B}_S}(b_\lambda \tens b_\mu) = \sigma_{\Cr{B}_\lambda, \Cr{B}_\omega}(b_\lambda \tens b_\mu) = b_\omega \tens  b_\mu^{*}.$$ By the assumption of the Theorem $b_\omega = \tilde e_{i_k} \tilde e_{i_{k-1}}\ldots \tilde e_{i_1} b_{\mu}$. Hence, as $\tilde f_{i_1} \ldots \tilde f_{i_{k-1}} \tilde f_{i_k}b_\omega = b_\mu \neq 0$, Corollary \ref{cor_10.1} implies that $$b_\mu^{*} = (\tilde f_{i_1} \ldots \tilde f_{i_{k-1}} \tilde f_{i_k} b_\omega)^{*} = \tilde f_{i_k} \tilde f_{i_{k-1}} \ldots  \tilde f_{i_1} b_\lambda.$$ Here we assumed that $\Cr{B}_\omega$ and $\Cr{B}_{\lambda}$ are embedded into $\Cr{B}_\infty$ (more precisely into $\Cr{B}_{\infty} \tens A_{\omega}$ or $\Cr{B}_{\infty } \tens A_{\lambda}$), so that the highest weight elements $b_\lambda$, $b_\nu$ are identified with the  element $b_\infty$ of $\Cr{B}_\infty$. Therefore, $$\sigma_{\Cr{B}_\lambda, \Cr{B}_S}(b_\lambda \tens b_\mu) = b_\omega \tens b_\mu^{*} = \tilde e_{i_k} \tilde e_{i_{k-1}}\ldots \tilde e_{i_1} b_{\mu} \tens \tilde f_{i_k} \tilde f_{i_{k-1}} \ldots \tilde f_{i_1} b_\lambda$$ and we are done. 


\end{proof}

We close this chapter with a proposition that will prove useful in the subsequent chapter. 

\begin{proposition}
\label{prop_4.4}
Let $\Cr{B}$ be a  crystal. Take $b \in \Cr{B}$ and $b_\mu, b_\nu \in \Cr{B}_S$, such that $\operatorname{wt}(b) + \operatorname{wt}(b_\nu) \in P_+$ and $\operatorname{wt}(b) + \operatorname{wt}(b_\mu) + \operatorname{wt}(b_\nu) \in P_+$. Then $\tilde e_{i_r} \ldots \tilde{e}_{i_1} b_\nu \neq 0$ implies that $$\tilde{f}_{i_r}\ldots \tilde f_{i_1}(b \tens b_\mu) =\tilde{f}_{i_r}\ldots \tilde f_{i_1} b \tens b_\mu \neq 0. $$
\end{proposition}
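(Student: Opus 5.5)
The plan is an induction on $r$ that proves the two claims together. First, $\tilde f_{i_r}\ldots\tilde f_{i_1} b \neq 0$. Second, at every stage the operator acts on the left tensor factor. The nonvanishing claim is already Corollary \ref{cor_4.1}, applied to $b$ and $b_\nu$ using $\operatorname{wt}(b)+\operatorname{wt}(b_\nu)\in P_+$. So the real content is the second claim. It amounts to the following: for each $k$, writing $b_k=\tilde f_{i_k}\ldots\tilde f_{i_1}b$ (with $b_0=b$), the tensor rule sends $\tilde f_{i_{k+1}}$ to the left factor of $b_k\tens b_\mu$. By Definition \ref{def_1.2}, this happens exactly when $\varphi_{i_{k+1}}(b_k) > \varepsilon_{i_{k+1}}(b_\mu)$.

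Since $\varepsilon_{i}(b_\mu)\in\{0,1\}$ by Proposition \ref{prop_3.1}, it suffices to show two things:
\begin{itemize}
\item $\varphi_{i_{k+1}}(b_k)\geqslant 1$ always;
\item $\varphi_{i_{k+1}}(b_k)\geqslant 2$ whenever $\varepsilon_{i_{k+1}}(b_\mu)=1$.
\end{itemize}
The first follows from $b_{k+1}\neq 0$, i.e.\ from Corollary \ref{cor_4.1}.

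For the second inequality I will argue with weights, in the spirit of Lemma \ref{lemma_4.5}. Set $c_k=\tilde e_{i_k}\ldots\tilde e_{i_1}b_\nu\neq 0$. Then
\[
\operatorname{wt}(b_k)+\operatorname{wt}(c_k)=\operatorname{wt}(b)+\operatorname{wt}(b_\nu)\in P_+ ,
\]
and likewise $\operatorname{wt}(b_k)+\operatorname{wt}(c_k)+\operatorname{wt}(b_\mu)\in P_+$. Pair the second relation with $\alpha_j^\vee$, where $j=i_{k+1}$, and use axiom 1 of Definition \ref{def_1.1} for each of the three elements. Since $c_k$ satisfies $\varepsilon_j(c_k)=1$, $\varphi_j(c_k)=0$ (because $\tilde e_j c_k\neq0$ in $\Cr{B}_S$), we get
\[
\varphi_j(b_k)-\varepsilon_j(b_k) - 1 + \varphi_j(b_\mu)-\varepsilon_j(b_\mu)\geqslant 0 .
\]
When $\varepsilon_j(b_\mu)=1$, we also have $\varphi_j(b_\mu)=0$, and the inequality gives $\varphi_j(b_k)\geqslant 2+\varepsilon_j(b_k)\geqslant 2$, as required. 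When $\varepsilon_j(b_\mu)=0$, the bound $\varphi_j(b_k)\geqslant1>0$ is enough.

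Given these two facts, the induction step is immediate. If $\tilde f_{i_k}\ldots\tilde f_{i_1}(b\tens b_\mu)=b_k\tens b_\mu$, then
\[
\tilde f_{i_{k+1}}(b_k\tens b_\mu)=b_{k+1}\tens b_\mu .
\]
The result is nonzero because $b_{k+1}\neq0$. The main point needing care is that the weight computation uses the hypothesis on $\operatorname{wt}(b)+\operatorname{wt}(b_\mu)+\operatorname{wt}(b_\nu)$ at every intermediate stage $k$, not just at $k=0$. This is exactly why the intermediate element $c_k$ is carried along: the total weight is invariant, since $b_k$ loses $\alpha$'s exactly as $c_k$ gains them.
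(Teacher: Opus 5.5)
Your proof is correct and follows essentially the same induction as the paper. In both, the crucial step is excluding the borderline case $\varphi_{i_{k+1}}(b_k)=\varepsilon_{i_{k+1}}(b_\mu)=1$, once $\varphi_{i_{k+1}}(b_k)\geqslant 1$ is known. The paper does this by contradiction with the non-vanishing of $\tilde f_{i_r}\ldots\tilde f_{i_1}(b\tens b_\mu)$, obtained from Corollary \ref{cor_4.1} applied to $b\tens b_\mu$ and $b_\nu$. You instead get $\varphi_{i_{k+1}}(b_k)\geqslant 2$ directly by pairing $\operatorname{wt}(b_k)+\operatorname{wt}(c_k)+\operatorname{wt}(b_\mu)$ with $\alpha_{i_{k+1}}^\vee$, which is the same computation written out.
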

\begin{proof}
  Assume that $\tilde e_{i_r} \ldots \tilde{e}_{i_1} b_\nu \neq 0$.  It follows from Corollary \ref{cor_4.1} that $\tilde{f}_{i_r}\ldots \tilde f_{i_1}(b \tens b_\mu)  \neq 0$, given that $\operatorname{wt}(b \tens b_\mu) + \operatorname{wt}(b_\nu) = \operatorname{wt}(b) + \operatorname{wt}(b_\mu) + \operatorname{wt}(b_\nu)  \in P_+$. 
  Next we show by induction on $r$ that  $\tilde{f}_{i_r}\ldots \tilde f_{i_1}(b \tens b_\mu) =\tilde{f}_{i_r}\ldots \tilde f_{i_1} b \tens b_\mu$. If $r = 0$, then there is nothing to prove.  By induction hypothesis $(\tilde e_{i_{r-1}} \ldots \tilde{e}_{i_1} b_\nu \neq 0)$ we have $$\tilde f_{i_r}\tilde f_{i_{r-1}}\ldots \tilde  f_{i_1}(b \tens b_\mu) = \tilde{f}_{i_r}( \tilde f_{i_{r-1}}\ldots \tilde  f_{i_1}b \tens b_\mu)$$
Notice that $\operatorname{wt}( \tilde f_{i_{r-1}}\ldots \tilde  f_{i_1}b) + \operatorname{wt}(\tilde e_{i_{r-1}} \ldots \tilde{e}_{i_1} b_\nu ) = \operatorname{wt}(b) + \operatorname{wt}(b_\nu) \in P_+$. So, as $\varepsilon_{i_r}(\tilde e_{i_{r-1}} \ldots \tilde{e}_{i_1} b_\nu ) = 1$, Lemma \ref{lemma_4.5} implies  $\varphi_{i_r}(\tilde f_{i_{r-1}}\ldots \tilde  f_{i_1}b) \geqslant 1$. 
Assume that  $$\tilde{f}_{i_r}( \tilde f_{i_{r-1}}\ldots \tilde  f_{i_1}b \tens b_\mu) = \tilde f_{i_{r-1}}\ldots \tilde  f_{i_1}b \tens\tilde{f}_{i_r} b_\mu$$ By definition of the tensor product $\tilde{f}_{i_r}$  acts on the second multiple of $\tilde f_{i_{r-1}}\ldots \tilde  f_{i_1}b \tens b_\mu$ iff $\varepsilon_{i_r}(b_\mu) \geqslant \varphi_{i_r}(\tilde f_{i_{r-1}}\ldots \tilde  f_{i_1}b)$.  Since $\varphi_{i_r}(\tilde f_{i_{r-1}}\ldots \tilde  f_{i_1}b) \geqslant 1$, we conclude that $$\varepsilon_{i_r}(b_\mu) =\varphi_{i_r}(\tilde f_{i_{r-1}}\ldots \tilde  f_{i_1}b) = 1.$$
But then $\varphi_{i_r}(b_\mu) = 0$, and thus $$\tilde f_{i_r}\tilde f_{i_{r-1}}\ldots \tilde  f_{i_1}(b \tens b_\mu)= \tilde{f}_{i_r}( \tilde f_{i_{r-1}}\ldots \tilde  f_{i_1}b \tens b_\mu) = \tilde f_{i_{r-1}}\ldots \tilde  f_{i_1}b \tens\tilde{f}_{i_r} b_\mu = 0,$$
which is a contradiction. 
Hence,  $$\tilde{f}_{i_r}( \tilde f_{i_{r-1}}\ldots \tilde  f_{i_1}b \tens b_\mu) = \tilde{f}_{i_r}\tilde f_{i_{r-1}}\ldots \tilde  f_{i_1}b \tens b_\mu.$$
and we are done. 
\end{proof}

\section{Cell tables and short semi-standard Young tables}
\label{cell_tables_&_sssyt}

This chapter follows my previous paper \cite{Svyatnyy}. It is dedicated to presenting combinatorial interpretations of the sets $\Delta^N$, $T_\lambda^{N} \hspace{1mm} \text{for}\hspace{1mm} \lambda \in \Delta^{N}$ defined in \ref{eq_8}, \ref{eq_9}.

\begin{definition}
\label{def_5.1}

A \textbf{\textit{regular cell diagram}} of the length $N$ and of the height $n$ is a composition of cells and a vertical straight line, called \textbf{axis} of the cell diagram constructed by two sets of non-negative integers $\textbf{{l}} = (l_i)_{1 \leqslant i \leqslant n}$, $\textbf{{r}} = (r_i)_{1 \leqslant i \leqslant n}$ satisfying the following conditions:
\begin{itemize}
 \item $r_i + l_i = N, \hspace{1mm} \forall i \in \{1, \ldots, n\}$
 \item $r_1 \geqslant r_2 \geqslant \ldots \geqslant r_n$
 \item $r_{n-1} \geqslant l_{n}$
\end{itemize}
The construction looks as follows. For every $i \in \{1, \ldots, n\}$ in the $i$-th row to the right of the axis draw $r_i$ cells and to the left of the axis draw $l_i$ cells. The enumeration of rows goes from top to bottom.
\end{definition}

\begin{notation}
We denote the regular cell diagram constructed by the sets $\textbf{{l}}, \textbf{{r}}$ by $D(\textbf{l}, \textbf{r})$.
\end{notation}

\begin{notation}
The set of regular cell diagrams of the length $N$ and height $n$ is denoted by  $\mathfrak{D}(N, n)$.
\end{notation}

\begin{example}
\label{ex_5.1}
As an example we draw the regular cell diagram $D(\textbf{l}, \textbf{r}) \in \mathfrak{D}(7,4)$, where $\textbf{r} =  (5, 4, 4, 3), \hspace{2mm} \textbf{l} = (2, 3, 3, 4)$.

\centering 
\begin{tikzpicture}[scale = 0.6]
\draw[thick,<->] (6,-0.5) -- (6, 4.5) node[anchor=north west] {};
\draw (2,0) rectangle (3,1);
\draw (3,0) rectangle (4,1);
\draw (4,0) rectangle (5,1);
\draw (5,0) rectangle (6,1);
\draw (6,0) rectangle (7,1);
\draw (7,0) rectangle (8,1);
\draw (8,0) rectangle (9,1);
\draw (3,1) rectangle (4,2);
\draw (4,1) rectangle (5,2);
\draw (5,1) rectangle (6,2);
\draw (6,1) rectangle (7,2);
\draw (7,1) rectangle (8,2);
\draw (8,1) rectangle (9,2);
\draw (9,1) rectangle (10,2);
\draw (3,2) rectangle (4,3);
\draw (4,2) rectangle (5,3);
\draw (5,2) rectangle (6,3);
\draw (6,2) rectangle (7,3);
\draw (7,2) rectangle (8,3);
\draw (8,2) rectangle (9,3);
\draw (9,2) rectangle (10,3);
\draw (5,3) rectangle (4,4);
\draw (6,3) rectangle (5,4);
\draw (7,3) rectangle (6,4);
\draw (8,3) rectangle (7,4);
\draw (9,3) rectangle (8,4);
\draw (10,3) rectangle (9,4);
\draw (11,3) rectangle (10,4);
\end{tikzpicture}
\end{example}

For each $\lambda \in P_+$ we define the tuples $$\textbf{{l}}(\lambda, N) = (l_i)_{1 \leqslant i \leqslant n}$$ and $$\textbf{{r}}(\lambda, N) = (r_i)_{1 \leqslant i \leqslant n}.$$
The pair $(l_i, r_i), \forall i \in \{1, \ldots, n\}$ is a solution of the following system of equations: 
\begin{gather}
\begin{cases} 
r_i + l_i = N \\
r_i - l_i = 2 \lambda_i
\end{cases}
\end{gather}
In other words: 

\begin{gather} 
\begin{cases} 
r_i = \frac{N}{2} + \lambda_i \\
l_i = \frac{N}{2} - \lambda_i
\end{cases}
\end{gather}

\begin{proposition}[Proposition 4.3 in \cite{Svyatnyy}]
\label{prop_5.1}
For any $\lambda \in P_+$, $\lambda \in \Delta^N $ iff $\textbf{{r}}(\lambda, N)$, $\textbf{{l}}(\lambda, N) \in (\mathbb{Z}_{+})^{\times n}.$ 
\end{proposition}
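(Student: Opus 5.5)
Both directions run through Corollary \ref{cor_3.1}: $\lambda \in \Delta^N$ exactly when there is a sequence $(\mu_1, \ldots, \mu_N)$ with $\mu_k \in P[S]$ and every partial sum $\sum_{i\leqslant k}\mu_i$ in $P_+$, ending at $\lambda$. In coordinates, $\mu_k$ adds $\pm\frac12$ to each coordinate and dominance means $\lambda_1 \geqslant \cdots \geqslant \lambda_n$, $\lambda_{n-1}+\lambda_n \geqslant 0$. Note that $r_i,l_i \in \mathbb{Z}_+$ for all $i$ is equivalent to two conditions: each $\lambda_i \in \frac N2 + \mathbb{Z}$, and $|\lambda_i| \leqslant \frac N2$.

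\emph{Necessity.} By induction on $N$, every $\lambda \in \Delta^N$ has all coordinates congruent to $\frac N2$ modulo $\mathbb{Z}$ and satisfies $|\lambda_i|\leqslant \frac N2$. Each step changes every coordinate by $\pm\frac12$, and $\Delta^1 = \{\Lambda_{n-1},\Lambda_n\}$ starts the induction. Hence $r_i,l_i$ are nonnegative integers.

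\emph{Sufficiency.} Induct on $N$. For $N=1$, a dominant $\lambda$ with $|\lambda_i|\leqslant\frac12$ and $\lambda_i\in\frac12+\mathbb{Z}$ is $\Lambda_{n-1}$ or $\Lambda_n$, both in $\Delta^1$. For $N \geqslant 2$, given dominant $\lambda$ with $r_i,l_i\in\mathbb{Z}_+$, we construct $\mu\in P[S]$ such that $\lambda' := \lambda-\mu$ satisfies three conditions:
\begin{enumerate}
\item $\lambda'$ is dominant;
\item $|\lambda'_i|\leqslant \frac{N-1}2$ for all $i$;
\item $\lambda + \mu' \in P_+$ is automatic, since $\lambda'+\mu = \lambda$.
\end{enumerate}
By induction $\lambda' \in \Delta^{N-1}$, hence $\lambda\in\Delta^N$. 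The congruence condition for $\lambda'$ holds automatically.

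The natural choice of signs is the following. For $i \leqslant n-1$, take $\mu_i = \operatorname{sgn}$-type:
\[
\mu_i=+\tfrac12 \text{ if } \lambda_i>0, \qquad \mu_i=-\tfrac12 \text{ if } \lambda_i<0,
\]
which shrinks $|\lambda_i|$ and so gives condition 2. The cases $\lambda_i = 0$ (possible only when $N$ is even) and $\mu_n$ need care. Note that $\lambda_1\geqslant\cdots\geqslant\lambda_{n-1}\geqslant|\lambda_n|\geqslant 0$, so the only negative coordinate can be $\lambda_n$. One can therefore choose $\mu_i=+\frac12$ for all $i$ with $\lambda_i>0$, and choose $\mu_n$ with the sign of $\lambda_n$.

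\emph{Main obstacle: zero coordinates.} If $\lambda_i = 0$, then $N$ is even and $|\lambda'_i|=\frac12\leqslant\frac{N-1}2$ holds for either sign, but dominance of $\lambda'$ must still be checked. Take the zero coordinates to be $\lambda_k=\cdots=\lambda_n=0$. Set $\mu_j=-\frac12$ for those $j$, except that we may pick $\mu_n=+\frac12$ or $-\frac12$ freely. Then $\lambda'$ ends in $(\ldots,\frac12,\ldots,\frac12,\pm\frac12)$, up to the choice of sign in the last entry.

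Dominance of $\lambda'$ is checked as follows:
\begin{itemize}
\item Subtracting $\frac12$ from each positive coordinate preserves the order among them.
\item At the boundary, a positive coordinate $\lambda_j\geqslant 1$ (an integer when $N$ is even) minus $\frac12$ is at least $\frac12$. When $N$ is odd, all coordinates are nonzero half-integers, and the step $\lambda_{n-1}\geqslant|\lambda_n|$ survives the shift by $\frac12$ with the chosen signs.
\item The condition $\lambda'_{n-1}+\lambda'_n\geqslant0$ follows from $\lambda_{n-1}\geqslant|\lambda_n|$ together with the sign choice, checked separately for $\lambda_n>0$, $\lambda_n<0$, and $\lambda_n=0$.
\end{itemize}

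These parity and boundary cases are the only delicate part of the proof. Everything else is the observation that each step of $T^N$ is a single $\pm\frac12$ move in every coordinate.
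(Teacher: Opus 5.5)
The paper does not prove this statement; it is quoted from the author's earlier work (Proposition 4.3 there), so there is no in-text argument to compare yours with. Your proof is correct and self-contained.

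\emph{Necessity.} This direction is immediate: each coordinate of $\lambda=\sum_k\mu_k$ is a sum of $N$ terms $\pm\frac12$, and dominance is not even needed.

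\emph{Sufficiency.} Peeling off one spinor weight is the right induction. If $\lambda'=\lambda-\mu$ is dominant with $|\lambda'_i|\leqslant\frac{N-1}{2}$, then appending $\mu$ to a sequence in $T^{N-1}$ with sum $\lambda'$ gives an element of $T^N$. The only new partial sum is $\lambda$ itself, which is dominant by hypothesis; this is what your garbled item~3 should say. The zero coordinates do form a terminal block, since $\lambda_1\geqslant\dots\geqslant\lambda_{n-1}\geqslant|\lambda_n|$.

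The cases you leave to be ``checked separately'' all go through with your sign choice:
\begin{itemize}
\item If $\lambda_n>0$, then $\lambda'_{n-1}+\lambda'_n=\lambda_{n-1}+\lambda_n-1\geqslant 0$, because $\lambda_{n-1}\geqslant\lambda_n\geqslant\frac12$.
\item If $\lambda_n<0$, then $\lambda'_{n-1}+\lambda'_n=\lambda_{n-1}+\lambda_n\geqslant0$. Also $\lambda'_{n-1}=\lambda_{n-1}-\frac12\geqslant|\lambda_n|-\frac12\geqslant 0\geqslant\lambda_n+\frac12=\lambda'_n$.
\item For a zero block $\lambda_k=\dots=\lambda_n=0$ ($N$ even), one has $\lambda'_{k-1}=\lambda_{k-1}-\frac12\geqslant\frac12=\lambda'_k$, and the tail $(\frac12,\dots,\frac12,\pm\frac12)$ is dominant. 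The bound $|\lambda'_j|=\frac12\leqslant\frac{N-1}{2}$ here uses $N\geqslant 2$.
\end{itemize}

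A side benefit of your route is that it is constructive. It produces an explicit element of $T^N_\lambda$ for every admissible $\lambda$, using nothing beyond the definition of $\Delta^N$ through $T^N$.
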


It is easy to see that for any $\lambda \in P_+$ the tuples $\textbf{{r}}(\lambda, N)$, $\textbf{{l}}(\lambda, N)$ satisfy the conditions from the definition. 
Therefore, the following definition makes sense. 

\begin{definition}
\label{def_5.2}
 A \textbf{cell diagram, corresponding to}  $\lambda \in \Delta^N$, denoted by $D^N_{\lambda}$, is a regular cell diagram of the length $N$ and height $n$, constructed by $\textbf{{l}}(\lambda, N)$, $\textbf{{r}}(\lambda, N)$.  
$$D_\lambda^N := D(\textbf{{l}}(\lambda, N),\textbf{{r}}(\lambda, N)).$$
\end{definition}

\begin{proposition}[Proposition 4.7 in \cite{Svyatnyy}]
\label{prop_5.2}
The map $\mathcal{K}_N : \Delta^N  \rightarrow \mathfrak{D}(N, n)$, where $$\mathcal{K}_N(\lambda) =  D_{\lambda}^N $$ is a bijection. 
\end{proposition}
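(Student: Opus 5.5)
Proposition \ref{prop_5.2} says that $\mathcal{K}_N$ is a bijection $\Delta^N \to \mathfrak{D}(N,n)$. The proof splits into three parts: $\mathcal{K}_N$ is well defined, it is injective, and it is surjective. Throughout I rely on Proposition \ref{prop_5.1}, which characterizes membership in $\Delta^N$: a dominant $\lambda$ lies in $\Delta^N$ iff both $\textbf{r}(\lambda,N)$ and $\textbf{l}(\lambda,N)$ consist of nonnegative integers.

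\emph{Well-definedness.} Fix $\lambda \in \Delta^N$. By Proposition \ref{prop_5.1} all $r_i = \frac N2 + \lambda_i$ and $l_i = \frac N2 - \lambda_i$ are nonnegative integers, and $r_i + l_i = N$ holds by construction. Dominance of $\lambda$ gives $\lambda_1 \geqslant \cdots \geqslant \lambda_n$, hence $r_1 \geqslant \cdots \geqslant r_n$. The remaining condition is $r_{n-1} \geqslant l_n$, i.e. $\frac N2 + \lambda_{n-1} \geqslant \frac N2 - \lambda_n$. This is exactly $\lambda_{n-1} + \lambda_n \geqslant 0$, which is the last dominance inequality for $D_n$. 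So $D_\lambda^N \in \mathfrak{D}(N,n)$.

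\emph{Injectivity.} This is immediate, since $\lambda_i = (r_i - l_i)/2$ is recovered from the diagram.

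\emph{Surjectivity.} Take any $D(\textbf{l},\textbf{r}) \in \mathfrak{D}(N,n)$ and set $\lambda_i = (r_i - l_i)/2 = r_i - \frac N2$. Each $\lambda_i$ lies in $\frac12\mathbb{Z}$. The differences $\lambda_i - \lambda_j = r_i - r_j$ are integers, so $\lambda \in P$. Reading the argument of the first part backwards turns the two inequalities in the definition of a regular cell diagram into the dominance conditions, so $\lambda \in P_+$. By construction $\textbf{r}(\lambda,N) = \textbf{r}$ and $\textbf{l}(\lambda,N) = \textbf{l}$ are tuples of nonnegative integers. Proposition \ref{prop_5.1} then gives $\lambda \in \Delta^N$, and $\mathcal{K}_N(\lambda) = D(\textbf{l},\textbf{r})$.

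The only real content is Proposition \ref{prop_5.1}, which I take as given. If it had to be justified here, the main obstacle would be its converse direction: constructing a sequence $(\mu_1,\dots,\mu_N) \in T^N$ with partial sums dominant and total sum $\lambda$. I would do this by induction on $N$, choosing $\mu_N \in P[S]$ so that $\lambda - \mu_N$ still satisfies the integrality, nonnegativity and dominance conditions for $N-1$. Concretely, subtract $\frac12$ from the coordinates with $l_i = 0$ forced and add $\frac12$ elsewhere, with care at the $\lambda_{n-1}+\lambda_n \geqslant 0$ edge. Since Proposition \ref{prop_5.1} is available, the proof itself is routine.
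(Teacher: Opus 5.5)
Your proof is correct and matches the argument the paper indicates. The paper gives no proof here and cites the earlier work, but it likewise notes that for dominant $\lambda$ the tuples $\textbf{r}(\lambda,N)$, $\textbf{l}(\lambda,N)$ satisfy the defining conditions of a regular cell diagram, and it reduces membership in $\Delta^N$ to Proposition~\ref{prop_5.1}, exactly as in your well-definedness and surjectivity steps. Your closing sketch of Proposition~\ref{prop_5.1} is also sound: setting $(\mu_N)_i=+\frac12$ exactly where $l_i=0$ already preserves $r_{n-1}\geqslant l_n$, so no extra care is needed at that edge.
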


We want to give another combinatorial interpretation of the set $\Delta^N$. For that we will need the following corollary of the fundamental result of Roger Howe (Theorem 8 in \cite{Howe})

\begin{proposition}[Corollary 5.8 in \cite{Svyatnyy}]
\label{prop_5.3}

Let $V_S = V_{\Lambda_{n-1}} \oplus V_{\Lambda_{n}}$ be the spinor representation of the Lie algebra $D_n$. Here by $V_\lambda$ we denote the highest weight representation of $D_n$ with the highest weight $\lambda$, and $\Lambda_{n-1}, \Lambda_n$ are the fundamental weights defined in the previous section.

Then $V_S^{\otimes N}$ as a $D_n$-module can be decomposed into the direct sum of simple $D_n$-modules: 

$$V_S^{\otimes N} = \bigoplus_{\lambda \in \Delta^{N} } U_{\lambda} \otimes V_{\lambda},$$
where $U_\lambda$ is the multiplicity space of the simple module $V_\lambda$ in $V_S^{\otimes N}$.

Multiplicity space $U_\lambda$ has a natural structure of a simple $O_N$-module. Moreover, for different $\lambda \in \Delta^N$ multiplicity spaces $U_\lambda$ are non-isomorphic as $O_N$-modules. 
\end{proposition}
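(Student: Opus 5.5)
The plan is to deduce the statement from skew Howe duality, namely Theorem 8 in \cite{Howe}, for the dual pair $(\mathfrak{o}_{2n}, O_N)$ acting inside a single big spinor module. First realize the spinor representation concretely. Take $W=\mathbb{C}^{2n}$ with a nondegenerate symmetric form and a polarization $W=E\oplus E^{*}$, where $\dim E=n$. Then $V_S=V_{\Lambda_{n-1}}\oplus V_{\Lambda_n}\cong \Lambda^{\bullet}E$, and the even and odd parts are the two half-spin modules. The algebra $D_n=\mathfrak{o}(W)$ acts through the quadratic elements of the Clifford algebra $Cl(W)$.

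Next, tensor with $\mathbb{C}^N$, equipped with its standard symmetric form, to get the quadratic space $W\otimes\mathbb{C}^N$ of dimension $2nN$. It has the polarization $(E\otimes\mathbb{C}^N)\oplus(E^{*}\otimes\mathbb{C}^N)$, and its spinor module is
\begin{gather*}
\Lambda^{\bullet}(E\otimes\mathbb{C}^N)\cong(\Lambda^{\bullet}E)^{\otimes N}=V_S^{\otimes N}.
\end{gather*}
The key compatibility check is that the restriction of the $\mathfrak{o}(W\otimes\mathbb{C}^N)$-action to $\mathfrak{o}(W)\otimes 1$ is the diagonal (tensor-product) action of $D_n$ on $V_S^{\otimes N}$. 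I expect this to follow from the fact that the spinor module of an orthogonal direct sum $W^{\oplus N}$ is the graded tensor product of the spinor modules of the summands. One has to verify that the Koszul signs do not affect the even quadratic elements spanning $\mathfrak{o}(W)$; since these are even, the signs cancel and the action is honestly diagonal.

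On the other side, $O_N=O(\mathbb{C}^N)$ acts on $W\otimes\mathbb{C}^N$ through the second factor and commutes with $\mathfrak{o}(W)\otimes 1$. Howe's theorem asserts that $(\mathfrak{o}(W),O_N)$ is a dual pair in the (pin) spinor representation. Consequently $V_S^{\otimes N}$ decomposes multiplicity-freely as $\bigoplus U_\lambda\otimes V_\lambda$, each $U_\lambda$ is a simple $O_N$-module, and $\lambda\mapsto U_\lambda$ is injective. The set of $\lambda$ that occur is the set of highest weights of $V_S^{\otimes N}$. This set equals $\Delta^N$ by Corollary \ref{cor_3.1}, because the character of $V_S^{\otimes N}$ agrees with that of the crystal $\mathcal{B}_S^{\otimes N}$, as crystal bases preserve weight multiplicities (Theorem \ref{th_2.2}).

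The step I expect to be the main obstacle is making the $O_N$-action genuinely well defined on $\Lambda^{\bullet}(E\otimes\mathbb{C}^N)$, rather than only an action of $\mathfrak{o}_N$ or of a double cover. Elements of $O_N$ of determinant $-1$ act on the spinor module only up to sign through $\mathrm{Pin}$. I would fix this the way Howe does: twist the pin action by a suitable character, so that $-1\in O_N$ acts by $(-1)^{\text{degree}}$ or by a fixed scalar. I would then check that this normalization commutes with $D_n$, which holds because the degree parity is preserved by the even operators in $\mathfrak{o}(W)$. With that in place, the remaining claims are exactly the conclusions of Howe's duality theorem.
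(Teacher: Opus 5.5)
Your proposal is correct and takes the same route as the paper. The paper gives no argument of its own: it states the result as a corollary of Howe's skew duality (Theorem 8 in \cite{Howe}) for the pair $(\mathfrak{o}_{2n},O_N)$ on $V_S^{\otimes N}\cong\Lambda^{\bullet}(E\otimes\mathbb{C}^N)$, and the index set is $\Delta^N$ via Corollary \ref{cor_3.1}. The obstacle you anticipate disappears: $O_N$ preserves the polarization, so it acts functorially on $\Lambda^{\bullet}(E\otimes\mathbb{C}^N)$ through $GL(E\otimes\mathbb{C}^N)$, and this action already commutes with $\mathfrak{o}(W)\otimes 1$ because $O_N$ fixes the form on $\mathbb{C}^N$, so no Pin twist is needed.
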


From the theorem $19.19$ in \cite{Fulton} we know that the isomorphism classes of simple $O_N$-modules are indexed by the Young diagrams such that the sum of the lengths of the first two columns does not exceed $N$. 

\begin{definition}
\label{def_5.3}
A \textbf{short Young diagram} of the height $N$ is a Young diagram such that the lengths of the first column does not exceed $N$.  
\end{definition}

It follows from the proposition that for each $\lambda \in \Delta^N$ there exists a short Young diagram $\nu$, such that $U_\lambda \underset{O_N}{\cong} R_\nu$, where $R_\nu$ is the irreducible representation of $O_N$, corresponding to the short Young diagram $\nu$. 

\begin{notation}
    Denote by $\operatorname{SYD}(N, n)$ the set of short Young diagrams of the height $N$ that have at most $n$ columns.
\end{notation}

In \cite{Svyatnyy} the following theorem was proved. 
\begin{theorem}[Theorem 6.1 in \cite{Svyatnyy}]
\label{th_5.1}
Consider the following map $$\mathcal{F}_N: \mathfrak{D}(N, n) \rightarrow \operatorname{SYD}(N, n).$$ We define it the following way. If $n$ is even, then $$\mathcal{F}_N (D(\textbf{l}, \textbf{r})) =  (l_n, l_{n-1}, \ldots, l_1)^{t},$$ if $n$ is odd, then $$\mathcal{F}_N (D(\textbf{l}, \textbf{r})) =  (r_n, l_{n-1}, \ldots, l_1)^{t}.$$ We claim that the map $\mathcal{F}_N$ is a bijection and moreover the image of the cell diagram $D^{N}_{\lambda}$, corresponding to $\lambda \in \Delta^{N} $, under the map $\mathcal{F}_N$ is the short Young diagram $\nu$, such that the multiplicity space $U_\lambda$ as $O_N$-module is isomorphic to $R_{\nu}$.
\end{theorem}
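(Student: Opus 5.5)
The statement has two parts, and I would prove them separately: the bijectivity of $\mathcal{F}_N$ is purely combinatorial, while the identification $U_\lambda \cong R_{\mathcal{F}_N(D^N_\lambda)}$ has to go through Howe duality (Proposition \ref{prop_5.3}).

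\textbf{Bijectivity.} First I translate the defining conditions of $D(\textbf{l},\textbf{r})\in\mathfrak{D}(N,n)$ into conditions on $\textbf{l}$ alone, using $r_i=N-l_i$:
\begin{itemize}
\item $r_1\geqslant\ldots\geqslant r_n$ becomes $l_1\leqslant l_2\leqslant\ldots\leqslant l_n$;
\item $r_{n-1}\geqslant l_n$ becomes $l_{n-1}+l_n\leqslant N$.
\end{itemize}
For $n$ even, the image $(l_n,\ldots,l_1)^t$ therefore has weakly decreasing column lengths, with the first two columns of total length $l_n+l_{n-1}\leqslant N$. So it lies in $\operatorname{SYD}(N,n)$. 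The inverse sends a diagram with column lengths $c_1\geqslant\ldots\geqslant c_n$ (padded by zeros) to $l_i=c_{n+1-i}$ and $r_i=N-l_i$.

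For $n$ odd, the image is $(r_n,l_{n-1},\ldots,l_1)^t$.
\begin{itemize}
\item The inequality $r_n\geqslant l_{n-1}$ is exactly $l_{n-1}+l_n\leqslant N$.
\item The shortness condition $r_n+l_{n-1}\leqslant N$ is exactly $l_{n-1}\leqslant l_n$.
\end{itemize}
So the two defining conditions are simply exchanged, and the inverse is $l_i=c_{n+1-i}$ for $i<n$ and $l_n=N-c_1$. In both parities I would check directly that the two maps are mutually inverse.

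\textbf{Identification with $U_\lambda$.} I would realize $V_S^{\otimes N}$ as the spinor module of $\mathfrak{o}_{2nN}$, namely $\Lambda(\mathbb{C}^n\otimes\mathbb{C}^N)$ with the $\mathfrak{gl}_n$-action twisted by $\det^{-N/2}$. Under this realization $D_n\times O_N$ is a dual pair, and Proposition \ref{prop_5.3} together with Howe's explicit correspondence applies.

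To pin down which $\nu$ pairs with $\lambda$, I would use skew $(GL_n,GL_N)$-duality:
$$\Lambda(\mathbb{C}^n\otimes\mathbb{C}^N)=\bigoplus_\mu V^{GL_n}_\mu\otimes V^{GL_N}_{\mu^t}.$$
The $D_n$-highest weight vectors of weight $\lambda$ sit inside the $GL_n$-isotypic pieces whose $\epsilon$-weights are $\mu_i-\frac N2$. From this I expect the relevant column lengths to be $l_i=\frac N2-\lambda_i$, read in reversed order. This is why the diagram $(l_n,\ldots,l_1)^t$ appears, and it agrees with Proposition \ref{prop_5.1}.

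I would then check that the $O_N$-harmonic part of $V^{GL_N}_{\mu^t}$ containing this vector is $R_\nu$ with $\nu=(l_n,\ldots,l_1)^t$. Comparing with a small case such as $N=1$ or $n=1$ would confirm the normalization.

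\textbf{Main obstacle: the last column and the parity of $n$.} The expected difficulty is the sign of $\lambda_n$, that is, the choice between $l_n$ and $r_n=N-l_n$ for the last column. This choice is exactly the ambiguity $R_\nu$ versus $R_\nu\otimes\det$ in $O_N$.

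To resolve it, I would compute the action of a reflection $g\in O_N\setminus SO_N$ on $V_S^{\otimes N}$. The reflection acts diagonally on one tensor factor, and on $V_S$ it acts by the Clifford element that exchanges $V_{\Lambda_{n-1}}$ and $V_{\Lambda_n}$, up to a scalar depending on $n$. Tracking how this exchange interacts with $\det$ shows that, depending on the parity of $n$, the component $U_\lambda$ is either the harmonic module itself or its twist by $\det$. Twisting by $\det$ replaces the first column length $c_1$ by $N-c_1$, which converts $l_n$ into $r_n$ when $n$ is odd. This parity bookkeeping is the step I expect to need the most care.

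Finally, once $\mathcal{F}_N$ is known to be a bijection, the statement is consistent with the non-isomorphism of the $U_\lambda$ asserted in Proposition \ref{prop_5.3}.
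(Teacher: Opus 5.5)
Your bijectivity argument is correct and complete. The paper only quotes this theorem from \cite{Svyatnyy}, so there is no proof here to compare with. The gap is in the identification of $U_\lambda$.

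\textbf{The lowest weight vector, not the highest, carries the $O_N$-type.} In the realization you fix, $\Lambda(\mathbb{C}^n\otimes\mathbb{C}^N)$ with $\mathfrak{gl}_n$-weights shifted by $-\frac N2$, the root vectors of $\epsilon_i+\epsilon_j$ are multiplications by $\sum_k x_{ik}x_{jk}$. So a $D_n$-highest weight vector of weight $\lambda$ has $\mathfrak{gl}_n$-degree $\lambda+\frac N2=\textbf{r}$. It lies in $V^{GL_n}_{\textbf{r}}\otimes V^{GL_N}_{\textbf{r}^t}$ and is in general not $O_N$-harmonic. Hence ``column lengths $l_i$ in reversed order'' does not follow from your remark on the weights $\mu_i-\frac N2$, and your planned check on the harmonic module containing this vector would fail.

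The lowest weight vector of the $V_\lambda$-isotypic part is the right one. It is killed by $\mathfrak{n}^-$, which contains all contractions, so it is harmonic and $\mathfrak{gl}_n$-lowest, with degree $w_0\lambda+\frac N2$. Your guess tacitly uses $w_0\lambda=-\lambda$, which holds in $W(D_n)$ only for even $n$. For odd $n$ one has $w_0\lambda=(-\lambda_1,\ldots,-\lambda_{n-1},\lambda_n)$. Reversing the degree vector gives the $GL_n$-highest weight $(l_n,\ldots,l_1)$ for even $n$ and $(r_n,l_{n-1},\ldots,l_1)$ for odd $n$. Howe's harmonic decomposition $\mathcal{H}\cong\bigoplus_\sigma V^{GL_n}_\sigma\otimes R_{\sigma^t}$ then yields exactly $\mathcal{F}_N(D^N_\lambda)$. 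This is the real source of the parity of $n$. Already for $N=1$, the trivial $O_1$-part $\Lambda^{\mathrm{even}}(\mathbb{C}^n)$ is $V_{\Lambda_n}$ for even $n$ but $V_{\Lambda_{n-1}}$ for odd $n$. Your guess $(l_n,\ldots,l_1)^t$ would instead assign the sign representation to $\Lambda_{n-1}$.

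\textbf{The reflection argument cannot work.} The reflection $e_k\mapsto -e_k$ of $\mathbb{C}^N$ acts on the $k$-th factor $\Lambda(\mathbb{C}^n)\cong V_S$ as the parity operator $(-1)^{\deg}$. Since $O_N$ commutes with $\mathfrak{o}_{2n}$, Schur's lemma forces it to act by scalars on $V_{\Lambda_{n-1}}$ and $V_{\Lambda_n}$, never exchanging them. An element exchanging them is an odd Clifford element, i.e.\ a reflection of $O_{2n}$, not of $O_N$. Once the realization is fixed the $O_N$-action is fixed, and there is no $\det$-twist to track. Your numerics are right: twisting by $\det$ replaces $c_1$ by $N-c_1$. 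But they only show that the two candidate answers differ by $\det$, not which one is correct.

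There is some truth in your intuition. For odd $n$ the $O_N$-structure on $V_S^{\otimes N}$ depends on the polarization. Put $W=\mathbb{C}^n\otimes\mathbb{C}^N$, where $O_N$ acts on $\Lambda^{\mathrm{top}}W$ by $\det^n$. Then $\Lambda(W)\cong\Lambda(W^*)\otimes\Lambda^{\mathrm{top}}W$ shows that the realization $\Lambda((\mathbb{C}^n)^*\otimes\mathbb{C}^N)$ carries an $O_N$-action differing by $\det^n$. In that realization the answer would be $(l_n,\ldots,l_1)^t$ for every $n$. So the theorem's odd-$n$ formula is tied to the realization you chose, and it is proved by the lowest-weight computation above, not by a reflection argument.
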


We provide some examples illustrating the map $\mathcal{F}_N$. 

\begin{example}
\label{ex_5.2}
Let  $n = 4, N = 7$ and $ \hspace{1mm} \lambda = (\frac{3}{2}, \frac{1}{2}, \frac{1}{2}, -\frac{1}{2}).$
Then $D_{\lambda}^{N}$ is the regular cell diagram from the example \ref{ex_5.1}. Map $\mathcal{F}_N$ sends it to the short Young diagram drawn on the right. 

\centering{
\begin{tikzpicture}
\draw[thick,<->] (2,-0.5) -- (2, 2.5) node[anchor=north west] {};
\filldraw[color=black!70, fill=red!10,  thick](0,0) rectangle (0.5,0.5);
\filldraw[color=black!70, fill=red!10,  thick] (0.5,0) rectangle (1,0.5);
\filldraw[color=black!65, fill=red!10,  thick] (1,0) rectangle (1.5,0.5);
\filldraw[color=black!70, fill=red!10,  thick] (1.5,0) rectangle (2,0.5);
\filldraw[color=black!70, fill=white!10,  thick] (2,0) rectangle (2.5,0.5);
\filldraw[color=black!70, fill=white!10,  thick] (2.5,0) rectangle (3,0.5);
\filldraw[color=black!70, fill=white!10,  thick] (3,0) rectangle (3.5,0.5);

\filldraw[color=black!70, fill=green!10,  thick]  (0.5,0.5) rectangle (1,1);
\filldraw[color=black!70, fill=green!10,  thick]  (1,0.5) rectangle (1.5,1);
\filldraw[color=black!70, fill=green!10,  thick]  (1.5,0.5) rectangle (2,1);
\filldraw[color=black!70, fill=white!10,  thick] (2,0.5) rectangle (2.5,1);
\filldraw[color=black!70, fill=white!10,  thick] (2.5,0.5) rectangle (3,1);
\filldraw[color=black!70, fill=white!10,  thick] (3,0.5) rectangle (3.5,1);
\filldraw[color=black!70, fill=white!10,  thick] (3.5,0.5) rectangle (4,1);

\filldraw[color=black!70, fill=blue!10,  thick] (0.5,1) rectangle (1,1.5);
\filldraw[color=black!70, fill=blue!10,  thick] (1,1) rectangle (1.5,1.5);
\filldraw[color=black!70, fill=blue!10,  thick] (1.5,1) rectangle (2,1.5);
\filldraw[color=black!70, fill=white!10,  thick] (2,1) rectangle (2.5,1.5);
\filldraw[color=black!70, fill=white!10,  thick] (2.5,1) rectangle (3,1.5);
\filldraw[color=black!70, fill=white!10,  thick] (3,1) rectangle (3.5,1.5);
\filldraw[color=black!70, fill=white!10,  thick] (3.5,1) rectangle (4,1.5);

\filldraw[color=black!70, fill=yellow!15,  thick] (1,1.5) rectangle (1.5,2);
\filldraw[color=black!70, fill=yellow!15,  thick] (1.5,1.5) rectangle (2,2);
\filldraw[color=black!70, fill=white!10,  thick] (2,1.5) rectangle (2.5,2);
\filldraw[color=black!70, fill=white!10,  thick] (2.5,1.5) rectangle (3,2);
\filldraw[color=black!70, fill=white!10,  thick] (3,1.5) rectangle (3.5,2);
\filldraw[color=black!70, fill=white!10,  thick] (3.5,1.5) rectangle (4,2);
\filldraw[color=black!70, fill=white!10,  thick] (4,1.5) rectangle (4.5,2);

\draw[thick,->] (5,1) -- (6.5,1);
\filldraw[black] (5.5,1.25) circle (0pt) node[anchor=west]{$\mathcal{F}$};

\filldraw[color=black!70, fill=red!10,  thick](7.5,0) rectangle (8,0.5);
\filldraw[color=black!70, fill=red!10,  thick](7.5,0.5) rectangle (8,1);
\filldraw[color=black!70, fill=red!10,  thick](7.5,1) rectangle (8,1.5);
\filldraw[color=black!70, fill=red!10,  thick](7.5,1.5) rectangle (8,2);

\filldraw[color=black!70, fill=green!10,  thick] (8,0.5) rectangle (8.5,1);
\filldraw[color=black!70, fill=green!10,  thick] (8,1) rectangle (8.5,1.5);
\filldraw[color=black!70, fill=green!10,  thick] (8,1.5) rectangle (8.5,2);

\filldraw[color=black!70, fill=blue!10,  thick] (8.5,0.5) rectangle (9,1);
\filldraw[color=black!70, fill=blue!10,  thick] (8.5,1) rectangle (9,1.5);
\filldraw[color=black!70, fill=blue!10,  thick] (8.5,1.5) rectangle (9,2);

\filldraw[color=black!70, fill=yellow!15,  thick] (9,1) rectangle (9.5,1.5);
\filldraw[color=black!70, fill=yellow!15,  thick] (9,1.5) rectangle (9.5,2);

\end{tikzpicture}}
\newline

\end{example}
\begin{example}
\label{ex_5.3}
We also present an example for odd $n$. Let $n = 5$, $N = 6$ and $ \hspace{1mm} \lambda = (3,2, 1, 1,-1).$
Then $D_{\lambda}^{N}$ is the regular cell diagram drawn on the left. Map $\mathcal{F}_N$ sends it to the short Young diagram drawn on the right.  

\centering
\begin{tikzpicture}

\draw[thick,<->] (2,-0.5) -- (2, 3) node[anchor=north west] {};
\filldraw[color=black!70, fill=white!10,  thick](0,0) rectangle (0.5,0.5);
\filldraw[color=black!70, fill=white!10,  thick] (0.5,0) rectangle (1,0.5);
\filldraw[color=black!70, fill=white!10,  thick] (1,0) rectangle (1.5,0.5);
\filldraw[color=black!70, fill=white!10,  thick] (1.5,0) rectangle (2,0.5);
\filldraw[color=black!70, fill=red!10,  thick] (2,0) rectangle (2.5,0.5);
\filldraw[color=black!70, fill=red!10,  thick] (2.5,0) rectangle (3,0.5);

\filldraw[color=black!70, fill=green!10,  thick]  (1,0.5) rectangle (1.5,1);
\filldraw[color=black!70, fill=green!10,  thick]  (1.5,0.5) rectangle (2,1);
\filldraw[color=black!70, fill=white!10,  thick] (2,0.5) rectangle (2.5,1);
\filldraw[color=black!70, fill=white!10,  thick] (2.5,0.5) rectangle (3,1);
\filldraw[color=black!70, fill=white!10,  thick] (3,0.5) rectangle (3.5,1);
\filldraw[color=black!70, fill=white!10,  thick] (3.5,0.5) rectangle (4,1);

\filldraw[color=black!70, fill=blue!10,  thick] (1,1) rectangle (1.5,1.5);
\filldraw[color=black!70, fill=blue!10,  thick] (1.5,1) rectangle (2,1.5);
\filldraw[color=black!70, fill=white!10,  thick] (2,1) rectangle (2.5,1.5);
\filldraw[color=black!70, fill=white!10,  thick] (2.5,1) rectangle (3,1.5);
\filldraw[color=black!70, fill=white!10,  thick] (3,1) rectangle (3.5,1.5);
\filldraw[color=black!70, fill=white!10,  thick] (3.5,1) rectangle (4,1.5);

\filldraw[color=black!70, fill=yellow!15,  thick] (1.5,1.5) rectangle (2,2);
\filldraw[color=black!70, fill=white!10,  thick] (2,1.5) rectangle (2.5,2);
\filldraw[color=black!70, fill=white!10,  thick] (2.5,1.5) rectangle (3,2);
\filldraw[color=black!70, fill=white!10,  thick] (3,1.5) rectangle (3.5,2);
\filldraw[color=black!70, fill=white!10,  thick] (3.5,1.5) rectangle (4,2);
\filldraw[color=black!70, fill=white!10,  thick] (4,1.5) rectangle (4.5,2);

\filldraw[color=black!70, fill=white!10,  thick] (2,2) rectangle (2.5,2.5);
\filldraw[color=black!70, fill=white!10,  thick] (2.5,2) rectangle (3,2.5);
\filldraw[color=black!70, fill=white!10,  thick] (3,2) rectangle (3.5,2.5);
\filldraw[color=black!70, fill=white!10,  thick] (3.5,2) rectangle (4,2.5);
\filldraw[color=black!70, fill=white!10,  thick] (4,2) rectangle (4.5,2.5);
\filldraw[color=black!70, fill=white!10,  thick] (4.5,2) rectangle (5,2.5);

\draw[thick,->] (5.25,1.25) -- (6.75,1.25);
\filldraw[black] (5.75,1.5) circle (0pt) node[anchor=west]{$\mathcal{F}$};

\filldraw[color=black!70, fill=red!10,  thick](7.5,0.75) rectangle (8,1.25);
\filldraw[color=black!70, fill=red!10,  thick](7.5,1.25) rectangle (8,1.75);

\filldraw[color=black!70, fill=green!10,  thick](8,0.75) rectangle (8.5,1.25);
\filldraw[color=black!70, fill=green!10,  thick](8,1.25) rectangle (8.5,1.75);

\filldraw[color=black!70, fill=blue!10,  thick](8.5,0.75) rectangle (9,1.25);
\filldraw[color=black!70, fill=blue!10,  thick](8.5,1.25) rectangle (9,1.75);

\filldraw[color=black!70, fill=yellow!15,  thick] (9,1.25) rectangle (9.5,1.75);

\end{tikzpicture}
\end{example}

Now we want to construct the combinatorial interpretation of the set $T_\lambda^N$ for all $\lambda \in \Delta^N$. We start with the necessary definitions. 

\begin{definition}
\label{def_5.4}
Let $D_1 = D(\textbf{l}^{(1)}, \textbf{r}^{(1)}) \in \mathfrak{D}(N_1, n)$ and $D_2 = D(\textbf{l}^{(2)}, \textbf{r}^{(2)}) \in \mathfrak{D}(N_2, n)$ be two regular cell diagrams of height $n$. We say that the diagram $D_1$ contains diagram $D_2$ (notation: $D_1 \supset D_2$)  iff  $l_i^{(1)}  \geqslant l_i^{(2)}$ and $r_i^{(1)}  \geqslant r_i^{(2)}$ for all $i \in \{1, 2, \ldots, n\}$. 
\end{definition}

\begin{definition}
\label{def_5.5}
\textbf{Regular cell table} of the shape $D^{(N)} \in \mathfrak{D}(N, n)$ is a sequence of regular cell diagrams $(D^{(1)}, D^{(2)}, \ldots, D^{(N)})$, such that $D^{(i)} \in \mathfrak{D}(i, n)$ and $D^{(i)} \supset D^{(j)}$ for $i \geqslant j$.  

We denote the set of regular cell tables of the shape $D^{(N)}$ by $\mathfrak{Ctab}(D^{(N)})$. 
\end{definition}

\begin{proposition}[Proposition 7.4 in \cite{Svyatnyy}]
\label{prop_5.4}
For $\lambda \in \Delta^N $ we consider a map $$\mathcal{I}_{\lambda}: T_\lambda^{N} \rightarrow \mathfrak{Ctab}(D_{\lambda}^{N}),$$ defined by the formula below.
\begin{gather} 
\mathcal{I}_{\lambda}:  (\mu_1, \mu_{2}, \ldots, \mu_N) \mapsto (\mathcal{K}_1(\mu_1), \mathcal{K}_2(\mu_1 + \mu_2), \ldots, \mathcal{K}_i\Big(\sum_{j=1}^{i} \mu_j\Big), \ldots, \mathcal{K}_N (\lambda)),
\end{gather}
where $(\mu_1, \mu_{2}, \ldots, \mu_N) \in T_\lambda^N$.
Then the map $\mathcal{I}_\lambda$ is a bijection. 
\end{proposition}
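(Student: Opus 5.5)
We have to show three things: the map $\mathcal{I}_\lambda$ is well defined, it is injective, and it is surjective. Everything reduces to Proposition \ref{prop_5.2} and to Proposition \ref{prop_5.1}, applied at each level $k \in \{1, \ldots, N\}$.

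\textbf{Well-definedness.} Take $(\mu_1, \ldots, \mu_N) \in T_\lambda^N$ and set $\lambda^{(k)} = \sum_{j=1}^{k}\mu_j$. By the definition of $T^N$ each $\lambda^{(k)}$ lies in $P_+$, and the truncated tuple $(\mu_1, \ldots, \mu_k)$ lies in $T^k$. Hence $\lambda^{(k)} \in \Delta^k$, and $D^{(k)} := \mathcal{K}_k(\lambda^{(k)}) \in \mathfrak{D}(k,n)$ is defined. At $k = N$ we get $D^{(N)} = D_\lambda^N$.

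It remains to check the containment condition. Since $\mu_{k+1} \in P[S]$, we have
$$\lambda^{(k+1)}_i = \lambda^{(k)}_i \pm \tfrac12.$$
Therefore $r_i^{(k+1)} = \frac{k+1}{2} + \lambda^{(k+1)}_i$ equals either $r_i^{(k)}+1$ or $r_i^{(k)}$. Because $r_i + l_i$ grows by exactly one, the other coordinate then satisfies $l_i^{(k+1)} = l_i^{(k)}$ or $l_i^{(k)}+1$ respectively. So $D^{(k+1)} \supset D^{(k)}$, and by transitivity $D^{(i)} \supset D^{(j)}$ for all $i \geqslant j$.

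\textbf{Injectivity.} Each $\mathcal{K}_k$ is injective by Proposition \ref{prop_5.2}, so the cell table determines all the partial sums $\lambda^{(k)}$. The $\mu_k = \lambda^{(k)} - \lambda^{(k-1)}$ are then determined as well, with $\lambda^{(0)} = 0$.

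\textbf{Surjectivity.} This is the only step with any content. Start from a cell table $(D^{(1)}, \ldots, D^{(N)} = D_\lambda^N)$. By Proposition \ref{prop_5.2}, $D^{(k)} = \mathcal{K}_k(\lambda^{(k)})$ for a unique $\lambda^{(k)} \in \Delta^k \subset P_+$. Define $\mu_k = \lambda^{(k)} - \lambda^{(k-1)}$. In coordinates,
$$(\mu_k)_i = \tfrac12\bigl(r_i^{(k)} - r_i^{(k-1)}\bigr) - \tfrac12\bigl(l_i^{(k)} - l_i^{(k-1)}\bigr).$$
Containment makes both differences nonnegative integers, and they sum to $k-(k-1)=1$. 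So exactly one of them equals $1$, and $(\mu_k)_i = \pm\frac12$. Hence $\mu_k \in P[S]$.

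The partial sums of the $\mu_j$ are the $\lambda^{(k)}$, which lie in $P_+$, and the total is $\lambda^{(N)} = \lambda$ because $\mathcal{K}_N$ is injective. Thus $(\mu_1, \ldots, \mu_N) \in T_\lambda^N$, and by construction it maps to the given cell table.

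The one point needing care is the base case $k=1$. There $\lambda^{(0)} = 0$ and $D^{(0)}$ is the empty diagram with all $r_i = l_i = 0$, and the same argument gives $\mu_1 \in P[S]$.
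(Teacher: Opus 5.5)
Your proof is correct. Well-definedness, injectivity and surjectivity all reduce, level by level, to the bijectivity of $\mathcal{K}_k$ (Proposition \ref{prop_5.2}) together with one observation: since $r_i+l_i$ grows by exactly one from level $k-1$ to level $k$, the containment $D^{(k-1)} \subset D^{(k)}$ holds precisely when $\lambda^{(k)}-\lambda^{(k-1)} \in P[S]$. The paper does not reprove this statement; it only quotes Proposition 7.4 of \cite{Svyatnyy}, so there is no proof in the text to compare against.
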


\begin{example}
\label{ex_5.4}
For clarity we provide an example of a regular cell table.

Let $x = (D^{(1)}, D^{(2)}, \ldots, D^{(7)})$ be the regular cell table of the shape $D^{(7)} = D^7_{\lambda} \in \mathfrak{D}(7,4)$, where $\lambda = (\frac{3}{2}, \frac{1}{2}, \frac{1}{2}, -\frac{1}{2})$. Set 

\begin{itemize}
    \item $\mu_1 = (\frac{1}{2}, \frac{1}{2},\frac{1}{2}, -\frac{1}{2})$
    \item $\mu_2 = (\frac{1}{2}, \frac{1}{2},\frac{1}{2}, \frac{1}{2})$
    \item $\mu_3 = (\frac{1}{2}, -\frac{1}{2},-\frac{1}{2}, -\frac{1}{2})$
    \item $\mu_4 = (\frac{1}{2}, \frac{1}{2},\frac{1}{2}, \frac{1}{2})$
    \item $\mu_5 = (\frac{1}{2}, -\frac{1}{2},-\frac{1}{2}, -\frac{1}{2})$
    \item $\mu_6 = (-\frac{1}{2}, \frac{1}{2},\frac{1}{2}, -\frac{1}{2})$
    \item $\mu_7 = (-\frac{1}{2}, -\frac{1}{2},-\frac{1}{2}, \frac{1}{2})$
\end{itemize}

Notice that $(\mu_1, \mu_2, \ldots, \mu_7) \in T_\lambda^{7}$.  
Then $x = \mathcal{I}_\lambda(\mu_1, \mu_2, \ldots, \mu_7) \in \mathfrak{Ctab}(D^7_{\lambda})$. We can visualize $x$ in the following way:

\centering

\begin{tikzpicture}
\draw[thick,<->] (6,-0.5) -- (6, 4.5) node[anchor=north west] {};
\draw (2,0) rectangle (3,1);
\draw (2.75, 0.5) circle (0pt)  node[anchor=east]{$6$};
\draw (3,0) rectangle (4,1);
\draw (3.75, 0.5) circle (0pt)  node[anchor=east]{$5$};
\draw (4,0) rectangle (5,1);
\draw (4.75, 0.5) circle (0pt)  node[anchor=east]{$3$};
\draw (5,0) rectangle (6,1);
\draw (5.75, 0.5) circle (0pt)  node[anchor=east]{$1$};
\draw (6,0) rectangle (7,1);
\draw (6.75, 0.5) circle (0pt)  node[anchor=east]{$2$};
\draw (7,0) rectangle (8,1);
\draw (7.75, 0.5) circle (0pt)  node[anchor=east]{$4$};
\draw (8,0) rectangle (9,1);
\draw (8.75, 0.5) circle (0pt)  node[anchor=east]{$7$};

\draw (3,1) rectangle (4,2);
\draw (3.75, 1.5) circle (0pt)  node[anchor=east]{$7$};
\draw (4,1) rectangle (5,2);
\draw (4.75, 1.5) circle (0pt)  node[anchor=east]{$5$};
\draw (5,1) rectangle (6,2);
\draw (5.75, 1.5) circle (0pt)  node[anchor=east]{$3$};
\draw (6,1) rectangle (7,2);
\draw (6.75, 1.5) circle (0pt)  node[anchor=east]{$1$};
\draw (7,1) rectangle (8,2);
\draw (7.75, 1.5) circle (0pt)  node[anchor=east]{$2$};
\draw (8,1) rectangle (9,2);
\draw (8.75, 1.5) circle (0pt)  node[anchor=east]{$4$};
\draw (9,1) rectangle (10,2);
\draw (9.75, 1.5) circle (0pt)  node[anchor=east]{$6$};

\draw (3,2) rectangle (4,3);
\draw (3.75, 2.5) circle (0pt)  node[anchor=east]{$7$};
\draw (4,2) rectangle (5,3);
\draw (4.75, 2.5) circle (0pt)  node[anchor=east]{$5$};
\draw (5,2) rectangle (6,3);
\draw (5.75, 2.5) circle (0pt)  node[anchor=east]{$3$};
\draw (6,2) rectangle (7,3);
\draw (6.75, 2.5) circle (0pt)  node[anchor=east]{$1$};
\draw (7,2) rectangle (8,3);
\draw (7.75, 2.5) circle (0pt)  node[anchor=east]{$2$};
\draw (8,2) rectangle (9,3);
\draw (8.75, 2.5) circle (0pt)  node[anchor=east]{$4$};
\draw (9,2) rectangle (10,3);
\draw (9.75, 2.5) circle (0pt)  node[anchor=east]{$6$};

\draw (5,3) rectangle (4,4);
\draw (4.75, 3.5) circle (0pt)  node[anchor=east]{$7$};
\draw (6,3) rectangle (5,4);
\draw (5.75, 3.5) circle (0pt)  node[anchor=east]{$6$};
\draw (7,3) rectangle (6,4);
\draw (6.75, 3.5) circle (0pt)  node[anchor=east]{$1$};
\draw (8,3) rectangle (7,4);
\draw (7.75, 3.5) circle (0pt)  node[anchor=east]{$2$};
\draw (9,3) rectangle (8,4);
\draw (8.75, 3.5) circle (0pt)  node[anchor=east]{$3$};
\draw (10,3) rectangle (9,4);
\draw (9.75, 3.5) circle (0pt)  node[anchor=east]{$4$};
\draw (11,3) rectangle (10,4);
\draw (10.75, 3.5) circle (0pt)  node[anchor=east]{$5$};
\end{tikzpicture}

Here $D^{(i)} \in \mathfrak{D}(i, n)$ is the regular cell diagram formed by all the cells with the numbers less than or equal to $i$. 
\end{example}

We conclude that the set of regular cell tables of the shape $D_\lambda^N$ is the combinatorial interpretation of the set $T_\lambda^N$. We want to discuss another combinatorial interpretation of the set $T_\lambda^N$. 

\begin{definition}
\label{def_5.6}
A \textbf{semi-standard short Young tableau} of the shape $\nu \in \operatorname{SYD}(N, n)$ is the sequence of the short Young diagrams $(\nu^{(1)}, \nu^{(2)}, \ldots, \nu^{(N)})$, such that $\nu^{(i)} \in \operatorname{SYD}(i, n), \nu^{(N)} = \nu$, $\nu^{(i)} \subset \nu^{(i+1)}$ and $\nu^{(i+1)} - \nu^{(i)}$ is a horizontal strip for all $1 \leqslant i < N$. 
    
We denote the set of semi-standard short Young tableaux of the shape $\nu$ by $\operatorname{SSSYT}(\nu, N)$. 
\end{definition}

\begin{proposition}[Corollary 7.11 in \cite{Svyatnyy}]
\label{prop_5.5}
Consider a map  $\mathcal{Y}_\lambda: \mathfrak{Ctab}(D_\lambda^{N}) \rightarrow \operatorname{SSSYT}(\nu, N)$, where $\nu = \mathcal{F}_N(D_\lambda^{N})$ defined by the following formula.
    \begin{gather} \label{bijection_ctab_sssyt}
    \mathcal{Y}_\lambda: (D^{(1)}, D^{(2)}, \ldots, D^{(N)} = D_\lambda^N) \mapsto (\mathcal{F}_1(D^{(1)}), \mathcal{F}_2(D^{(2)}), \ldots, \nu).
\end{gather}

The map $\mathcal{Y}_\lambda$ is a bijection. 
\end{proposition}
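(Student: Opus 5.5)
The plan is to reduce everything to the levelwise bijections $\mathcal{F}_i$ of Theorem \ref{th_5.1} and then to check that the two ``nestedness'' conditions correspond: containment of regular cell diagrams on one side, and the horizontal strip condition on the other. Since each $\mathcal{F}_i:\mathfrak{D}(i,n)\to\operatorname{SYD}(i,n)$ is a bijection, the map $\mathcal{Y}_\lambda$ is automatically injective. Moreover $\mathcal{F}_N(D^{(N)})=\mathcal{F}_N(D^N_\lambda)=\nu$. So it remains to show two things: that the image of a regular cell table is a short semi-standard Young tableau, and that every element of $\operatorname{SSSYT}(\nu,N)$ arises this way.

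The first observation concerns consecutive diagrams. Let $D^{(i)}=D(\textbf{l}^{(i)},\textbf{r}^{(i)})\subset D^{(i+1)}=D(\textbf{l}^{(i+1)},\textbf{r}^{(i+1)})$. Since $l^{(i)}_j+r^{(i)}_j=i$ and $l^{(i+1)}_j+r^{(i+1)}_j=i+1$, and both coordinates are non-decreasing, in every row $j$ exactly one of $l_j,r_j$ increases by $1$ and the other is unchanged. Hence
\[
l^{(i+1)}_j-l^{(i)}_j\in\{0,1\}, \qquad r^{(i+1)}_j-r^{(i)}_j=1-\bigl(l^{(i+1)}_j-l^{(i)}_j\bigr)\in\{0,1\}.
\]
This is just the cell-table shadow of $\mu_{i+1}\in P[S]$. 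Conversely, if for every $j$ the number $l_j$ grows by $0$ or $1$ when passing from level $i$ to level $i+1$, then $r_j=i-l_j$ grows by $1$ or $0$ respectively. Therefore the containment $D^{(i)}\subset D^{(i+1)}$ is equivalent to the condition that all $l_j$ grow by $0$ or $1$ (equivalently, that all $r_j$ do).

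Next I translate this into the Young diagram language. By construction (and since $r_1\geqslant\dots\geqslant r_n$ forces $l_1\leqslant\dots\leqslant l_n$), the columns of $\mathcal{F}_i(D^{(i)})$ are $l^{(i)}_n\geqslant l^{(i)}_{n-1}\geqslant\dots\geqslant l^{(i)}_1$ for even $n$. For odd $n$ the first column $l^{(i)}_n$ is replaced by $r^{(i)}_n$. That these lengths form a short Young diagram in $\operatorname{SYD}(i,n)$ is exactly the content of Theorem \ref{th_5.1}, so I do not reprove it. Recall that, for $\nu^{(i)}\subset\nu^{(i+1)}$, the skew shape $\nu^{(i+1)}-\nu^{(i)}$ is a horizontal strip iff it has at most one cell in each column, i.e. iff each column length grows by $0$ or $1$. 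By the previous paragraph this is equivalent to containment $D^{(i)}\subset D^{(i+1)}$. In the odd case the first column is $r_n$, which by the displayed relation also grows by $0$ or $1$ exactly when $l_n$ does. This gives well-definedness. For surjectivity, take $(\nu^{(1)},\dots,\nu^{(N)})\in\operatorname{SSSYT}(\nu,N)$ and set $D^{(i)}=\mathcal{F}_i^{-1}(\nu^{(i)})$. The column growth condition then yields containment of consecutive $D^{(i)}$. Containment for all $i\geqslant j$ follows by transitivity, and $D^{(N)}=D^N_\lambda$.

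The only step needing care, and the one I expect to be the main (mild) obstacle, is the odd-$n$ case. There the first column switches from $l_n$ to $r_n$, and I must check that the horizontal strip condition on that column still matches containment. This works because of the complementarity $r_n=i-l_n$: exactly one of $l_n$ and $r_n$ grows at each step, so $r_n$ grows by $0$ or $1$ precisely when $l_n$ grows by $1$ or $0$. Everything else is bookkeeping with the bijections $\mathcal{F}_i$.
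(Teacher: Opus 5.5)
Your argument is correct, but it is not the route the paper takes. The paper gives no proof here. It quotes the statement from \cite{Svyatnyy}, and the surrounding discussion (the Remark after the statement and the Introduction) presents $\mathcal{Y}_\lambda$ as coming from the representation theory of the multiplicity space $U_\lambda$. There, the principal basis of highest weight vectors, obtained by tensoring with $V_S$ one factor at a time, is identified with the Gelfand--Tsetlin basis for $O_1\subset O_2\subset\cdots\subset O_N$. In effect, Howe duality matches the spinor step $\lambda\mapsto\lambda+\mu$, $\mu\in P[S]$, with the horizontal-strip branching $O_N\downarrow O_{N-1}$.

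You instead take only the levelwise bijectivity of $\mathcal{F}_i$ from Theorem \ref{th_5.1} as input, and check directly that the two nesting conditions coincide. Since $l_j+r_j$ increases by exactly $1$ from level $i$ to level $i+1$, the containment $D^{(i)}\subset D^{(i+1)}$ is equivalent to $l^{(i+1)}_j-l^{(i)}_j\in\{0,1\}$ for every $j$. This is precisely the condition that every column of $\mathcal{F}_i(D^{(i)})$ grows by $0$ or $1$, and in the odd case the first column is handled by $\Delta r_n=1-\Delta l_n$. Injectivity, surjectivity (with transitivity giving the full containment chain) and the endpoint condition $\mathcal{F}_N(D^N_\lambda)=\nu$ then follow as you say.

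Your route is short, fully elementary, and makes the statement independent of Howe duality once Theorem \ref{th_5.1} is granted. What it does not give is the content behind the word ``natural'': that $\mathcal{Y}_\lambda$ matches two specific bases of $U_\lambda$ compatibly with the $O_i$-structures. That is what the cited representation-theoretic description supplies.

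One small point: Theorem \ref{th_5.1}, which you rely on, is only true with the Introduction's notion of shortness (first two columns of total length at most $N$). Definition \ref{def_5.3} literally bounds only the first column, which appears to be a typo in the paper.
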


It follows that the set of short semi-standard Young tableaux of the shape $\nu = D_\lambda^{N}$ is another combinatorial interpretation of the set $T_\lambda^N$.

\begin{remark}
    The latter bijection $\mathcal{Y}_\lambda$ comes from the identification of two bases in the multiplicity space $U_\lambda$. One of them is the Gelfand-Tsetlin basis with respect to chain of the orthogonal groups $O_1 \subset O_2 \subset \ldots \subset O_N$ indexed by the set of short semi-standard Young tableaux $\operatorname{SSSYT}(\nu, N)$, where $\nu = \Cr{F}_N(D_\lambda^{N})$. Another one is called the principal basis and it consists of the highest weight vectors of the weight $\lambda$ of the irreducible components of $V_S^{\otimes N}$ that occur if we consecutively decompose the tensor product adding one multiple each time. It turns out that the principal basis of $U_\lambda$ is indexed by the set $T_\lambda^N \cong \mathfrak{Ctab}(D_\lambda^N)$ and is in fact equal to the Gelfand-Tsetlin basis of $U_\lambda$. Identification of  these bases gives us the bijection from the previous proposition.
\end{remark}

\section{Cactus group}
\label{final_chapter}

\subsection{Action of the cactus group on crystals}
\label{def_of_action_of_cactus_group}
Let $\mathcal{B}_1, \mathcal{B}_2, \ldots, \mathcal{B}_N \in$ $Ob($\underline{$\mathfrak{g}$-$\text{Crystals}$}). The word crystal is reserved for an object of the \underline{$\mathfrak{g}$-$\text{Crystals}$} category for the rest of this chapter. Using the commutors $\sigma_{A,B} : A\tens B \rightarrow B \tens A$ defined in \cite{Henriques_Kamnitzer} one can construct isomorphisms of crystals $$\Cr{B}_1 \tens \Cr{B}_2 \tens \ldots \tens \Cr{B}_N \rightarrow \Cr{B}_{\rho(1)} \tens \Cr{B}_{\rho(2)} \tens \ldots \tens \Cr{B}_{\rho(N)},$$ for any permutation $\rho$.  If $1 \leq p \leq r < q \leq N$, we denote by $\sigma_{p,r,q}$ the isomorphism defined by the following formula: 
\begin{gather*}
    (\sigma_{p,r,q})_{\mathcal{B}_1 \otimes \mathcal{B}_2 \otimes \ldots \otimes \mathcal{B}_N} := 1_{\mathcal{B}_1 \otimes \ldots \otimes \mathcal{B}_{p-1}} \otimes \sigma_{\mathcal{B}_{p} \otimes \ldots \otimes \mathcal{B}_r, \mathcal{B}_{r+1} \otimes \ldots \otimes \mathcal{B}_q} \otimes 1_{\mathcal{B}_{q+1} \otimes \ldots \otimes \mathcal{B}_{N}}: \\
    \mathcal{B}_1 \otimes \ldots \otimes  \mathcal{B}_{p-1} \otimes \mathcal{B}_{p} \otimes \ldots \otimes \mathcal{B}_{r} \otimes \mathcal{B}_{r+1} \otimes \ldots \otimes \mathcal{B}_{q} \otimes \mathcal{B}_{q+1} \otimes \ldots \otimes   \mathcal{B}_N \rightarrow \\ \rightarrow \mathcal{B}_1 \otimes \ldots \otimes  \mathcal{B}_{p-1} \otimes \mathcal{B}_{r+1} \otimes \ldots \otimes \mathcal{B}_{q} \otimes \mathcal{B}_{p} \otimes \ldots \otimes \mathcal{B}_{r} \otimes \mathcal{B}_{q+1} \otimes \ldots \otimes   \mathcal{B}_{N}
\end{gather*}
We will use isomorphisms $\sigma_{p,r,q}$ as building blocks to construct some natural isomorphisms. For $1 \leq p \leq q \leq N$ we define isomorphisms 
\begin{gather*}s_{p, q}: \mathcal{B}_1 \otimes \ldots \otimes  \mathcal{B}_{p-1} \otimes \mathcal{B}_{p} \otimes \mathcal{B}_{p+1} \otimes \ldots \otimes \mathcal{B}_{q-1} \otimes \mathcal{B}_{q}  \otimes \mathcal{B}_{q+1}\otimes \ldots \mathcal{B}_N \rightarrow \\ \rightarrow \mathcal{B}_1 \otimes \ldots \otimes  \mathcal{B}_{p-1} \otimes \mathcal{B}_{q} \otimes \mathcal{B}_{q-1} \otimes \ldots \otimes \mathcal{B}_{p+1} \otimes \mathcal{B}_{p}  \otimes \mathcal{B}_{q+1}\otimes \ldots \mathcal{B}_N 
\end{gather*}
recursively by setting $s_{p, p+1} = \sigma_{p, p, p+1}$ and $s_{p,q} = \sigma_{p, p, q} \circ s_{p+1, q}$ for $q-p > 1$. By convention $s_{p,p} = 1$.

We will also adopt the following notation 
\begin{notation}
  For $p < q$  we denote by $\tilde{s}_{p, q}$ the involutive element of the symmetric group $S_n$ which reverses the segment $[p, q]$. 
  $$\tilde{s}_{p, q} = \begin{pmatrix}
      1 & \ldots & p-1 & p & \ldots & q & q+1 & \ldots & n \\
      1 & \ldots & p-1 & q & \ldots & p & q+1 & \ldots & n 
  \end{pmatrix}$$
\end{notation}

It was proved in \cite{Henriques_Kamnitzer} that the isomorphisms defined above satisfy certain relations. We restate these results here. 

\begin{proposition}[Lemma 3 \& Lemma 4 in \cite{Henriques_Kamnitzer}]
\label{prop_6.1}
Isomorphisms $s_{p,q}$ satisfy the following conditions:

    \begin{enumerate}
        \item $s_{p, q} \circ s_{p, q} = 1$
        \item $s_{p, q} \circ s_{k, l} = s_{k, l} \circ s_{p, q}$  if  $p < q  < k < l$ 
        \item $s_{p,q} \circ s_{k,l} = s_{m,n} \circ s_{p,q} $ if $p < q$ contains $k < l$, where $m = \tilde{s}_{p,q}(l)$ and $n = \tilde{s}_{p,q}(k)$. 
        \item $s_{p,q} = \sigma_{p,r,q} \circ s_{p,r} \circ s_{r+1, q}$
    \end{enumerate}
    
\end{proposition}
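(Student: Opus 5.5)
This proposition restates Lemma 3 and Lemma 4 of \cite{Henriques_Kamnitzer}. I would derive it from the coboundary axioms: involutivity (Proposition \ref{prop_1.3}), the cactus square (Theorem \ref{th_1.2}), and naturality of $\sigma$ in both arguments. The order would be (4), then (2), then (3), then (1), since the later items are easiest once the recursion in (4) is available.

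\textbf{Step 1: property (4).} I would prove it by induction on $r-p$. For $r=p$ it is the definition $s_{p,q}=\sigma_{p,p,q}\circ s_{p+1,q}$. For the inductive step, write $s_{p,r}=\sigma_{p,p,r}\circ s_{p+1,r}$ and use the induction hypothesis for $s_{p+1,q}=\sigma_{p+1,r,q}\circ s_{p+1,r}\circ s_{r+1,q}$. The identity to establish is then
\begin{gather*}
\sigma_{p,p,q}\circ\sigma_{p+1,r,q} = \sigma_{p,r,q}\circ\sigma_{p,p,r}
\end{gather*}
as maps out of $\Cr{B}_p\tens(\Cr{B}_{p+1}\cdots\Cr{B}_r)\tens(\Cr{B}_{r+1}\cdots\Cr{B}_q)$. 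This is exactly the cactus square of Theorem \ref{th_1.2}, applied with $\mathcal{B}_1=\Cr{B}_p$, $\mathcal{B}_2=\Cr{B}_{p+1}\tens\cdots\tens\Cr{B}_r$ and $\mathcal{B}_3=\Cr{B}_{r+1}\tens\cdots\tens\Cr{B}_q$, possibly after using involutivity to reorient the square. I expect the main obstacle here to be bookkeeping: matching the arrows of the square with the correct orientation and the intermediate reversed factors. I would handle this by checking the composite carefully on the underlying tensor factors.

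\textbf{Step 2: property (2).} This is immediate. When $q<k$, the maps $s_{p,q}$ and $s_{k,l}$ act on disjoint tensor slots, and $\tens$ is a bifunctor.

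\textbf{Step 3: property (3).} I would use naturality of $\sigma_{A,B}$ in $A$ and $B$: an isomorphism applied inside $A$ can be moved through $\sigma_{A,B}$ to become the same isomorphism applied inside $A$ on the other side. I would proceed by induction on $q-p$, decomposing $s_{p,q}=\sigma_{p,r,q}\circ s_{p,r}\circ s_{r+1,q}$ via (4) and choosing $r$ so that $[k,l]$ lies inside $[p,r]$ or inside $[r+1,q]$. If $[k,l]$ straddles $r$, I would instead take $r=p$ and handle the case $k=p$ separately with the recursive definition. Then $s_{k,l}$ commutes past one of $s_{p,r}$, $s_{r+1,q}$ by the induction hypothesis, and is transported through $\sigma_{p,r,q}$ by naturality. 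This transport shifts its position to the reflected segment $[\tilde s_{p,q}(l),\tilde s_{p,q}(k)]$.

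\textbf{Step 4: property (1).} I would induct on $q-p$. Using (4) with $r=p$,
\begin{gather*}
s_{p,q}\circ s_{p,q}=\sigma_{p,p,q}\circ s_{p+1,q}\circ\sigma_{p,p,q}\circ s_{p+1,q}.
\end{gather*}
By naturality, $s_{p+1,q}$ moves through $\sigma_{p,p,q}$; the correct form is the version of $\sigma$ with its second argument precomposed, which is still a commutor. This gives
\begin{gather*}
\sigma_{p,p,q}\circ\sigma'\circ s'_{p+1,q}\circ s_{p+1,q}.
\end{gather*}
Here $\sigma'\circ\sigma_{p,p,q}$ collapses to the identity by Proposition \ref{prop_1.3}, and $s'_{p+1,q}\circ s_{p+1,q}=1$ by the induction hypothesis.
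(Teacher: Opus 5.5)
Your Steps 1--3 follow the Henriques--Kamnitzer argument that the paper only cites, and they work. The identity in Step 1 is exactly the cactus axiom $\sigma_{A,C\tens B}\circ(1_A\tens\sigma_{B,C})=\sigma_{B\tens A,C}\circ(\sigma_{A,B}\tens 1_C)$, with $A=\Cr{B}_p$ and $B,C$ the already reversed blocks, so no reorientation is needed. (The paper's Theorem \ref{th_1.2} has a misprinted right arrow.) In Step 3 your fallback for a straddling $[k,l]$ is muddled, but it is unnecessary. Take $r=k-1$ if $k>p$, and $r=l$ if $k=p$ and $l<q$; the only remaining case, $[k,l]=[p,q]$, is trivial. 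Then $s_{k,l}$ is conjugated through the block containing it by the induction hypothesis, commutes with the other block by (2), and naturality shifts the reflected segment to $[p+q-l,\,p+q-k]$.

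The genuine gap is in Step 4. Write the object as $X\tens Y$, with $X$ in position $p$ and $Y$ the block in positions $p+1,\dots,q$. After the first $\sigma_{p,p,q}\circ s_{p+1,q}$, the reversed block occupies positions $p,\dots,q-1$ and $X$ occupies position $q$. The middle $s_{p+1,q}$ therefore reverses the last $q-p-1$ factors of that block together with $X$. It is not of the form $g\tens f$ with respect to the splitting (block)$\tens X$ produced by $\sigma$, so naturality cannot move it through $\sigma$. Already for $q=p+2$ with factors $A,B,C$, your composite is
\[
\sigma_{C,A\tens B}\circ(1_C\tens\sigma_{B,A})\circ\sigma_{A,C\tens B}\circ(1_A\tens\sigma_{B,C}),
\]
and naturality gives no way to pass $1_C\tens\sigma_{B,A}$ across $\sigma_{A,C\tens B}$.

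The claimed cancellation is also not an instance of Proposition \ref{prop_1.3}. Two successive ``first factor versus the rest'' commutors are not a mutually inverse pair: after the first one, the first factor is a single factor of the block, not the whole block.

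The fix is to expand the two copies of $s_{p,q}$ asymmetrically via (4). Use $r=q-1$ for the outer copy, giving $s_{p,q}=\sigma_{p,q-1,q}\circ s_{p,q-1}$ since $s_{q,q}=1$, and $r=p$ for the inner copy. Now $s_{p,q-1}$ acts exactly on the block that $\sigma_{p,p,q}$ moved, so naturality legitimately applies:
\[
s_{p,q}\circ s_{p,q}=\sigma_{p,q-1,q}\circ s_{p,q-1}\circ\sigma_{p,p,q}\circ s_{p+1,q}=\sigma_{p,q-1,q}\circ\sigma_{p,p,q}\circ s_{p+1,q}\circ s_{p+1,q}.
\]
The inner pair is $1$ by induction. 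The outer pair is genuinely $\sigma_{Y,X}\circ\sigma_{X,Y}=1$ by Proposition \ref{prop_1.3}; this is the same cancellation the paper uses in the proof of Proposition \ref{prop_6.2}.
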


It makes sense to give the definition of the cactus group right after this proposition. 

\begin{definition}
\label{def_6.1}
Let $C_N$ be the group with generators $\mathbf{s}_{p,q}$ for $1 \leq p < q \leq N$ and relations: 

    \begin{enumerate}
        \item $\mathbf{s}_{p,q}^{2} = 1.$
        \item $\mathbf{s}_{p,q}\mathbf{s}_{k,l} = \mathbf{s}_{k,l}\mathbf{s}_{p,q}$ if $p < q $ and $k < l$ are disjoint. 
        \item $\mathbf{s}_{p,q}\mathbf{s}_{k,l} = \mathbf{s}_{m,n}\mathbf{s}_{p,q}$ if $p < q$ contains $k < l$, where $m = \tilde{s}_{p,q}(l)$ and $n = \tilde{s}_{p,q}(k)$. 
    \end{enumerate}
    
The group $C_N$ is called the \textbf{cactus group}. 

\end{definition}

Let $\mathcal{B}$ be a crystal. Notice that there is a natural action of the cactus group $C_N$ on $\mathcal{B}^{\otimes N}$ by automorphisms. 

We define the group homomorphism $\psi_\mathcal{B}: C_N \rightarrow \text{Aut}(\mathcal{B}^{\otimes N})$ by the following formula
\begin{gather}
    \psi_\mathcal{B}: \mathbf{s}_{p,q} \mapsto s_{p,q} \in \text{Aut}(\mathcal{B}^{\otimes N})
\end{gather}

\begin{notation}
    Denote the set of the highest weight elements of a crystal $\mathcal{B}$ by $\operatorname{Sing}(\mathcal{B})$. We also denote by $\operatorname{Sing}_\lambda(\mathcal{B}) \subset \operatorname{Sing}(\Cr{B})$ the set of the highest weight elements of the crystal $\mathcal{B}$ of weight $\lambda$. It is clear that $$\coprod_{\lambda  \in P_+} \operatorname{Sing}_{\lambda} (\Cr{B}) = \operatorname{Sing}(\Cr{B}).$$
\end{notation}

Any morphism of crystals sends $g: \Cr{B}_1 \rightarrow \Cr{B}_2$ the highest weight elements to the highest weight elements, therefore it induces a map between the sets of highest weight elements in these crystals $g^{res}: \operatorname{Sing(\Cr{B}_1)} \rightarrow \operatorname{Sing}(\Cr{B}_2)$. On the other hand, highest weight elements generate the whole crystal, so morphism $g$ can be recovered from the induced map $g^{res}$.

It follows from the Schur's lemma \ref{prop_1.2} that an automorphism $g$ of a crystal $\Cr{B}$ permutes its connected components (which are highest weight crystals) of the same highest weight. The induced map $g^{res}$ permutes the highest weight elements of the same weight in $\Cr{B}$ and these permutations uniquely determine the automorphism $g$. In other words, for any $g \in \operatorname{Aut}(\Cr{B})$ we have $g^{res} \in \operatorname{Aut}(\operatorname{Sing}(\Cr{B}))$ and $g^{res}(\operatorname{Sing}_{\lambda}(\Cr{B})) = \operatorname{Sing}_\lambda(\Cr{B})$ for any $\lambda \in P_+$.



In the work of Michael Chmutov, Max Glick and Pavlo Pylyavskii \cite{Chmutov} another description of the cactus group was presented. 

\begin{theorem}[Theorem 1.8 in \cite{Chmutov}]

\label{th_6.1}

Let $G_N$ be the group with generators $\mathbf{t}_i$, $i \in \{1, 2,\ldots N-1\}$ and relations 
\begin{gather}
\mathbf{t}_i^{2} = 1 \\
\mathbf{t}_i\mathbf{t}_j = \mathbf{t}_j\mathbf{t}_i, \hspace{1mm} \text{if} \hspace{1mm}  |i - j| > 1 \\
(\mathbf{t}_i\mathbf{s}_{k-1}\mathbf{s}_{k-j}\mathbf{s}_{k-1})^{2} = 1, \hspace{1mm} \text{where} \hspace{1mm} i+1 < j < k , \hspace{1mm} \text{and}\\ 
\mathbf{s}_i = \mathbf{t}_1(\mathbf{t}_2\mathbf{t}_1)\ldots (\mathbf{t}_i\mathbf{t}_{i-1}\ldots \mathbf{t}_1)
\end{gather}
Then $G_N$ is isomorphic to the cactus group $C_N$. The isomorphism is given by the map
$$\mathbf{t}_1 \mapsto \mathbf{s}_{1,2}, \hspace{1mm} \mathbf{t}_2 \mapsto \mathbf{s}_{1,2}\mathbf{s}_{1,3}\mathbf{s}_{1,2}, \hspace{1mm} \mathbf{t}_i \mapsto \mathbf{s}_{1,i}\mathbf{s}_{1,i+1}\mathbf{s}_{1,i}\mathbf{s}_{1, i-1}, \hspace{1mm} \text{if} \hspace{1mm} i> 2$$
in one direction and $$\mathbf{s}_{i,j} \mapsto \mathbf{s}_{j-1}\mathbf{s}_{j-i}\mathbf{s}_{j-1}, \hspace{1mm} \text{where} \hspace{1mm} \mathbf{s}_i = \mathbf{t}_1(\mathbf{t}_2\mathbf{t}_1)\ldots (\mathbf{t}_i\mathbf{t}_{i-1}\ldots \mathbf{t}_1)$$ in the other.

\end{theorem}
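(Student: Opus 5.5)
The approach is to write down both homomorphisms explicitly, check that each respects the defining relations of its source group, and then check that the two composites are the identity on generators. Throughout, I write $\mathbf{s}_i$ for $\mathbf{s}_{1,i+1}\in C_N$ and use the same letter for the element $\mathbf{t}_1(\mathbf{t}_2\mathbf{t}_1)\ldots(\mathbf{t}_i\ldots\mathbf{t}_1)\in G_N$. The first task is to show that these two meanings are compatible under the proposed maps.

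\emph{Step 1: from $G_N$ to $C_N$.} Define $\Theta\colon G_N\to C_N$ by the stated images of the $\mathbf{t}_i$. First I would prove by induction on $i$ that
$$\Theta(\mathbf{t}_i\mathbf{t}_{i-1}\ldots\mathbf{t}_1)=\mathbf{s}_{1,i}\mathbf{s}_{1,i+1}.$$
The formula for $\Theta(\mathbf{t}_i)$ is designed exactly so that this product telescopes, using $\mathbf{s}_{1,k}^2=1$. Consequently $\Theta(\mathbf{s}_i)=\mathbf{s}_{1,i+1}$, which justifies the shared notation. Next I check the relations of $G_N$ in $C_N$:
\begin{itemize}
\item $\Theta(\mathbf{t}_i)^2=1$. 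Write $\Theta(\mathbf{t}_i)$ as $\mathbf{s}_{1,i}\mathbf{s}_{1,i+1}\mathbf{s}_{1,i}\mathbf{s}_{1,i-1}$. By the containment relation of Definition \ref{def_6.1}, $\mathbf{s}_{1,i}\mathbf{s}_{1,i+1}\mathbf{s}_{1,i}$ is a conjugate of $\mathbf{s}_{1,i+1}$, and $\Theta(\mathbf{t}_i)$ can be rewritten as $\mathbf{s}_{1,i+1}\mathbf{s}_{2,i+1}$. It is then a product of two commuting-type involutions, and I would verify the involutivity directly using relation 3.
\item $\Theta(\mathbf{t}_i)$ and $\Theta(\mathbf{t}_j)$ commute for $|i-j|>1$.
\item The long relation: $\Theta(\mathbf{t}_i)$ commutes with $\mathbf{s}_{1,k}\mathbf{s}_{1,k-j+1}\mathbf{s}_{1,k}=\mathbf{s}_{j,k}$. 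For $i+1<j$ the supports $[i,i+1]$ and $[j,k]$ are disjoint, so this follows from relation 2 once $\Theta(\mathbf{t}_i)$ is expressed through $\mathbf{s}_{i,i+1}$-type elements.
\end{itemize}
The cleanest route for all three checks is to express $\Theta(\mathbf{t}_i)$ as a product of $\mathbf{s}_{p,q}$ with $q\le i+1$, namely $\mathbf{s}_{1,i+1}\mathbf{s}_{1,i}$ up to conjugation.

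\emph{Step 2: from $C_N$ to $G_N$.} Define $\Xi(\mathbf{s}_{i,j})=\mathbf{s}_{j-1}\mathbf{s}_{j-i}\mathbf{s}_{j-1}$; note that $\Xi(\mathbf{s}_{1,j})=\mathbf{s}_{j-1}$. The first thing to prove in $G_N$ is that $\mathbf{s}_k^2=1$. This holds because $\mathbf{s}_k$ is the ``longest element'' word: using only $\mathbf{t}_i^2=1$ and far commutativity, one rewrites $\mathbf{t}_1(\mathbf{t}_2\mathbf{t}_1)\ldots$ in reversed order, exactly as for the longest element of $S_{k+1}$ but without braid relations. I expect this to go through with the Coxeter-free manipulations available. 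Then relation 1 of $C_N$ follows.

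The relations of types 2 and 3 reduce to a single family of conjugation identities in $G_N$:
$$\mathbf{s}_{q-1}\,\Xi(\mathbf{s}_{k,l})\,\mathbf{s}_{q-1}=\Xi(\mathbf{s}_{q+1-l,\,q+1-k})\quad\text{for } l\le q,$$
together with the commutation of $\Xi(\mathbf{s}_{p,q})$ and $\Xi(\mathbf{s}_{k,l})$ for disjoint intervals. The commutation case is precisely what the long relation $(\mathbf{t}_i\mathbf{s}_{k-1}\mathbf{s}_{k-j}\mathbf{s}_{k-1})^2=1$ encodes at the level of single generators: it says that $\mathbf{t}_i$ commutes with $\Xi(\mathbf{s}_{j,k})$ when $i+1<j$. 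From this I would bootstrap to arbitrary words, using that $\Xi(\mathbf{s}_{p,q})$ is a word in $\mathbf{t}_1,\ldots,\mathbf{t}_{q-1}$.

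\emph{Step 3: inverse check.} Finally I verify $\Xi\circ\Theta=\mathrm{id}$ and $\Theta\circ\Xi=\mathrm{id}$ on generators. The identity $\Theta\Xi(\mathbf{s}_{i,j})=\mathbf{s}_{1,j}\mathbf{s}_{1,j-i+1}\mathbf{s}_{1,j}=\mathbf{s}_{i,j}$ is immediate from relation 3, since $\tilde{s}_{1,j}$ sends $[1,j-i+1]$ to $[i,j]$. The identity $\Xi\Theta(\mathbf{t}_i)=\mathbf{t}_i$ follows from $\mathbf{s}_{i-1}^{-1}\mathbf{s}_i=\mathbf{t}_i\ldots\mathbf{t}_1$ and the telescoping identity.

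The main obstacle is Step 2: proving the general conjugation relation 3 inside $G_N$ from the sparse relation set. I would attack it by induction on $q$. Write $\mathbf{s}_{q-1}=\mathbf{s}_{q-2}(\mathbf{t}_{q-1}\ldots\mathbf{t}_1)$ and show that conjugation by the ``cyclic shift'' $\mathbf{t}_{q-1}\ldots\mathbf{t}_1$ moves interval generators in the expected way. This uses the long relation to commute $\mathbf{t}$'s past $\Xi(\mathbf{s}_{j,k})$ for far-away intervals. If the direct induction becomes too messy, the fallback is to follow Chmutov--Glick--Pylyavskyy: pass through the action on Gelfand--Tsetlin patterns, or reduce to their presentation via the internal semidirect structure.
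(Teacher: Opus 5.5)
The paper gives no proof of this statement. It is imported from Chmutov--Glick--Pylyavskyy (Theorem~1.8 there), so your argument has to stand on its own. Your architecture is the right one, and several pieces are correct as written: the telescoping $\Theta(\mathbf{t}_i\cdots\mathbf{t}_1)=\mathbf{s}_{1,i}\mathbf{s}_{1,i+1}$ and $\Theta(\mathbf{s}_i)=\mathbf{s}_{1,i+1}$; the long relation (every factor of $\mathbf{s}_{1,i}\mathbf{s}_{1,i+1}\mathbf{s}_{1,i}\mathbf{s}_{1,i-1}$ has interval inside $[1,i+1]$, which is disjoint from $[j,k]$); relation~2 of $C_N$ inside $G_N$; and both inverse checks.

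However, the involutivity argument in Step~1 rests on a false identity. For $i\ge 2$, $\mathbf{s}_{1,i+1}\mathbf{s}_{2,i+1}$ maps to an $(i+1)$-cycle under $C_N\to S_N$, whereas $\Theta(\mathbf{t}_i)$ maps to the transposition $(i\;\,i+1)$. Already in $C_3$, the infinite dihedral group on $\mathbf{s}_{1,2},\mathbf{s}_{1,3}$, the element $\mathbf{s}_{1,3}\mathbf{s}_{2,3}=\mathbf{s}_{1,2}\mathbf{s}_{1,3}$ has infinite order. The same objection applies to ``$\mathbf{s}_{1,i+1}\mathbf{s}_{1,i}$ up to conjugation''. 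What is true is $\mathbf{s}_{1,i}\,\Theta(\mathbf{t}_i)\,\mathbf{s}_{1,i}=\mathbf{s}_{1,i+1}\mathbf{s}_{2,i}$, using $\mathbf{s}_{1,i}\mathbf{s}_{1,i-1}\mathbf{s}_{1,i}=\mathbf{s}_{2,i}$ (for $i=2$ read $\mathbf{s}_{2,2}=1$). This is a product of two involutions that commute by relation~3, which gives $\Theta(\mathbf{t}_i)^2=1$.

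Far commutativity is asserted without argument, and knowing ``$q\le i+1$'' does not suffice, because $\Theta(\mathbf{t}_j)$ is built from $\mathbf{s}_{1,\ast}$. What works is this: for $q\le j-1$, conjugating $\mathbf{s}_{p,q}$ successively by $\mathbf{s}_{1,j-1},\mathbf{s}_{1,j},\mathbf{s}_{1,j+1},\mathbf{s}_{1,j}$ produces $\mathbf{s}_{j-q,j-p}$, $\mathbf{s}_{p+1,q+1}$, $\mathbf{s}_{j+1-q,j+1-p}$, $\mathbf{s}_{p,q}$. Hence $\Theta(\mathbf{t}_j)$ centralizes $\Theta(\mathbf{t}_i)$ whenever $j\ge i+2$.

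In Step~2 the two essential facts are only announced (``I expect'', ``if the direct induction becomes too messy''), so as written the hard direction is unproved. Your plan does close, though, and more cheaply than you anticipate.

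For $\mathbf{s}_k^2=1$: the subword of $\mathbf{t}_1(\mathbf{t}_2\mathbf{t}_1)\cdots(\mathbf{t}_k\cdots\mathbf{t}_1)$ in the letters $\mathbf{t}_a,\mathbf{t}_{a+1}$ is the palindrome $\mathbf{t}_a(\mathbf{t}_{a+1}\mathbf{t}_a)^{k-a}$. So the word and its reverse have the same projection onto every pair of non-commuting letters, and are therefore equal modulo far commutativity. The reversed word is $\mathbf{s}_k^{-1}$.

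For the conjugation family, the case $k=1$ is literally the definition of $\Xi$. For $2\le k<l\le q$, write $\mathbf{t}_{q-1}\cdots\mathbf{t}_1=(\mathbf{t}_{q-1}\cdots\mathbf{t}_l)\,\mathbf{s}_{l-2}\mathbf{s}_{l-1}$.
\begin{itemize}
\item Conjugation by $\mathbf{s}_{l-2}\mathbf{s}_{l-1}$ turns $\Xi(\mathbf{s}_{k,l})=\mathbf{s}_{l-1}\mathbf{s}_{l-k}\mathbf{s}_{l-1}$ into $\mathbf{s}_{l-2}\mathbf{s}_{l-k}\mathbf{s}_{l-2}=\Xi(\mathbf{s}_{k-1,l-1})$ by cancellation.
\item The letters $\mathbf{t}_l,\dots,\mathbf{t}_{q-1}$ commute with this word in $\mathbf{t}_1,\dots,\mathbf{t}_{l-2}$ by far commutativity alone. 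The long relation is not needed here, since these letters sit to the right of the interval.
\end{itemize}
Hence $\mathbf{s}_{q-1}\Xi(\mathbf{s}_{k,l})\mathbf{s}_{q-1}=\mathbf{s}_{q-2}\Xi(\mathbf{s}_{k-1,l-1})\mathbf{s}_{q-2}$, and induction on $q$ gives the whole family. General relation~3 then follows from three successive conjugations through $\Xi(\mathbf{s}_{p,q})=\mathbf{s}_{q-1}\mathbf{s}_{q-p}\mathbf{s}_{q-1}$.

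With these repairs the proof is complete and elementary, and the long relation is used only for relation~2. The Gelfand--Tsetlin fallback is unnecessary. In any case, an action of $G_N$ on a set could not establish relations in the abstract group $G_N$.
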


Identify $G_N$ with $C_N$. We call the generators $\mathbf{t}_i$ of cactus group $C_N$ the \textit{Bender-Knuth generators}. It turns out that the action of Bender-Knuth generators $\mathbf t_i$ on $\Cr{B}^{\tens N}$ given by $\psi_{\Cr{B}}$ can be expressed as a composition of no more than two commutors (i.e., automorphisms $\sigma_{p,r,q}$).  

\begin{proposition}

\label{prop_6.2}

Let $\mathcal{B}$ be a crystal. Bender-Knuth generators $\mathbf{t}_i$  of the group $C_N$ for $i \in \{2, \ldots, N-1\}$ act on $\mathcal{B}^{\otimes N}$ by the following composition of commutors: \begin{gather} \label{eq_24}\psi_{\mathcal{B}}(\mathbf{t}_i) = \sigma_{1,1,i} \circ \sigma_{1, i, i+1} \in \text{Aut}(\mathcal{B}^{\otimes N}), \end{gather} where $\sigma_{p,r,q}$ is the isomorphism defined at the start of this chapter. 

For $i = 1$, the isomorphism $\sigma_{1,1, i}$ was not defined, but if we define it to be the identity map, then the formula \ref{eq_24} will be true  for $i=1$ too. 

\end{proposition}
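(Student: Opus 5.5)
We want $\psi_{\mathcal{B}}(\mathbf{t}_i)=\sigma_{1,1,i}\circ\sigma_{1,i,i+1}$. The plan is to compute directly from the image of $\mathbf{t}_i$ in $C_N$ given by Theorem \ref{th_6.1}, using $\psi_{\mathcal{B}}(\mathbf{s}_{1,k})=s_{1,k}$ and the relations of Proposition \ref{prop_6.1}. Since the statement concerns only positions $1,\dots,i+1$ (tensored with identities on the rest), it suffices to work in $\mathcal{B}^{\otimes (i+1)}$.

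\textbf{Case $i=1$.} Here $\mathbf{t}_1\mapsto \mathbf{s}_{1,2}$ and $s_{1,2}=\sigma_{1,1,2}$ by definition, which is the claim with $\sigma_{1,1,1}=1$.

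\textbf{Case $i=2$.} Here $\psi(\mathbf{t}_2)=s_{1,2}s_{1,3}s_{1,2}$. Apply item 4 of Proposition \ref{prop_6.1} with $r=2$ to get $s_{1,3}=\sigma_{1,2,3}\circ s_{1,2}\circ s_{3,3}=\sigma_{1,2,3}\circ s_{1,2}$. Then
\[
\psi(\mathbf{t}_2)=s_{1,2}\,\sigma_{1,2,3}\,s_{1,2}\,s_{1,2}=s_{1,2}\,\sigma_{1,2,3}=\sigma_{1,1,2}\circ\sigma_{1,2,3},
\]
which is the claim, because $\sigma_{1,1,i}$ with $i=2$ is $s_{1,2}$.

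\textbf{Case $i>2$.} Here $\psi(\mathbf{t}_i)=s_{1,i}\,s_{1,i+1}\,s_{1,i}\,s_{1,i-1}$. Apply item 4 with $r=i$ to get $s_{1,i+1}=\sigma_{1,i,i+1}\circ s_{1,i}$, so
\[
\psi(\mathbf{t}_i)=s_{1,i}\,\sigma_{1,i,i+1}\,s_{1,i-1}.
\]
Next, item 4 with $r=1$ gives $s_{1,i}=\sigma_{1,1,i}\circ s_{2,i}$, hence
\[
\psi(\mathbf{t}_i)=\sigma_{1,1,i}\,s_{2,i}\,\sigma_{1,i,i+1}\,s_{1,i-1}.
\]
It remains to prove $s_{2,i}\circ\sigma_{1,i,i+1}=\sigma_{1,i,i+1}\circ s_{1,i-1}$. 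This is the main step. The map $\sigma_{1,i,i+1}$ commutes $\mathcal{B}_1\otimes\cdots\otimes\mathcal{B}_i$ past $\mathcal{B}_{i+1}$, so the block occupying positions $1..i-1$ before it occupies positions $2..i$ after it. Naturality of the commutor $\sigma_{X,Y}$ in $X$, applied to the automorphism $s_{1,i-1}\otimes 1$ of $X=\mathcal{B}_1\otimes\cdots\otimes\mathcal{B}_i$, gives
\[
\sigma_{X,Y}\circ(s_{1,i-1}\otimes 1_{\mathcal{B}_i}\otimes 1_Y)=(1_Y\otimes s_{1,i-1}\otimes 1_{\mathcal{B}_i})\circ\sigma_{X,Y},
\]
and $1_Y\otimes s_{1,i-1}\otimes 1$ is exactly $s_{2,i}$ acting on the target. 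This yields the needed identity. In the situation $\mathcal{B}^{\otimes N}$ all objects coincide, so the maps match as automorphisms.

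The delicate point is bookkeeping: reading compositions in the order fixed by Theorem \ref{th_6.1}, and checking that $s_{p,q}$ on shifted positions is literally $1\otimes s\otimes 1$, which follows from the recursive definition $s_{p,q}=\sigma_{p,p,q}\circ s_{p+1,q}$ by induction. If the products in Theorem \ref{th_6.1} are meant in the reverse order, the same computation works symmetrically, using that each $s_{1,k}$ is an involution.
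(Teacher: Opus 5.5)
Your proof is correct, but for $i>2$ it removes the extra factors by a different mechanism than the paper.

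\textbf{The paper's route.} It peels off $s_{1,i}=\sigma_{1,1,i}\circ s_{2,i}$ and commutes $s_{2,i}$ past $s_{1,i+1}$ using the containment relation (item 3 of Proposition~\ref{prop_6.1}). It then expands $s_{1,i+1}=\sigma_{1,i,i+1}\circ s_{1,i}$ and rewrites the two remaining copies of $s_{1,i}$ via item 4 with $r=1$ and $r=i-1$. Finally it cancels the leftover $\sigma_{1,1,i}\circ\sigma_{1,i-1,i}$ by the symmetry axiom of Proposition~\ref{prop_1.3}.

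\textbf{Your route.} You collapse $s_{1,i+1}\circ s_{1,i}$ to $\sigma_{1,i,i+1}$ immediately. You then slide $s_{2,i}$ through $\sigma_{1,i,i+1}$ by naturality of the commutor in its first argument, where it becomes $s_{1,i-1}$, so only $s_{1,i-1}^2=1$ remains to cancel. This trades item 3 and Proposition~\ref{prop_1.3} for naturality of $\sigma$, which the paper itself uses elsewhere (e.g.\ in Proposition~\ref{prop_6.3}). It also needs the bookkeeping fact $s_{2,i}=1_{\mathcal{B}}\otimes s_{1,i-1}\otimes 1_{\mathcal{B}}$, which you rightly flag and which follows from the recursive definition.

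\textbf{Comparison.} Your key identity $s_{2,i}\circ\sigma_{1,i,i+1}=\sigma_{1,i,i+1}\circ s_{1,i-1}$ can also be derived from two instances of item 3 plus item 4. So the paper's computation stays purely inside the listed relations, while yours is shorter and shows directly why the reversal of the first $i-1$ factors disappears. Your closing remark about the order of products is harmless, since $\mathbf{t}_i$ is an involution.
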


\begin{proof}

Let $i > 2$. From Theorem \ref{th_6.1} we know that $t_i := \psi_{\mathcal{B}}(\mathbf{t}_i) \in \text{Aut}(\mathcal{B}^{\otimes N})$ satisfies the equation \begin{gather} t_i = s_{1,i} \circ s_{1, i+1} \circ s_{1, i} \circ s_{1, i-1}.\end{gather}
By definition of the automorphism $s_{p,q}$ we can rewrite the previous equation in the following way: 
\begin{gather} t_i = (\sigma_{1,1,i} \circ s_{2,i}) \circ s_{1, i+1} \circ s_{1, i} \circ s_{1, i-1}\end{gather}
From the third equation of Proposition \ref{prop_6.1} it follows that $s_{1, i+1} \circ s_{2,i} = s_{2,i} \circ s_{1, i+1} $. 
So, we get 
\begin{gather} \label{eq_25}t_i = \sigma_{1,1,i} \circ (s_{1, i+1}  \circ s_{2,i}) \circ s_{1, i} \circ s_{1, i-1} \end{gather}
Using the last equation of  Proposition \ref{prop_6.1} we can write  $$s_{1, i+1} = \sigma_{1, i, i+1} \circ s_{1, i} \circ s_{i+1, i+1} = \sigma_{1, i, i+1} \circ s_{1, i}$$
Substituting it into our formula \ref{eq_25} we get 
\begin{gather} \label{eq_26} t_i = \sigma_{1,1,i} \circ (\sigma_{1, i, i+1} \circ s_{1, i})  \circ s_{2,i} \circ s_{1, i} \circ s_{1, i-1}\end{gather}
By definition of $s_{p,q}$ we have $$s_{1,i} = \sigma_{1,1,i} \circ s_{2,i}.$$Once again using the last equation of Proposition \ref{prop_6.1} we obtain $$s_{1,i} = \sigma_{1,i-1,i} \circ s_{1, i-1}.$$
Substituting the last two equalities into the formula \ref{eq_26} as follows:
\begin{gather} t_i = \sigma_{1,1,i} \circ \sigma_{1, i, i+1} \circ (\sigma_{1,1,i}\circ s_{2, i})  \circ s_{2,i} \circ (\sigma_{1,i-1, i} \circ s_{1, i-1}) \circ s_{1, i-1} = \sigma_{1,1,i} \circ \sigma_{1, i, i+1} \circ (\sigma_{1,1,i}\circ \sigma_{1,i-1, i}) \end{gather}
But from Proposition \ref{prop_1.3} it follows that $\sigma_{1,1,i}\circ \sigma_{1,i-1, i} = 1$. Therefore,
\begin{gather} \label{eq_27} t_i = \sigma_{1,1,i} \circ \sigma_{1, i, i+1}\end{gather}

We also need to consider the case $i = 2$ separately. 
From Theorem \ref{th_6.1}:
\begin{gather}t_2 = s_{1,2} \circ s_{1,3} \circ s_{1,2}\end{gather}
Last equation of Proposition \ref{prop_6.1} implies that $s_{1,3} = \sigma_{1,2,3} \circ s_{1,2}$. Also, it is clear that  $s_{1,2} = \sigma_{1,1,2}$. Therefore, we get: \begin{gather}
t_2 = \sigma_{1,1,2} \circ \sigma_{1,2,3} \circ s_{1,2} \circ s_{1,2} = \sigma_{1,1,2} \circ \sigma_{1,2,3}. 
\end{gather}
For $i = 1$, Theorem \ref{th_6.1} implies  that \begin{gather}t_1 = s_{1,2} = \sigma_{1,1,2}. \end{gather}
So, we are done.
\end{proof}

The primary goal of this chapter is to explore the action of the Bender-Knuth generators $\mathbf{t}_i$ of the cactus group $C_N$ on the $N$-th tensor power of the spinor crystal $\mathcal{B}_S^{\otimes N}$. In other words, we want to compute $t_i := \psi_{\mathcal{B}_S}(\mathbf{t}_i) \in \text{Aut}(\mathcal{B}_S^{\otimes N})$. As was observed above, the automorphism $t_i$ can be recovered from the induced map $t_i^{res} \in \operatorname{Aut}(\operatorname{Sing}(\mathcal{B}_S^{\otimes N}))$. The rest of this chapter is dedicated to the computation of the induced map $t_i^{res}$.                                                                                                                                                                                                         
Firstly, we prove the locality of $t_i^{res}$. Locality in this case means that $t_i^{res}$  may change only $i$-th and $i+1$-th  factors of the highest weight element  $b_{\mu_1} \otimes b_{\mu_2} \otimes \ldots \otimes b_{\mu_{N}} \in \text{Sing}(\mathcal{B}_S^{\otimes N})$. 

\begin{proposition}
\label{prop_6.3}
Denote by $b_{\mu_i} \in \Cr{B}_S$ the element of weight $\mu_i \in P[S]$.  Let $b_{\mu_1} \otimes b_{\mu_{2}} \otimes \ldots \otimes b_{\mu_N} \in \operatorname{Sing}_{\lambda}(\mathcal{B}_S^{\otimes N})$ be a highest weight element of highest weight $\lambda$ in $\mathcal{B}_S^{\otimes N}$. In other words, $(\mu_1, \mu_2, \ldots \mu_N) \in T_\lambda^N$. Then $\forall i \in \{1, 2, \ldots N-1\}$:
$$t_i^{res}(b_{\mu_1} \otimes  \ldots  b_{\mu_{i-1}}\otimes b_{\mu_{i}} \otimes b_{\mu_{i+1}} \otimes b_{\mu_{i+2}} \ldots \otimes b_{\mu_N}) = b_{\mu_1} \otimes  \ldots  b_{\mu_{i-1}}\otimes b_{\tilde\mu_{i+1}} \otimes b_{\tilde\mu_{i}} \otimes b_{\mu_{i+2}} \ldots \otimes b_{\mu_N},$$ where $(\mu_1, \ldots, \mu_{i-1}, \tilde\mu_{i}, \tilde\mu_{i+1}, \mu_{i+2}, \ldots, \mu_N) \in T_\lambda^{N}$.

\end{proposition}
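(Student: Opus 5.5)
We use the decomposition $t_i = \sigma_{1,1,i}\circ\sigma_{1,i,i+1}$ of Proposition \ref{prop_6.2} together with naturality of the commutor. The key structural fact is that a commutor $\sigma_{A,B}$ is natural in both arguments. Hence if $A'\subset A$ is a connected component, then $\sigma_{A,B}$ restricted to $A'\tens B$ is $\sigma_{A',B}$ composed with the inclusion $B\tens A'\subset B\tens A$. The highest weight elements of $\Cr{B}_S^{\tens N}$ are exactly $b_{\mu_1}\tens\ldots\tens b_{\mu_N}$ with $(\mu_1,\ldots,\mu_N)\in T^N$ (Corollary \ref{cor_3.1}). Since every morphism in sight acts as the identity on tensor factors outside the positions it touches, the tail $b_{\mu_{i+2}}\tens\ldots\tens b_{\mu_N}$ is untouched. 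So the whole question concerns the first $i+1$ factors.

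First step. Set $x = b_{\mu_1}\tens\ldots\tens b_{\mu_{i-1}}$, which is a highest weight element of weight $\lambda' := \mu_1+\ldots+\mu_{i-1}$. It generates a component $C\cong \Cr{B}_{\lambda'}$ of $\Cr{B}_S^{\tens(i-1)}$. The element $y=x\tens b_{\mu_i}\tens b_{\mu_{i+1}}$ lies in $C\tens\Cr{B}_S\tens\Cr{B}_S$, a subcrystal of $\Cr{B}_S^{\tens(i+1)}$. By naturality, $\sigma_{1,i,i+1}$ maps $C\tens\Cr{B}_S\tens\Cr{B}_S$ into $\Cr{B}_S\tens C\tens\Cr{B}_S$, and $\sigma_{1,1,i}$ maps this into $C\tens\Cr{B}_S\tens\Cr{B}_S$. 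Here the first commutor swaps the block $C\tens\Cr{B}_S$ (positions $1..i$) with position $i+1$. The second swaps position $1$ with the block $C$ (positions $2..i$). Thus $t_i$ preserves $C\tens\Cr{B}_S\tens\Cr{B}_S\subset \Cr{B}_S^{\tens(i+1)}$.

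Second step. The image $t_i(y)$ is a highest weight element of weight $\lambda'+\mu_i+\mu_{i+1}$ (automorphisms preserve $\operatorname{Sing}_\lambda$). Highest weight elements of $C\tens \Cr{B}_S\tens\Cr{B}_S\cong\Cr{B}_{\lambda'}\tens\Cr{B}_S\tens\Cr{B}_S$ are classified by applying Proposition \ref{prop_3.2} twice. Via the identification $C\cong\Cr{B}_{\lambda'}$ sending $b_{\lambda'}$ to $x$, they are exactly $x\tens b_{\nu}\tens b_{\nu'}$ with $\lambda'+\nu\in P_+$ and $\lambda'+\nu+\nu'\in P_+$, where $\nu,\nu'\in P[S]$. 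Thus $t_i(y)=x\tens b_{\tilde\mu_{i+1}}\tens b_{\tilde\mu_i}$ for some $\tilde\mu$'s (named as in the statement) with $\tilde\mu_{i+1}+\tilde\mu_i=\mu_i+\mu_{i+1}$. Appending the untouched tail, the total sequence has all partial sums dominant: the first $i-1$ and the sums from $i+1$ onward are unchanged, and the $i$-th is dominant by the condition just stated. So the new tuple lies in $T^N_\lambda$, which is the claim.

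The main obstacle is the naturality bookkeeping in the first step. Both $\sigma_{1,i,i+1}$ and $\sigma_{1,1,i}$ involve $b_{\mu_1}$, which lies inside the block $C$, yet $\sigma_{1,1,i}$ separates position $1$ from positions $2..i$. So one cannot directly say that the second commutor preserves $C$. I would resolve this by naturality of $\sigma_{1,i,i+1}$ with respect to the embedding $C\hookrightarrow\Cr{B}_S^{\tens(i-1)}$: this shows that $\sigma_{1,i,i+1}(y)$ lies in $\Cr{B}_S\tens C\tens\Cr{B}_S$. Only then does one apply $\sigma_{1,1,i}$ as $\sigma_{\Cr{B}_S, C}\tens 1$ restricted via naturality in the second argument. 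If this is delicate, an alternative is to observe $t_i=s_{1,i}s_{1,i+1}s_{1,i}s_{1,i-1}$ and use the general locality of $s_{p,q}$ from \cite{Henriques_Kamnitzer}. I would try the direct naturality argument first.
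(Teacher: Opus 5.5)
Your proposal is correct and follows essentially the same route as the paper: the decomposition $t_i=\sigma_{1,1,i}\circ\sigma_{1,i,i+1}$ from Proposition \ref{prop_6.2}, naturality of each commutor with respect to inclusions of connected components (first in the first argument, then in the second), and Proposition \ref{prop_3.2} to identify the resulting highest weight element. The only difference is bookkeeping. You show the composite maps the summand $C\tens\Cr{B}_S\tens\Cr{B}_S$ into itself and then classify its highest weight elements, whereas the paper tracks the intermediate element $b_{\Lambda(\mu_{i+1})}\tens\tilde{\mathbf{f}}_1(b_{\mu_1}\tens\ldots\tens b_{\mu_{i-1}})\tens\tilde{\mathbf{f}}_2(b_{\mu_i})$ and applies Proposition \ref{prop_3.2} to its highest weight prefix; the ``obstacle'' you flag is therefore already resolved by your first step.
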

\begin{proof}
It follows from proposition \ref{prop_6.2} that
\begin{gather}t_i^{res}(b_{\mu_1} \tens b_{\mu_{2}} \tens \ldots \tens b_{\mu_N}) = \sigma_{1,1,i} \circ \sigma_{1, i, i+1}(b_{\mu_1} \tens b_{\mu_{2}} \tens \ldots \tens b_{\mu_N}).\end{gather}
It is evident that $t_i^{res}$ does not act on the tensor factors with indices greater than $i+1$: 
\begin{gather} \label{eq_29} \sigma_{1,1,i} \circ \sigma_{1,i,i+1}(b_{\mu_1} \tens \ldots \tens b_{\mu_N}) = \sigma_{1,1,i} \circ \sigma_{1,i,i+1}( b_{\mu_{1}} \tens \ldots \tens b_{\mu_{i+1}}) \tens \ldots \tens b_{\mu_N}. \end{gather}

Therefore, it is sufficient to compute $\sigma_{1,1,i} \circ \sigma_{1,i,i+1}(b_{\mu_{1}} \tens \ldots \tens b_{\mu_{i+1}})$. From Corollary \ref{cor_3.1} we know all the highest weight elements in $\Cr{B}_S^{\tens N}$. In particular, for all $j \leqslant N$, the element $b_{\mu_1} \tens \ldots \tens b_{\mu_{j}}$ is the highest weight element in $\Cr{B}_{S}^{\tens j}$ of the highest weight $\sum_{k = 1}^{j} \mu_k$.

Consider $\sigma_{1,i,i+1}(b_{\mu_{1}}  \tens \ldots \tens b_{\mu_{i+1}})$. The element $b_{\mu_{1}} \tens \ldots \tens b_{\mu_{i+1}}$ is the unique highest weight element of highest weight $\sum_{k=1}^{i+1} \mu_k$ in the crystal $\Cr{B}_{\mu_{1} + \ldots + \mu_i} \tens \Cr{B}_S  \subset \Cr{B}_S^{\tens i} \tens \Cr{B}_S   = \Cr{B}_S^{\tens i+1}$. Since the commutor $\sigma_{1,i,i+1}$ is a natural map, we have $\sigma_{1,i, i+1}( \Cr{B}_{\mu_{1} + \ldots + \mu_i} \tens\Cr{B}_S ) = \Cr{B}_S \tens \Cr{B}_{\mu_{1} + \ldots + \mu_i}  \subset  \Cr{B}_S \tens\Cr{B}_{S}^{\tens i} $. Hence, $\sigma_{1,i,i+1}$ maps $b_{\mu_{1}} \tens \ldots \tens b_{\mu_{i+1}}$ to the unique highest weight element of highest weight $\sum_{k=1}^{i+1} \mu_k$ in $\Cr{B}_S \tens \Cr{B}_{\mu_{1} + \ldots + \mu_i}  $. Let us investigate what this element looks like. Clearly, $\Cr{B}_S \tens \Cr{B}_{\mu_{1} + \ldots + \mu_i}  \subset  \Cr{B}_S \tens\Cr{B}_{S}^{\tens i} = \Cr{B}_{S}^{\tens i+1}$, and we know the structure of the highest weight elements in $\Cr{B}_{S}^{\tens i+1}$. Thus, we can write the following:
$$\sigma_{1,i,i+1}(b_{\mu_{1}} \tens  \ldots \tens b_{\mu_{i+1}}) = b_{\
\Lambda(\mu_{i+1})} \tens \tilde{\mathbf{f}}(b_{\mu_1} \tens \ldots \tens b_{\mu_i}) ,$$
where $\Lambda(\mu_{i+1}) = \begin{cases}\Lambda_{n-1} \\ \Lambda_n \end{cases}$ depending on the highest weight of the connected component of the element $b_{\mu_{i+1}}$ in $\Cr{B}_S$, and $\tilde{\mathbf{f}}$ is some composition of descending crystal operators. The right side of the latter equality can be rewritten as follows:
$$b_{\Lambda(\mu_{i+1})} \tens \tilde{\mathbf{f}}(b_{\mu_1} \tens \ldots \tens b_{\mu_i}) = 
b_{\Lambda(\mu_{i+1})} \tens \tilde{\mathbf{f}_1}(b_{\mu_1} \tens \ldots \tens b_{\mu_{i-1}}) \otimes \tilde{\mathbf{f}_2} (b_{\mu_i}),$$
where $\tilde{\mathbf{f}}_1$, $\tilde{\mathbf{f}}_2$ are some compositions of descending crystal operators.

As $b_{\Lambda(\mu_{i+1})} \tens \tilde{\mathbf{f}_1}(b_{\mu_1} \tens \ldots \tens b_{\mu_{i-1}}) \otimes \tilde{\mathbf{f}_2} (b_{\mu_i})$ is a highest weight element in $\Cr{B}_S^{\tens i+1}$, it follows that $b_{\Lambda(\mu_{i+1})} \tens \tilde{\mathbf{f}_1}(b_{\mu_1} \tens \ldots \tens b_{\mu_{i-1}})$ is a highest weight element in $\Cr{B}_S^{\tens i}$. Moreover, $b_{\Lambda(\mu_{i+1})} \tens \tilde{\mathbf{f}_1}(b_{\mu_1} \tens \ldots \tens b_{\mu_{i-1}})$ clearly lies in $\Cr{B}_S \tens \Cr{B}_{\mu_{1}+ \ldots +\mu_{i-1}} \subset \Cr{B}_{S}^{\tens i}$. Therefore, $\sigma_{1,1,i}(b_{\Lambda(\mu_{i+1})} \tens \tilde{\mathbf{f}_1}(b_{\mu_1} \tens \ldots \tens b_{\mu_{i-1}}))$ is a highest weight element in $ \Cr{B}_{\mu_{1}+ \ldots +\mu_{i-1}}\tens \Cr{B}_S  \subset \Cr{B}_{S}^{\tens i}$. Then, from proposition \ref{prop_3.2} we obtain that
$$\sigma_{1,1,i}(b_{\Lambda(\mu_{i+1})} \tens \tilde{\mathbf{f}_1}(b_{\mu_1} \tens \ldots \tens b_{\mu_{i-1}})) = b_{\mu_{1}} \tens \ldots \tens b_{\mu_{i-1}} \tens b_{\tilde \mu_{i}},$$
where $\tilde{\mu}_{i} \in P[S]$ is some weight, such that $(\mu_1, \ldots, \mu_{i-1}, \tilde \mu_{i}) \in T^{i}$

From the above, we derive the following chain of equalities:
\begin{gather}
\label{eq_30}
    \sigma_{1,1,i} \circ \sigma_{1,i,i+1}( b_{\mu_{1}} \tens \ldots \tens b_{\mu_{i+1}}) = \sigma_{1,1,i}( b_{\Lambda(\mu_{i+1})} \tens \tilde{\mathbf{f}_1}(b_{\mu_1} \tens \ldots \tens b_{\mu_{i-1}}) \otimes \tilde{\mathbf{f}_2} (b_{\mu_i})) =\\ \label{eq_31} = \sigma_{1,1,i} (b_{\Lambda(\mu_{i+1})} \tens \tilde{\mathbf{f}_1}(b_{\mu_1} \tens \ldots \tens b_{\mu_{i-1}})) \otimes \tilde{\mathbf{f}_2} (b_{\mu_i}) = b_{\mu_{1}} \tens \ldots \tens b_{\mu_{i-1}} \tens b_{\tilde \mu_{i}} \otimes \tilde{\mathbf{f}_2} (b_{\mu_i}).
\end{gather}

Substituting the computation results of \ref{eq_30} and \ref{eq_31} into \ref{eq_29}, we obtain:
\begin{gather} \label{eq_200}
\sigma_{1,1,i} \circ \sigma_{1,i,i+1}(b_{\mu_1}  \tens \ldots \tens b_{\mu_N}) = b_{\mu_{1}} \tens \ldots \tens b_{\mu_{i-1}} \tens b_{\tilde \mu_{i}} \otimes \tilde{\mathbf{f}_2} (b_{\mu_i}) \tens b_{\mu_{i+2}} \ldots \tens b_{\mu_N}.
\end{gather}
Setting $b_{\tilde \mu_{i+1}} := \tilde{\mathbf{f}_2} (b_{\mu_i})$, we get the needed equality. As $t_i^{res}(\operatorname{Sing}_{\lambda}(\Cr{B}_S^{\tens N})) = \operatorname{Sing}_{\lambda}(\Cr{B}_S^{\tens N})$, we conclude that $$ b_{\mu_1} \otimes  \ldots  \tens b_{\tilde\mu_{i}} \otimes b_{\tilde\mu_{i+1}} \otimes  \ldots \otimes b_{\mu_N} \in \operatorname{Sing}_{\lambda}(\Cr{B}^{\tens N}_S) \Leftrightarrow (\mu_1, \ldots, \tilde\mu_{i}, \tilde\mu_{i+1}, \ldots, \mu_N) \in T_\lambda^{N}.$$
Thus, the locality of the map $t_i^{res} \in \Aut(\Sing(\Cr{B}_S^{\tens N}))$ is proved.
\end{proof}

The next step is to explicitly compute $t_i^{res}$. It follows from the proof of Proposition \ref{prop_6.3} that in order to compute $t_i^{res}$ it is enough to find $b_{\tilde\mu_{i+1}}= \tilde {\textbf{f}}_2(b_{\mu_i})$, as $t_i^{res}$ preserves weights. 
By definition of $\tilde {\textbf{f}}_2$ we have $$\sigma_{1,i,i+1}(b_{\mu_{1}} \tens  \ldots \tens b_{\mu_{i+1}}) = b_{\Lambda(\mu_{i+1})} \tens \tilde{\mathbf{f}_1}(b_{\mu_1} \tens \ldots \tens b_{\mu_{i-1}}) \otimes \tilde{\mathbf{f}_2} (b_{\mu_i}). $$
Element $b_{\mu_1} \tens \ldots \tens b_{\mu_i} \tens b_{\mu_{i+1}}$ is the only highest weight element in $\Cr{B}_{\mu_1 + \ldots + \mu_i} \tens \Cr{B}_S \subset \Cr{B}_S^{\tens i} \tens \Cr{B}_S$ of highest weight $\sum_{k=1}^{i+1} \mu_k$. So, $\sigma_{1,i,i+1}(b_{\mu_{1}} \tens  \ldots \tens b_{\mu_{i+1}})$ is the only highest weight element in $ \Cr{B}_S \tens \Cr{B}_{\mu_1 + \ldots + \mu_i} \subset \Cr{B}_S \tens \Cr{B}_S^{\tens i}$ of highest weight $\sum_{k=1}^{i+1} \mu_k$. By Theorem \ref{th_4.1} the highest weight element in $\Cr{B}_S \tens \Cr{B}_{\mu_1 + \ldots + \mu_i}$ of highest weight $\sum_{k=1}^{i+1} \mu_k$ can be written as 
$$\sigma_{1,i,i+1}(b_{\mu_{1}} \tens  \ldots \tens b_{\mu_{i+1}}) = \tilde e_{i_r} \ldots \tilde e_{i_1}b_{\mu_{i+1}} \tens \tilde f_{i_r} \ldots \tilde f_{i_1}(b_{\mu_1} \tens \ldots \tens b_{\mu_i}),$$where $\tilde e_{i_r} \ldots \tilde e_{i_1}$ is any composition of ascending crystal operators, such that $\tilde e _{i_1} \ldots \tilde e_{i_r}b_{\mu_{i+1}} = b_{\Lambda(\mu_{i+1})}$ is a highest weight element in $\Cr{B}_S$.  For brevity, set $\lambda  := \mu_1 + \ldots + \mu_{i-1}$ and  $b_\lambda := b_{\mu_1} \tens \ldots \tens b_{\mu_{i-1}}$.
Our goal is to understand which descending crystal operators from composition $\tilde f_{i_r} \ldots \tilde f_{i_1}$ act on the first factor and which on the second in $\tilde f_{i_r} \ldots \tilde f_{i_1}(b_\lambda \tens b_{\mu_i}) = \tilde{\mathbf{f}_1}(b_{\lambda}) \otimes \tilde{\mathbf{f}_2} (b_{\mu_i})$.

Note that in general there are many ways to choose the composition of ascending crystal operators $\tilde e _{i_r} \ldots \tilde e_{i_1}$ that sends $b_{\mu_{i+1}}$ to a highest weight element of $\Cr{B}_S$. We are going to fix the choice of  this composition for each triple $(\lambda, \mu_i, \mu_{i+1})$, such that \begin{itemize}
    \item $\lambda \in \Delta^{i-1}$, 
    \item  $\mu_i, \mu_{i+1} \in P[S]$,
    \item $\lambda + \mu_i, 
    \hspace{1mm}\lambda + \mu_i + \mu_{i+1 } \in P_+$.
\end{itemize} 
Triplets satisfying the conditions above are called \textbf{admissible}. For an admissible triplet $(\lambda, \mu_i, \mu_{i+1})$ we define the \textit{free intervals} of the set $\{1, 2, \ldots n\}$. 

A \textbf{Free interval} denoted by $\operatorname{Fr}$ is a maximal by inclusion subset of $\{1,2, \ldots, n\}$, such that 
\begin{itemize}
    \item $\lambda_{j_1} = \lambda_{j_2}, \hspace{2mm} \forall j_1, j_2 \in \operatorname{Fr}.$
    \item $(\mu_{i})_{j}  \cdot (\mu_{i+1})_j < 0, \hspace{2mm}  \forall j \in \operatorname{Fr}$ 
\end{itemize}

\begin{example}
\label{ex_6.1}
We give an example of an admissible triplet  $(\lambda, \mu_{i}, \mu_{i+1})$  and list the free intervals defined by it. 

\begin{center}
\begin{tabular}{ c | c c c c c c c c c c c c }
 № & 1 & \textbf{\textcolor{red}{2}} & \textbf{\textcolor{red}{3}} & \textbf{\textcolor{red}{4}} & 5 & 6 & \textbf{\textcolor{blue}{7}} & \textbf{\textcolor{blue}{8}} & 9 & \textbf{\textcolor{green}{10}} & \textbf{\textcolor{green}{11}} & \textbf{\textcolor{purple}{12}} \\ 
 \hline
  $\mu_{i+1}$ & $+$  & $-$  &  $-$ &$+$&$-$&$+$&$-$&$-$&$-$&$-$&$+$&$+$ \\  
 \hline
 $\mu_{i}$ & $+$ & $+$ & $+$ & $-$ & $-$ & $+$ & $+$ & $+$ & $-$ & $+$ &$-$ &$-$ \\
\hline
 $\lambda$ & $4$ & $4$&$4$ & $4$ &$4$ &$2$ &$2$ &$2$ &$2$ &$1$ & $1$ &$0$
\end{tabular}
\end{center}
\vspace{2mm}
The list of the free intervals for this triplet of weights: \begin{itemize}
    \item $\operatorname{Fr}^{1} = \{2,3,4\}$,
    \item $\operatorname{Fr}^{2} = \{7,8\}$,
    \item $\operatorname{Fr}^{3} = \{10,11\}$,
    \item $\operatorname{Fr}^{4} = \{12\}$.
\end{itemize} 

Free intervals are highlighted with different colors in the first row of the table.
\end{example}

Evidently, the free intervals defined by an admissible triplet $(\lambda, \mu_i, \mu_{i+1})$  have the following properties: \begin{enumerate}
    \item If two free intervals intersect, then they coincide. 
    \item If $j_1 < j_2$, and $j_1, j_2 \in \operatorname{Fr}$,  then $\forall j_1 \leqslant j \leqslant j_2$, $j \in \operatorname{Fr}$. 
    \item If $j_1, j_2 \in \operatorname{Fr},$ such that $(\mu_{i+1})_{j_1} = -\frac{1}{2}$ and $(\mu_{i+1})_{j_2} = + \frac{1}{2}$, then $j_1 < j_2$
\end{enumerate}

We define the \textbf{positive part} of a free interval, denoted by $\operatorname{Fr}_+ \subset \operatorname{Fr}$, as 

$$\operatorname{Fr}_{+} := \Big\{j \in \operatorname{Fr} \hspace{1mm} | \hspace{1mm} (\mu_{i+1})_j = + \frac{1}{2}\Big\}.$$ 
In the same way we define the \textbf{negative part} of a free interval $\operatorname{Fr}_- \subset \operatorname{Fr}$: 
$$\operatorname{Fr}_{-} := \Big\{j \in \operatorname{Fr} \hspace{1mm} | \hspace{1mm} (\mu_{i+1})_j = - \frac{1}{2}\Big\}.$$
It is clear that $\operatorname{Fr} = \operatorname{Fr}_- \cup \operatorname{Fr}_+ $. We say that a free interval $\operatorname{Fr}$ is \textbf{non-degenerate} if both $\operatorname{Fr}_-, \operatorname{Fr}_+ \neq \emptyset$. Every non-degenerate free interval  is of the form $$\operatorname{Fr} = [\operatorname{Fr}_{min}, \operatorname{Fr}_{mid}] \cup [\operatorname{Fr}_{mid}+ 1, \operatorname{Fr}_{max}],$$
where $\operatorname{Fr}_{min}$$(\text{resp}. \hspace{1mm}\operatorname{Fr}_{max})$ is the minimal (resp. maximal) element in $\operatorname{Fr}$, and $\operatorname{Fr}_{mid} \in \operatorname{Fr}$, such that $$\operatorname{
Fr}_- = [\operatorname{Fr}_{min}, \operatorname{Fr}_{mid}], $$
and 
$$\operatorname{
Fr}_+ = [\operatorname{Fr}_{mid} + 1, \operatorname{Fr}_{max}].$$
For a degenerate free interval $\operatorname{Fr}_{mid}$ is not defined. 

Assume that $\operatorname{Fr}^{1}, \operatorname{Fr}^{2}, \ldots \operatorname{Fr}^{m}$ are the free intervals of the admissible triplet $(\lambda, \mu_i, \mu_{i+1})$.  For clarity we assume that $\operatorname{Fr}^{1}_{min} < \operatorname{Fr}^{2}_{min} < \ldots < \operatorname{Fr}^{m}_{min}$. For each free interval $\operatorname{Fr}^{k}, k \in \{1, 2, \ldots m\}$ we define the  composition of ascending crystal operators $\tilde{\textbf{e}}_{\operatorname{Fr}^{k}}$: 

\begin{gather}
\tilde{ \textbf{e}}_{\operatorname{Fr}^{k}}= \begin{cases} 1, \hspace{1mm} \text{if}  \hspace{1mm} \operatorname{Fr}^{k} \hspace{1mm} \text{is degenerate},   \\ \tilde e_{[\operatorname{Fr}^{k}_{min} +\operatorname{Fr}^{k}_{max} - \operatorname{Fr}^{k}_{mid} -1, \operatorname{Fr}_{max}^{k} - 1]} \ldots \tilde e_{[\operatorname{Fr}^{k}_{min}+1, \operatorname{Fr}^{k}_{mid}+1]}  \tilde e_{[\operatorname{Fr}^{k}_{min}, \operatorname{Fr}^{k}_{mid}]}, \hspace{1mm} \text{otherwise}.
 \end{cases}.  
\end{gather}

Respectively, we define the composition of descending crystal operators $\tilde{\textbf{f}}_{\operatorname{Fr}^{k}}$: 

\begin{gather}
\tilde{ \textbf{f}}_{\operatorname{Fr}^{k}}= \begin{cases} 1, \hspace{1mm} \text{if}  \hspace{1mm} \operatorname{Fr}^{k} \hspace{1mm} \text{is degenerate},   \\ \tilde f_{[\operatorname{Fr}^{k}_{min} +\operatorname{Fr}^{k}_{max} - \operatorname{Fr}^{k}_{mid} -1, \operatorname{Fr}_{max}^{k} - 1]} \ldots \tilde f_{[\operatorname{Fr}^{k}_{min}+1, \operatorname{Fr}^{k}_{mid}+1]}  \tilde f_{[\operatorname{Fr}^{k}_{min}, \operatorname{Fr}^{k}_{mid}]}, \hspace{1mm} \text{otherwise}.
 \end{cases}.  
\end{gather}

Here we used the short notation $\tilde e_{[a,b]} = \tilde e_a \tilde e_{a+1} \ldots \tilde e_{b}$ (resp. $\tilde f_{[a,b]} = \tilde f_a \tilde f_{a+1} \ldots \tilde f_{b})$, where $a < b$ are natural numbers from the set $\{1, 2, \ldots n\}$.  

Notice that \begin{gather}
    \tilde{\textbf{e}}_{\operatorname{Fr}^{m}} \ldots \tilde{\textbf{e}}_{\operatorname{Fr}^{1}} b_{\mu_{i+1}} \neq 0. 
\end{gather}
Indeed, this composition of ascending crystal operators acts on $b_{\mu_{i+1}}$ as follows. For each non-degenerate free interval $\operatorname{Fr}^{k}$  it carries the pluses of $b_{\mu_{i+1}}$ in the coordinates from $\operatorname{Fr}^{k}_+$  through the minuses of $b_{\mu_{i+1}}$ in the coordinates from $\operatorname{Fr}^{k}_{-}$. 
Since  $\lambda + \mu_{i} + \mu_{i+1} \in P_+$, it follows from Corollary \ref{cor_4.1} that  

\begin{gather}
    \tilde{\textbf{f}}_{\operatorname{Fr}^{m}} \ldots \tilde{\textbf{f}}_{\operatorname{Fr}^{1}} (b_{\lambda} \tens  b_{\mu_{i}}) \neq 0
\end{gather}
For any $\tilde f_j$ in the composition we have $j \in [\operatorname{Fr}^{k}_{min}, \operatorname{Fr}^{k}_{max}-1]$ for some $k$. 
By definition of a free interval, $\forall j \in [\operatorname{Fr}^{k}_{min}, \operatorname{Fr}^{k}_{max}-1]$, $\lambda_{j} = \lambda_{j+1}$. As $b_\lambda$ is a highest weight element, one can write 
\begin{gather}
 0 =   \lambda_j - \lambda_{j+1} =    \langle \lambda, \alpha_j^{\vee} \rangle = \varphi_j (b_\lambda) - \varepsilon_j(b_\lambda) = \varphi_j(b_\lambda)
\end{gather}
Therefore, the whole composition $\tilde{\textbf{f}}_{\operatorname{Fr}^{m}} \ldots \tilde{\textbf{f}}_{\operatorname{Fr}^{1}}$ acts on the second factor $b_{\mu_i}$:  
\begin{gather}
    \tilde{\textbf{f}}_{\operatorname{Fr}^{m}} \ldots \tilde{\textbf{f}}_{\operatorname{Fr}^{1}} (b_{\lambda} \tens  b_{\mu_{i}}) = b_\lambda \tens  \tilde{\textbf{f}}_{\operatorname{Fr}^{m}} \ldots \tilde{\textbf{f}}_{\operatorname{Fr}^{1}} b_{\mu_{i}}
\end{gather}
The composition of descending crystal operators $\tilde{\textbf{f}}_{\operatorname{Fr}^{m}} \ldots \tilde{\textbf{f}}_{\operatorname{Fr}^{1}}$ acts on  $b_{\mu_i}$ as follows. For each non-degenerate free interval $\operatorname{Fr}^{k}$ it carries the minuses of $b_{\mu_i}$ in the coordinates from $\operatorname{Fr}^{k}_+$ through the pluses of $b_{\mu_{i}}$ in the coordinates from $\operatorname{Fr}^{k}_{-}$. 

For brevity, we use the following notation. 
\begin{notation}
We denote by $\mu_{i}^{*}$, $\mu_{i+1}^{*}$ the weights of the elements $ \tilde{\textbf{f}}_{\operatorname{Fr}^{m}} \ldots \tilde{\textbf{f}}_{\operatorname{Fr}^{1}} b_{\mu_{i}}$,  $\tilde{\textbf{e}}_{\operatorname{Fr}^{m}} \ldots \tilde{\textbf{e}}_{\operatorname{Fr}^{1}} b_{\mu_{i+1}}$ respectively. Equivalently, one can write: 
\begin{gather} \label{mu_i_star}b_{\mu_i^{*}} := \tilde{\textbf{f}}_{\operatorname{Fr}^{m}} \ldots \tilde{\textbf{f}}_{\operatorname{Fr}^{1}} b_{\mu_{i}}, \\ \label{mu_i+1_star}b_{\mu_{i+1}^{*}} := \tilde{\textbf{e}}_{\operatorname{Fr}^{m}} \ldots \tilde{\textbf{e}}_{\operatorname{Fr}^{1}} b_{\mu_{i+1}}.\end{gather}
\end{notation}
\begin{example}
  \label{ex_6.2}
    Let $(\lambda, \mu_i, \mu_{i+1})$ be the admissible triplet from Example 
    \ref{ex_6.1}. Then, the triplet $(\lambda, \mu_i^{*}, \mu_{i+1}^{*})$ equals 

\center{
\begin{tabular}{ c | c c c c c c c c c c c c }
 № & 1 & \textbf{\textcolor{red}{2}} & \textbf{\textcolor{red}{3}} & \textbf{\textcolor{red}{4}} & 5 & 6 & \textbf{\textcolor{blue}{7}} & \textbf{\textcolor{blue}{8}} & 9 & \textbf{\textcolor{green}{10}} & \textbf{\textcolor{green}{11}} & \textbf{\textcolor{purple}{12}} \\ 
 \hline
  $\mu_{i+1}^{*}$ & $+$  & $+$  &  $-$ & $-$ & $-$ & $+$ & $-$ & $-$ & $-$ & $+$ & $-$ & $+$ \\  
 \hline
 $\mu_{i}^{*}$ & $+$ & $-$ & $+$ & $+$ & $-$ & $+$ & $+$ & $+$ & $-$ & $-$ & $+$ & $-$ \\
\hline
 $\lambda$ & $4$ & $4$&$4$ & $4$ &$4$ &$2$ &$2$ &$2$ &$2$ &$1$ & $1$ &$0$
\end{tabular}}
    
\end{example}

\begin{proposition}
\label{prop_6.4}
If $(\lambda, \mu_i, \mu_{i+1})$ is an admissible triplet and  $\mu_{i}^{*}$, $\mu_{i+1}^{*}$ are the weights defined above, then for any $j \in \{1, 2, \ldots n-1\}$
    \begin{gather} \label{eq_33}
        \langle \lambda + \mu_{i+1}^{*}, \alpha_j^{\vee}\rangle \geqslant 0.
    \end{gather}
\end{proposition}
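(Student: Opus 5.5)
The plan is to reduce \eqref{eq_33} to a local statement about two adjacent coordinates $j, j+1$. Since $\langle \lambda + \mu_{i+1}^{*}, \alpha_j^{\vee}\rangle = (\lambda_j - \lambda_{j+1}) + \big((\mu_{i+1}^{*})_j - (\mu_{i+1}^{*})_{j+1}\big)$ for $j \leqslant n-1$, and the second bracket lies in $\{-1,0,1\}$, the inequality can only fail in one situation. That situation is $\lambda_j = \lambda_{j+1}$, $(\mu_{i+1}^{*})_j = -\frac12$ and $(\mu_{i+1}^{*})_{j+1} = +\frac12$. Indeed, $\lambda \in P_+$ gives $\lambda_j \geqslant \lambda_{j+1}$, and $\lambda_j - \lambda_{j+1} \in \mathbb{Z}$. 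So I would assume this configuration and derive a contradiction.

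First I make $\mu_{i+1}^{*}$ explicit from the description of $\tilde{\textbf{e}}_{\operatorname{Fr}^{m}} \ldots \tilde{\textbf{e}}_{\operatorname{Fr}^{1}}$ given before the statement. Outside the free intervals, and on degenerate free intervals, $\mu_{i+1}^{*}$ coincides with $\mu_{i+1}$. On a non-degenerate free interval the sign pattern of $\mu_{i+1}$ is $-\cdots-+\cdots+$ (by property 3 of free intervals), and it becomes $+\cdots+-\cdots-$, since the pluses are carried through the minuses. This is consistent with Example \ref{ex_6.2}. Consequently $\mu_{i+1}^{*}$ never has a pattern $(-,+)$ at two adjacent positions inside one free interval. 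This disposes of the case where $j$ and $j+1$ lie in the same free interval.

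Second, I observe that if $\lambda_j = \lambda_{j+1}$ and both $j$ and $j+1$ are free, they lie in the same free interval. The union of the two intervals still satisfies both defining conditions, so maximality forces them to coincide. Hence at least one of $j, j+1$ is not free, and at a non-free position $k$ we have $(\mu_{i+1}^{*})_k = (\mu_{i+1})_k$ and $(\mu_i)_k = (\mu_{i+1})_k$. Both equalities come from the failure of the second free condition at $k$, which is forced once $\lambda$ is locally constant.

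Now I use admissibility. With $\lambda_j = \lambda_{j+1}$, the conditions $\lambda + \mu_i \in P_+$ and $\lambda + \mu_i + \mu_{i+1} \in P_+$ give $(\mu_i)_j \geqslant (\mu_i)_{j+1}$ and $(\mu_i + \mu_{i+1})_j \geqslant (\mu_i+\mu_{i+1})_{j+1}$.
\begin{itemize}
\item If $j+1$ is not free, then $(\mu_{i+1})_{j+1} = (\mu_i)_{j+1} = +\frac12$. The first inequality forces $(\mu_i)_j = +\frac12$, and then the second forces $(\mu_{i+1})_j = +\frac12$. So $j$ is not free either, and $(\mu_{i+1}^{*})_j = +\frac12$, a contradiction.
\item If $j$ is not free, then $(\mu_{i+1})_j = (\mu_i)_j = -\frac12$. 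The first inequality forces $(\mu_i)_{j+1} = -\frac12$, and the second forces $(\mu_{i+1})_{j+1} = -\frac12$. So $j+1$ is not free, and $(\mu_{i+1}^{*})_{j+1} = -\frac12$, a contradiction.
\end{itemize}

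The only step needing care is the first one: justifying precisely that $\tilde{\textbf{e}}_{\operatorname{Fr}}$ rearranges $-^{a}+^{b}$ into $+^{b}-^{a}$ on each non-degenerate interval and changes nothing elsewhere. I would verify this by following the operators $\tilde e_{[\,\cdot,\cdot\,]}$ one block at a time, each of which moves one plus leftward across the block of minuses, using Definition \ref{def_3.2}. The indices involved are at most $n-1$, so $\tilde e_n$ never appears.
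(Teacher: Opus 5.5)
Your proof is correct and follows essentially the paper's argument. You reduce to $\lambda_j=\lambda_{j+1}$, use that $\mu_{i+1}^{*}$ puts its pluses before its minuses on each free interval, and handle non-free positions via $(\mu_i)_k=(\mu_{i+1})_k$ together with admissibility. Your version is also slightly more complete than the paper's: you justify the mixed free/non-free case via $\lambda+\mu_i+\mu_{i+1}\in P_+$, where the paper simply asserts $(\mu_{i+1}^{*})_j=\tfrac12$, and you explain via maximality why two free positions with $\lambda_j=\lambda_{j+1}$ lie in the same free interval.
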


\begin{proof}
As $j < n$, by definition of a co-root we have $$\langle \lambda + \mu_{i+1}^{*}, \alpha_j^{\vee}\rangle  =  \lambda_{j} - \lambda_{j+1} + (\mu_{i+1}^{*})_j - (\mu_{i+1}^{*})_{j+1}$$
If $\lambda_{j} > \lambda_{j+1}$, then $\lambda_j - \lambda_{j+1} \geqslant 1$ and since $(\mu_{i+1}^{*})_j,  (\mu_{i+1}^{*})_{j+1} \in \{ \pm \frac{1}{2}\}$ we conclude that $$\langle \lambda + \mu_{i+1}^{*}, \alpha_j^{\vee}\rangle  =  \lambda_{j} - \lambda_{j+1} + (\mu_{i+1}^{*})_j - (\mu_{i+1}^{*})_{j+1} \geqslant 0.$$
Assume that $\lambda_j = 
\lambda_{j+1}$. Firstly, if both $j$ and $j+1$ lie in some free interval $\operatorname{Fr}^{k}$, then $(\mu_{i+1}^{*})_j \geqslant (\mu_{i+1}^{*})_{j+1}$ as in any free interval all positive coordinates of $\mu_{i+1}^{*}$ go before negative coordinates and our inequality \ref{eq_33} holds.  Secondly, if both $j$ and $j+1$ do not belong to any free interval, then $(\mu_{i+1}^{*})_j = (\mu_{i+1})_j$ and $(\mu_{i+1}^{*})_{j+1} = (\mu_{i+1})_{j+1}$. As $\lambda + \mu_i \in P_+$, it follows that $$ \langle \lambda + \mu_{i+1}^{*}, \alpha_j^{\vee}\rangle=  \lambda_{j} - \lambda_{j+1} + (\mu_{i+1})_j - (\mu_{i+1})_{j+1} = \langle \lambda + \mu_{i+1}, \alpha_j^{\vee}\rangle  \geqslant 0.$$
Finally, if $j$ doesn't belong to any free interval and $j+1 \in \operatorname{Fr}^{k}$, then $(\mu_{i+1}^{*})_j = \frac{1}{2}$ and the inequality \ref{eq_33} holds. Similarly, if $j+1$ doesn't belong to any free interval and $j$ does, then $(\mu_{i+1}^{*})_{j+1} = - \frac{1}{2}$ and the inequality \ref{eq_33} still holds. 
\end{proof}

We introduce the following classification of admissible triplets $(\lambda, \mu_i, \mu_{i+1})$. 

\begin{definition}
\label{def_6.2}
    Let $(\lambda, \mu_i, \mu_{i+1})$ be an admissible triplet and $\mu_{i}^{*}$, $\mu_{i+1}^{*}$ be the corresponding weights defined above. 
    We say that the admissible triplet $(\lambda, \mu_i, \mu_{i+1})$ is of the \begin{itemize}
    \item \textbf{type $0$}, if $\langle \lambda + \mu_{i+1}^{*}, \alpha_n^{\vee}\rangle \geqslant 0$. 
    \item \textbf{type $1$}, if $\langle \lambda + \mu_{i+1}^{*}, \alpha_n^{\vee}\rangle < 0$ and $n, n-1$ lie in different free intervals for the triplet $(\lambda, \mu_i, \mu_{i+1})$.
    \item\textbf{type $2$}, if $\langle \lambda + \mu_{i+1}^{*}, \alpha_n^{\vee}\rangle < 0$ and $n, n-1$ lie in the same free interval for the triplet $(\lambda, \mu_i, \mu_{i+1})$.
     \end{itemize}
\end{definition}

It is worth explaining why any admissible triplet is either of the type $0$, $1$ or $2$. 
Assume that $\langle \lambda + \mu_{i+1}^{*}, \alpha_n^{\vee}\rangle < 0$, otherwise $(\lambda, \mu_i, \mu_{i+1})$ is of the type $0$ . If  $n$ doesn't belong to any free interval defined by the triplet $(\lambda, \mu_i, \mu_{i+1})$, then $$(\mu_{i})_n = (\mu_{i+1})_n = (\mu_{i}^{*})_n = (\mu_{i+1}^{*})_n = -\frac{1}{2},$$
because if $(\mu_i^{*})_n = (\mu_{i+1}^{*})_n = \frac{1}{2}$, then $\langle \lambda + \mu_{i+1}^{*}, \alpha_n^{\vee} \rangle = \langle \lambda, \alpha_n^{\vee}\rangle + (\mu_{i+1}^{*})_{n-1} + (\mu_{i+1}^{*})_n \geqslant 0,$ which is a contradiction.
If $n$ does belong to some free interval of the triplet $(\lambda, \mu_i, \mu_{i+1})$, then evidently $$(\mu_{i}^{*})_{n} + (\mu_{i+1}^{*})_n = 0.$$
The same can be said about  $n-1$. Hence, if $n$ or $n-1$ (or both) doesn't belong to any free interval then $$\langle \mu_{i}^{*} + \mu_{i+1}^{*}, \alpha_n^{\vee} \rangle \leqslant -1. $$
We know that $\mu_i + \mu_{i+1} = \mu_{i}^{*} + \mu_{i+1}^{*}$, so it follows  $$\langle \lambda + \mu_{i}^{*} + \mu_{i+1}^{*}, \alpha_n^{\vee} \rangle \geqslant 0$$ and consequently
$$\langle \lambda, \alpha_n^{\vee}\rangle \geqslant 1$$
But, then  $$\langle \lambda + \mu_{i+1}^{*}, \alpha^{\vee}_n \rangle  = \langle \lambda, \alpha_n^{\vee} \rangle + (\mu_{i+1}^{*})_{n-1} + (\mu_{i+1}^{*})_n \geqslant 0,$$
which is a contradiction. 

So, both $n-1$ and $n$ lie in some free intervals of the triplet $(\lambda, \mu_i, \mu_{i+1})$, so $(\lambda, \mu_i, \mu_{i+1})$ is either of the type $1$ or $2$.

We deal with admissible triplets of the type $0$ first.

\begin{corollary}
\label{cor_6.1}
Let  $ (\lambda, \mu_i, \mu_{i+1})$ be an admissible triplet.  Then $(\lambda, \mu_i, \mu_{i+1})$ is of the type $0$ iff $(\lambda, \mu_{i+1}^{*}, \mu_{i}^{*})$ is admissible.
\end{corollary}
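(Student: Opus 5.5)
Admissibility of $(\lambda, \mu_{i+1}^{*}, \mu_{i}^{*})$ means three things: $\lambda \in \Delta^{i-1}$, which holds trivially; $\lambda + \mu_{i+1}^{*} \in P_+$; and $\lambda + \mu_{i+1}^{*} + \mu_i^{*} \in P_+$. The membership $\mu_i^{*}, \mu_{i+1}^{*} \in P[S]$ is automatic, since these are weights of elements of $\Cr{B}_S$. So the plan is to check the two dominance conditions separately and compare them with the definition of type $0$.

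\emph{Step 1 (third condition, always true).} The operators $\tilde{\textbf{f}}_{\operatorname{Fr}^{k}}$ and $\tilde{\textbf{e}}_{\operatorname{Fr}^{k}}$ consist of the same indices, so the weight lost by $b_{\mu_i}$ equals the weight gained by $b_{\mu_{i+1}}$. Coordinatewise, inside each free interval the two weights have opposite signs at every position, and the operators only move signs within that interval. Either way we get $\mu_i^{*} + \mu_{i+1}^{*} = \mu_i + \mu_{i+1}$, which the paper already uses. Hence $\lambda + \mu_{i+1}^{*} + \mu_i^{*} = \lambda + \mu_i + \mu_{i+1} \in P_+$ by admissibility of the original triplet. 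This condition therefore holds independently of the type.

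\emph{Step 2 (second condition).} The weight $\lambda + \mu_{i+1}^{*}$ lies in $P_+$ iff $\langle \lambda + \mu_{i+1}^{*}, \alpha_j^\vee\rangle \geqslant 0$ for all $j \in \{1,\ldots,n\}$. Proposition \ref{prop_6.4} gives this for every $j \leqslant n-1$. Integrality is automatic: $\lambda \in P$ and $\mu_{i+1}^{*} \in P[S]$, and one checks that $\lambda + \mu_{i+1}^{*} \in P$ because all coordinates are half-integers with integral differences. So dominance reduces exactly to $\langle \lambda + \mu_{i+1}^{*}, \alpha_n^\vee\rangle \geqslant 0$, which is the definition of type $0$ (Definition \ref{def_6.2}).

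Combining the two steps: if the triplet is of type $0$, both dominance conditions hold, so $(\lambda,\mu_{i+1}^{*},\mu_i^{*})$ is admissible. Conversely, admissibility forces $\lambda+\mu_{i+1}^{*}\in P_+$, and in particular the $\alpha_n^\vee$ pairing is nonnegative, which is type $0$.

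The only point needing care is the identity $\mu_i^{*}+\mu_{i+1}^{*}=\mu_i+\mu_{i+1}$. I would verify it via the explicit description of the operators. Within a non-degenerate free interval, $\tilde{\textbf{e}}$ moves the pluses of $\mu_{i+1}$ from $\operatorname{Fr}_+$ to the front, and $\tilde{\textbf{f}}$ moves the minuses of $\mu_i$ from $\operatorname{Fr}_+$ to the front. At every position of the interval the two resulting signs are still opposite, so the sum there is zero both before and after. Outside free intervals nothing changes.
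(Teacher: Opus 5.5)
Your proof is correct and is essentially the paper's argument. Proposition~\ref{prop_6.4} together with Definition~\ref{def_6.2} reduces type $0$ to $\lambda+\mu_{i+1}^{*}\in P_+$, and the identity $\mu_i^{*}+\mu_{i+1}^{*}=\mu_i+\mu_{i+1}$ makes the remaining admissibility condition automatic; the paper states this identity without proof, while you verify it explicitly, coordinatewise within the free intervals.
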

\begin{proof}
 It is immediate from Definition \ref{def_6.2} and Proposition \ref{prop_6.4} that $(\lambda, \mu_i, \mu_{i+1})$ is of the type $0$ iff $\lambda + \mu_{i+1}^{*} \in P_+$. Since $\mu_{i}^{*} + \mu_{i+1}^{*} = \mu_{i} + \mu_{i+1}$, it follows that $\lambda + \mu_{i}^{*} + \mu_{i+1}^{*} \in P_+$. Hence, $(\lambda, \mu_{i}, \mu_{i+1})$ is of the type $0$ iff the triplet $(\lambda, \mu_{i+1}^{*}, \mu_{i}^{*})$ is admissible. 
\end{proof}
\begin{corollary}
\label{cor_6.2}
Let $b_{\mu_1} \tens\ldots \tens b_{\mu_i} \tens b_{\mu_{i+1}} \tens \ldots \tens b_{\mu_N}$ be an element in $\operatorname{Sing}(\Cr{B}_S^{\tens N})$,  such that the (admissible) triplet  $(\lambda, \mu_i, \mu_{i+1})$, where $\lambda := \sum_{j=1}^{i-1} \mu_j$, is of the type $0$. Then 
    \begin{gather}
        t_i^{res}(b_{\mu_1} \tens\ldots \tens b_{\mu_i} \tens b_{\mu_{i+1}} \tens \ldots b_{\mu_N})= b_{\mu_1} \tens\ldots \tens b_{\mu_{i+1}^{*}} \tens b_{\mu_{i}^{*}} \tens \ldots b_{\mu_N}
    \end{gather}      
\end{corollary}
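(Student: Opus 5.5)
By the proof of Proposition \ref{prop_6.3}, $t_i^{res}$ only changes the $i$-th and $(i+1)$-th factors. Concretely, $t_i^{res}$ replaces $b_{\mu_{i+1}}$ by $b_{\tilde\mu_{i+1}} = \tilde{\mathbf{f}}_2(b_{\mu_i})$. Here $\tilde{\mathbf{f}}_2$ is the part of $\tilde f_{i_r}\ldots \tilde f_{i_1}$ that acts on the second factor of $b_\lambda \tens b_{\mu_i}$, and $\tilde e_{i_r}\ldots \tilde e_{i_1}$ is any composition sending $b_{\mu_{i+1}}$ to the highest weight element $b_{\Lambda(\mu_{i+1})}$. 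The new $i$-th weight is then forced: $\tilde\mu_i = \mu_i+\mu_{i+1}-\tilde\mu_{i+1}$, since $t_i^{res}$ preserves weights. So it suffices to show $\tilde\mu_{i+1} = \mu_i^*$. Then $\tilde\mu_i = \mu_{i+1}^*$ follows, using $\mu_i+\mu_{i+1} = \mu_i^*+\mu_{i+1}^*$.

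I will choose the composition of ascending operators in two stages. First apply $\tilde{\mathbf{e}}_{\operatorname{Fr}^m}\ldots\tilde{\mathbf{e}}_{\operatorname{Fr}^1}$, which sends $b_{\mu_{i+1}}$ to $b_{\mu_{i+1}^*}$. Then apply a further composition $\tilde{\mathbf{e}}' = \tilde e_{j_s}\ldots \tilde e_{j_1}$ sending $b_{\mu_{i+1}^*}$ to the highest weight element of its component. Theorem \ref{th_4.1} (via the argument of Proposition \ref{prop_6.3}) then gives
$$\sigma_{1,i,i+1}(b_{\mu_1}\tens\ldots\tens b_{\mu_{i+1}}) = \tilde{\mathbf{e}}'\, b_{\mu_{i+1}^*} \tens \tilde f_{j_s}\ldots\tilde f_{j_1}\,\tilde{\mathbf{f}}_{\operatorname{Fr}^m}\ldots\tilde{\mathbf{f}}_{\operatorname{Fr}^1}(b_\lambda\tens b_{\mu_i}).$$
The discussion preceding Proposition \ref{prop_6.4} already shows that the free-interval part acts entirely on the second factor, giving $b_\lambda \tens b_{\mu_i^*}$.

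It remains to check that the remaining operators $\tilde f_{j_s}\ldots\tilde f_{j_1}$ all act on the first factor, i.e. $\tilde f_{j_s}\ldots\tilde f_{j_1}(b_\lambda\tens b_{\mu_i^*}) = \tilde f_{j_s}\ldots\tilde f_{j_1} b_\lambda \tens b_{\mu_i^*}$. For this I apply Proposition \ref{prop_4.4} with $b=b_\lambda$, $b_\mu = b_{\mu_i^*}$, $b_\nu = b_{\mu_{i+1}^*}$. Its two hypotheses hold as follows.
\begin{itemize}
\item $\lambda+\mu_{i+1}^*\in P_+$: this is exactly the type $0$ condition combined with Proposition \ref{prop_6.4}, as recorded in Corollary \ref{cor_6.1}.
\item $\lambda+\mu_i^*+\mu_{i+1}^*\in P_+$: this holds because $\mu_i^*+\mu_{i+1}^* = \mu_i+\mu_{i+1}$ and the triplet is admissible.
\end{itemize}
Since $\tilde e_{j_s}\ldots\tilde e_{j_1}b_{\mu_{i+1}^*}\neq 0$, Proposition \ref{prop_4.4} yields the claim. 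Hence $\tilde{\mathbf{f}}_2(b_{\mu_i}) = b_{\mu_i^*}$, so $\tilde\mu_{i+1}=\mu_i^*$ and $\tilde\mu_i=\mu_{i+1}^*$.

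The main obstacle is bookkeeping rather than substance. One point is that the composition in Theorem \ref{th_4.1} may be chosen arbitrarily, which is justified since the highest weight element of a given weight in $\Cr{B}_S\tens\Cr{B}_{\lambda+\mu_i}$ is unique. The other is identifying which factor each operator acts on after the two stages are concatenated, which is where the type $0$ hypothesis enters. Consistency is automatic: Corollary \ref{cor_6.1} shows $(\lambda,\mu_{i+1}^*,\mu_i^*)$ is admissible, so the resulting element indeed lies in $\operatorname{Sing}_\lambda$, as it must.
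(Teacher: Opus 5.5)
Your proposal is correct and follows essentially the same route as the paper. You extend $\tilde{\mathbf{e}}_{\operatorname{Fr}^m}\ldots\tilde{\mathbf{e}}_{\operatorname{Fr}^1}$ to a path up to a highest weight element, use Theorem~\ref{th_4.1} to write $\sigma_{1,i,i+1}$ of the element, note that the free-interval operators act on $b_{\mu_i}$, push the remaining operators onto $b_\lambda$ by Proposition~\ref{prop_4.4} using the type $0$ condition $\lambda+\mu_{i+1}^*\in P_+$, and recover the $i$-th factor from weight preservation; you also correctly cite Theorem~\ref{th_4.1} where the paper's proof has a misreference.
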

\begin{proof}
It follows from Proposition \ref{prop_6.3} that $t_i^{res}$ changes only $i$-th and $i+1$-th factors, preserving the sum of their weights. It follows from the proof of Proposition \ref{prop_6.3}  that the $i+1$-th factor of $t_i^{res}(b_{\mu_1} \tens\ldots \tens b_{\mu_i} \tens b_{\mu_{i+1}} \tens \ldots b_{\mu_N})$ is equal to $\tilde {\textbf{f}}_2(b_{\mu_i})$, which is equal to  the $i+1$-th factor of $\sigma_{1,i,i+1}(b_{\mu_{1}} \tens  \ldots \tens b_{\mu_{i+1}})$. So, the $i+1$-th factor of $t_i^{res}(b_{\mu_1} \tens\ldots \tens b_{\mu_i} \tens b_{\mu_{i+1}} \tens \ldots b_{\mu_N})$ is equal to the $i+1$-th factor of the unique highest weight element of highest weight $\sum_{j=1}^{i+1} \mu_j = \lambda + \mu_i + \mu_{i+1}$ in $\Cr{B}_S \tens \Cr{B}_{\mu_1 + \ldots \mu_{i}}$. By Theorem \ref{th_5.1} this highest weight element can be written as $$\tilde e_{i_r} \ldots \tilde e_{i_1}\tilde{\textbf{e}}_{\operatorname{Fr}^{m}} \ldots \tilde{\textbf{e}}_{\operatorname{Fr}^{1}} b_{\mu_{i+1}} \tens\tilde f_{i_r}\ldots \tilde f_{i_1}\tilde{\textbf{f}}_{\operatorname{Fr}^{m}} \ldots \tilde{\textbf{f}}_{\operatorname{Fr}^{1}}(b_{\lambda} \tens b_{\mu_{i}}), $$ where $b_\lambda = b_{\mu_1} \tens \ldots \tens b_{\mu_{i-1}}$ and $\tilde e_{i_r} \ldots \tilde e_{i_1}\tilde{\textbf{e}}_{\operatorname{Fr}^{m}} \ldots \tilde{\textbf{e}}_{\operatorname{Fr}^{1}} b_{\mu_{i+1}}$ is a highest weight element in $\Cr{B}_S$ .  We can rewrite it, using definitions of $b_{\mu_{i}^{*}}$ and $b_{\mu_{i+1}^{*}}$ as 

$$\tilde e_{i_r} \ldots \tilde e_{i_1} b_{\mu_{i+1}^{*}} \tens\tilde f_{i_r}\ldots \tilde f_{i_1}(b_{\lambda} \tens b_{\mu_{i}^{*}}), $$
Since $(\lambda, \mu_{i}, \mu_{i+1})$ is of the type $0$, meaning that $\lambda + \mu_{i+1}^{*} \in P_+$, and  $\tilde e_{i_r} \ldots \tilde e_{i_1} b_{\mu_{i+1}^{*}} \neq 0$, we obtain from Proposition \ref{prop_4.4} that 
$$\tilde e_{i_r} \ldots \tilde e_{i_1} b_{\mu_{i+1}^{*}} \tens\tilde f_{i_r}\ldots \tilde f_{i_1}(b_{\lambda} \tens b_{\mu_{i}^{*}}) = \tilde e_{i_r} \ldots \tilde e_{i_1} b_{\mu_{i+1}^{*}} \tens\tilde f_{i_r}\ldots \tilde f_{i_1}b_{\lambda} \tens b_{\mu_{i}^{*}}.$$
We conclude that $i+1$-th factor of $t_i^{res}(b_{\mu_1} \tens\ldots \tens b_{\mu_i} \tens b_{\mu_{i+1}} \tens \ldots b_{\mu_N})$ is equal to $b_{\mu_{i}^{*}}$. Since $t_i^{res}$ preserves the weight of the element, it follows that the weight of the $i$-th factor of the element  $t_i^{res}(b_{\mu_1} \tens\ldots \tens b_{\mu_i} \tens b_{\mu_{i+1}} \tens \ldots b_{\mu_N})$ equals $\mu_i + \mu_{i+1} - \mu_{i}^{*} = \mu_{i+1}^{*}$.  Therefore, 
\begin{gather}
        t_i^{res}(b_{\mu_1} \tens\ldots \tens b_{\mu_i} \tens b_{\mu_{i+1}} \tens \ldots b_{\mu_N})= b_{\mu_1} \tens\ldots \tens b_{\mu_{i+1}^{*}} \tens b_{\mu_{i}^{*}} \tens \ldots b_{\mu_N}
    \end{gather}  and we are done. 
\end{proof}

Now we want to calculate $t_i^{res}$ on the elements $b_{\mu_1} \tens\ldots \tens b_{\mu_i} \tens b_{\mu_{i+1}} \tens \ldots \tens b_{\mu_N} \in \operatorname{Sing}(\Cr{B}_S^{\tens N})$ for which the (admissible) triplet  $(\lambda, \mu_i, \mu_{i+1})$, where $\lambda := \sum_{j=1}^{i-1} \mu_j$, is of the type $1$ or $2$.  Our whole argument for the type $0$  triplets was built around the fact that $\lambda + \mu_{i+1}^{*} \in P_+$. For triplets of type $1$ and $2$ that is not the case, since $\langle \lambda + \mu_{i+1}^{*} , \alpha_n^{\vee}\rangle < 0$ by definition of the types. So, we would like to act on $b_{\mu_{i+1}^{*}}$ by some certain extra composition (depending on the type) of ascending crystal operators, so that the resulting weight of the image of $b_{\mu_{i+1}^{*}}$ plus $\lambda$ lies in $P_+$ and the respective composition of descending operators acts on the second factor in $b_\lambda \tens b_{\mu_i^{*}}$. For each type we will explicitly construct these compositions of ascending/descending crystal operators. 

\begin{remark}
\label{remark_6.1}
By definition of a co-root:$$\langle \lambda + \mu_{i+1}^{*} , \alpha_n^{\vee}\rangle  = \lambda_{n-1} + \lambda_{n}  + (\mu_{i+1}^{*})_{n-1} + (\mu_{i+1}^{*})_n.$$ Since $\lambda \in P_+$, $$\langle  \lambda, \alpha_{n}^{\vee} \rangle= \lambda_{n-1} + \lambda_n \geqslant 0.$$
So, it follows that $\langle \lambda + \mu_{i+1}^{*} , \alpha_n^{\vee}\rangle  < 0 $ iff $\lambda_{n-1} + \lambda_{n}  = 0$ and $(\mu_{i+1}^{*})_{n-1} =  (\mu_{i+1}^{*})_n = - \frac{1}{2}$. 

Then, it is clear that for the type $1$ triplets, $\{n\}$ must be a (degenerate) free interval with only negative part, as $(\mu_{i})_n = (\mu_{i+1}^{*})_{n} = -\frac{1}{2}.$ The free interval  containing $n-1$ must have non-empty negative part, because $(\mu_{i+1}^{*})_{n-1} = - \frac{1}{2}$.

For the type $2$ triplets $\lambda_n = \lambda_{n-1} = 0$, because $n$ and $n-1$ belong to the same free interval. Moreover, the negative part of the last free interval (which contains $n-1$ and $n$) must have cardinality at least $2$ because $(\mu_{i+1}^{*})_{n} = (\mu_{i+1}^{*})_{n-1} = - \frac{1}{2}$, and the positive part must have no more than $1$ element, because $\langle \lambda + \mu_i, \alpha^{\vee}_n\rangle = \lambda_{n-1} + \lambda_n + (\mu_{i})_{n-1} + (\mu_i)_n =  (\mu_{i})_{n-1} + (\mu_i)_n \geqslant 0 $.

\end{remark}

We start with type $1$ triplets. For the reference we give a concrete example of a type $1$ triplet.

\begin{example}
\label{ex_6.3}
Let $(\lambda, \mu_{i},\mu_{i+1})$ be an admissible triplet, defined by the table below.

\begin{center}
\begin{tabular}{ c | c c c c c c c c c c c c }
 № & \textbf{\textcolor{red}{1}} & \textbf{\textcolor{red}{2}} & \textbf{\textcolor{red}{3}} & \textbf{\textcolor{red}{4}} & 5 & \textbf{\textcolor{blue}{6}} & \textbf{\textcolor{blue}{7}} & \textbf{\textcolor{blue}{8}} & \textbf{\textcolor{blue}{9}} & \textbf{\textcolor{blue}{10}} & \textbf{\textcolor{blue}{11}} & \textbf{\textcolor{green}{12}} \\ 
 \hline
  $\mu_{i+1}$ & $-$  & $+$  &  $+$ & $+$ & $-$ & $-$ & $-$ & $-$ & $-$ & $+$ & $+$ & $-$ \\  
 \hline
 $\mu_{i}$ & $+$ & $-$ & $-$ & $-$ & $-$ & $+$ & $+$ & $+$ & $+$ & $-$ & $-$ & $+$ \\
\hline
 $\lambda$ & $2$ & $2$&$2$ & $2$ &$2$ &$1$ &$1$ &$1$ &$1$ &$1$ & $1$ &$-1$
\end{tabular}
\end{center}
\vspace{2mm} 
The respective weights  ${\mu}_{i}^{*}$, ${\mu}_{i+1}^{*}$ are written in the table below: 
\vspace{1mm} 
\begin{center}
\begin{tabular}{ c | c c c c c c c c c c c c }
 № & \textbf{\textcolor{red}{1}} & \textbf{\textcolor{red}{2}} & \textbf{\textcolor{red}{3}} & \textbf{\textcolor{red}{4}} & 5 & \textbf{\textcolor{blue}{6}} & \textbf{\textcolor{blue}{7}} & \textbf{\textcolor{blue}{8}} & \textbf{\textcolor{blue}{9}} & \textbf{\textcolor{blue}{10}} & \textbf{\textcolor{blue}{11}} & \textbf{\textcolor{green}{12}} \\ 
 \hline
  $\mu_{i+1}^{*}$ & $+$  & $+$  &  $+$ & $-$ & $-$ & $+$ & $+$ & $-$ & $-$ & $-$ & $-$ & $-$ \\  
 \hline
 $\mu_{i}^{*}$ & $-$ & $-$ & $-$ & $+$ & $-$ & $-$ & $-$ & $+$ & $+$ & $+$ & $+$ & $+$ \\
\hline
 $\lambda$ & $2$ & $2$&$2$ & $2$ &$2$ &$1$ &$1$ &$1$ &$1$ &$1$ & $1$ &$-1$
\end{tabular}
\end{center}
\vspace{2mm}
It is evident that $(\mu_{i+1}^{*})_{12} + (\mu_{i+1}^{*})_{11} + \lambda_{12} + \lambda_{11}  =  -1 < 0.$
The triplet $(\lambda, \mu_i, \mu_{i+1})$ is of the type $1$, since $\operatorname{Fr}^{3}= \{12\}$ and $\operatorname{Fr}^{2}= \{6,7,8,9, 10, 11\}$. As was observed in Remark\ref{remark_6.1}: $\operatorname{Fr}^{3} = \operatorname{Fr}^{3}_-$ and $\operatorname{Fr}^{2}_- \neq \emptyset$.
\end{example}

Let $(\lambda, \mu_i, \mu_{i+1})$ be an admissible triplet of type $1$. We denote by $\operatorname{Fr}^{1}, \ldots, \operatorname{Fr}^{m}$the free intervals defined by the triplet $(\lambda, \mu_i, \mu_{i+1})$. Just as before, we assume that $\operatorname{Fr}^{1}_{min} < \ldots < \operatorname{Fr}^{m}_{min}$. Since the triplet is of type $1$ we have $\operatorname{Fr}^{m} = \operatorname{Fr}_{-}^{m} = \{n\}$ and $n-1 \in \operatorname{Fr}^{m-1}$. Set $a := |\operatorname{Fr}^{m-1}_{-}| > 0$. Define the following composition of crystal ascending operators: \begin{gather} \tilde{\textbf{e}}_{\operatorname{type}_1} = \begin{cases}(\tilde e_{n-a}\ldots  \tilde e_{n-2})\tilde e_n, \hspace{1mm} \text{if} \hspace{1mm} a> 1, \\  \tilde e_n, \hspace{1mm} \text{otherwise}.\end{cases}\end{gather} Respectively, we define \begin{gather} \tilde{\textbf{f}}_{\operatorname{type}_1} = \begin{cases}(\tilde f_{n-a}\ldots  \tilde f_{n-2})\tilde f_n, \hspace{1mm} \text{if} \hspace{1mm} a> 1, \\  \tilde f_n, \hspace{1mm} \text{otherwise}.\end{cases}\end{gather}

Notice that $\tilde{\textbf{e}}_{\operatorname{type}_1}b_{\mu_{i+1}^{*}} \neq 0$. Indeed, $\tilde{\textbf{e}}_{\operatorname{type}_1}$ acts on $b_{\mu_{i+1}^{*}}$ by changing the minuses in the coordinates $n$ and $n-a$  to pluses. Similarly, $\tilde{\textbf{f}}_{\operatorname{type}_1}$ acts on $b_{\mu_{i}^{*}}$ by changing the pluses in the coordinates $n$ and $n-a$ to minuses.  
Since $$\operatorname{wt}(b_\lambda \tens b_{\mu_{i}^{*}}) + \operatorname{wt}(b_{\mu_{i+1}^{*}}) = \lambda + \mu_i^{*} + \mu_{i+1}^{*} = \lambda + \mu_i + \mu_{i+1} \in P_+,$$
and $\tilde{\textbf{e}}_{\operatorname{type}_1}b_{\mu_{i+1}^{*}} \neq 0$, Corollary \ref{cor_4.1} implies that $\tilde{\textbf{f}}_{\operatorname{type}_1}(b_\lambda \tens b_{\mu_{i}^{*}}) \neq 0$. 
Then, it follows that \begin{gather} \tilde{\textbf{f}}_{\operatorname{type}_1}(b_\lambda \tens b_{\mu_{i}^{*}}) = b_\lambda \tens \tilde{\textbf{f}}_{\operatorname{type}_1} b_{\mu_{i}^{*}}, \end{gather} because $$\varphi_n(b_\lambda) = \langle \lambda, \alpha_n^{\vee}\rangle = \lambda_{n-1} + \lambda_n = 0$$ and $$\varphi_j(b_\lambda) = \langle \lambda, \alpha_j^{\vee}\rangle= \lambda_{j} - \lambda_{j+1} = 0, \forall j \in \{n-a, \ldots, n-2\}. $$
In the latter equation we used that for $j \in \{n-a, \ldots, n-2\}$ one has $\{j, j+1\} \subset \{n-a, \ldots n-1\} \subset \operatorname{Fr}^{m-1}$ and hence by the definition of the free interval $\lambda_j = \lambda_{j+1}$. 

For brevity, we are going to use the following notation.
\begin{notation}
We denote by $\mu_{i}^{\diamond }$, $\mu_{i+1}^{\diamond}$ the weights of the elements $ \tilde{\textbf{f}}_{\operatorname{type}_1}b_{\mu_{i}^{*}}$,  $\tilde{\textbf{e}}_{\operatorname{type}_1}b_{\mu_{i+1}^{*}}$ respectively. Equivalently, one can write
\begin{gather}
b_{\mu_{i+1}^{\diamond}}  := \tilde{\textbf{e}}_{\operatorname{type}_1}b_{\mu_{i+1}^{*}},  \\ b_{\mu_{i}^{\diamond}} := \tilde{\textbf{f}}_{\operatorname{type}_1}b_{\mu_{i}^{*}} .
\end{gather}
\end{notation}
Clearly, $\mu_{i}^{*} + \mu_{i+1}^{*} = \mu_{i}^{\diamond} + \mu_{i+1}^{\diamond}$. 

\begin{example}
\label{ex_6.4}
Let $(\lambda, \mu_{i},  \mu_{i+1})$ be the admissible triplet of the type $1$ from Example \ref{ex_6.3}. Then  ${\mu}_{i+1}^{\diamond}$, $\mu_i^{\diamond}$ equal 

\center{
\begin{tabular}{ c | c c c c c c c c c c c c }
 № & \textbf{\textcolor{red}{1}} & \textbf{\textcolor{red}{2}} & \textbf{\textcolor{red}{3}} & \textbf{\textcolor{red}{4}} & 5 & \textbf{\textcolor{blue}{6}} & \textbf{\textcolor{blue}{7}} & \textbf{\textcolor{blue}{8}} & \textbf{\textcolor{blue}{9}} & \textbf{\textcolor{blue}{10}} & \textbf{\textcolor{blue}{11}} & \textbf{\textcolor{green}{12}} \\ 
 \hline
  ${\mu}_{i+1}^{\diamond}$ & $+$  & $+$  &  $+$ & $-$ & $-$ & $+$ & $+$ & $+$ & $-$ & $-$ & $-$ & $+$ \\  
 \hline
 ${\mu}_{i}^{\diamond}$ & $-$ & $-$ & $-$ & $+$ & $-$ & $-$ & $-$ & $-$ & $+$ & $+$ & $+$ & $-$ \\
\hline
 $\lambda$ & $2$ & $2$&$2$ & $2$ &$2$ &$1$ &$1$ &$1$ &$1$ &$1$ & $1$ &$-1$
\end{tabular}
}
\end{example}

\begin{proposition}
\label{prop_6.5}

If $(\lambda, \mu_i, \mu_{i+1})$ is an admissible triplet of the type $1$, then $\lambda + \mu_{i+1}^{\diamond} \in P_+$.  Equivalently, as $\mu_{i} + \mu_{i+1} = \mu_{i}^{\diamond} + \mu_{i+1}^{\diamond}$,  if $(\lambda, \mu_{i}, \mu_{i+1})$ is an admissible triplet of the type $1$, then the triplet $(\lambda, \mu_{i+1}^{\diamond}, \mu_{i}^{\diamond})$ is admissible. 
\end{proposition}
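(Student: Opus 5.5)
We verify $\langle \lambda + \mu_{i+1}^{\diamond}, \alpha_j^{\vee}\rangle \geqslant 0$ for every $j \in \{1, \ldots, n\}$ by direct coordinate comparison with $\mu_{i+1}^{*}$, for which Proposition \ref{prop_6.4} already gives the inequality for $j<n$. By construction, $\mu_{i+1}^{\diamond}$ differs from $\mu_{i+1}^{*}$ only in coordinates $n$ and $n-a$, where minuses become pluses. Here $a = |\operatorname{Fr}^{m-1}_-|$, and by Remark \ref{remark_6.1} the negative part of $\operatorname{Fr}^{m-1}$ sits at the end of that interval after applying $\tilde{\textbf{e}}_{\operatorname{Fr}^{m-1}}$. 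So in $\mu_{i+1}^{*}$ the coordinates $n-a, \ldots, n-1$ are all $-$, and $n-a-1$ (if it lies in $\operatorname{Fr}^{m-1}$) is $+$. The second claim of the proposition then follows at once, since $\lambda+\mu_{i+1}^{\diamond}+\mu_i^{\diamond}=\lambda+\mu_i+\mu_{i+1}\in P_+$.

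The first step is $j=n$. By Remark \ref{remark_6.1}, $\lambda_{n-1}+\lambda_n=0$ and $(\mu_{i+1}^{*})_{n-1}=(\mu_{i+1}^{*})_n=-\tfrac12$. In $\mu_{i+1}^{\diamond}$ coordinate $n$ becomes $+\tfrac12$. Coordinate $n-1$ stays $-\tfrac12$ when $a>1$, and becomes $+\tfrac12$ when $a=1$ (then $n-a=n-1$). In either case the pairing is $\geqslant 0$.

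The second step is $j<n$ with $j,j+1\notin\{n-a,n\}$. The pairing is unchanged from $\mu_{i+1}^{*}$, so it is nonnegative by Proposition \ref{prop_6.4}.

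The remaining indices $j = n-a-1$, $j=n-a$ and $j=n-1$ need care.
\begin{itemize}
\item For $j=n-a$ with $a>1$, coordinate $n-a$ rises to $+$ while $n-a+1$ stays $-$, so the difference increases and the pairing stays nonnegative.
\item For $j=n-1$ (when $a>1$), coordinate $n$ rises. The pairing is $\lambda_{n-1}-\lambda_n-\tfrac12-\tfrac12$. Since $\{n\}$ is a separate free interval, we expect $\lambda_{n-1}\neq\lambda_n$. Together with $\lambda_{n-1}+\lambda_n=0$ and dominance this gives $\lambda_{n-1}=-\lambda_n\geqslant\tfrac12$. The case $\lambda_{n-1}=\tfrac12$ needs checking; there the original admissibility $\lambda+\mu_i\in P_+$ and the sign pattern at $n$ must be used. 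I expect $\lambda_{n-1}-\lambda_n\geqslant 1$ to follow from $(\mu_i)_n=-\tfrac12$, $(\mu_{i+1})_n=+\tfrac12$ being impossible otherwise. Note that $n$ lies in a free interval with $(\mu_{i+1})_n=-$ and $(\mu_i)_n=+$, and the separation of $n-1$ and $n$ into different free intervals forces $\lambda_{n-1}\neq\lambda_n$. Since $\lambda_{n-1}-\lambda_n=2\lambda_{n-1}$ is an integer that is positive, it is at least $1$. So this case is fine.
\item For $j=n-a-1$, coordinate $n-a$ rises, which could break the inequality. If $n-a-1\in\operatorname{Fr}^{m-1}$, then $(\mu_{i+1}^{*})_{n-a-1}=+$, the new difference is $0$, and $\lambda_{n-a-1}=\lambda_{n-a}$, so the pairing is $0$. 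If $n-a-1\notin\operatorname{Fr}^{m-1}$, then $n-a$ is the minimum of $\operatorname{Fr}^{m-1}$. Either $\lambda_{n-a-1}>\lambda_{n-a}$, giving a difference of at least $1$ and the pairing $\geqslant 0$; or $\lambda_{n-a-1}=\lambda_{n-a}$ and $n-a-1$ is not free. In that last subcase $\mu_i$ and $\mu_{i+1}$ agree at $n-a-1$, and $\lambda+\mu_i\in P_+$ together with $(\mu_i)_{n-a}=+$ (since $n-a\in\operatorname{Fr}_-$) forces $(\mu_i)_{n-a-1}=+$, hence $(\mu_{i+1}^{*})_{n-a-1}=+$.
\end{itemize}

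The $j=n-a-1$ case is the main obstacle, because it requires tracking free-interval boundaries and the original admissibility.
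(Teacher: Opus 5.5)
Your proof is correct and follows essentially the paper's argument. You compare $\mu_{i+1}^{\diamond}$ with $\mu_{i+1}^{*}$ coordinatewise, use Proposition~\ref{prop_6.4} for $j<n$ away from the changed coordinates, and check $j=n-1$, $j=n-a-1$ and $j=n$ by hand; at $j=n-1$ the fact that $n-1$ and $n$ lie in distinct free intervals forces $\lambda_{n-1}-\lambda_n\geqslant 1$. At $j=n-a-1$ you use $\lambda+\mu_i\in P_+$, whereas the paper derives a contradiction from $\lambda+\mu_i^{\diamond}+\mu_{i+1}^{\diamond}\in P_+$; this is an inessential variation.

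The one index you leave unmentioned, $j=n-1$ with $a=1$, is immediate: both coordinates $n-1$ and $n$ rise, so the pairing is $\lambda_{n-1}-\lambda_n\geqslant 0$. You should also replace the hedged wording in your $j=n-1$ bullet with the clean argument you eventually reach.
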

\begin{proof}
Let $(\lambda, \mu_i, \mu_{i+1})$ be an admissible triplet of the type $1$. Proposition \ref{prop_6.4} implies that $$\langle \lambda + \mu_{i+1}^{*}, \alpha_j^{\vee} \rangle  = \lambda_j - \lambda_{j+1} + (\mu_{i+1}^{*})_j -(\mu_{i+1}^{*})_{j+1}  \geqslant 0, \forall j \in \{1,2, \ldots n-1\}.$$
By definition, the weight $\mu_{i+1}^{\diamond}$ is obtained from  $\mu_{i+1}^{*}$ by changing $(\mu_{i+1}^{*})_n = (\mu_{i+1}^{*})_{n-a} = -\frac{1}{2}$ to  $+ \frac{1}{2}$. Here $a = |\operatorname{Fr}^{m-1}_-| > 0$ (see Remark \ref{remark_6.1}), where $\operatorname{Fr}^{m-1}$ is the free interval of the triplet $(\lambda, \mu_i, \mu_{i+1})$,  such that $n-1 \in \operatorname{Fr}^{m-1}$.  Therefore,
$$\langle \lambda + \mu_{i+1}^{\diamond}, \alpha_j^{\vee} \rangle \geqslant \langle \lambda + \mu_{i+1}^{*}, \alpha_j^{\vee} \rangle \geqslant 0, \hspace{1mm} \forall j \in \{1,2, \ldots n-1\},$$ unless $j = n-1$ or $n-a-1$.  By definition of the type $1$ triplet $n$ and $n-1$ belong to different free intervals, so $\lambda_{n-1} - \lambda_{n} \geqslant 1$. Thus for $j = n-1$: $$\langle \lambda + \mu_{i+1}^{\diamond}, \alpha_{n-1}^{\vee} \rangle = \lambda _{n-1}- \lambda_{n} + (\mu_{i+1}^{\diamond})_{n-1}- (\mu_{i+1}^{\diamond})_{n} \geqslant 1 +(\mu_{i+1}^{\diamond})_{n-1}- (\mu_{i+1}^{\diamond})_{n} \geqslant 0. $$ Now, assume that $j = n-a -1 > 0$. If $(\mu_{i+1}^{\diamond})_{n-a-1} = + \frac{1}{2}$, 
$$\langle \lambda + \mu_{i+1}^{\diamond}, \alpha_{n-a-1}^{\vee} \rangle = \lambda _{n-a-1} - \lambda_{n-a} + (\mu_{i+1}^{\diamond})_{n-a-1} - (\mu_{i+1}^{\diamond})_{n-a}  = \lambda_{n-a-1} - \lambda_{n-a} \geqslant 0.$$
Otherwise $(\mu_{i+1}^{\diamond})_{n-a-1} = - \frac{1}{2}$ implying that $\lambda_{n-a-1} - \lambda_{n-a}  \geqslant 1$, because if $\lambda_{n-a-1} = \lambda_{n-a}$,  then $ (\mu_{i+1}^{\diamond})_{n-a-1} =(\mu^{\diamond}_{i})_{n-a-1} = - \frac{1}{2}$ and $\langle \lambda + \mu_i^{\diamond} +  \mu_{i+1}^{\diamond}, \alpha_{n-a-1}^{\vee} \rangle < 0$, which is a contradiction.  So, $$\langle \lambda + \mu_{i+1}^{\diamond}, \alpha_{n-a-1}^{\vee} \rangle = \lambda _{n-a-1} - \lambda_{n-a} + (\mu_{i+1}^{\diamond})_{n-a-1} - (\mu_{i+1}^{\diamond})_{n-a} \geqslant 1 + (\mu_{i+1}^{\diamond})_{n-a-1} - (\mu_{i+1}^{\diamond})_{n-a} \geqslant 0.$$

It is left to check that $\langle \lambda + \mu_{i+1}^{\diamond}, \alpha_n^{\vee} \rangle \geqslant 0 $.  As $\lambda \in P_+$,  we have $\langle \lambda + \mu_{i+1}^{\diamond}, \alpha_n^{\vee} \rangle = \lambda _{n-1}+ \lambda_{n} + (\mu_{i+1}^{\diamond})_{n-1}+ (\mu_{i+1}^{\diamond})_{n} = \lambda _{n-1}+ \lambda_{n} + (\mu_{i+1}^{\diamond})_{n-1}+ \frac{1}{2} \geqslant 0$. 
\end{proof}

Similar to the case of the type $0$ triplets we get the following corollary. 

\begin{corollary}
    \label{cor_6.3}
Let $b_{\mu_1} \tens\ldots \tens b_{\mu_i} \tens b_{\mu_{i+1}} \tens \ldots \tens b_{\mu_N}$ be an element in $\operatorname{Sing}(\Cr{B}_S^{\tens N})$,  such that the (admissible) triplet  $(\lambda, \mu_i, \mu_{i+1})$, where $\lambda := \sum_{k=1}^{i-1} \mu_k$, is of the type $1$. Then 
    \begin{gather}
        t_i^{res}(b_{\mu_1} \tens\ldots \tens b_{\mu_i} \tens b_{\mu_{i+1}} \tens \ldots b_{\mu_N})= b_{\mu_1} \tens\ldots \tens b_{\mu_{i+1}^{\diamond}} \tens b_{\mu_{i}^{\diamond}} \tens \ldots b_{\mu_N}
    \end{gather}      
\end{corollary}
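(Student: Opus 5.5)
The argument follows the proof of Corollary \ref{cor_6.2}, with the extra composition $\tilde{\textbf{e}}_{\operatorname{type}_1}$ inserted. By Proposition \ref{prop_6.3} and its proof, $t_i^{res}$ changes only the $i$-th and $(i+1)$-th factors and preserves the sum of their weights. Moreover, the new $(i+1)$-th factor equals the last tensor factor of the unique highest weight element of weight $\lambda+\mu_i+\mu_{i+1}$ in $\Cr{B}_S \tens \Cr{B}_{\mu_1+\ldots+\mu_i}$. So it suffices to show that this last factor is $b_{\mu_i^{\diamond}}$. The $i$-th factor then has weight $\mu_i+\mu_{i+1}-\mu_i^{\diamond}=\mu_{i+1}^{\diamond}$, and since $\Cr{B}_S$ has a unique element of each weight, it is $b_{\mu_{i+1}^{\diamond}}$.

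To compute the highest weight element I will use Theorem \ref{th_4.1}, choosing the path to a highest weight element in a particular way. First apply $\tilde{\textbf{e}}_{\operatorname{Fr}^{m}}\cdots\tilde{\textbf{e}}_{\operatorname{Fr}^{1}}$, giving $b_{\mu_{i+1}^{*}}$. Then apply $\tilde{\textbf{e}}_{\operatorname{type}_1}$, giving $b_{\mu_{i+1}^{\diamond}}$. Finally apply any composition $\tilde e_{j_s}\cdots\tilde e_{j_1}$ that carries $b_{\mu_{i+1}^{\diamond}}$ to a highest weight element of $\Cr{B}_S$. Theorem \ref{th_4.1} then gives
$$\tilde e_{j_s}\cdots\tilde e_{j_1} b_{\mu_{i+1}^{\diamond}} \tens \tilde f_{j_s}\cdots\tilde f_{j_1}\tilde{\textbf{f}}_{\operatorname{type}_1}\tilde{\textbf{f}}_{\operatorname{Fr}^{m}}\cdots\tilde{\textbf{f}}_{\operatorname{Fr}^{1}}(b_\lambda\tens b_{\mu_i}).$$
The paper has already shown that the free-interval compositions act only on the second factor, producing $b_\lambda\tens b_{\mu_i^{*}}$. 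It has also shown that $\tilde{\textbf{f}}_{\operatorname{type}_1}$ acts only on the second factor, because $\varphi_n(b_\lambda)=0$ and $\varphi_j(b_\lambda)=0$ for $j\in\{n-a,\ldots,n-2\}$, producing $b_\lambda\tens b_{\mu_i^{\diamond}}$. The expression therefore reduces to
$$\tilde e_{j_s}\cdots\tilde e_{j_1} b_{\mu_{i+1}^{\diamond}} \tens \tilde f_{j_s}\cdots\tilde f_{j_1}(b_\lambda\tens b_{\mu_i^{\diamond}}).$$

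The remaining operators are handled by Proposition \ref{prop_4.4}, applied with $b=b_\lambda$, $b_\mu=b_{\mu_i^{\diamond}}$ and $b_\nu=b_{\mu_{i+1}^{\diamond}}$. Its hypotheses are $\lambda+\mu_{i+1}^{\diamond}\in P_+$, which is Proposition \ref{prop_6.5}, and $\lambda+\mu_i^{\diamond}+\mu_{i+1}^{\diamond}=\lambda+\mu_i+\mu_{i+1}\in P_+$, which holds by admissibility. Since $\tilde e_{j_s}\cdots\tilde e_{j_1}b_{\mu_{i+1}^{\diamond}}\neq0$, the proposition gives $\tilde f_{j_s}\cdots\tilde f_{j_1}(b_\lambda\tens b_{\mu_i^{\diamond}})=\tilde f_{j_s}\cdots\tilde f_{j_1}b_\lambda\tens b_{\mu_i^{\diamond}}$. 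Hence the last factor is $b_{\mu_i^{\diamond}}$, which completes the argument.

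The step needing most care is combining the compositions into one ascending path. Theorem \ref{th_4.1} allows any sequence of $\tilde e$'s ending at a highest weight element of $\Cr{B}_S$, so the concatenation is legitimate as long as each partial application is nonzero. This holds because $b_{\mu_{i+1}^{*}}\neq0$ and $\tilde{\textbf{e}}_{\operatorname{type}_1}b_{\mu_{i+1}^{*}}\neq0$, both already established in the text. The substantive positivity input is Proposition \ref{prop_6.5}, which we take as given.
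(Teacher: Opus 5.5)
Your proposal is correct and is essentially the paper's own proof. Both use locality and weight conservation from Proposition \ref{prop_6.3}, then compute the highest weight element in $\Cr{B}_S\tens\Cr{B}_{\mu_1+\ldots+\mu_i}$ via Theorem \ref{th_4.1} along the path $\tilde{\textbf{e}}_{\operatorname{Fr}^{1}},\ldots,\tilde{\textbf{e}}_{\operatorname{Fr}^{m}},\tilde{\textbf{e}}_{\operatorname{type}_1}$ followed by an arbitrary path to the top, and finally apply Proposition \ref{prop_4.4} together with Proposition \ref{prop_6.5} to see that the last factor stays $b_{\mu_i^{\diamond}}$ (you also rightly cite Theorem \ref{th_4.1} where the paper's proof mistakenly refers to Theorem \ref{th_5.1}).
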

\begin{proof}
The proof of this corollary is almost identical to the proof of Corollary \ref{cor_6.2}. It suffices to find the $i+1$-th factor of the unique highest weight element of highest weight  $\sum_{j=1}^{i+1} \mu_j = \lambda + \mu_i + \mu_{i+1}$ in $\Cr{B}_S \tens \Cr{B}_{\mu_1 + \ldots \mu_{i}} \subset \Cr{B}_S \tens \Cr{B}_{S}^{\tens i}$. By Theorem \ref{th_5.1} this highest weight element can be written as $$\tilde e_{i_r} \ldots \tilde e_{i_1}\tilde{\textbf{e}}_{\operatorname{type}_1}\tilde{\textbf{e}}_{\operatorname{Fr}^{m}} \ldots \tilde{\textbf{e}}_{\operatorname{Fr}^{1}} b_{\mu_{i+1}} \tens\tilde f_{i_r}\ldots \tilde f_{i_1} \tilde{\textbf{f}}_{\operatorname{type}_1}\tilde{\textbf{f}}_{\operatorname{Fr}^{m}} \ldots \tilde{\textbf{f}}_{\operatorname{Fr}^{1}}(b_{\lambda} \tens b_{\mu_{i}}), $$ where $b_\lambda = b_{\mu_1} \tens \ldots \tens b_{\mu_{i-1}}$ and $\tilde e_{i_r} \ldots \tilde e_{i_1} \tilde{\textbf{e}}_{\operatorname{type}_1}\tilde{\textbf{e}}_{\operatorname{Fr}^{m}} \ldots \tilde{\textbf{e}}_{\operatorname{Fr}^{1}} b_{\mu_{i+1}}$ is a highest weight element in $\Cr{B}_S$ .  We can rewrite it, using definitions of $b_{\mu_{i}^{\diamond}}$ and $b_{\mu_{i+1}^{\diamond}}$ as 
$$\tilde e_{i_r} \ldots \tilde e_{i_1} b_{\mu_{i+1}^{\diamond}} \tens\tilde f_{i_r}\ldots \tilde f_{i_1}(b_{\lambda} \tens b_{\mu_{i}^{\diamond}}), $$
Since the triplet $(\lambda, \mu_{i}, \mu_{i+1})$ is of the type $1$, Proposition \ref{prop_6.5} implies that $\lambda + \mu_{i+1}^{\diamond} \in P_+$. Also it is clear that  $\tilde e_{i_r} \ldots \tilde e_{i_1} b_{\mu_{i+1}^{\diamond}} \neq 0$, so Proposition \ref{prop_4.4} implies that 
$$\tilde e_{i_r} \ldots \tilde e_{i_1} b_{\mu_{i+1}^{\diamond}} \tens\tilde f_{i_r}\ldots \tilde f_{i_1}(b_{\lambda} \tens b_{\mu_{i}^{\diamond}}) = \tilde e_{i_r} \ldots \tilde e_{i_1} b_{\mu_{i+1}^{\diamond}} \tens\tilde f_{i_r}\ldots \tilde f_{i_1}b_{\lambda} \tens b_{\mu_{i}^{\diamond}}.$$
We conclude that $i+1$-th factor of $t_i^{res}(b_{\mu_1} \tens\ldots \tens b_{\mu_i} \tens b_{\mu_{i+1}} \tens \ldots b_{\mu_N})$ is equal to $b_{\mu_{i}^{\diamond}}$. Since $t_i^{res}$ preserves the weight of the element and is local, it follows that the weight of the $i$-th factor of the element  $t_i^{res}(b_{\mu_1} \tens\ldots \tens b_{\mu_i} \tens b_{\mu_{i+1}} \tens \ldots b_{\mu_N})$ equals $\mu_i + \mu_{i+1} - \mu_{i}^{\diamond} = \mu_{i+1}^{\diamond}$.  Therefore, 
\begin{gather}
        t_i^{res}(b_{\mu_1} \tens\ldots \tens b_{\mu_i} \tens b_{\mu_{i+1}} \tens \ldots b_{\mu_N})= b_{\mu_1} \tens\ldots \tens b_{\mu_{i+1}^{\diamond}} \tens b_{\mu_{i}^{\diamond}} \tens \ldots b_{\mu_N}
    \end{gather}  and we are done. 
\end{proof}

It remains to deal with the triplets of the type $2$. Let $(\lambda, \mu_i, \mu_{i+1})$ be an admissible triplet of the type $2$. Then as was observed in Remark \ref{remark_6.1} $\lambda_n = \lambda_{n-1} = 0$.  Just as before we denote by $\operatorname{Fr}^{m}$ the free interval containing $\{n-1, n\}$. Let $a = |\operatorname{Fr}^{m}_{-}|$ and $b = |\operatorname{Fr}_{+}^{m}|$. such that $(\mu_{i+1})_j = -\frac{1}{2}$. Remark \ref{remark_6.1} implies that $a \geqslant 2$ and $b \leqslant 1$.  Then, last $a+b$ coordinates (from $n-(a+b - 1)$ to $n$) of the weight $\mu_{i+1}^{*}$ look as follows: 
$$\Big(\overset{n-a-b + 1}{\pm \frac{1}{2}}, -\frac{1}{2}, \ldots, -\overset{n}{\frac{1}{2}}\Big).$$
The sign of the $n-a-b+1$-th coordinate of $\mu_{i+1}^{*}$ depends on $b$. If $b = 1$, then the sign is $+$, otherwise $-$. Respectively, last $a+b$ coordinates from $n-(a+b - 1)$ to $n$ of the weight $\mu_{i}^{*}$ look as follows:
$$\Big(\overset{n-a-b + 1}{\mp \frac{1}{2}}, +\frac{1}{2}, \ldots, +\overset{n}{\frac{1}{2}}\Big).$$
The sign of the $n-a-b+1$-th coordinate of $\mu_i^{*}$ depends on $b$. If $b = 1$, then the sign is $-$, otherwise $+$. 

We denote by $\tilde{\textbf{e}}_{\operatorname{type}_2}$ the composition of ascending operators that acts on $b_{\mu_{i+1}}^{*}$ so that it only changes the last $a+b$ coordinates of the weight to 
$$\Big(\overset{n-a-b + 1}{+ \frac{1}{2}}, +\frac{1}{2}, \ldots, \pm\overset{n}{\frac{1}{2}}\Big).$$
The sign of the last coordinate  depends on parity of $a$. If $a$ is even then the sign is $+$, if not then it is a $-$. 

Respectively, we define $\tilde{\textbf{f}}_{\operatorname{type}_2}$. It consists of the descending operators $\tilde f_j$ with the same indexes as in the composition $\tilde{\textbf{e}}_{\operatorname{type}_2}$.  Clearly, $\tilde{\textbf{f}}_{\operatorname{type}_2}$ acts on $b_{\mu_{i}^{*}}$ so that it only changes the last $a+b$ coordinates of the weight to 
$$\Big(\overset{n-a-b + 1}{- \frac{1}{2}}, -\frac{1}{2}, \ldots, \mp\overset{n}{\frac{1}{2}}\Big).$$
The sign of the last coordinate depends on parity of $a$. If $a$ is even then the sign is $-$, if not then it is a $+$.  

As $\tilde{\textbf{e}}_{\operatorname{type}_2} b_{\mu_{i+1}^{*}} \neq 0$, Corollary \ref{cor_4.1} implies that $\tilde{\textbf{f}}_{\operatorname{type}_2}(b_\lambda \tens b_{\mu_{i}^{*}}) \neq 0$.  Clearly,  $\{n-a-b+1, \ldots, n \} \subset \operatorname{Fr}^{m}$, so $\lambda_{n-a-b+1} = \ldots = \lambda_n = 0$ and $$\varphi_j(b_\lambda) = \langle \lambda, \alpha_j^{\vee}\rangle =  0, \hspace{1mm} \forall j \in \{n-a-b+1, \ldots, n\}.$$
It is also clear all the indexes of crystal operators in compositions $\tilde{\textbf{e}}_{\operatorname{type}_2}$, $\tilde{\textbf{f}}_{\operatorname{type}_2}$ lie in the set $\{n-a-b+1, \ldots, n \} \subset \operatorname{Fr}^{m}$. Since $\tilde{\textbf{f}}_{\operatorname{type}_2}(b_\lambda \tens b_{\mu_{i}^{*}}) \neq 0$ we conclude  $$\tilde{\textbf{f}}_{\operatorname{type}_2}(b_\lambda \tens b_{\mu_{i}^{*}}) = b_\lambda \tens\tilde{\textbf{f}}_{\operatorname{type}_2} b_{\mu_{i}^{*}}$$
For brevity, we introduce the following notation.
\begin{notation}
We denote by $\mu_{i}^{\diamond \diamond}$, $\mu_{i+1}^{\diamond \diamond}$ the weights of the elements $ \tilde{\textbf{f}}_{\operatorname{type}_2}b_{\mu_{i}^{*}}$,  $\tilde{\textbf{e}}_{\operatorname{type}_2}b_{\mu_{i+1}^{*}}$ respectively. Equivalently, one can write
\begin{gather}
b_{\mu_{i+1}^{\diamond \diamond}}  := \tilde{\textbf{e}}_{\operatorname{type}_2}b_{\mu_{i+1}^{*}},  \\ b_{\mu_{i}^{\diamond \diamond}} := \tilde{\textbf{f}}_{\operatorname{type}_2}b_{\mu_{i}^{*}} .
\end{gather}
\end{notation}
Evidently, $\mu_{i}^{*} + \mu_{i+1}^{*} = \mu_{i}^{\diamond \diamond} + \mu_{i+1}^{\diamond \diamond}$. For the reference, we provide a concrete example of a type $2$ triplet and calculate the weights $\mu_i^{\diamond\diamond}, \mu_{i+1}^{\diamond \diamond}$ for it. 

\begin{example}
\label{ex_6.5}
 Let $(\lambda, \mu_{i},\mu_{i+1})$ be an admissible triplet, defined by the table below.
\begin{center}
\begin{tabular}{ c | c c c c c c c c c c c c }
 № & \textbf{\textcolor{red}{1}} & \textbf{\textcolor{red}{2}} & 3 & \textbf{\textcolor{blue}{4}} & \textbf{\textcolor{blue}{5}} & \textbf{\textcolor{blue}{6}} & 7 & 8 & \textbf{\textcolor{green}{9}} & \textbf{\textcolor{green}{10}} & \textbf{\textcolor{green}{11}} & \textbf{\textcolor{green}{12}} \\ 
 \hline
  $\mu_{i+1}$ & $+$  & $+$  &  $-$ & $-$ & $+$ & $+$ & $-$ & $-$ & $-$ & $-$ & $-$ & $+$ \\  
 \hline
 $\mu_{i}$ & $-$ & $-$ & $-$ & $+$ & $-$ & $-$ & $-$ & $-$ & $+$ & $+$ & $+$ & $-$ \\
\hline
 $\lambda$ & $2$ & $2$&$2$ & $1$ &$1$ &$1$ &$1$ &$1$ &$0$ &$0$ & $0$ &$0$
\end{tabular}
\end{center}
\vspace{2mm}
The respective weights  ${\mu}_{i}^{*}$, ${\mu}_{i+1}^{*}$ are given in the table below: 
\vspace{1mm} 
\begin{center}
 \begin{tabular}{ c | c c c c c c c c c c c c }
 № & \textbf{\textcolor{red}{1}} & \textbf{\textcolor{red}{2}} & 3 & \textbf{\textcolor{blue}{4}} & \textbf{\textcolor{blue}{5}} & \textbf{\textcolor{blue}{6}} & 7 & 8 & \textbf{\textcolor{green}{9}} & \textbf{\textcolor{green}{10}} & \textbf{\textcolor{green}{11}} & \textbf{\textcolor{green}{12}} \\ 
 \hline
  $\mu_{i+1}^{*}$ & $+$  & $+$  &  $-$ & $+$ & $-$ & $-$ & $-$ & $-$ & $+$ & $-$ & $-$ & $-$ \\  
 \hline
 $\mu_{i}^{*}$ & $-$ & $-$ & $-$ & $-$ & $+$ & $+$ & $-$ & $-$ & $-$ & $+$ & $+$ & $+$ \\
\hline
 $\lambda$ & $2$ & $2$&$2$ & $1$ &$1$ &$1$ &$1$ &$1$ &$0$ &$0$ & $0$ &$0$
\end{tabular}
\end{center}
\vspace{2mm}
It is evident that $(\mu_{i+1}^{*})_{12} + (\mu_{i+1}^{*})_{11} + \lambda_{12} + \lambda_{11}  =  -1 < 0.$
The triplet $(\lambda, \mu_i, \mu_{i+1})$ is of the type $2$, since $\operatorname{Fr}^{3}= \{9,10,11,12\}$. As was observed in Remark \ref{remark_6.1} $|\operatorname{Fr}^{3}_-|= 3 \geqslant 2$  and $|\operatorname{Fr}^{3}_+| = 1 \leqslant 1$. 

Then, the weights $\mu_{i}^{\diamond \diamond}, \mu_{i+1}^{\diamond\diamond}$ equal

\begin{center}
\begin{tabular}{ c | c c c c c c c c c c c c }
 № & \textbf{\textcolor{red}{1}} & \textbf{\textcolor{red}{2}} & 3 & \textbf{\textcolor{blue}{4}} & \textbf{\textcolor{blue}{5}} & \textbf{\textcolor{blue}{6}} & 7 & 8 & \textbf{\textcolor{green}{9}} & \textbf{\textcolor{green}{10}} & \textbf{\textcolor{green}{11}} & \textbf{\textcolor{green}{12}} \\ 
 \hline
  $\mu_{i+1}^{\diamond\diamond}$ & $+$  & $+$  &  $-$ & $+$ & $-$ & $-$ & $-$ & $-$ & $+$ & $+$ & $+$ & $-$ \\  
 \hline
 $\mu_{i}^{\diamond\diamond}$ & $-$ & $-$ & $-$ & $-$ & $+$ & $+$ & $-$ & $-$ & $-$ & $-$ & $-$ & $+$ \\
\hline
 $\lambda$ & $2$ & $2$&$2$ & $1$ &$1$ &$1$ &$1$ &$1$ &$0$ &$0$ & $0$ &$0$
\end{tabular}
\end{center}
\end{example}

Just like for type $1$ triplets we have the following proposition for type $2$ triplets. 

\begin{proposition}
\label{prop_6.6}
If $(\lambda, \mu_i, \mu_{i+1})$ is an admissible triplet of the type $2$, then $\lambda + \mu_{i+1}^{\diamond \diamond} \in P_+$.  Equivalently, as $\mu_{i} + \mu_{i+1} = \mu_{i}^{\diamond \diamond} + \mu_{i+1}^{\diamond \diamond}$,  if $(\lambda, \mu_{i}, \mu_{i+1})$ is an admissible triplet of the type $2$, then the triplet $(\lambda, \mu_{i+1}^{\diamond \diamond}, \mu_{i}^{\diamond \diamond})$ is admissible. 
\end{proposition}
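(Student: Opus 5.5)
We check $\langle \lambda + \mu_{i+1}^{\diamond\diamond}, \alpha_j^{\vee}\rangle \geqslant 0$ for every $j$, following the proof of Proposition \ref{prop_6.5}. The starting point is Proposition \ref{prop_6.4}, which gives $\langle \lambda + \mu_{i+1}^{*}, \alpha_j^{\vee}\rangle \geqslant 0$ for all $j \leqslant n-1$. By construction, $\mu_{i+1}^{\diamond\diamond}$ differs from $\mu_{i+1}^{*}$ only in the coordinates $n-a-b+1, \ldots, n$. On that block $\lambda$ vanishes, since $\lambda_{n-a-b+1} = \ldots = \lambda_n = 0$. The new values there are $(+\frac12, \ldots, +\frac12, \pm\frac12)$, with the last sign $+$ if $a$ is even and $-$ if $a$ is odd. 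So we may split the indices $j$ into three groups.

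\textbf{Indices away from the block} ($j < n-a-b$). Both $j$ and $j+1$ keep their coordinates from $\mu_{i+1}^{*}$, so $\langle \lambda + \mu_{i+1}^{\diamond\diamond}, \alpha_j^{\vee}\rangle = \langle \lambda + \mu_{i+1}^{*}, \alpha_j^{\vee}\rangle \geqslant 0$.

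\textbf{Indices inside the block} ($n-a-b+1 \leqslant j \leqslant n-1$). Here $\lambda_j = \lambda_{j+1} = 0$, and the coordinates of $\mu_{i+1}^{\diamond\diamond}$ are weakly decreasing along the block, being $+\frac12$ except possibly the last one. Hence the pairing is $(\mu^{\diamond\diamond}_{i+1})_j - (\mu^{\diamond\diamond}_{i+1})_{j+1} \geqslant 0$.

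\textbf{The boundary index} $j = n-a-b$ (when positive). Now $(\mu_{i+1}^{\diamond\diamond})_{j+1} = +\frac12$, so the pairing is $\lambda_j - \lambda_{j+1} + (\mu_{i+1}^{\diamond\diamond})_j - \frac12$. If $(\mu_{i+1}^{\diamond\diamond})_j = +\frac12$, this equals $\lambda_j - \lambda_{j+1} \geqslant 0$. Otherwise $(\mu_{i+1}^{\diamond\diamond})_j = -\frac12$, and we must show $\lambda_j > \lambda_{j+1} = 0$. Suppose instead that $\lambda_j = 0$. By maximality of the free interval $\operatorname{Fr}^m$, the index $j$ is not in it, so $(\mu_i)_j (\mu_{i+1})_j > 0$. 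Since $j$ lies in no free interval, its coordinates were left unchanged by the $\tilde{\mathbf{f}}_{\operatorname{Fr}}$ and $\tilde{\mathbf{e}}_{\operatorname{Fr}}$ compositions. Therefore $(\mu_i)_j = (\mu_{i+1})_j = -\frac12$.

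We now derive a contradiction from this. The first coordinate $n-a-b+1$ of $\operatorname{Fr}^m$ carries the value $+\frac12$ in $\mu_i$. This holds whether $b = 1$ (then that coordinate lies in $\operatorname{Fr}^m_+$, where $\mu_{i+1}$ is $+$ and so $\mu_i$ is $-$ — this case needs care) or $b = 0$ (then it lies in $\operatorname{Fr}^m_-$, where $\mu_i$ is $+$). For the admissibility argument it is cleanest to use $\lambda + \mu_i + \mu_{i+1} \in P_+$. At coordinate $j$ the sum $\mu_i + \mu_{i+1}$ equals $-1$. At coordinate $j+1 \in \operatorname{Fr}^m$ it equals $0$, because the signs are opposite on a free interval. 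Then $\langle \lambda + \mu_i + \mu_{i+1}, \alpha_j^{\vee}\rangle = 0 - 1 - 0 < 0$, contradicting admissibility. This mirrors the step in Proposition \ref{prop_6.5} and I expect it to be the main point to get right, including the edge case $j = n-a-b = 0$, which is vacuous.

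\textbf{The index} $j = n$. The pairing is $\lambda_{n-1} + \lambda_n + (\mu^{\diamond\diamond}_{i+1})_{n-1} + (\mu^{\diamond\diamond}_{i+1})_n = \frac12 \pm \frac12 \geqslant 0$. This uses $a + b \geqslant 2$, so that coordinate $n-1$ lies in the block and is $+\frac12$.

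The equivalent admissibility statement then follows exactly as in Corollary \ref{cor_6.1}, since $\mu_i + \mu_{i+1} = \mu_i^{\diamond\diamond} + \mu_{i+1}^{\diamond\diamond}$.
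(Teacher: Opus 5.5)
Your proof is correct and is the type-$2$ version of the paper's proof of Proposition~\ref{prop_6.5}; the paper states Proposition~\ref{prop_6.6} without its own proof. Like that proof, you use Proposition~\ref{prop_6.4} for the indices untouched by $\tilde{\mathbf{e}}_{\operatorname{type}_2}$, check directly inside the block where $\lambda=0$, get a contradiction with $\lambda+\mu_i+\mu_{i+1}\in P_+$ at the boundary index $n-a-b$, and verify the $\alpha_n$ pairing.

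One correction: the parenthetical about $b=1$ is wrong and should be deleted. By property~3 of free intervals the minuses of $\mu_{i+1}$ precede its pluses, so coordinate $n-a-b+1$ always lies in $\operatorname{Fr}^m_-$ (you conflated $\mu_{i+1}$ with $\mu_{i+1}^{*}$). The contradiction you actually use does not depend on this.
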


\begin{corollary}
   \label{cor_6.4}
   Let $b_{\mu_1} \tens\ldots \tens b_{\mu_i} \tens b_{\mu_{i+1}} \tens \ldots \tens b_{\mu_N}$ be an element in $\operatorname{Sing}(\Cr{B}_S^{\tens N})$,  such that the (admissible) triplet  $(\lambda, \mu_i, \mu_{i+1})$, where $\lambda := \sum_{k=1}^{i-1} \mu_k$, is of the type $2$. Then 
    \begin{gather}
        t_i^{res}(b_{\mu_1} \tens\ldots \tens b_{\mu_i} \tens b_{\mu_{i+1}} \tens \ldots b_{\mu_N})= b_{\mu_1} \tens\ldots \tens b_{\mu_{i+1}^{\diamond \diamond}} \tens b_{\mu_{i}^{\diamond \diamond}} \tens \ldots b_{\mu_N}
    \end{gather}      
\end{corollary}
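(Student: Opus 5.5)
The plan is to copy the proof of Corollary \ref{cor_6.3} almost word for word, with $\tilde{\textbf{e}}_{\operatorname{type}_1}$, $\tilde{\textbf{f}}_{\operatorname{type}_1}$ replaced by $\tilde{\textbf{e}}_{\operatorname{type}_2}$, $\tilde{\textbf{f}}_{\operatorname{type}_2}$, and with Proposition \ref{prop_6.6} used in place of Proposition \ref{prop_6.5}.

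By Proposition \ref{prop_6.3} and the discussion after it, $t_i^{res}$ changes only the $i$-th and $(i+1)$-th factors and preserves their total weight. Moreover, the new $(i+1)$-th factor equals the last tensor factor of $\sigma_{1,i,i+1}(b_{\mu_1}\tens\ldots\tens b_{\mu_{i+1}})$. That element is the unique highest weight element of weight $\lambda+\mu_i+\mu_{i+1}$ in $\Cr{B}_S\tens\Cr{B}_{\mu_1+\ldots+\mu_i}$.

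I apply Theorem \ref{th_4.1} with a particular choice of ascending composition bringing $b_{\mu_{i+1}}$ to a highest weight element of $\Cr{B}_S$. First apply $\tilde{\textbf{e}}_{\operatorname{Fr}^{m}}\ldots\tilde{\textbf{e}}_{\operatorname{Fr}^{1}}$, then $\tilde{\textbf{e}}_{\operatorname{type}_2}$, and finish with any composition $\tilde e_{i_r}\ldots\tilde e_{i_1}$ that takes $b_{\mu_{i+1}^{\diamond\diamond}}$ to $b_{\Lambda_{n-1}}$ or $b_{\Lambda_n}$. Such a composition exists because each component of $\Cr{B}_S$ is a highest weight crystal. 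Theorem \ref{th_4.1} then writes the highest weight element as
$$\tilde e_{i_r}\ldots\tilde e_{i_1} b_{\mu_{i+1}^{\diamond\diamond}}\tens \tilde f_{i_r}\ldots\tilde f_{i_1}\tilde{\textbf{f}}_{\operatorname{type}_2}\tilde{\textbf{f}}_{\operatorname{Fr}^{m}}\ldots\tilde{\textbf{f}}_{\operatorname{Fr}^{1}}(b_\lambda\tens b_{\mu_i}).$$
Here $b_\lambda=b_{\mu_1}\tens\ldots\tens b_{\mu_{i-1}}$, using $\Cr{B}_{\mu_1+\ldots+\mu_i}\subset\Cr{B}_\lambda\tens\Cr{B}_S$.

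Next I use the computations already made before the statement. The free-interval operators act entirely on the second factor, giving $b_\lambda\tens b_{\mu_i^{*}}$. The operator $\tilde{\textbf{f}}_{\operatorname{type}_2}$ also acts on the second factor, giving $b_\lambda\tens b_{\mu_i^{\diamond\diamond}}$; this was checked using $\lambda_{n-a-b+1}=\ldots=\lambda_n=0$ and Corollary \ref{cor_4.1}. So the element becomes
$$\tilde e_{i_r}\ldots\tilde e_{i_1} b_{\mu_{i+1}^{\diamond\diamond}}\tens\tilde f_{i_r}\ldots\tilde f_{i_1}(b_\lambda\tens b_{\mu_i^{\diamond\diamond}}).$$
Now apply Proposition \ref{prop_4.4} with $b=b_\lambda$, $b_\mu=b_{\mu_i^{\diamond\diamond}}$ and $b_\nu=b_{\mu_{i+1}^{\diamond\diamond}}$. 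Its hypotheses are $\lambda+\mu_{i+1}^{\diamond\diamond}\in P_+$, which is Proposition \ref{prop_6.6}, and $\lambda+\mu_i^{\diamond\diamond}+\mu_{i+1}^{\diamond\diamond}=\lambda+\mu_i+\mu_{i+1}\in P_+$, which holds because the triplet is admissible. The proposition then shows that all of $\tilde f_{i_r}\ldots\tilde f_{i_1}$ acts on the first factor, so the last factor is $b_{\mu_i^{\diamond\diamond}}$.

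Finally, weight preservation forces the new $i$-th factor to have weight $\mu_i+\mu_{i+1}-\mu_i^{\diamond\diamond}=\mu_{i+1}^{\diamond\diamond}$. Since $\Cr{B}_S$ has exactly one element of each weight, that factor is $b_{\mu_{i+1}^{\diamond\diamond}}$.

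The main obstacle is Proposition \ref{prop_6.6}, since the corollary depends on it and it is stated here without proof. If I have to prove it myself, I would check $\langle\lambda+\mu_{i+1}^{\diamond\diamond},\alpha_j^\vee\rangle\ge 0$ coordinatewise as in Proposition \ref{prop_6.5}. For $j<n-a-b$ the check is the same as Proposition \ref{prop_6.4}, since no coordinate there changed. For $j=n-a-b$ one uses that $n-a-b+1$ starts the free interval $\operatorname{Fr}^m$, so either $\lambda_{n-a-b}>\lambda_{n-a-b+1}$, or the new coordinate at $n-a-b+1$ is $+\frac{1}{2}$. Inside $\operatorname{Fr}^m$ the new coordinates are $+\frac12,\ldots,+\frac12$ followed by a last coordinate $\pm\frac{1}{2}$, so the checks for $j$ with both $j,j+1\in\operatorname{Fr}^m$ are immediate. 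For $\alpha_n$, the coordinates $n-1$ and $n$ are $+\frac12$ and $\pm\frac12$, and their sum is $\ge 0$.
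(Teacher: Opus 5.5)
\paragraph{Main argument.} Your proof of the corollary is the paper's proof. The paper only says to repeat the proof of Corollary~\ref{cor_6.3} with type~$1$ replaced by type~$2$ and Proposition~\ref{prop_6.5} by Proposition~\ref{prop_6.6}, and you carry out exactly that substitution correctly: Theorem~\ref{th_4.1} along the chosen ascending path, the descending operators acting on $b_{\mu_i}$, then Proposition~\ref{prop_4.4} and weight preservation.

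\paragraph{Flaw in the sketch of Proposition~\ref{prop_6.6}.} The one error is in your optional sketch, at $j=n-a-b$. Because $a+b\geqslant 2$, the coordinate of $\mu_{i+1}^{\diamond\diamond}$ at $n-a-b+1$ is always $+\tfrac12$. For $\alpha_j^\vee$, a plus at position $j+1$ is the unfavourable case:
$$\langle\lambda+\mu_{i+1}^{\diamond\diamond},\alpha_j^\vee\rangle=\lambda_j-\lambda_{j+1}+(\mu_{i+1}^{\diamond\diamond})_j-\tfrac12 .$$
So your dichotomy (``$\lambda_{n-a-b}>\lambda_{n-a-b+1}$ or the new coordinate at $n-a-b+1$ is $+\tfrac12$'') does not yield non-negativity; it looks at the wrong position.

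What you need is that $\lambda_j=\lambda_{j+1}$ forces $(\mu_{i+1}^{\diamond\diamond})_j=+\tfrac12$. This holds for the following reason. By maximality of $\operatorname{Fr}^m$, the index $j$ lies in no free interval. Hence its coordinate is untouched by all the operators used, and $(\mu_i)_j=(\mu_{i+1})_j$. If both were $-\tfrac12$, then
$$\langle\lambda+\mu_i+\mu_{i+1},\alpha_j^\vee\rangle=0-1-0=-1,$$
since $(\mu_i+\mu_{i+1})_{j+1}=0$ on a free interval. This contradicts admissibility.

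If instead $\lambda_j>\lambda_{j+1}$, then $\lambda_j-\lambda_{j+1}\geqslant 1$ is enough. The paper uses the same fact without justification in the last case of the proof of Proposition~\ref{prop_6.4}. The remaining checks in your sketch (for $j<n-a-b$, inside $\operatorname{Fr}^m$, and for $\alpha_n^\vee$) are correct.
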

\begin{proof}
    The proof is identical to the proof of the Corollary \ref{cor_6.3} up to replacing type $1$ with type $2$ everywhere and Proposition \ref{prop_6.5} with Proposition \ref{prop_6.6}.
\end{proof}

\subsection{Action of the cactus group on the short semi-standard Young tables}
\label{final_final}

In the work of Michael Chmutov, Max Glick and Pablo Pylyavskii \cite{Chmutov} the action of  the cactus group on the set of semi-standard Young tableaux was defined. By definition of that action, the generators $\mathbf{t}_i, \hspace{1mm} i = 1, 2, \ldots, N-1$ of the cactus group $C_N$ act on semi-standard Young tableaux as Bender-Knuth involutions, denoted by $\tau_i^{bk}$. We recall how these involutions work. 

Let $Y$ be the semi-standard Young tableau filled with the numbers $1, 2, \ldots, N$. Denote by $Y_{[i,i+1]}$ the skew semi-standard Young tableau formed by all the cells with the numbers $i$, $i+1$ in them. Any row of $Y_{[i,i+1]}$ from left to right consists of 

\vspace{2mm}
\begin{enumerate}
    \item $a$ cells with the number $i$, with the cell with the number $i+1$ directly underneath,
    \item $b$ cells with the number $i$, with no cells underneath,
    \item $c$ cells with the number $i+1$, with no cells underneath,
    \item $d$ cells with the number $i+1$, with a cell with number $i$ above. 
\end{enumerate}
\vspace{2mm}

The cells of the second and third types we are going to call \textbf{free}. Let $Y^{'}_{[i,  i+1]}$ be the tableau obtained from $Y_{[i, i+1]}$ by switching the values $b,c$ in each row. 
By definition, $\tau_i^{bk}(Y)$ is the result of replacing $Y_{[i, i+1]}$ with $Y_{[i, i+1]}^{'}$ inside $Y$. Clearly, the result is still a semi-standard Young tableau. Also, it is evident that the Bender-Knuth involutions are local, meaning that $\tau_i^{bk}$ acts only on the cells with the values $i, i+1$ in them. We also provide an example of the action of a  Bender-Knuth involution on a semi-standard Young tableau, see Example \ref{ex_7.1}.

\begin{example}
\label{ex_7.1}
This example illustrates the action of Bender-Knuth involution $\tau_i^{bk}$ on the semi-standard Young tableau. Here in each row the free cells are colored in white and all the other cells are colored in gray:  

\begin{center}
\begin{tikzpicture}[scale = 0.6]
\draw (2,0) rectangle (3,1);
\draw (3, 0.5) circle (0pt)  node[scale = 0.5, anchor=east]{$i+1$};

\draw (3,1) rectangle (4,2);
\draw (3.7, 1.5) circle (0pt)  node[scale = 0.5, anchor=east]{$i$};
\draw (4,1) rectangle (5,2);
\draw (4.7, 1.5) circle (0pt)  node[scale = 0.5, anchor=east]{$i$};
\draw (5,1) rectangle (6,2);
\draw (5.7, 1.5) circle (0pt)  node[scale = 0.5, anchor=east]{$i$};
\draw (6,1) rectangle (7,2);
\draw (7, 1.5) circle (0pt)  node[scale = 0.5, anchor=east]{$i+1$};
\draw (7,1) rectangle (8,2);
\draw (8, 1.5) circle (0pt)  node[scale = 0.5, anchor=east]{$i+1$};
\filldraw[color=black!70, fill=gray!15,  thick] (8,1) rectangle (9,2);
\draw (9, 1.5) circle (0pt)  node[scale = 0.5, anchor=east]{$i+1$};
\filldraw[color=black!70, fill=gray!15,  thick] (8,2) rectangle (9,3);
\draw (8.7, 2.5) circle (0pt)  node[scale = 0.5, anchor=east]{$i$};
\draw (9,2) rectangle (10,3);
\draw (9.7, 2.5) circle (0pt)  node[scale = 0.5, anchor=east]{$i$};
\draw (10,2) rectangle (11,3);
\draw (10.7, 2.5) circle (0pt)  node[scale = 0.5, anchor=east]{$i$};
\draw (11,2) rectangle (12,3);
\draw (12, 2.5) circle (0pt)  node[scale = 0.5, anchor=east]{$i+1$};
\filldraw[color=black!70, fill=gray!15,  thick] (12,2) rectangle (13,3);
\draw (13, 2.5) circle (0pt)  node[scale = 0.5, anchor=east]{$i+1$};
\filldraw[color=black!70, fill=gray!15,  thick] (13,2) rectangle (14,3);
\draw (14, 2.5) circle (0pt)  node[scale = 0.5, anchor=east]{$i+1$};
\filldraw[color=black!70, fill=gray!15,  thick] (12,3) rectangle (13,4);
\draw (12.7, 3.5) circle (0pt)  node[scale = 0.5, anchor=east]{$i$};
\filldraw[color=black!70, fill=gray!15,  thick] (13,3) rectangle (14,4);
\draw (13.7, 3.5) circle (0pt)  node[scale = 0.5, anchor=east]{$i$};
\draw (2,1) -- (2,6);
\draw (2,6) -- (15,6);
\draw (15,6) -- (15,4);
\draw (15,4) -- (14,4);

\draw[thick,->] (16,3) -- (18,3);
\filldraw[black] (16.40,3.45) circle (0pt) node[anchor=west]{$\tau_i^{bk}$};


\draw (19,0) rectangle (20,1);
\draw (19.7, 0.5) circle (0pt)  node[scale = 0.5, anchor=east]{$i$};

\draw (20,1) rectangle (21,2);
\draw (20.7, 1.5) circle (0pt)  node[scale = 0.5, anchor=east]{$i$};
\draw (21,1) rectangle (22,2);
\draw (21.7, 1.5) circle (0pt)  node[scale = 0.5, anchor=east]{$i$};
\draw (22,1) rectangle (23,2);
\draw (23, 1.5) circle (0pt)  node[scale = 0.5, anchor=east]{$i+1$};
\draw (23,1) rectangle (24,2);
\draw (24, 1.5) circle (0pt)  node[scale = 0.5, anchor=east]{$i+1$};
\draw (24,1) rectangle (25,2);
\draw (25, 1.5) circle (0pt)  node[scale = 0.5, anchor=east]{$i+1$};
\filldraw[color=black!70, fill=gray!15,  thick] (25,1) rectangle (26,2);
\draw (26, 1.5) circle (0pt)  node[scale = 0.5, anchor=east]{$i+1$};
\filldraw[color=black!70, fill=gray!15,  thick] (25,2) rectangle (26,3);
\draw (25.7, 2.5) circle (0pt)  node[scale = 0.5, anchor=east]{$i$};
\draw (26,2) rectangle (27,3);
\draw (26.7, 2.5) circle (0pt)  node[scale = 0.5, anchor=east]{$i$};
\draw (27,2) rectangle (28,3);
\draw (28, 2.5) circle (0pt)  node[scale = 0.5, anchor=east]{$i+1$};
\draw (28,2) rectangle (29,3);
\draw (29, 2.5) circle (0pt)  node[scale = 0.5, anchor=east]{$i+1$};
\filldraw[color=black!70, fill=gray!15,  thick] (29,2) rectangle (30,3);
\draw (30, 2.5) circle (0pt)  node[scale = 0.5, anchor=east]{$i+1$};
\filldraw[color=black!70, fill=gray!15,  thick] (30,2) rectangle (31,3);
\draw (31, 2.5) circle (0pt)  node[scale = 0.5, anchor=east]{$i+1$};
\filldraw[color=black!70, fill=gray!15,  thick] (29,3) rectangle (30,4);
\draw (29.7, 3.5) circle (0pt)  node[scale = 0.5, anchor=east]{$i$};
\filldraw[color=black!70, fill=gray!15,  thick](30,3) rectangle (31,4);
\draw (30.7, 3.5) circle (0pt)  node[scale = 0.5, anchor=east]{$i$};
\draw (19,1) -- (19,6);
\draw (19,6) -- (32,6);
\draw (32,6) -- (32,4);
\draw (32,4) -- (31,4);
\end{tikzpicture}
\end{center}

\end{example}

In the previous chapter we defined an action of the cactus group $C_N$ on the set $\Sing(\Cr{B}_S^{\otimes N})$ and explicitly computed it for generators $\mathbf t_i \in C_N$. In chapter \ref{cell_tables_&_sssyt} we observed that for each $\lambda \in \Delta^{N}$ the set $\operatorname{Sing}_\lambda(\Cr{B}_S^{\tens N})$ is in bijection with the set of short semi-standard Young tableaux of the shape $\nu = \mathcal{F}_N(D_\lambda^{N}) \in \operatorname{SYD}(N,n)$, denoted by $\operatorname{SSSYT}(\nu, N)$. Clearly, $\Sing_\lambda(\Cr{B}_S^{\tens N})$ is an invariant subset of $\Sing(\Cr{B}_S^{\tens N})$ under the action of $C_N$. Therefore, for any shape $\nu \in \operatorname{SYD}(N,n)$ we can define the action of the cactus group $C_N$ on the set  $\operatorname{SSSYT}(\nu, N)$. Let us discuss it in detail. 
 
Denote the natural bijection between sets $\Sing(\Cr{B}_S^{\otimes N})$ and $T^N$, given by the formula below, by $\Cr{H}_{\lambda,N}: T^N_{\lambda} \rightarrow \Sing_{\lambda}(\Cr{B}_S^{\otimes N})$:
\begin{gather}
    (\mu_1, \mu_2, \ldots \mu_N) \mapsto b_{\mu_1} \tens b_{\mu_2} \tens \ldots \tens b_{\mu_N}.
\end{gather}
For any $\nu \in \operatorname{SYD}(N,n)$ we define the group homomorphism
$$\tilde\psi_{\nu}: C_N \rightarrow \Aut(\operatorname{SSSYT}(\nu, N))$$ 
by the following formula:
\begin{gather}
    \mathbf{t}_i \mapsto \Cr{Y}_{\lambda} \circ \Cr{I}_{\lambda} \circ\Cr{H}_{\lambda,N}^{-1} \circ t_i^{res}
    \circ\Cr{H}_{\lambda,N} \circ \Cr{I}_{\lambda}^{-1}\circ \Cr{Y}_{\lambda}^{-1},
\end{gather}
where $\lambda = \Cr{K}_N^{-1}(\Cr{F}_N^{-1}(\nu))$ and $\Cr{K}_N, \Cr{F}_N, \Cr{Y}_\lambda$, $\Cr{I}_\lambda$ are the maps defined in chapter \ref{cell_tables_&_sssyt}. Denote the image of the generator $\mathbf t_i$ under this map by $\tau_i := \tilde\psi_\nu(\mathbf{t}_i)$.

The homomorphism $\tilde \psi_\nu$ defines the action of the cactus group on the set $\operatorname{SSSYT}(\nu,N)$ of short semi-standard Young tableaux of the shape $\nu$. Clearly, $\operatorname{SSSYT}(\nu,N) \subset \operatorname{SSYT}(\nu, N)$, where $\operatorname{SSYT}(\nu, N)$ is a set of semi-standard Young tableaux of shape $\nu$, filled with the numbers $1, 2,\ldots, N$. 

Hence, it is interesting to see if the action of the cactus group $C_N$ given by $\tilde \psi_\nu$ on the subset $\operatorname{SSSYT}(\nu,N)$ correlates with the action of $C_N$ on the bigger set $\operatorname{SSYT}(\nu, N)$ defined by Chmutov in \cite{Chmutov}. Notice that the action of $C_N$ on $\operatorname{SSYT}(\nu, N)$ defined in \cite{Chmutov} cannot be restricted to the subset $\operatorname{SSSYT}(\nu,N)$, because Bender-Knuth involutions on the set of semi-standard Young tableaux may map a short tableau to a non-short one. So, the action of the cactus group on the subset $\operatorname{SSSYT}(\nu, N) \subset \operatorname{SSYT}(\nu, N)$ given by $\tilde \psi_\nu$ definitely differs from the one discussed in \cite{Chmutov}. Nevertheless, we will soon see that the map $\tau_i \in \Aut(\operatorname{SSSYT}(\nu, N))$ is similar to the Bender-Knuth involution $\tau_i^{bk} \in  \Aut(\operatorname{SSYT}(\nu, N))$.  

The following theorems explicitly describe the action of the Bender-Knuth generators $\mathbf{t}_i \in C_N$ on the set $\operatorname{SSSYT}(\nu, N)$ given by $\tilde \psi_\nu(\mathbf t_i) = \tau_i \in \Aut(\operatorname{SSSYT}(\nu, N))$. 

\begin{theorem}
\label{th_7.1}
Let $x = (\nu^{(1)}, \nu^{(2)}, \ldots, \nu^{(i-1)},\nu^{(i)}, \nu^{(i+1)}, \ldots \nu^{(N)}) \in \operatorname{SSSYT}(\nu, N)$. Consider $$\tau^{bk}_i(x) = (\nu^{(1)}, \nu^{(2)}, \ldots, \nu^{(i-1)},\tilde\nu^{(i)}, \nu^{(i+1)}, \ldots \nu^{(N)}).$$Suppose that $\tau^{bk}_i(x) \in \operatorname{SSSYT}(\nu) \Leftrightarrow \tilde\nu^{(i)} \in \operatorname{SYD}(i, n)$, then \begin{gather}\tau_i(x) = \tau^{bk}_i(x).
\end{gather} 
\end{theorem}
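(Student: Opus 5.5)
The plan is to translate everything to the level of highest weight elements, where Corollaries \ref{cor_6.2}, \ref{cor_6.3}, \ref{cor_6.4} already compute $t_i^{res}$. Write $x$ as the image under $\Cr{Y}_\lambda\circ\Cr{I}_\lambda$ of $(\mu_1,\ldots,\mu_N)\in T_\lambda^N$ and put $\lambda':=\mu_1+\ldots+\mu_{i-1}$. By Proposition \ref{prop_6.3}, $\tau_i$ changes only the $i$-th diagram $\nu^{(i)}$, exactly as $\tau_i^{bk}$ does. The $i$-th diagram of $\tau_i(x)$ is $\mathcal{F}_i\circ\mathcal{K}_i(\lambda'+\tilde\mu_i)$, where $\tilde\mu_i$ is $\mu_{i+1}^{*}$, $\mu_{i+1}^{\diamond}$ or $\mu_{i+1}^{\diamond\diamond}$ according to the type of the triplet $(\lambda',\mu_i,\mu_{i+1})$. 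So the proof reduces to two combinatorial claims. First, under the hypothesis $\tilde\nu^{(i)}\in\operatorname{SYD}(i,n)$, the triplet is of type $0$. Second, for a type $0$ triplet, $\mathcal{F}_i\mathcal{K}_i(\lambda'+\mu_{i+1}^{*})$ equals the Bender--Knuth image $\tilde\nu^{(i)}$.

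For the second claim, I would first translate the Bender--Knuth involution into coordinates. Through $\mathcal{F}$, the diagrams $\nu^{(i-1)}\subset\nu^{(i)}\subset\nu^{(i+1)}$ are transposes of the vectors $(l_n,\ldots,l_1)$ (or $(r_n,l_{n-1},\ldots,l_1)$ when $n$ is odd), and the horizontal strip condition becomes a coordinatewise condition. The $j$-th coordinate of $\mathbf{l}$ increases by $\tfrac12-(\mu_k)_j\in\{0,1\}$ at step $k$, so each column $j$ of $\nu^{(i+1)}-\nu^{(i-1)}$ has $0$, $1$ or $2$ cells. A column with $2$ cells corresponds to a non-free pair ($i$ above $i+1$). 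A column with $0$ cells also corresponds to no free cell. A column with exactly one cell, i.e. $(\mu_i)_j(\mu_{i+1})_j<0$, is a free cell, containing $i$ or $i+1$ according to the sign of $(\mu_i)_j$.

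Rows of the skew tableau correspond to maximal runs of columns of equal length. After the appropriate reindexing (reversal $j\mapsto n+1-j$, and the $r_n$ twist for odd $n$), these are the runs of equal $\lambda'_j$. Hence the free cells in one row are exactly a free interval $\operatorname{Fr}$, with the $i$'s and $i+1$'s distributed according to $\operatorname{Fr}_\pm$. In this dictionary, swapping the counts $b$ and $c$ in each row is exactly the effect of $\tilde{\mathbf f}_{\operatorname{Fr}}$ on $b_{\mu_i}$. That operator moves the block of signs within each free interval, turning $\mu_i$ into $\mu_i^{*}$ and $\mu_{i+1}$ into $\mu_{i+1}^{*}$. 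So $\mathcal{F}_i\mathcal{K}_i(\lambda'+\mu_{i+1}^{*})=\tilde\nu^{(i)}$ whenever this latter vector is a regular cell diagram. This matching of orientation conventions (reversal of indices, role of the $n$-th coordinate for odd $n$) is routine but needs care.

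The first claim I expect to be the main obstacle. The natural route is the contrapositive of Corollary \ref{cor_6.1}. Suppose the triplet is of type $1$ or $2$. Then by Remark \ref{remark_6.1}, $\lambda'_{n-1}+\lambda'_n=0$ and $(\mu_{i+1}^{*})_{n-1}=(\mu_{i+1}^{*})_n=-\tfrac12$, so $\lambda'+\mu_{i+1}^{*}$ violates the condition $r_{n-1}\ge l_n$ in $\mathfrak{D}(i,n)$. I then have to show that this violation is exactly the failure of shortness of the candidate diagram $(l_n,\ldots)^t$, i.e. that its first two columns exceed $i$ cells. In the coordinates, the first two columns of the transpose have lengths $l_n$ and $l_{n-1}$ (resp. $r_n$ and $l_{n-1}$), and $r_{n-1}\ge l_n$ is equivalent to $l_n+l_{n-1}\le i$. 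So $\tilde\nu^{(i)}\in\operatorname{SYD}(i,n)$ forces $\langle\lambda'+\mu_{i+1}^{*},\alpha_n^\vee\rangle\ge0$, which is type $0$. I would check the odd $n$ case separately, since $r_n$ replaces $l_n$ there.
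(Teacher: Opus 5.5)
For even $n$ your plan is the paper's proof. You translate $x$ into $b_{\mu_1}\otimes\cdots\otimes b_{\mu_N}$ and note that the free intervals of $(\lambda',\mu_i,\mu_{i+1})$ are the runs of free cells in the rows of $\nu^{(i+1)}-\nu^{(i-1)}$, so the tableau built from $(\mu_{i+1}^{*},\mu_i^{*})$ is $\tau_i^{bk}(x)$. Shortness of that tableau is the $\alpha_n$-condition, i.e.\ type $0$, and Corollary \ref{cor_6.2} finishes. Your coordinate dictionary supplies details the paper leaves implicit.

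The gap is the odd case, which you treat as a routine check of conventions. It is not, and the reduction to your two claims breaks down for odd $n$ with the paper's free intervals and types. For odd $n$ the map $\mathcal{F}$ uses $r_n$, and this changes three things:
\begin{itemize}
\item the first column has length $\tfrac i2+\lambda_n$ and gains a cell at step $k$ iff $(\mu_k)_n=+\tfrac12$;
\item the first two columns have equal length iff $\lambda_{n-1}+\lambda_n=0$, not $\lambda_{n-1}=\lambda_n$, so rows no longer match the paper's free intervals;
\item shortness $r_n+l_{n-1}\le i$ is the $\alpha_{n-1}$-condition $\lambda_{n-1}\ge\lambda_n$, while $r_{n-1}\ge l_n$ is equivalent to $r_n\ge l_{n-1}$, i.e.\ to the first column being at least as long as the second.
\end{itemize}
So the roles of $\alpha_{n-1}$ and $\alpha_n$ are interchanged. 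Concretely, take $n=3$, $N=4$, $i=3$, and $\mu_1=\mu_3=(+,+,+)$, $\mu_2=\mu_4=(+,-,-)$, with signs standing for $\pm\tfrac12$.
\begin{itemize}
\item Then $x$ has rows $1\,2$ and $3\,4$. The $3$ and the $4$ are free cells of one row, so $\tau_3^{bk}(x)=x$, which is short.
\item Yet $\lambda'=(1,0,0)$ and $\{2,3\}$ is a degenerate free interval with $\operatorname{Fr}_-=\{2,3\}$. Hence $\mu_4^{*}=\mu_4$ and $\langle\lambda'+\mu_4^{*},\alpha_3^\vee\rangle=-1$, so the triplet is of type $2$, not type $0$.
\item The formal diagram built from $\lambda'+\mu_4^{*}$ has column lengths $(1,2)$, which is not even a Young diagram.
\end{itemize}
The theorem survives in this example only through Corollary \ref{cor_6.4}, since $\mu_4^{\diamond\diamond}=\mu_3$ and $\mu_3^{\diamond\diamond}=\mu_4$.

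The missing ingredient is the diagram automorphism $\sigma$ of $D_n$ exchanging $\alpha_{n-1}$ and $\alpha_n$, which on weights flips the sign of the $n$-th coordinate. It preserves $P_+$, $P[S]$ and $T^N$, and on $\mathcal{B}_S$ it intertwines $\tilde e_{n-1}$ with $\tilde e_n$. The odd-$n$ formula for $\mathcal{F}_N(D^N_\lambda)$ is exactly the even-$n$ formula applied to $D^N_{\sigma\lambda}$. So for odd $n$ you must define free intervals and types from $(\sigma\lambda',\sigma\mu_i,\sigma\mu_{i+1})$, and conjugate the indices in $\tilde{\mathbf e}_{\operatorname{Fr}}$ and $\tilde{\mathbf f}_{\operatorname{Fr}}$ by $\sigma$. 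This is legitimate because Theorem \ref{th_4.1} allows any path to the highest weight element, and Lemma \ref{lemma_4.5} and Proposition \ref{prop_4.4} hold for every index. With that twist, your even-$n$ argument goes through verbatim. The paper's own proof treats only even $n$ and calls the odd case similar, so it passes over the same point.
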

\begin{proof}
Assume that $n$ is even (the case of odd $n$ is similar).  Take $x \in \operatorname{SSSYT}(\nu, N)$. We say that a cell of the tableau $x = (\nu^{(1)}, \nu^{(2)}, \ldots, \nu^{(i-1)},\nu^{(i)}, \nu^{(i+1)}, \ldots \nu^{(N)}) $ is labeled $i$  iff it belongs to the skew diagram $\nu^{(i)} - \nu^{(i-1)}$.  

Let $\lambda = \Cr{K}_N^{-1}(\Cr{F}_N^{-1}(\nu))$. Consider $(\Cr{H}_{\lambda,N} \circ \Cr{I}_{\lambda}^{-1}\circ \Cr{Y}_{\lambda}^{-1})(x) = b_{\mu_1} \tens b_{\mu_2} \tens \ldots \tens b_{\mu_N} \in \operatorname{Sing}_{\lambda}(\Cr{B}_S^{\tens N})$. 
By definition of the maps $\Cr{I}_\lambda, \Cr{Y}_\lambda$, tableau $x$ has a cell labeled $i$ in the $m$-th column iff $(\mu_i)_{n-m} = -\frac{1}{2}$.   The skew tableau $\nu^{(i+1)} - \nu^{(i-1)}$ consists of cells filled with numbers of $x$ labeled $i$ or $i+1$ and is uniquely determined by the shape $\nu^{(i-1)}$ and weights $\mu_i$, $\mu_{i+1}$. Set $\gamma = \Cr{K}_{i-1}^{-1}(\Cr{F}_{i-1}^{-1}(\nu^{(i-1)})) = \sum_{k=1}^{i-1} \mu_k$. 

Free intervals, defined by an admissible triplet  $(\gamma, \mu_i, \mu_{i+1})$, correspond to maximal (with respect to inclusion) subsets of consecutive columns where the skew tableau $\nu^{(i+1)} - \nu^{(i-1)}$ contains exactly one cell (labeled either $i$ or $i+1$).  Consider an element $b_{\mu_1} \tens \ldots \tens b_{\mu_{i+1}^*} \tens b_{\mu_{i}^{*}} \tens \ldots \tens b_{\mu_N} \in \Cr{B}_S^{\tens N}$, where $b_{\mu_i^{*}}$ and $b_{\mu_{i+1}^{*}}$ are defined by formulas \ref{mu_i_star},  \ref{mu_i+1_star}. Even though the element $b_{\mu_1} \tens \ldots \tens b_{\mu_{i+1}^*} \tens b_{\mu_{i}^{*}} \tens \ldots \tens b_{\mu_N}$ might not lie in $\operatorname{Sing}_\lambda(\Cr{B}_S^{\tens N})$, one can construct a semi-standard Young tableau of the shape $\nu$ corresponding to this element using the same algorithm.  Namely, we draw, starting with $i=1$, a cell labeled $i$ in the $k$-th column iff $(\mu_i)_{n-k} = -\frac{1}{2}$. The fact that the result of these operations is a semi-standard Young tableau of the shape $\nu$ follows from Proposition \ref{prop_6.4} and the fact that $\mu_i^{*} + \mu_{i+1}^{*} = \mu_i + \mu_{i+1}$. Observe that the semi-standard tableau $y$ obtained from the element $b_{\mu_1} \tens \ldots \tens b_{\mu_{i+1}^*} \tens b_{\mu_{i}^{*}} \tens \ldots \tens b_{\mu_N}$ equals $\tau_i^{bk}(x)$. It follows from the definitions of $b_{\mu_i^{*}}$ and $b_{\mu_{i+1}^{*}}$, see \ref{mu_i_star},  \ref{mu_i+1_star}. 
The semi-standard Young tableau $y = \tau_i^{bk}(x)$ obtained from $b_{\mu_1} \tens \ldots \tens b_{\mu_{i+1}^*} \tens b_{\mu_{i}^{*}} \tens \ldots \tens b_{\mu_N}$ is short iff the triplet $(\gamma, \mu_{i+1}^{*}, \mu_{i}^{*})$ is admissible, i.e. the triplet $(\gamma, \mu_i, \mu_{i+1})$ is of type $0$. So, if $y =\tau_i^{bk}(x) \in \operatorname{SSSYT}(\nu, N)$, then the triplet $(\gamma, \mu_{i}, \mu_{i+1})$ is of the type $0$ and by Corollary \ref{cor_6.2} $$t_i^{res }(b_{\mu_1} \tens b_{\mu_2} \tens \ldots \tens b_{\mu_N}) = b_{\mu_1} \tens \ldots \tens b_{\mu_{i+1}^*} \tens b_{\mu_{i}^{*}} \tens \ldots \tens b_{\mu_N}.$$
Hence,  $\tau_i(x) = y = \tau_i^{bk}(x)$ and we are done.
\end{proof}

We have just observed that $\tau_i$ acts as the Bender-Knuth involution $\tau_i^{bk}$, whenever the Bender-Knuth involution  preserves the shortness of the tableau. The next step is to understand when the latter fails. The next proposition presents complete classification of such cases. 

\begin{proposition}
    \label{prop_7.1}
    Let $x = (\nu^{(1)}, \nu^{(2)}, \ldots, \nu^{(i-1)},\nu^{(i)}, \nu^{(i+1)}, \ldots, \nu^{(N)} = \nu) \in \operatorname{SSSYT}(\nu, N)$. If $\tau_i^{bk}(x) \notin \operatorname{SSSYT}(\nu, N)$, then there are $2$ possibilities for tableau $x$: 

\vspace{1mm}

\textbf{Possibility 1}. Tableau $x$ satisfies the following conditions: 
\begin{itemize}
    \item There is only one cell  in the last row of the skew Young tableau $\nu^{(i+1)} - \nu^{(i-1)}$. Moreover, this cell is free and has the number $i+1$ in it. 
    \item There is a  free cell with the number $i+1$ in it in the penultimate row of the skew tableau $\nu^{(i+1)} - \nu^{(i-1)}$.
    \item In the first two columns of the tableau $\nu^{(i+1)}$ there are exactly $i+1$ cells.
\end{itemize}
Tableau $x$ that satisfies the conditions above is said to be of  \textbf{type $1$}. 

\vspace{1mm}

\textbf{\textit{Possibility 2}}. Tableau $x$ satisfies the following conditions: 
\begin{itemize}
\item In the last row of the skew tableau $\nu^{(i+1)} - \nu^{(i-1)}$ there are at least two  free cells with the number $i+1$ and fewer than two cells with the number $i$.
\item In the first two columns of the tableau $\nu^{(i+1)}$ there are exactly $i+1$ cells.
\end{itemize}
Tableau $x$ that satisfies the conditions above is said to be of \textbf{type $2$}. 
\end{proposition}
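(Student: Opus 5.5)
The plan is to translate the condition $\tau_i^{bk}(x)\notin\operatorname{SSSYT}(\nu,N)$ into the weight language of Section \ref{def_of_action_of_cactus_group} and then read off the classification of Remark \ref{remark_6.1} back in tableau terms. As in the proof of Theorem \ref{th_7.1}, I will treat $n$ even (odd $n$ only changes the convention for the column coming from $r_n$). Write $x$ as $b_{\mu_1}\otimes\cdots\otimes b_{\mu_N}$, set $\gamma=\sum_{k<i}\mu_k$, and recall two facts from that proof. First, a cell labeled $j$ sits in column $m$ iff $(\mu_j)_{n-m}=-\frac12$. Second, $\tau_i^{bk}(x)$ is the tableau built from $\mu_{i+1}^{*},\mu_i^{*}$. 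That proof also showed $\tau_i^{bk}(x)$ is short iff $(\gamma,\mu_{i+1}^*,\mu_i^*)$ is admissible, which by Corollary \ref{cor_6.1} holds iff the triplet $(\gamma,\mu_i,\mu_{i+1})$ is of type $0$. So the hypothesis means the triplet is of type $1$ or $2$. By the argument following Definition \ref{def_6.2}, these are the only possibilities.

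Next I translate Remark \ref{remark_6.1}. In both types we have $\gamma_{n-1}+\gamma_n=0$ and $(\mu^*_{i+1})_{n-1}=(\mu^*_{i+1})_n=-\frac12$. Via $\mathcal K$ and $\mathcal F$, the equality $\gamma_{n-1}+\gamma_n=0$ says $l_n+l_{n-1}=i-1$, i.e. the first two columns of $\nu^{(i-1)}$ have exactly $i-1$ cells. Adding the contributions of $\mu_i,\mu_{i+1}$ in coordinates $n-1,n$ should give exactly $i+1$ cells in the first two columns of $\nu^{(i+1)}$. Two new cells are forced because $(\mu^*_{i+1})_{n-1}=(\mu^*_{i+1})_n=-\frac12$ and $\mu_i+\mu_{i+1}=\mu_i^*+\mu_{i+1}^*$, and shortness of $\nu^{(i+1)}$ forbids more. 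I then need a dictionary between free intervals and the skew shape $\nu^{(i+1)}-\nu^{(i-1)}$. Columns in a common free interval have equal $\gamma$-values, so they are consecutive columns of equal height of $\nu^{(i-1)}$, and each contains exactly one new cell. Such a block of columns therefore lies in a single row of the skew tableau, and its cells are exactly the free cells of that row. The negative part $\operatorname{Fr}_-$, where $\mu_{i+1}$ has $-$, corresponds to free cells with entry $i+1$, and $\operatorname{Fr}_+$ to free cells with entry $i$.

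\emph{Type $1$.} Here $\{n\}=\operatorname{Fr}^m_-$ is degenerate and lies in a different free interval from $n-1$, so $\gamma_{n-1}>\gamma_n$. The first column then receives one new cell, with entry $i+1$, and it is the only cell of the last skew row; it is free because $(\mu_i)_n=+\frac12$. The interval $\operatorname{Fr}^{m-1}\ni n-1$ has nonempty negative part, which yields a free $i+1$ in the penultimate row. \emph{Type $2$.} Here $\operatorname{Fr}^m\supset\{n-1,n\}$, with $|\operatorname{Fr}^m_-|\ge2$ and $|\operatorname{Fr}^m_+|\le1$. This gives at least two free cells with $i+1$ in the last row and at most one free $i$. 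In addition, the equality $\gamma_{n-1}=\gamma_n=0$, together with the count of $i+1$ cells in the first two columns, rules out non-free $i$'s in that row.

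The main obstacle is making the dictionary rigorous: the identification of rows of the skew shape with blocks of equal $\gamma$-coordinates, and the translation of $\gamma_{n-1}+\gamma_n=0$ into the statement about the first two columns. Both run through the index reversal $m\leftrightarrow n-m$, the transpose in $\mathcal F_N$, and the parity convention. I would first check this bookkeeping carefully on Examples \ref{ex_6.3} and \ref{ex_6.5}, and only then write the general argument.
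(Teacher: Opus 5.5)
Your proposal is correct and follows the paper's own proof. You reduce, via the proof of Theorem \ref{th_7.1} and Corollary \ref{cor_6.1}, to the triplet $(\gamma,\mu_i,\mu_{i+1})$ not being of type $0$, invoke the trichotomy shown after Definition \ref{def_6.2}, and translate Remark \ref{remark_6.1} back into tableau language. The paper only asserts this last dictionary (``one can show''), and your sketch of it is sound with one reading made explicit. In type $1$, ``penultimate row'' must mean the second-lowest nonempty row of the skew shape, since $\gamma_{n-1}-\gamma_n$ may exceed $1$. The row of $\operatorname{Fr}^{m-1}$ is that row because every column other than the first ends at or above it in $\nu^{(i+1)}$.
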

\begin{proof}
Let $\lambda = \Cr{K}_N^{-1}(\Cr{F}_N^{-1}(\nu))$. Set $$(\Cr{I}_{\lambda}^{-1}\circ \Cr{Y}_{\lambda}^{-1})(x) = (\mu_1, \ldots \mu_i, \mu_{i+1}, \ldots, \mu_N) \in T_\lambda^{N}$$
and $\gamma  = \sum_{k=1}^{i-1} \mu_k$. Using definitions of the maps $\Cr{I}_{\lambda}, \Cr{Y}_\lambda$ one can show that $x$ is of type $1$ (resp. $2$) iff the admissible triplet $(\gamma, \mu_i, \mu_{i+1})$ is of type $1$ (resp. $2$), see Definition \ref{def_6.2} and Remark \ref{remark_6.1}. Also, it was observed in the proof of Theorem \ref{th_7.1}  that $\tau_i^{bk}(x) \in \operatorname{SSSYT}(\nu, N)$ iff the triplet $(\gamma, \mu_i, \mu_{i+1})$ is of the type $0$.   Hence, the statement of the proposition is equivalent to the statement that any admissible triplet $(\gamma, \mu_i, \mu_{i+1})$ which is not type $0$ is either type $1$ or type $2$. The latter was proved right after Definition \ref{def_6.2}.
\end{proof}

The last step is to  compute the map $\tau_i$ on the tableaux of types $1$ and $2$. Hence, the following two theorems.

\begin{theorem}
\label{th_7.2}
 If  $x \in \operatorname{SSSYT}(\nu, N)$ is a tableau of \textbf{type }$1$, see Proposition \ref{prop_7.1}, then $\tau_i$ operates on $x$ the following way: 
    \begin{itemize}
        \item All rows of the skew tableau $\nu^{(i+1)} - \nu^{(i-1)}$, aside from the last two rows convert in the same way they would under the Bender-Knuth involution.  
        \item The last row of  $\nu^{(i+1)} - \nu^{(i-1)}$ stays the same (it only has one cell with $i+1$ in it)
        \item The number inside the first free cell with the number $i+1$ in the penultimate row is being replaced with $i$ and then the usual Bender-Knuth involution is applied.   
    \end{itemize}
\end{theorem}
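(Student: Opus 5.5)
We follow the pattern of the proof of Theorem \ref{th_7.1}, now invoking Corollary \ref{cor_6.3} instead of Corollary \ref{cor_6.2}. Set $\lambda = \Cr{K}_N^{-1}(\Cr{F}_N^{-1}(\nu))$ and $(\Cr{H}_{\lambda,N}\circ\Cr{I}_\lambda^{-1}\circ\Cr{Y}_\lambda^{-1})(x) = b_{\mu_1}\tens\ldots\tens b_{\mu_N}$. Put $\gamma = \sum_{k=1}^{i-1}\mu_k$ and assume $n$ even; the odd case differs only in reading row $n$ through $r_n$ instead of $l_n$. By the proof of Proposition \ref{prop_7.1}, $x$ is of type $1$ exactly when the admissible triplet $(\gamma,\mu_i,\mu_{i+1})$ is of type $1$. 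Corollary \ref{cor_6.3} then gives
$$t_i^{res}(b_{\mu_1}\tens\ldots\tens b_{\mu_N}) = b_{\mu_1}\tens\ldots\tens b_{\mu_{i+1}^{\diamond}}\tens b_{\mu_i^{\diamond}}\tens\ldots\tens b_{\mu_N}.$$
So it remains to translate the pair $(\mu_{i+1}^\diamond,\mu_i^\diamond)$ into a skew tableau $\nu^{(i+1)}-\nu^{(i-1)}$, using the dictionary already used in Theorem \ref{th_7.1}: a cell labeled $k$ in column $m$ corresponds to $(\mu_k)_{n-m} = -\frac12$.

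\textbf{Step 1: columns of the skew tableau.} I would first fix how the indices of $\{1,\ldots,n\}$ correspond to columns of $\nu^{(i+1)}-\nu^{(i-1)}$, and hence to its rows and free cells. As noted in the proof of Theorem \ref{th_7.1}, free intervals of $(\gamma,\mu_i,\mu_{i+1})$ correspond to maximal runs of consecutive columns carrying exactly one cell, i.e. to the free cells of one row of the skew tableau. By Remark \ref{remark_6.1}, in type $1$ we have $\operatorname{Fr}^m = \{n\}$, a degenerate interval with only a negative part; this should be the lone free $i+1$ in the last row of the skew tableau. The interval $\operatorname{Fr}^{m-1}\ni n-1$ corresponds to the free cells of the penultimate row. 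Its negative part $\operatorname{Fr}^{m-1}_-$, of size $a$, consists of the coordinates $n-a,\ldots,n-1$ where $\mu_{i+1}$ has $-$, which are the free cells labeled $i+1$ in that row. I will need to check the orientation carefully, since the index $n-m$ reverses the order and the boundary coordinate $n$ is special in type $D$.

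\textbf{Step 2: reading off $\diamond$.} On every free interval other than $\operatorname{Fr}^{m-1},\operatorname{Fr}^m$, the passage $\mu\mapsto\mu^*$ swaps the counts of $-$ and $+$, carrying pluses through minuses. Exactly as in Theorem \ref{th_7.1}, this is the Bender–Knuth switch of the numbers $b,c$ in the corresponding row, and $\tilde{\mathbf e}_{\operatorname{type}_1}$ does not touch these coordinates. On $\operatorname{Fr}^m=\{n\}$, the operator $\tilde{\mathbf e}_{\operatorname{type}_1}$ turns $(\mu_{i+1}^*)_n$ back to $+$, so after the swap of positions $i,i+1$ the last row is unchanged. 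On $\operatorname{Fr}^{m-1}$, $\mu^*$ first performs the BK switch, and then $\tilde e_{n-a}\cdots\tilde e_{n-2}$ changes coordinate $n-a$ of $\mu_{i+1}^*$ from $-$ to $+$ (with the complementary change in $\mu_i^*$). I would verify that this equals the following operation: first relabel the first free $i+1$ of the penultimate row as $i$, which raises the free count of $i$ by one and lowers that of $i+1$ by one, and then apply the BK switch to that row. Concretely, if the row originally has $b$ free $i$'s and $c=a$ free $i+1$'s, both descriptions should yield $c-1$ free $i$'s followed by $b+1$ free $i+1$'s.

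\textbf{Main obstacle.} The difficult part is the bookkeeping in Step 2 near the axis. One must confirm that coordinate $n-a$ (a $\tilde e_{n-2}$-type move chained with $\tilde e_n$) indeed corresponds to the leftmost free $i+1$-cell of the penultimate row, and that the $\tilde e_n$ move on coordinate $n$, which also interacts with coordinate $n-1$ through $\alpha_n=\epsilon_{n-1}+\epsilon_n$, does not alter the penultimate row beyond what is claimed. I would test this first on Example \ref{ex_6.4} against Example \ref{ex_7.2}, and then argue in general from the explicit sign patterns in $\mu_{i+1}^*$ and $\mu_{i+1}^\diamond$ on $\operatorname{Fr}^{m-1}\cup\{n\}$. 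The shortness condition (first two columns containing $i+1$ cells) is used only to ensure we are in type $1$, not type $0$.
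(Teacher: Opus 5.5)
Your reduction is the same as the paper's. The paper's proof of this theorem is only the remark, taken from the proof of Proposition \ref{prop_7.1}, that $x$ is of type $1$ iff $(\gamma,\mu_i,\mu_{i+1})$ is. It then asserts that the statement is Corollary \ref{cor_6.3} read through $\Cr{H}_{\lambda,N}\circ\Cr{I}_\lambda^{-1}\circ\Cr{Y}_\lambda^{-1}$. Your Steps 1--2 attempt the translation the paper leaves implicit. The count you aim for is correct: the penultimate row becomes $c-1$ free $i$'s followed by $b+1$ free $i+1$'s, and the last row is unchanged. However, two identifications on the way are wrong, and the check announced in your last paragraph would fail as stated.

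First, coordinate $j$ sits in column $n+1-j$; the index $n-m$ in the dictionary is off by one. Property 3 of free intervals then gives $\operatorname{Fr}^{m-1}_-=[n-a-b,\,n-1-b]$ and $\operatorname{Fr}^{m-1}_+=[n-b,\,n-1]$, where $b=|\operatorname{Fr}^{m-1}_+|$ is the number of free $i$'s. So $\{n-a,\dots,n-1\}$ is not $\operatorname{Fr}^{m-1}_-$. It is the set where $\mu^*_{i+1}$ is negative, i.e. the free $i$'s of the Bender--Knuth image of the row.

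Second, and consequently, coordinate $n-a$ (column $a+1$) is the rightmost free $i$ of the BK image. It is not, in general, the leftmost free $i+1$ of $x$. In Example \ref{ex_7.2} one has $a=2$, $b=3$, and column $3$ holds a free $i$ of $x$.

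Here is what actually happens. $\tilde e_n$, applied first, makes coordinates $n-1$ and $n$ of $\mu^*_{i+1}$ positive. Then $\tilde e_{n-2},\dots,\tilde e_{n-a}$ carry the $+$ from $n-1$ down to $n-a$; for $a=1$ nothing further moves. This settles your worry about $\alpha_n=\epsilon_{n-1}+\epsilon_n$: $\mu^\diamond_{i+1}$ differs from $\mu^*_{i+1}$ exactly at $n$ and $n-a$. Hence the last row keeps its $i+1$ (plain BK would turn it into $i$). The penultimate row is the BK image with its last free $i$ turned into $i+1$. BK on a row depends only on the pair $(b,c)$, so this coincides with ``first free $i+1$ becomes $i$, then BK'', which is all the theorem asserts. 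No cell-by-cell match with coordinate $n-a$ is needed.

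Finally, your parity remark is as brief as the paper's ``similar'', but it is not accurate as stated. For odd $n$, column $1$ has length $r_n$ and records $+$ signs at coordinate $n$, so the types of Definition \ref{def_6.2} no longer coincide with the tableau types. For example, take $n=5$, $i=2$, $\mu_1=\tfrac12(1,1,1,1,-1)$, $\mu_2=\tfrac12(1,1,1,1,1)$, $\mu_3=\tfrac12(1,1,1,-1,-1)$. The triplet is of type $1$, yet $x$ is the single row $2\,3$, which is fixed by $\tau_2^{bk}$. You need to conjugate the analysis by the diagram automorphism exchanging $\alpha_{n-1}$ and $\alpha_n$, not just re-read row $n$.
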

\begin{proof}
Set $\lambda = \Cr{K}_N^{-1}(\Cr{F}_N^{-1}(\nu))$. As was stated in the proof of Proposition \ref{prop_7.1}, the tableau $x \in \operatorname{SSSYT}(\nu, N)$ is of type $1$ iff the triplet $(\gamma, \mu_i, \mu_{i+1})$ is of type $1$, where $(\mu_1, \ldots, \mu_i, \mu_{i+1}, \ldots \mu_N) = (\Cr{I}_{\lambda}^{-1}\circ \Cr{Y}_{\lambda}^{-1}) (x)$ and $\gamma = \sum_{k=1}^{i-1} \mu_k$.

Then, the theorem is equivalent to Corollary \ref{cor_6.3} via the correspondence between elements of $\operatorname{Sing}_{\lambda}(\Cr{B}_S^{\tens N})$ and $\operatorname{SSSYT}(\nu, N)$ given by the bijection $\Cr{H}_{\lambda,N} \circ \Cr{I}_{\lambda}^{-1}\circ \Cr{Y}_{\lambda}^{-1} $.
\end{proof}

The next example illustrates the action of $\tau_i$ on the type $1$ tableaux described in Theorem \ref{th_7.2}

\begin{example}
\label{ex_7.2}
Suppose the first two columns of the tableau below (left) contain exactly $i+1$ cells. Then the automorphism ${\tau_i}$ maps it to the tableau on the right.
\vspace{2mm}

\begin{center}
\begin{tikzpicture}[scale = 0.6]
\draw (2,0) rectangle (3,1);
\draw (3, 0.5) circle (0pt)  node[scale = 0.5, anchor=east]{$i+1$};

\draw (3,1) rectangle (4,2);
\draw (3.7, 1.5) circle (0pt)  node[scale = 0.5, anchor=east]{$i$};
\draw (4,1) rectangle (5,2);
\draw (4.7, 1.5) circle (0pt)  node[scale = 0.5, anchor=east]{$i$};
\draw (5,1) rectangle (6,2);
\draw (5.7, 1.5) circle (0pt)  node[scale = 0.5, anchor=east]{$i$};
\filldraw[color=black!70, fill=red!15,  thick] (6,1) rectangle (7,2);
\draw (7, 1.5) circle (0pt)  node[scale = 0.5, anchor=east]{$i+1$};
\draw (7,1) rectangle (8,2);
\draw (8, 1.5) circle (0pt)  node[scale = 0.5, anchor=east]{$i+1$};
\filldraw[color=black!70, fill=gray!15,  thick] (8,1) rectangle (9,2);
\draw (9, 1.5) circle (0pt)  node[scale = 0.5, anchor=east]{$i+1$};
\filldraw[color=black!70, fill=gray!15,  thick] (8,2) rectangle (9,3);
\draw (8.7, 2.5) circle (0pt)  node[scale = 0.5, anchor=east]{$i$};
\draw (9,2) rectangle (10,3);
\draw (9.7, 2.5) circle (0pt)  node[scale = 0.5, anchor=east]{$i$};
\draw (10,2) rectangle (11,3);
\draw (10.7, 2.5) circle (0pt)  node[scale = 0.5, anchor=east]{$i$};
\draw (11,2) rectangle (12,3);
\draw (12, 2.5) circle (0pt)  node[scale = 0.5, anchor=east]{$i+1$};
\filldraw[color=black!70, fill=gray!15,  thick] (12,2) rectangle (13,3);
\draw (13, 2.5) circle (0pt)  node[scale = 0.5, anchor=east]{$i+1$};
\filldraw[color=black!70, fill=gray!15,  thick] (13,2) rectangle (14,3);
\draw (14, 2.5) circle (0pt)  node[scale = 0.5, anchor=east]{$i+1$};
\filldraw[color=black!70, fill=gray!15,  thick] (12,3) rectangle (13,4);
\draw (12.7, 3.5) circle (0pt)  node[scale = 0.5, anchor=east]{$i$};
\filldraw[color=black!70, fill=gray!15,  thick] (13,3) rectangle (14,4);
\draw (13.7, 3.5) circle (0pt)  node[scale = 0.5, anchor=east]{$i$};
\draw (2,1) -- (2,6);
\draw (2,6) -- (15,6);
\draw (15,6) -- (15,4);
\draw (15,4) -- (14,4);

\draw[thick,->] (16,3) -- (18,3);
\filldraw[black] (16.40,3.40) circle (0pt) node[anchor=west]{$\tau_i$};


\draw (19,0) rectangle (20,1);
\draw (20, 0.5) circle (0pt)  node[scale = 0.5, anchor=east]{$i+1$};

\draw (20,1) rectangle (21,2);
\draw (20.7, 1.5) circle (0pt)  node[scale = 0.5, anchor=east]{$i$};
\draw (21,1) rectangle (22,2);
\draw (22, 1.5) circle (0pt)  node[scale = 0.5, anchor=east]{$i+1$};
\draw (22,1) rectangle (23,2);
\draw (23, 1.5) circle (0pt)  node[scale = 0.5, anchor=east]{$i+1$};
\draw (23,1) rectangle (24,2);
\draw (24, 1.5) circle (0pt)  node[scale = 0.5, anchor=east]{$i+1$};
\draw (24,1) rectangle (25,2);
\draw (25, 1.5) circle (0pt)  node[scale = 0.5, anchor=east]{$i+1$};
\filldraw[color=black!70, fill=gray!15,  thick] (25,1) rectangle (26,2);
\draw (26, 1.5) circle (0pt)  node[scale = 0.5, anchor=east]{$i+1$};
\filldraw[color=black!70, fill=gray!15,  thick] (25,2) rectangle (26,3);
\draw (25.7, 2.5) circle (0pt)  node[scale = 0.5, anchor=east]{$i$};
\draw (26,2) rectangle (27,3);
\draw (26.7, 2.5) circle (0pt)  node[scale = 0.5, anchor=east]{$i$};
\draw (27,2) rectangle (28,3);
\draw (28, 2.5) circle (0pt)  node[scale = 0.5, anchor=east]{$i+1$};
\draw (28,2) rectangle (29,3);
\draw (29, 2.5) circle (0pt)  node[scale = 0.5, anchor=east]{$i+1$};
\filldraw[color=black!70, fill=gray!15,  thick] (29,2) rectangle (30,3);
\draw (30, 2.5) circle (0pt)  node[scale = 0.5, anchor=east]{$i+1$};
\filldraw[color=black!70, fill=gray!15,  thick] (30,2) rectangle (31,3);
\draw (31, 2.5) circle (0pt)  node[scale = 0.5, anchor=east]{$i+1$};
\filldraw[color=black!70, fill=gray!15,  thick] (29,3) rectangle (30,4);
\draw (29.7, 3.5) circle (0pt)  node[scale = 0.5, anchor=east]{$i$};
\filldraw[color=black!70, fill=gray!15,  thick](30,3) rectangle (31,4);
\draw (30.7, 3.5) circle (0pt)  node[scale = 0.5, anchor=east]{$i$};
\draw (19,1) -- (19,6);
\draw (19,6) -- (32,6);
\draw (32,6) -- (32,4);
\draw (32,4) -- (31,4);
\end{tikzpicture}
\end{center}

Non-free cells are highlighted in gray, and the first free cell in the penultimate row of the left skew Young tableau with the number $i+1$ inside is highlighted in red. This example illustrates the automorphism ${\tau_i}$ on tableaux satisfying the conditions of \textbf{Possibility} $1$ in Proposition \ref{prop_7.1}. 

\end{example}

\begin{theorem}
\label{th_7.3}
If  $x \in \operatorname{SSSYT}(\nu, N)$ is a tableau of \textbf{type} $2$,  see Proposition \ref{prop_7.1}, then $\tau_i$ operates on $x$ the following way: 
\begin{itemize}
    \item All rows of the skew tableau $\nu^{(i+1)} - \nu^{(i-1)}$, aside from the last one, convert in the same way they would under the Bender-Knuth involution.
    \item The first cell in the last row of the skew tableau $\nu^{(i+1)} - \nu^{(i-1)}$ (it is always free) changes its value from $i$ to $i+1$ and vice versa if the number of the  free cells in the last row of $\nu^{(i+1)} - \nu^{(i-1)}$ is odd.
\end{itemize}
\end{theorem}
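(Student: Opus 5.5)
The proof will mirror that of Theorem \ref{th_7.2}. Set $\lambda = \Cr{K}_N^{-1}(\Cr{F}_N^{-1}(\nu))$, write $(\Cr{I}_{\lambda}^{-1}\circ \Cr{Y}_{\lambda}^{-1})(x) = (\mu_1, \ldots, \mu_N)$ and $\gamma = \sum_{k=1}^{i-1}\mu_k$. By the proof of Proposition \ref{prop_7.1}, $x$ is of type $2$ exactly when the admissible triplet $(\gamma, \mu_i, \mu_{i+1})$ is of type $2$. Corollary \ref{cor_6.4} then gives
\[
t_i^{res}(b_{\mu_1}\tens\ldots\tens b_{\mu_N}) = b_{\mu_1}\tens\ldots\tens b_{\mu_{i+1}^{\diamond\diamond}}\tens b_{\mu_i^{\diamond\diamond}}\tens\ldots\tens b_{\mu_N}.
\]
It therefore remains to read off, column by column, the skew tableau that this element encodes. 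As in the proof of Theorem \ref{th_7.1}, a cell labeled $i$ sits in column $m$ iff the $(n-m)$-th coordinate of the $i$-th weight is $-\frac12$. We carry this out for even $n$; odd $n$ differs only in the treatment of $r_n$ in $\Cr{F}_N$.

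The translation proceeds in two steps. First, the passage $(\mu_i,\mu_{i+1}) \mapsto (\mu_{i+1}^{*},\mu_i^{*})$ is already known, from the proof of Theorem \ref{th_7.1}, to produce $\tau_i^{bk}(x)$. Second, $\mu^{\diamond\diamond}$ differs from $\mu^{*}$ only in the coordinates of the last free interval $\operatorname{Fr}^m$, namely the positions $n-a-b+1,\ldots,n$, where $\lambda_j=\gamma_j=0$. Outside $\operatorname{Fr}^m$ the rows of $\nu^{(i+1)}-\nu^{(i-1)}$ therefore transform exactly as under Bender--Knuth. We must check that $\operatorname{Fr}^m$ corresponds precisely to the free cells of the last row of the skew tableau. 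This uses the fact that $\gamma_j = 0$ on $\operatorname{Fr}^m$, so these columns are the ones whose skew cells lie in the bottom row. Under this correspondence, $\operatorname{Fr}^m_-$ gives the free cells filled with $i+1$ and $\operatorname{Fr}^m_+$ gives those filled with $i$. The constraints $a\geqslant 2$ and $b\leqslant 1$ from Remark \ref{remark_6.1} then match the type $2$ conditions.

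Next we compare the final pattern on $\operatorname{Fr}^m$ with Bender--Knuth. After the $*$-step the pair $(\mu_{i+1}^{*},\mu_i^{*})$ restricted to $\operatorname{Fr}^m$ reads $(\pm,-,\ldots,-)$ and $(\mp,+,\ldots,+)$. After the type-$2$ step it reads $(+,\ldots,+,\pm)$ and $(-,\ldots,-,\mp)$, where the last sign depends on the parity of $a$. Converting these back to cells shows that the resulting bottom row differs from the Bender--Knuth image only in the cell at coordinate $n$. That coordinate corresponds to the first (leftmost) column of the row, which is free. The change occurs exactly when the relevant parity is odd, and the count of free cells in the last row is $a+b$ with the structure above; these two parities must be shown to agree. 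This gives the second bullet of the theorem.

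The main obstacle is this bookkeeping. We must fix the index reversal $m \leftrightarrow n-m$ and verify that coordinate $n$ is indeed the first cell of the last row. We must also confirm that the parity of $a$ (together with $b$) is the same as the parity of the number of free cells in the bottom row, taking into account whether the single possible $i$-cell is present. Finally, we must check that the interaction between the first two columns and the "exactly $i+1$ cells" condition does not introduce an extra shift. We will handle this by writing the last row explicitly in the two cases $b=0$ and $b=1$, as in Example \ref{ex_6.5}.
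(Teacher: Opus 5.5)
Your reduction to Corollary~\ref{cor_6.4} through $\Cr{H}_{\lambda,N}\circ\Cr{I}_\lambda^{-1}\circ\Cr{Y}_\lambda^{-1}$ is exactly the paper's proof. The paper stops there and leaves the translation to the reader. The translation you sketch, however, rests on a false claim: the bottom row produced by $(\mu_{i+1}^{\diamond\diamond},\mu_i^{\diamond\diamond})$ does \emph{not} differ from the Bender--Knuth image only at coordinate $n$.

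For even $n$, column $m$ corresponds to coordinate $n-m+1$, so $\operatorname{Fr}^m$ corresponds to columns $1,\dots,a+b$ of the bottom row. Since $\mu_{i+1}^{*}$ equals $-\frac12$ at the last $a$ coordinates, $\tau_i^{bk}(x)$ has $i$ in columns $1,\dots,a$. By contrast, $\mu_{i+1}^{\diamond\diamond}$ equals $+\frac12$ on all of $\operatorname{Fr}^m$ except possibly at coordinate $n$, so $\tau_i(x)$ has $i$ at most in column $1$. Example~\ref{ex_7.3} ($a=3$, $b=0$) shows this concretely. There the free part of the bottom row is $i,i,i$ under $\tau_i^{bk}$ but $i,i+1,i+1$ under $\tau_i$. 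They differ in columns $2$ and $3$ and agree in column $1$.

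Your planned parity check fails under this framing too. The first cell of $\tau_i^{bk}(x)$ is always $i$ (as $a\geqslant 2$), so $\tau_i(x)$ differs from it there iff $a$ is even, regardless of $b$. Nor can the second bullet be read as a correction of the Bender--Knuth row: flipping the first of $a\geqslant 2$ leading $i$'s would put $i+1$ to the left of $i$.

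The comparison that works is with $x$ itself, which is why the first bullet excludes the last row. On $\operatorname{Fr}^m$ the old $(i+1)$-st weight $\mu_{i+1}$ equals $-\frac12$ at $n-a-b+1,\dots,n-1$, and $(\mu_{i+1})_n=+\frac12$ iff $b=1$. The new $(i+1)$-st weight $\mu_i^{\diamond\diamond}$ also equals $-\frac12$ at $n-a-b+1,\dots,n-1$, and $(\mu_i^{\diamond\diamond})_n=+\frac12$ iff $a$ is odd. In both cases the $i$-th weight is the complement on $\operatorname{Fr}^m$. Hence the bottom row of $\tau_i(x)$ coincides with that of $x$ except in column $1$. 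That cell flips iff exactly one of ``$a$ odd'' and ``$b=1$'' holds, i.e.\ iff $a+b$, the number of free cells in the row, is odd. With this replacement the rest of your plan goes through: identifying $\operatorname{Fr}^m$ with the free cells of the last row, matching $a\geqslant 2$, $b\leqslant 1$ to the type $2$ conditions, and getting the other rows from Bender--Knuth.

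One further caution concerns odd $n$. There column $1$ receives a cell labelled $k$ iff $(\mu_k)_n=+\frac12$, so the dictionary reverses at coordinate $n$, and triplet type $2$ no longer matches tableau type $2$. Take $n=3$, $\mu_1=\mu_3=(\frac12,\frac12,\frac12)$, $\mu_2=\mu_4=(\frac12,-\frac12,-\frac12)$, $i=3$. The triplet is of type $2$, but the tableau has rows $1\,2$ and $3\,4$, has only one free $4$, and is fixed by $\tau_3^{bk}$. So the odd case is not a verbatim repeat of the even one. One way to handle it is to conjugate by the sign change of the last coordinate, after checking that this sign change is compatible with the commutors.
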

\begin{proof}
Set $\lambda = \Cr{K}_N^{-1}(\Cr{F}_N^{-1}(\nu))$. As was stated in the proof of Proposition \ref{prop_7.1}, the tableau $x \in \operatorname{SSSYT}(\nu, N)$ is of type $2$ iff the triplet $(\gamma, \mu_i, \mu_{i+1})$ is of type $2$, where $(\mu_1, \ldots, \mu_i, \mu_{i+1}, \ldots \mu_N) = (\Cr{I}_{\lambda}^{-1}\circ \Cr{Y}_{\lambda}^{-1}) (x)$ and $\gamma = \sum_{k=1}^{i-1} \mu_k$.

Then, the theorem is equivalent to Corollary \ref{cor_6.4} via the correspondence between elements of $\operatorname{Sing}_{\lambda}(\Cr{B}_S^{\tens N})$ and $\operatorname{SSSYT}(\nu, N)$ given by the bijection $\Cr{H}_{\lambda,N} \circ \Cr{I}_{\lambda}^{-1}\circ \Cr{Y}_{\lambda}^{-1} $. 
\end{proof}

Finally, we present an example that illustrates the action of $\tau_i$ on the type $2$ tableaux described in Theorem \ref{th_7.3}

\begin{example}
\label{ex_7.3}
Suppose the first two columns of the tableau below (left) contain exactly $i+1$ cells. Then the automorphism ${\tau_i}$ maps it to the tableau on the right.
\vspace{2mm}  
\begin{center}
\begin{tikzpicture}[scale = 0.6]
\filldraw[color=black!70, fill=green!15,  thick] (2,0) rectangle (3,1);
\draw (3, 0.5) circle (0pt)  node[scale = 0.5, anchor=east]{$i+1$};
\filldraw[color=black!70, fill=green!15,  thick] (3,0) rectangle (4,1);
\draw (4, 0.5) circle (0pt)  node[scale = 0.5, anchor=east]{$i+1$};
\filldraw[color=black!70, fill=green!15,  thick] (4,0) rectangle (5,1);
\draw (5, 0.5) circle (0pt)  node[scale = 0.5, anchor=east]{$i+1$};
\filldraw[color=black!70, fill=gray!15,  thick] (5,0) rectangle (6,1);
\draw (6, 0.5) circle (0pt)  node[scale = 0.5, anchor=east]{$i+1$};
\filldraw[color=black!70, fill=gray!15,  thick](6,0) rectangle (7,1);
\draw (7, 0.5) circle (0pt)  node[scale = 0.5, anchor=east]{$i+1$};
\filldraw[color=black!70, fill=gray!15,  thick] (5,1) rectangle (6,2);
\draw (5.7, 1.5) circle (0pt)  node[scale = 0.5, anchor=east]{$i$};
\filldraw[color=black!70, fill=gray!15,  thick] (6,1) rectangle (7,2);
\draw (6.7, 1.5) circle (0pt)  node[scale = 0.5, anchor=east]{$i$};
\draw (7,1) rectangle (8,2);
\draw (7.7, 1.5) circle (0pt)  node[scale = 0.5, anchor=east]{$i$};
\draw (8,1) rectangle (9,2);
\draw (8.7, 1.5) circle (0pt)  node[scale = 0.5, anchor=east]{$i$};
\draw (9,1) rectangle (10,2);
\draw (10, 1.5) circle (0pt)  node[scale = 0.5, anchor=east]{$i+1$};
\draw (10,1) rectangle (11,2);
\draw (11, 1.5) circle (0pt)  node[scale = 0.5, anchor=east]{$i+1$};
\draw (11,1) rectangle (12,2);
\draw (12, 1.5) circle (0pt)  node[scale = 0.5, anchor=east]{$i+1$};

\draw (12,2) rectangle (13,3);
\draw (12.7, 2.5) circle (0pt)  node[scale = 0.5, anchor=east]{$i$};
\draw (13,2) rectangle (14,3);
\draw (13.7, 2.5) circle (0pt)  node[scale = 0.5, anchor=east]{$i$};

\draw (2,1) -- (2,6);
\draw (2,6) -- (14,6);
\draw (14,6) -- (14,3);

\draw[thick,->] (16,3) -- (18,3);
\filldraw[black] (16.40,3.40) circle (0pt) node[anchor=west]{$\tau_i$};

\filldraw[color=black!70, fill=red!15,  thick] (20,0) rectangle (21,1);
\draw (20.7, 0.5) circle (0pt)  node[scale = 0.5, anchor=east]{$i$};
\filldraw[color=black!70, fill=green!15,  thick] (21,0) rectangle (22,1);
\draw (22, 0.5) circle (0pt)  node[scale = 0.5, anchor=east]{$i+1$};
\filldraw[color=black!70, fill=green!15,  thick] (22,0) rectangle (23,1);
\draw (23, 0.5) circle (0pt)  node[scale = 0.5, anchor=east]{$i+1$};
\filldraw[color=black!70, fill=gray!15,  thick] (23,0) rectangle (24,1);
\draw (24, 0.5) circle (0pt)  node[scale = 0.5, anchor=east]{$i+1$};
\filldraw[color=black!70, fill=gray!15,  thick](24,0) rectangle (25,1);
\draw (25, 0.5) circle (0pt)  node[scale = 0.5, anchor=east]{$i+1$};
\filldraw[color=black!70, fill=gray!15,  thick] (23,1) rectangle (24,2);
\draw (23.7, 1.5) circle (0pt)  node[scale = 0.5, anchor=east]{$i$};
\filldraw[color=black!70, fill=gray!15,  thick] (24,1) rectangle (25,2);
\draw (24.7, 1.5) circle (0pt)  node[scale = 0.5, anchor=east]{$i$};
\draw (25,1) rectangle (26,2);
\draw (25.7, 1.5) circle (0pt)  node[scale = 0.5, anchor=east]{$i$};
\draw (26,1) rectangle (27,2);
\draw (26.7, 1.5) circle (0pt)  node[scale = 0.5, anchor=east]{$i$};
\draw (27,1) rectangle (28,2);
\draw (27.7, 1.5) circle (0pt)  node[scale = 0.5, anchor=east]{$i$};
\draw (28,1) rectangle (29,2);
\draw (29, 1.5) circle (0pt)  node[scale = 0.5, anchor=east]{$i+1$};
\draw (29,1) rectangle (30,2);
\draw (30, 1.5) circle (0pt)  node[scale = 0.5, anchor=east]{$i+1$};

\draw (30,2) rectangle (31,3);
\draw (31, 2.5) circle (0pt)  node[scale = 0.5, anchor=east]{$i+1$};
\draw (31,2) rectangle (32,3);
\draw (32, 2.5) circle (0pt)  node[scale = 0.5, anchor=east]{$i+1$};

\draw (20,1) -- (20,6);
\draw (20,6) -- (32,6);
\draw (32,6) -- (32,3);

\end{tikzpicture}
\end{center}

Non-free cells are highlighted in gray, free cells of the last row are highlighted in green. The first cell of the last row of the right tableau, where the number changed, is highlighted in red. This example illustrates the automorphism $\tau_i$ on tableaux satisfying the conditions of \textbf{Possibility} $2$ in Proposition \ref{prop_7.1}.
\end{example}

\renewcommand\refname{References}

\end{document}